\newtheorem{theorem}{Theorem}[section]
\newtheorem{lemma}[theorem]{Lemma}
\newtheorem{proposition}[theorem]{Proposition}
\newtheorem{corollary}[theorem]{Corollary}
\newtheorem{remark}[theorem]{Remark}
\def\R{{\mathbb{R}}}
\def\H{{I\!\!H}}
\def\CC{{\rm \kern.24em \vrule width.02em height1.4ex depth-.05ex
		\kern-.26emC}}
\def\Xint#1{\mathchoice
	{\XXint\displaystyle\textstyle{#1}}%
	{\XXint\textstyle\scriptstyle{#1}}%
	{\XXint\scriptstyle\scriptscriptstyle{#1}}%
	{\XXint\scriptscriptstyle\scriptscriptstyle{#1}}%
	\!\int}
\def\XXint#1#2#3{{\setbox0=\hbox{$#1{#2#3}{\int}$}
		\vcenter{\hbox{$#2#3$}}\kern-.13\wd0}}
\def\dashint{\Xint-}
\def\Xint#1{\mathchoice
	{\XXint\displaystyle\textstyle{#1}}%
	{\XXint\textstyle\scriptstyle{#1}}%
	{\XXint\scriptstyle\scriptscriptstyle{#1}}%
	{\XXint\scriptscriptstyle\scriptscriptstyle{#1}}%
	\!\int}
\def\XXint#1#2#3{{\setbox0=\hbox{$#1{#2#3}{\int}$}
		\vcenter{\hbox{$#2#3$}}\kern-.5\wd0}}
\newcommand{\pvint}{\mathop{\mathrlap{\pushpv}}\!\int}
\newcommand{\pushpv}{\mathchoice
	{\mkern10.5mu\rule[.6ex]{.5em}{.5pt}}
	{\mkern2.8mu\rule[.5ex]{.35em}{.8pt}}
	{\mkern2.5mu\rule[.50ex]{.7em}{.7pt}}
	{\mkern2mu\rule[.5ex]{.7em}{.5pt}}
}
\newcommand{\ppvint}{\mathop{\mathrlap{\pushppv}}\!\int}
\newcommand{\pushppv}{\mathchoice
	{\mkern13.9mu\rule[.6ex]{.5em}{.5pt}}
	{\mkern2.8mu\rule[.5ex]{.35em}{.8pt}}
	{\mkern2.5mu\rule[.50ex]{.7em}{.7pt}}
	{\mkern2mu\rule[.5ex]{.7em}{.5pt}}
}
\def\TagOnRight
\def\AA{{it I} \hskip-3pt{\tt A}}
\def\QQ{\rlap {\raise 0.4ex \hbox{$\scriptscriptstyle |$}} {\hskip -0.1em Q}}
\def\theequation{\@arabic{\c@section}.\@arabic{\c@equation}}
\renewcommand{\div}{{\mathrm{div}}}
\newcommand{\DT}{\mathbb{D}}
\newcommand{\HC}[2]{\mathcal{C}^{{#1},{#2}}}
\renewcommand{\L}[1]{L^{#1}(\Omega)}
\newcommand{\Lb}[1]{L^{#1}(\Gamma)}
\newcommand{\vL}[1]{\bm{L}^{#1}(\Omega)}
\newcommand{\vLb}[1]{\bm{L}^{#1}(\Gamma)}
\renewcommand{\H}[1]{H^{#1}(\Omega)}
\newcommand{\vH}[1]{\bm{H}^{#1}(\Omega)}
\newcommand{\vHfracb}[2]{\bm{H}^{\frac{#1}{#2}}(\Gamma)}
\newcommand{\vHfracbd}[2]{\bm{H}^{-\frac{#1}{#2}}(\Gamma)}
\newcommand{\W}[2]{W^{#1,#2}(\Omega)}
\newcommand{\vW}[2]{\bm{W}^{#1,#2}(\Omega)}
\newcommand{\Wfracb}[2]{W^{#1, #2}(\Gamma)}
\newcommand{\vWfracb}[2]{\bm{W}^{#1, #2}(\Gamma)}
\newcommand{\vE}[1]{\bm{E}^{#1}(\Omega)}
\newcommand{\vn}{\bm{n}}
\newcommand{\vt}{\bm{\tau}}
\newcommand{\vu}{\bm{u}} 
\newcommand{\KEYWORDS}[1]{\paragraph{Keywords:}#1\par}
\renewcommand{\Re}{\operatorname{Re}}
\renewcommand{\Im}{\operatorname{Im}}
\newcommand{\leqnomode}{\tagsleft@true\let\veqno\@@leqno}
\newcommand{\reqnomode}{\tagsleft@false\let\veqno\@@eqno}
\title{Semigroup theory for the Stokes operator with Navier boundary condition on $L^p$ spaces}
\author[1]{C.~Amrouche \thanks{cherif.amrouche@univ-pau.fr}}
\author[2]{M.~Escobedo \thanks{miguel.escobedo@ehu.eus}}
\author[1,2]{A.~Ghosh \thanks{amrita.ghosh@univ-pau.fr}}
\affil[1]{LMAP, UMR CNRS 5142, Universit\'{e} de Pau et des Pays de l'Adour, France.}
\affil[2]{Departamento de Matem\'{a}ticas, Universidad del Pa\'{i}s Vasco, 48940 Lejona, Spain.}
\begin{document}

\maketitle

\begin{abstract}
	We consider the incompressible Navier-Stokes equations in a bounded domain with $\HC{1}{1}$ boundary, completed with slip boundary condition. Apart from studying the general semigroup theory related to the Stokes operator with Navier boundary condition where the slip coefficient $\alpha$ is a non-smooth scalar function, our main goal is to obtain estimate on the solutions, independent of $\alpha$. We show that for $\alpha$ large, the weak and strong solutions of both the linear and non-linear system are bounded uniformly with respect to $\alpha$. This justifies mathematically that the solution of the Navier-Stokes problem with slip condition converges in the energy space to the solution of the Navier-Stokes with no-slip boundary condition as $\alpha \to \infty$.
\end{abstract}

\KEYWORDS{Slip boundary condition, Semigroup theory, Resolvent estimate, Limit problem as slip coefficient grows large.}

\section{Introduction}

In this article we prove the existence of solutions to the following problem for the Navier Stokes equation:

\begin{empheq}[left=\empheqlbrace]{align}
&\frac{\partial\boldsymbol{u}}{\partial t} - \Delta \boldsymbol{u
}+ (\boldsymbol{u} \cdot \nabla) \boldsymbol{u}+\,\nabla\pi=\boldsymbol{0} &&\textrm{in} \,\,\Omega\times (0,T) \label{lens0} \\ 
&\mathrm{div}\,\boldsymbol{u}= 0 &&\textrm{in} \,\,\Omega\times (0,T) \label{lens2}\\
&\boldsymbol{u}\cdot \boldsymbol{n}=0 &&\textrm{on}\,\, \Gamma\times (0,T)\label{lens3}\\
& 2[(\DT\boldsymbol{u})\vn]_{\vt}+\alpha\vu_{\vt} = \bm{0} &&\textrm{on} \,\,\Gamma\times (0,T)\label{lens4}\\
&\boldsymbol{u}(0)=\boldsymbol{u}_{0} &&\textrm{in} \,\,\Omega\label{lens5}
\end{empheq}
where $\Omega $ is a bounded domain of ${\mathbb{R}}^3$, not necessarily simply connected, whose boundary $\Gamma $ is of class $C^{1,1}$. The initial velocity $\boldsymbol{u}_{0}$ and the (scalar) friction coefficient $\alpha $ are given functions. The external unit vector on $\Gamma $ is denoted by $\boldsymbol{n}$, $\DT\boldsymbol{u} = \frac{1}{2}\left( \nabla \vu + \nabla \vu^T\right)$ denotes the strain tensor and the subscript ${\bm{\tau}}$ denotes the tangential component i.e. $\bm{v}_{\vt} = \bm{v}- (\bm{v}\cdot \vn)\vn$ for any vector field $\bm{v}$. The functions $\boldsymbol{u}$ and $\pi$ describe respectively the velocity and the pressure of a viscous incompressible fluid in $\Omega$ satisfying the boundary conditions (\ref{lens3}), (\ref{lens4}).

The boundary condition (\ref{lens4}) was introduced by H. Navier in \cite{Navier}, taking into account the molecular interactions with the boundary and is called \textit{Navier boundary condition}. It may be deduced from kinetic theory considerations, as  first described in \cite{Maxwell} and rigorously proved under suitable conditions in \cite{Masmoudi3}. It has been  widely studied in recent years, because of its significance in modeling and simulations of flows and fluid-solid interaction problems (cf. \cite{IS}, \cite{Masmoudi2}, \cite{Pal} and  references therein). In that context the function $\alpha $ is, up to some constant,  the inverse of the slip length. We then impose  the condition $\alpha \ge 0$ in all the remaining of this work.

The problem with Dirichlet boundary condition has deserved a lot of attention. In particular, a good semigroup theory has been developed  in a series of work by Giga (cf. \cite{Giga81PJA}, \cite{Giga86}, \cite{giga-miya}, \cite{Giga91}). 
Here we wish to establish a  similar framework for  $\alpha \not \equiv 0$. 
We will study two different types of solutions for (\ref{lens1}), (\ref{lens2})-(\ref{lens5}): \textit{strong solutions} which belong to $L^p(0, T, \boldsymbol{L}^q(\Omega))$ type spaces and \textit{weak solutions} (in a suitable sense) that may be written for a.e. $t>0$, as $\boldsymbol{u}(t)=\boldsymbol{v}(t)+\nabla w(t)$ where 
$\boldsymbol{v}\in L^p(0, T; \boldsymbol{L}^{q}(\Omega))$ and $w\in L^p(0, T; L^{q}(\Omega))$. We will also consider different hypothesis on regularity of the function $\alpha $. In particular, we collect some of the relevant results available for the Navier-Stokes problem with no-slip condition based on semigroup properties and prove them for the system (\ref{lens0})-(\ref{lens5}) for the sake of completeness, so that this paper can be used as a basis for further work.

Let us give an overview of some related works. The system (\ref{lens0})-(\ref{lens5}) has been studied in \cite{mikelic} in two-dimension where $\alpha\ge 0$ is a function in $ C^2(\Gamma)$, with the main objective to analyse vanishing viscosity limit where the existence of weak and strong solutions have been established. Also in \cite{cipriano}, the authors have studied stochastic Navier-Stokes equation with Navier boundary condition, similar to (\ref{lens0})-(\ref{lens5}) where they considered same assumption that $\alpha\ge 0$ is in $C^2(\Gamma)$ and proved existence of weak solution. Beirao Da Veiga \cite{BDV} has considered the same problem in 3D in $\HC{2}{1}$ domain with $\alpha\ge 0$ constant and first showed that the Stokes operator with Navier boundary condition $A$ is a maximal monotone, self-adjoint operator on $\bm{L}^2_{\sigma,\vt}(\Omega)$ which generates an analytic semigroup of contraction and thus obtain strong solution of Stokes problem; Also by identifying the domain of $A^{1/2}$, he obtained the global strong solution of Navier-Stokes equation under the assumption of small data as in the no-slip boundary condition. The system (\ref{lens0})-(\ref{lens5}) has also been studied by Tanaka et al \cite{itoh} in Sobolev-Slobodetskii spaces in point of view to analyze asymptotic behavior of the unsteady solution to the steady solution where they have considered $\Gamma $ belongs to $W^{\frac{5}{2}+l}_2 $ and $\alpha\in (0,1)$ is in $ W^{\frac{1}{2}+l, \frac{1}{4}+\frac{l}{2}}_2((0,\infty)\times \Gamma)$ with $l\in (\frac{1}{2},1)$ and proved existence of local in time, strong solution and global in time, strong solution for small data. Note that in this work, $\alpha$ depends on both time and space variable. 
We want to mention further the works of \cite{IS, Masmoudi} where though the main objective is again to study viscosity limit, in the first paper, Iftimie and Sueur show existence of global in time weak solution in $C([0,\infty);\bm{L}^2_{\sigma,\vt}(\Omega))\cap L^2_{loc}([0,\infty);\vH{1})$ for $ \bm{L}^2_{\sigma,\vt}(\Omega)$ initial value; by classical approach: first deriving some energy estimate and then using Galerkin method. There they considered $\alpha$ a scalar function of class $\mathcal{C}^2$, without a sign. And in the second paper, Masmoudi and Rousset worked on anistropic conormal Sobolev spaces considering smooth domain and $|\alpha|\le 1$. In the paper \cite{monniaux}, the authors aimed to prove the existence of global in time, strong solution of a similar problem assuming small data, in $\HC{1}{1}$ domain and $\alpha$ non-negative, H\"{o}lder continuous in time. Also we mention the work \cite{xiao} where Lagrangian Navier-Stokes problem (as a regularization system of classical Navier-Stokes equations) with vorticity slip boundary condition (which is close to boundary condition (\ref{lens4})) has been studied for non-negative smooth function $\alpha$ and existence of weak solution, global in time is obtained. 

Further, in \cite{benes}, Benes has established a unique weak solution, local in time for the Navier-Stokes system with mixed boundary condition: on some part of the boundary Navier condition with $\alpha =0$ is considered and on other part, Neumann type boundary condition. Similar result for Navier-Stokes problem with Navier-type boundary condition (which corresponds to $\alpha =0$) has been studied in, for example \cite{NK}.
Also for $\alpha =0$ the semigroup associated to equation (\ref{lens1}) with (\ref{lens2})-(\ref{lens5}) has been studied in \cite{Hind-Ahmed}.

In this work, we wish to study the general semigroup theory for any $p\in(1,\infty)$ for the Stokes operator with Navier boundary condition (NBC) with (possibly) minimal regularity on $\alpha$ which gives us existence, uniqueness and regularity of both strong and weak solutions of (\ref{lens1}), (\ref{lens2})-(\ref{lens5}). We start with introducing the strong and weak Stokes operators for each fixed $\alpha$ and show that they generate analytic semigroups on the respective spaces for all $p\in (1,\infty)$. The proof of this is not very complicated and mostly use the existence and estimate results studied in \cite{AG} for the steady problem. Further we study the imaginary and fractional powers of the Stokes operators. To show that the operators are of bounded imaginary power(BIP) is not straight forward; Here we did not use pseudo-differential operator theory or Fourier multiplier theory as done by Giga \cite{Giga91}, but chose a rather different approach: we show that the Stokes operator with (full) slip boundary condition (in its weak form) can be written as a lower order perturbation of the Navier-type boundary condition (cf. (\ref{20.})) for which the result (that the operator is BIP) is known (cf. \cite[Theorem 6.1]{AAE1}); And then with the help of Amann's interpolation-extrapolation theory \cite{amann}, we recover the boundedness of imaginary power of $A_{p, \alpha}$. This method has been used in \cite{pruss} to establish that the Stokes operator with NBC for $\alpha>0$ constant, possesses a bounded $\mathcal{H}^\infty$-calculus on $\bm{L}^p_{\sigma,\vt}(\Omega)$. Next we prove that the Stokes operator has maximal $L^q$-regularity and establish various types of $L^p-L^q$ estimates which helps to develop an $L^p$-theory for the Navier-Stokes equations. We have used the abstract theory by Giga \cite{{Giga86}} for semilinear parabolic equations in $L^p$ to achieve the similar existence and regularity of a local strong solution and global weak solution.

The interesting part is to show the resolvent estimate
$$
|\lambda| \|(\lambda I + A_{p,\alpha})^{-1}\bm{f}\|_{\bm{L}^p_{\sigma,\vt}(\Omega)} \le C \|\bm{f}\|_{\bm{L}^p_{\sigma,\vt}(\Omega)} \qquad \forall \,\lambda \in \mathbb{C}^* \,\,\text{ with }\,\, \Re \lambda \ge 0
$$
where the positive constant $C$ does not depend on $\alpha$ for $\alpha$ sufficiently large. Adopting the standard method, multiplying the equation by $|\vu|^{p-2}\bar{\vu}$ (for example, as followed in \cite{AAE}), one can easily obtain the above estimate but with the constant depending on $\alpha$. Therefore we need some different approach. In the Hilbert case, this follows (cf. Theorem \ref{17}) from the variational formulation as expected. But to prove it for $p\ne 2$ is much more delicate. We have used $L^p$-extrapolation theory of Shen (cf. Lemma \ref{L0}), with a suitable operator $T$. The main difficulty is to satisfy the necessary condition (\ref{extrapolation_condition}) which is a weak reverse H\"{o}lder inequality (wRHI). Unfortunately, the estimates derived on the steady problem do not help in this situation. 

It seems  very natural  to let $\alpha \to \infty$ in some sense in order  to obtain the Dirichlet boundary condition on $\Gamma $ from the condition (\ref{lens4}).
The main goal of the present article is to study the behavior of the solutions of the unsteady Stokes and Navier-Stokes equation with NBC with respect to $\alpha$, in particular what happens when $\alpha$ goes to $\infty$. This problem  is considered in  \cite{kelliher} in 2D where the author shows in Theorem 9.2 that, for $\vu_0\in\vH{3}$, when $\|1/\alpha\| _{ L^\infty (\Gamma ) }\rightarrow 0$, the solution of problem (\ref{lens0})-(\ref{lens5}) converges to the solution of the Navier-Stokes problem with Dirichlet boundary condition in suitable spaces (cf. Section \ref{9} for details). In our work, we first deduce uniform resolvent estimates with respect to some suitable norm of the function $\alpha$  which in turn provides $\alpha$ independent estimates for the solutions of non-stationary Stokes problem with NBC. This enables us to consider the case where  $\alpha $ is a constant function and $\alpha \to \infty$ in both the linear problem (\ref{lens1}), (\ref{lens2})-(\ref{lens5}) and the nonlinear problem (\ref{lens0})-(\ref{lens5}). We show that   the solutions of  the problems with NBC converge strongly in the energy space to the solutions of corresponding problem  with Dirichlet boundary condition as $\alpha$ goes to $\infty$. 

We state now our main results, for which the following notations are needed:
$$\bm{L}^p_{\sigma,\vt}(\Omega) = \lbrace \bm{v}\in\vL{p}; \div \
\bm{v} = 0 \text{ in } \Omega, \bm{v}\cdot \vn = 0 \text{ on } \Gamma\rbrace $$
equipped with the norm of $\vL{p}$ and
\begin{eqnarray*}
\mathbf{D}(A_{p,\alpha}) = \left\{ \vu\in\vW{2}{p}\cap \bm{L}^p_{\sigma,\vt}(\Omega); \,\, 2[(\DT\vu)\vn]_{\vt}+\alpha\vu_{\vt} = \bm{0} \text{ on } \Gamma \right\}.
\end{eqnarray*}
The space $\mathbf{D}(A_{p,\alpha}) $ is nothing but the domain of $A_{p,\alpha}$, the Stokes operator on $\bm{L}_{ \sigma , \vt  }^p(\Omega )$ with the boundary conditions (\ref{lens3})-(\ref{lens4}).

\begin{theorem}
Suppose that $\alpha \in \Wfracb{1-\frac{1}{r_1}}{r_1}$ for some $r_1\ge 3$ and  $\alpha \ge 0$. Then,  for every $\vu_0 \in \bm{L}^{r_2}_{\sigma,\vt}(\Omega)$ with $r_2\ge 3$, there exists a unique solution $\vu$ of (\ref{lens0})--(\ref{lens5}) defined  on a maximal time interval $[0, T_\star)$ such that
\begin{equation*}
\vu \in C([0,T_\star);\bm{L}^r_{\sigma,\vt}(\Omega)) \cap L^q(0,T_\star; \bm{L}^p_{\sigma,\vt}(\Omega))
\end{equation*}
\begin{equation*}
t^{1/q}\vu \in C([0,T_\star);\bm{L}^p_{\sigma,\vt}(\Omega)) \quad \text{ and } \quad t^{1/q}\|\vu\|_{\vL{p}} \rightarrow 0 \text{ as } t\rightarrow 0
\end{equation*}
with $r=\min (r_1, r_2)$,  $p>r$, $q>r$ and $\frac{2}{q}= \frac{3}{r}-\frac{3}{p}$.  Moreover,
$$\boldsymbol{u}\in C((0,T_{\star}),\mathbf{D}(A_{r,\alpha}))\cap C^{1}((0,T_{\star});\,\boldsymbol{L}^{r}_{\sigma,\vt}(\Omega)).$$ 

If $r>3$ and $T_\star<\infty$,
\begin{equation*}
\|u(t)\|_{\vL{r}} \geq C (T_\star -t)^{(3-r)/2r}
\end{equation*}
where $C$ is independent of $T_\star$ and $t$.

Also, there exists a constant $\varepsilon >0$ such that if $\|\vu_0\|_{\vL{3}} < \varepsilon$, then $T_\star=\infty$.
\end{theorem}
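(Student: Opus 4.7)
The plan is to recast (\ref{lens0})--(\ref{lens5}) as an abstract semilinear Cauchy problem on $\bm{L}^r_{\sigma,\vt}(\Omega)$ with $r=\min(r_1,r_2)$, and to invoke the abstract theory of Giga \cite{Giga86} for parabolic semilinear equations. The hypotheses of that theory are furnished by the analyticity of the semigroup generated by $-A_{r,\alpha}$, its bounded imaginary powers, and the $L^p$-$L^q$ estimates already established in the earlier sections of the paper.

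First I would use $\div \vu=0$ to rewrite $(\vu\cdot\nabla)\vu=\div(\vu\otimes\vu)$ and apply the Helmholtz projector $P_r$, giving the abstract equation
$$\vu'(t)+A_{r,\alpha}\vu(t)=-P_r\div(\vu\otimes\vu), \qquad \vu(0)=\vu_0,$$
whose mild form is a Duhamel integral equation. The Serrin-type triple $p>r$, $q>r$, $\frac{2}{q}=\frac{3}{r}-\frac{3}{p}$ is precisely calibrated so that, combining the smoothing bound $\|e^{-tA_{r,\alpha}}\|_{\vL{r}\to\vL{p}}\lesssim t^{-\frac{3}{2}(\frac{1}{r}-\frac{1}{p})}$ with the analogous bound for $e^{-tA_{r,\alpha}}P_r\div$ (which loses an extra $t^{-1/2}$), the bilinear form
$$\mathcal{B}(\vu,\vv)(t):=\int_0^t e^{-(t-s)A_{r,\alpha}}P_r\div(\vu(s)\otimes\vv(s))\,ds$$
maps the weighted Kato-type space
$$X_T=\bigl\{\vu\in C([0,T];\bm{L}^r_{\sigma,\vt}(\Omega)):\, t^{1/q}\vu\in C([0,T];\bm{L}^p_{\sigma,\vt}(\Omega)),\ t^{1/q}\|\vu(t)\|_{\vL{p}}\to 0\bigr\}$$
continuously into itself with a bilinear norm that vanishes as $T\to 0$. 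Continuity at $t=0$ for $t^{1/q}e^{-tA_{r,\alpha}}\vu_0$ in $\vL{p}$ follows by density of $\mathbf{D}(A_{r,\alpha})$ in $\bm{L}^r_{\sigma,\vt}(\Omega)$, and a contraction argument in a small ball of $X_T$ produces the unique local mild solution, which is extended to a maximal interval $[0,T_\star)$.

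The regularity $\vu\in C((0,T_\star);\mathbf{D}(A_{r,\alpha}))\cap C^1((0,T_\star);\bm{L}^r_{\sigma,\vt}(\Omega))$ is a standard parabolic bootstrap based on the analyticity of $e^{-tA_{r,\alpha}}$ and the $L^q$-integrability of $\vu$ in $\vL{p}$. The blow-up alternative when $r>3$ follows by a scaling argument: the local existence time produced above, starting from $\vu(t_0)$, depends only on $\|\vu(t_0)\|_{\vL{r}}$ through a power of the form $\|\vu(t_0)\|_{\vL{r}}^{-2r/(r-3)}$; if $\|\vu(t)\|_{\vL{r}}$ stayed below this threshold as $t_0\uparrow T_\star$, the solution could be extended beyond $T_\star$, contradicting maximality, whence the lower bound $\|\vu(t)\|_{\vL{r}}\ge C(T_\star-t)^{(3-r)/2r}$. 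At the critical exponent $r=3$ the contraction radius is governed by $\sup_{0<t<T}t^{1/q}\|e^{-tA_{3,\alpha}}\vu_0\|_{\vL{p}}$, which can be made arbitrarily small by choosing $\|\vu_0\|_{\vL{3}}<\varepsilon$ for a suitable $\varepsilon>0$; the iteration then closes on $[0,\infty)$, yielding $T_\star=\infty$.

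The main obstacle I anticipate is the estimate on $e^{-tA_{r,\alpha}}P_r\div$, since $P_r\div G$ is not a priori a well-defined element of $\bm{L}^s_{\sigma,\vt}(\Omega)$ for $G\in L^s(\Omega)$ only: one must interpret it through the duality $\langle P_r\div G,\varphi\rangle=-\int_\Omega G:\nabla\varphi$ for $\varphi\in\mathbf{D}(A_{r',\alpha})$, where the Navier boundary condition and the dependence on $\alpha$ enter the boundary terms. The corresponding $L^p$-$L^q$ smoothing for the projected divergence is then obtained from the identification of the fractional-power domain $\mathbf{D}(A_{r',\alpha}^{1/2})$ via the BIP property and Amann's extrapolation established earlier. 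Once this duality estimate is secured, the remainder of the proof is a direct instantiation of the abstract results in \cite{Giga86}.
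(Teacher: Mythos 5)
Your proposal is correct and follows essentially the same route as the paper: Section 8 recasts \eqref{NS} as the abstract semilinear problem $\vu_t+A_{r,\alpha}\vu=-P(\vu\cdot\nabla)\vu=LG\vu$ and verifies Giga's hypotheses ({\bf A}), ({\bf N1}), ({\bf N2}) using the $L^p$--$L^q$ smoothing from the fractional-power embeddings and the boundedness of $A_{r,\alpha}^{-1/2}L$, exactly the duality/BIP point you single out as the main obstacle. The local existence, Kato-space membership, blow-up rate, small-data global existence, and the $C((0,T_\star);\mathbf{D}(A_{r,\alpha}))\cap C^1$ regularity are then read off from \cite[Theorems 1 and 2]{Giga86} and \cite{giga-miya, kato}, which is the abstract packaging of the contraction and bootstrap arguments you describe.
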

Under weaker conditions on $\Omega $ and $\alpha $, a similar Theorem holds for initial data in the space of distributions $\vu_0 =\bm{\psi}+\nabla \chi $  where  $\bm{\psi}  \in L^{r}(\Omega)$ and  $\chi \in L^{r}(\Omega)$ (denoted by $[{\bf H}_0^{r'}(\div, \Omega )]'$,  cf. Proposition \ref{prop}), with $r\ge 3$.

The proof of these results is based on a careful study of the semigroup associated to the linear equation
\begin{eqnarray}
&&\frac{\partial\boldsymbol{u}}{\partial t} - \Delta \boldsymbol{u
}\,+\,\nabla\pi=\boldsymbol{f} \label{lens1}
\end{eqnarray}
with conditions (\ref{lens2})-(\ref{lens5}). For that we first study the strong and weak Stokes operators $A_{p,\alpha}$ and $B_{p,\alpha}$ and deduce that both of them have bounded inverse on $\bm{L}^p_{\sigma,\vt}(\Omega)$ and $[{\bf H}_0^{p'}(\div, \Omega )]'$ respectively for all $p\in (1,\infty)$. Also $-A_{p,\alpha}$ and $-B_{p,\alpha}$ generate bounded analytic semigroups on their respective spaces (cf. Theorem \ref{24} and Theorem \ref{S4Thm1}) and their pure imaginary powers are uniformly bounded as well (cf. Theorem \ref{62}). We obtain the following theorems, if $\bm{ f}=\bm{0}$:

\begin{theorem}
\label{homstrong}
Let $1<p<\infty$ and $\alpha\ge 0$ be as in \eqref{alpha}. Then for $\vu_0\in \bm{L}^p_{\sigma,\vt}(\Omega)$, the problem \eqref{evolutionary Stokes} has a unique solution $\vu(t)$ satisfying
\begin{equation*}
\vu\in C([0,\infty),\bm{L}^p_{\sigma,\vt}(\Omega) )\cap C((0,\infty),\mathbf{D}(A_{p,\alpha}))\cap C^1((0,\infty),\bm{L}^p_{\sigma,\vt}(\Omega)) 
\end{equation*}
and
\begin{equation*}
\vu\in C^k((0,\infty),\mathbf{D}(A_{p,\alpha}^l)) \quad \forall \ k\in\mathbb{N}, \ \forall \ l\in\mathbb{N}\backslash\{0\} .
\end{equation*}
Also, for all $t>0$ and $q\ge p$,  $\vu(t)\in L^q(\Omega )$ and there exists $\delta >0$ independent of $t$ and $q$ such that:
\begin{equation*}
\|\vu(t)\|_{\vL{q}} \leq C(\Omega, p) \ e^{-\delta t}t^{-3/2 (1/p-1/q)} \|\vu_0\|_{\vL{p}}.
\end{equation*}
Moreover, the following estimates also hold
\begin{equation*}
\|\DT{\vu(t)}\|_{\vL{q}} \leq C(\Omega,p) \ e^{-\delta t}t^{-3/2(1/p-1/q)-1/2} \|\vu_0\|_{\vL{p}} ,
\end{equation*}
\begin{equation*}
\forall \ m,n\in\mathbb{N}, \quad \|\frac{\partial ^m}{\partial t^m}A_{p,\alpha}^n \vu(t)\|_{\vL{q}} \leq C(\Omega, p) \ e^{-\delta t}t^{-(m+n)-3/2(1/p-1/q)} \|\vu_0\|_{\vL{p}} .
\end{equation*}
\end{theorem}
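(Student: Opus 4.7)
The plan is to exploit the analytic semigroup generated by $-A_{p,\alpha}$ on $\bm{L}^p_{\sigma,\vt}(\Omega)$, whose existence is established in Theorem \ref{24}. Setting $\vu(t):=e^{-tA_{p,\alpha}}\vu_0$, standard semigroup theory immediately yields uniqueness, continuity $\vu\in C([0,\infty);\bm{L}^p_{\sigma,\vt}(\Omega))$, and the smoothing property $\vu(t)\in\mathbf{D}(A_{p,\alpha}^l)$ for every $l\in\mathbb{N}$ and $t>0$, with $\vu\in C^k((0,\infty);\mathbf{D}(A_{p,\alpha}^l))$ via the identity $\frac{d\vu}{dt}=-A_{p,\alpha}\vu(t)$. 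Since $A_{p,\alpha}$ is boundedly invertible on $\bm{L}^p_{\sigma,\vt}(\Omega)$, its spectrum lies in a sector strictly separated from the imaginary axis, so there exists $\delta>0$ such that
\begin{equation*}
\|A_{p,\alpha}^{n}\,e^{-tA_{p,\alpha}}\|_{\mathcal{L}(\bm{L}^p_{\sigma,\vt}(\Omega))}\leq C_{n}\,e^{-\delta t}\,t^{-n},\qquad n\in\mathbb{N},\ t>0,
\end{equation*}
which produces the exponential decay factor in all three estimates.

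To obtain the $L^p$-$L^q$ smoothing estimate for $p\leq q$ and $\gamma:=\tfrac{3}{2}(\tfrac{1}{p}-\tfrac{1}{q})<1$, I would use the continuous embedding $\mathbf{D}(A_{p,\alpha}^{\gamma})\hookrightarrow\vL{q}$. This is obtained from the identification $\mathbf{D}(A_{p,\alpha}^{\gamma})=[\bm{L}^p_{\sigma,\vt}(\Omega),\mathbf{D}(A_{p,\alpha})]_{\gamma}$, which follows from the bounded imaginary powers of $A_{p,\alpha}$ proved in Theorem \ref{62} (via Heinz--Kato), combined with $\mathbf{D}(A_{p,\alpha})\hookrightarrow\vW{2}{p}$ and the Sobolev embedding $\vW{2\gamma}{p}\hookrightarrow\vL{q}$. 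Then
\begin{equation*}
\|\vu(t)\|_{\vL{q}}\leq C\|A_{p,\alpha}^{\gamma}\vu(t)\|_{\vL{p}}\leq C\,e^{-\delta t}\,t^{-\gamma}\|\vu_0\|_{\vL{p}}.
\end{equation*}
For the range $\gamma\geq 1$, I would iterate this bound through finitely many intermediate Lebesgue spaces, using the semigroup identity $e^{-tA_{p,\alpha}}=e^{-(t/2)A_{p,\alpha}}\circ e^{-(t/2)A_{p,\alpha}}$.

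The gradient estimate is obtained analogously by invoking $\mathbf{D}(A_{p,\alpha}^{\gamma+1/2})\hookrightarrow\vW{1}{q}$ (still a consequence of the BIP-based interpolation identification and Sobolev embedding) together with the analyticity bound at exponent $\gamma+\tfrac{1}{2}$. The higher-order mixed estimate follows from the factorisation
\begin{equation*}
\tfrac{\partial^{m}}{\partial t^{m}}A_{p,\alpha}^{n}\,e^{-tA_{p,\alpha}}\vu_0=(-1)^{m}A_{p,\alpha}^{m+n}e^{-(t/2)A_{p,\alpha}}\bigl(e^{-(t/2)A_{p,\alpha}}\vu_0\bigr),
\end{equation*}
applying the $L^p$-$L^q$ estimate just proved to the inner factor and the analyticity bound of order $m+n$ to the outer factor.

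The main obstacle is justifying the embedding $\mathbf{D}(A_{p,\alpha}^{\gamma})\hookrightarrow\vL{q}$: unlike in the no-slip situation, the Navier boundary condition makes the identification of fractional domains with Sobolev-type spaces non-trivial, because it cannot be read off from a straightforward Helmholtz decomposition. It is precisely the BIP property of $A_{p,\alpha}$ (Theorem \ref{62}), transferred from the Navier-type operator via Amann's interpolation-extrapolation machinery as described in the introduction, that underlies the identification of $\mathbf{D}(A_{p,\alpha}^{\gamma})$ with a complex interpolation space and hence unlocks the classical Sobolev embeddings on which the whole scheme depends.
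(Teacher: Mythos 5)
Your proposal is correct and follows essentially the same route as the paper: existence, uniqueness and the smoothing/regularity statements come from the analytic semigroup of Theorem \ref{24} together with the exponential decay supplied by the bounded invertibility of $A_{p,\alpha}$, and the $L^p$--$L^q$ estimates come from the embedding of fractional power domains (Theorem \ref{44}), whose proof is exactly the BIP-based complex-interpolation identification you describe. The only cosmetic differences are that the paper reaches $\vL{q}$ by embedding $\mathbf{D}(A_{p,\alpha}^{s})$ into an intermediate $\vL{p_0}$ for an exponent $s$ slightly larger than $\tfrac{3}{2}(\tfrac{1}{p}-\tfrac{1}{q})$ and then interpolating between Lebesgue spaces, and that it obtains the $\DT\vu$ bound by Gagliardo--Nirenberg between $\vW{2}{q}$ and $\vL{q}$ rather than by an embedding $\mathbf{D}(A_{p,\alpha}^{\gamma+1/2})\hookrightarrow\vW{1}{q}$.
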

For $\bm f\not \equiv \bm{0}$, and if we denote $E_q$ the following real interpolation space:
$$
E_q\equiv (D(A_{p,\alpha}), \bm{L}^p_{\sigma, \vt}(\Omega)) _{ \frac {1} {q}, q }
$$
we have the result:

\begin{theorem}
\label{nonhom}
Let $1< p,q <\infty$. Also assume that $0< T \leq \infty$ and $\alpha\ge 0$ be as in \eqref{alpha}. Then for $\bm{f}\in L^q(0,T;\bm{L}^p_{\sigma, \vt}(\Omega))$ and $\vu_0 \in E_q$, there exists a unique solution $(\vu,\pi)$ of \eqref{58} satisfying:
	\begin{equation*}
	\vu \in L^q(0,T_0;\vW{2}{p}) \ \text{ for all } \ T_0\leq T \ \text{ if } \ T<\infty \ \text{ and } \ T_0 < \infty \ \text{ if } \ T= \infty ,
	\end{equation*}
	
	\begin{equation*}
	\pi\in L^q(0,T; \W{1}{p}/\R) , \quad \frac{\partial \vu}{\partial t} \in L^q(0,T, \bm{L}^p_{\sigma, \vt}(\Omega)),
	\end{equation*}
	
	\begin{equation}
	\begin{aligned}
	\label{59}
	\int\displaylimits_{0}^{T}\left\| \frac{\partial \vu}{\partial t}\right\| ^q_{\vL{p}} \mathrm{d}t + \int\displaylimits_{0}^{T}\|\vu\|^q_{\vW{2}{p}} \mathrm{d}t + \int\displaylimits_{0}^{T}\|\pi\|^q_{\W{1}{p}} \mathrm{d}t \leq C \left( \int\displaylimits_{0}^{T} \|\bm{f}\|^q_{\vL{p}} \mathrm{d}t + \|\vu_0\|^q_{E_q} \right)
	\end{aligned}
	\end{equation}
	where $C>0$ is independent of $\bm{f}, \vu_0$ and $T$. 
\end{theorem}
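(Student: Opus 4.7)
The plan is to reduce Theorem \ref{nonhom} to the abstract $L^q$-maximal regularity theorem of Dore--Venni / Pr\"{u}ss--Sohr. The three ingredients that this theorem requires are all already established in the excerpt: first, $\bm{L}^p_{\sigma,\vt}(\Omega)$ is a UMD space for $1<p<\infty$, being a closed subspace of $\vL{p}$; second, $-A_{p,\alpha}$ generates a bounded analytic semigroup on that space (Theorem \ref{24}); and third, $A_{p,\alpha}$ has bounded imaginary powers with power angle strictly less than $\pi/2$ while $0\in\rho(A_{p,\alpha})$ (Theorems \ref{24} and \ref{62}). I would therefore apply this theorem to the Cauchy problem
\begin{equation*}
\vu'(t)+A_{p,\alpha}\vu(t)=\bm{f}(t),\qquad \vu(0)=\vu_0,
\end{equation*}
which is legitimate because $\bm{f}(t)\in\bm{L}^p_{\sigma,\vt}(\Omega)$ already, so no Helmholtz projection is needed. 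The abstract result then delivers a unique $\vu\in L^q(0,T;D(A_{p,\alpha}))$ with $\partial_t\vu\in L^q(0,T;\bm{L}^p_{\sigma,\vt}(\Omega))$, attaining $\vu_0$ at $t=0$ precisely because $E_q$ is the Dore--Venni trace space, together with the bound $\|\partial_t\vu\|_{L^q(0,T;\vL{p})}+\|A_{p,\alpha}\vu\|_{L^q(0,T;\vL{p})}\le C(\|\bm{f}\|_{L^q(0,T;\vL{p})}+\|\vu_0\|_{E_q})$.

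Second, I would recover the pressure and upgrade to the $\vW{2}{p}$ bound. Since $D(A_{p,\alpha})\hookrightarrow\vW{2}{p}$ and $\vu$ is divergence-free with the Navier boundary condition satisfied for a.e.\ $t$, the pair $(\vu(t),\pi(t))$ solves the stationary Stokes problem with right-hand side $\bm{f}(t)-\partial_t\vu(t)\in\vL{p}$. The stationary $\vW{2}{p}\times\W{1}{p}/\R$ regularity result of \cite{AG}, applied pointwise in $t$ and then raised to the $q$-th power and integrated, produces $\pi\in L^q(0,T;\W{1}{p}/\R)$ and the $\vW{2}{p}$-norm of $\vu$, controlled by $\|\bm{f}(t)\|_{\vL{p}}+\|\partial_t\vu(t)\|_{\vL{p}}$; combined with the abstract estimate this reproduces (\ref{59}). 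Uniqueness follows immediately: the difference of two solutions solves the homogeneous projected Cauchy problem with zero initial data, hence vanishes by Theorem \ref{homstrong}, and then $\nabla(\pi_1-\pi_2)=0$ gives uniqueness of the pressure modulo constants.

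The delicate point, and the step I expect to be the main obstacle, is ensuring that the constant $C$ in (\ref{59}) is independent of $T$, in particular that the estimate survives the limit $T=+\infty$. A naive application of Dore--Venni on a bounded interval $(0,T)$ typically yields a constant that degrades with $T$. The remedy is to exploit that $A_{p,\alpha}$ is boundedly invertible (Theorem \ref{24}), so that the semigroup $e^{-tA_{p,\alpha}}$ is exponentially stable with the decay rate $\delta>0$ appearing in Theorem \ref{homstrong}. This spectral gap allows the Dore--Venni calculus to be carried out directly on the half-line $(0,+\infty)$, giving a constant independent of $T$; restricting to any subinterval $(0,T)$ preserves this uniformity. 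The stationary pressure step is pointwise in time and so propagates the $T$-independent constant to all three norms, which completes the proof.
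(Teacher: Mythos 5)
Your proposal is correct and follows essentially the same route as the paper: the paper invokes the Giga--Sohr form of the Dore--Venni theorem (\cite[Theorem 2.3]{Giga91}), whose hypotheses are exactly the $\zeta$-convexity (UMD property) of $\bm{L}^p_{\sigma,\vt}(\Omega)$ and the bounded imaginary powers estimate of Theorem \ref{62}, and then recovers $\pi$ and the $\vW{2}{p}$ bound from $\nabla\pi=\bm{f}-\frac{\partial \vu}{\partial t}+\Delta\vu$ together with $A_{p,\alpha}\vu=-\Delta\vu+\nabla\pi$. The only cosmetic difference is that the cited abstract theorem already yields a constant independent of $T$ (and does not even require $0\in\rho(A_{p,\alpha})$), so your additional half-line/exponential-stability argument, while valid, is not needed.
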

Similar results hold for less regular data  (cf. Theorem \ref{37} and Theorem \ref{S7Thm1}). And the last interesting result of our work is the following limit problem which improves the result in \cite[Theorem 9.2]{kelliher}:

\begin{theorem}
	\label{S9T2}
	Let $\alpha$ be a constant and $ (\vu_\alpha, \pi_\alpha)$ be the solution of the problem (\ref{NS}) with $\vu_0 \in \bm{L}^2_{\sigma,\vt}(\Omega)$ and
	$(\vu_{\infty},\pi_\infty)\in  L^2(0,T; \bm{H}^1_0(\Omega))\times L^2(0,T;L^2(\Omega))$ a solution of the following Navier-Stokes problem with Dirichlet boundary condition
	\begin{equation}
	\label{NSD}
	\left\{
	\begin{aligned}
	\frac{\partial \vu_\infty}{\partial t} -\Delta \vu_\infty + (\vu_\infty\cdot \nabla) \vu_\infty + \nabla \pi_\infty = \bm{0} , \quad \div \ \vu_\infty = 0 \quad & \text{ in } \Omega\times (0,T) ;\\
	\vu_\infty = \bm{0} \quad & \text{ on } \Gamma\times (0,T) ;\\
	\vu_\infty(0) = \vu_0 \quad & \text{ in } \Omega
	\end{aligned}
	\right.
	\end{equation}
	(whose existence has been proved in \cite[Theorem 3.1, Chapter III]{Temam}). 
	Then for any $T<T_*$,
	\begin{equation*}
	\vu_{\alpha} \rightarrow \vu_{\infty}\quad \text{ in } \quad L^2(0,T;\vH{1}) \quad \text{ as } \quad \alpha\to \infty
	\end{equation*}
	and
	\begin{equation}
	\label{60}
	\int\displaylimits_{0}^{T}\int\displaylimits_{ \Gamma  }{ |\vu_\alpha-\vu_\infty |^2}\le \frac{C}{\alpha}.
	\end{equation}
	Moreover, if $\Gamma$ is $\HC{2}{1}$ and $\vu_0 \in \vH{2}\cap\bm{H}^1_{0,\sigma}(\Omega)$, we also have
	\begin{equation}
	\label{61}
	\sup_{t\in[0,T]}\int\displaylimits_{ \Omega  }{|\vu_\alpha-\vu_\infty |^2}+\int\displaylimits_{0}^{T}\int\displaylimits_{\Omega}{|\DT(\vu_\alpha-\vu_\infty)|^2}+	\int\displaylimits_{0}^{T}\int\displaylimits_{ \Gamma  }{ |\vu_\alpha-\vu_\infty |^2}\le \frac{C}{\alpha}.
	\end{equation}
\end{theorem}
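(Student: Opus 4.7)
Set $\bm w_\alpha := \vu_\alpha - \vu_\infty$. Estimate \eqref{60} is immediate from the energy identity for $\vu_\alpha$: testing its equation with itself, using \eqref{lens4} and $\vu_\alpha\cdot\vn = 0$, gives
\[
\frac{1}{2}\|\vu_\alpha(T)\|_{\vL 2}^2 + 2\int_0^T \|\DT\vu_\alpha\|_{\vL 2}^2\,dt + \alpha\int_0^T\!\!\int_\Gamma |\vu_\alpha|^2\,ds\,dt = \frac{1}{2}\|\vu_0\|_{\vL 2}^2,
\]
and since $\vu_\infty|_\Gamma = 0$ one obtains $\int_0^T\!\int_\Gamma|\bm w_\alpha|^2 \le \|\vu_0\|^2/(2\alpha)$. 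The same identity combined with Korn's inequality also provides an $\alpha$-uniform bound on $\vu_\alpha$ in $L^\infty(0,T;\vL 2)\cap L^2(0,T;\vH 1)$.

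The plan for the convergence in $L^2(0,T;\vH 1)$ is to first identify the weak limit and then upgrade to strong convergence. An Aubin--Lions argument (with a dual-space estimate on $\partial_t\vu_\alpha$ coming from the equation) extracts a subsequence converging strongly in $L^2(0,T;\vL 2)$ and weakly in $L^2(0,T;\vH 1)$ to some $\vu^*$; by \eqref{60} and trace continuity, $\vu^*|_\Gamma = 0$. Passing to the limit in the weak formulation with test functions $\bm\phi\in\bm{H}^1_{0,\sigma}(\Omega)$---admissible for every $\alpha$ and making the boundary integral $\alpha\int_\Gamma\vu_\alpha\cdot\bm\phi$ vanish---shows that $\vu^*$ is a Leray--Hopf weak solution of \eqref{NSD}; weak--strong uniqueness on $[0,T]\subset[0,T_*)$ then forces $\vu^* = \vu_\infty$, so the whole family converges. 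For the strong convergence, discarding the non-negative boundary term in the energy identity and using strong $L^2 L^2$ convergence of $\vu_\alpha(T)$ yields $\limsup_\alpha 2\int_0^T\|\DT\vu_\alpha\|^2 \le \frac{1}{2}(\|\vu_0\|^2 - \|\vu_\infty(T)\|^2)$; by the energy equality for the strong solution $\vu_\infty$ together with the identity $2\|\DT\vv\|_{\vL 2}^2 = \|\nabla\vv\|_{\vL 2}^2$ valid on $\bm{H}^1_{0,\sigma}(\Omega)$, the right-hand side equals $2\int_0^T\|\DT\vu_\infty\|^2$. Weak lower semicontinuity then forces norm convergence of $\DT\vu_\alpha$ in $L^2_{t,x}$; combined with weak convergence and Korn's inequality, this gives the strong $L^2(0,T;\vH 1)$ convergence.

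Under the hypotheses of \eqref{61}, $\vu_\infty$ is a strong solution with $\DT\vu_\infty|_\Gamma\in L^2(0,T;\vLb 2)$ and $\nabla\vu_\infty\in L^2(0,T;\bm{L}^\infty(\Omega))$. The difference $\bm w_\alpha$ satisfies the modified boundary condition $2[\DT\bm w_\alpha\,\vn]_\vt + \alpha(\bm w_\alpha)_\vt = -2[\DT\vu_\infty\,\vn]_\vt$ on $\Gamma$. Testing its equation with itself, and using $\int_\Omega((\vu_\alpha\cdot\nabla)\bm w_\alpha)\cdot\bm w_\alpha = 0$, yields
\[
\frac{1}{2}\frac{d}{dt}\|\bm w_\alpha\|^2 + 2\|\DT\bm w_\alpha\|^2 + \alpha\int_\Gamma|\bm w_\alpha|^2 = -2\int_\Gamma[\DT\vu_\infty\,\vn]_\vt\cdot\bm w_\alpha - \int_\Omega((\bm w_\alpha\cdot\nabla)\vu_\infty)\cdot\bm w_\alpha.
\]
Young's inequality with weight $\alpha$ absorbs half of $\alpha\int_\Gamma|\bm w_\alpha|^2$ against the boundary remainder, leaving the residual $(2/\alpha)\int_\Gamma|\DT\vu_\infty|^2$; the trilinear term is controlled by $\|\nabla\vu_\infty\|_{\bm{L}^\infty}\|\bm w_\alpha\|_{\vL 2}^2$. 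Gronwall's lemma with $\bm w_\alpha(0)=0$ then delivers \eqref{61}.

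The most delicate step is the upgrade from weak to strong convergence in $L^2(0,T;\vH 1)$ in the $\vL 2$ initial-data case: it hinges on weak--strong uniqueness (available precisely because $T<T_*$) together with a careful comparison of the energy identity for $\vu_\alpha$ (an inequality in general) with the energy equality enjoyed by the strong Dirichlet solution $\vu_\infty$, so that the two energy budgets can be matched as $\alpha\to\infty$.
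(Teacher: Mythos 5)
Your overall architecture --- uniform bounds from the energy identity of $\vu_\alpha$, Aubin--Lions compactness, passage to the limit in the weak formulation against $\bm{H}^1_{0,\sigma}(\Omega)$ test functions, and a Gronwall estimate on the difference for \eqref{61} --- matches the paper's. Two of your steps are in fact cleaner: you obtain \eqref{60} directly from $\alpha\int_0^T\!\int_\Gamma|\vu_{\alpha}|^2\le\tfrac12\|\vu_0\|^2_{\vL{2}}$ together with $\vu_\infty|_\Gamma=\bm 0$, rather than via the energy estimate for the difference; and for \eqref{61} your Young-with-weight-$\alpha$ absorption of the boundary remainder delivers the claimed $O(1/\alpha)$ rate transparently, whereas the paper's plain Cauchy--Schwarz on the boundary pairing only feeds $\|\bm{v}_\alpha\|_{L^2(0,T;\vLb{2})}\le C\alpha^{-1/2}$ into Gronwall.

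The genuine gap is in your upgrade from weak to strong convergence in $L^2(0,T;\vH{1})$. Your squeeze needs $\tfrac12\bigl(\|\vu_0\|^2-\|\vu_\infty(T)\|^2\bigr)=2\int_0^T\|\DT\vu_\infty\|^2$, i.e.\ the energy \emph{equality} for $\vu_\infty$, and you also invoke weak--strong uniqueness to identify the Aubin--Lions limit with the prescribed $\vu_\infty$. Under the stated hypotheses ($\vu_0\in\bm{L}^2_{\sigma,\vt}(\Omega)$ only, in three dimensions) the Dirichlet solution furnished by Temam is a Leray--Hopf weak solution: it is only known to satisfy the energy \emph{inequality}, and neither energy equality nor weak--strong uniqueness is available for it. With only the inequality, your bound $\limsup_\alpha 2\int_0^T\|\DT\vu_\alpha\|^2\le\tfrac12(\|\vu_0\|^2-\|\vu_\infty(T)\|^2)$ may strictly exceed $2\int_0^T\|\DT\vu_\infty\|^2$, so norm convergence of $\DT\vu_\alpha$ does not follow. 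The paper proceeds differently: it writes the energy identity (\ref{49}) for the difference $\bm{v}_\alpha=\vu_\alpha-\vu_\infty$ and shows its right-hand side tends to zero --- the boundary pairing $\langle[(\DT\vu_\infty)\vn]_{\vt},\bm{v}_{\alpha\vt}\rangle_\Gamma$ vanishes because $\bm{v}_{\alpha\vt}\rightharpoonup\bm 0$ weakly in $L^2(0,T;\vHfracb{1}{2})$ while $[(\DT\vu_\infty)\vn]_{\vt}\in L^2(0,T;\vHfracbd{1}{2})$, and the trilinear term vanishes by the strong $L^2(0,T;\vL{4})$ convergence --- and it takes $\vu_\infty$ to be the weak solution constructed as the limit of the subsequence, so no uniqueness class is required. (The paper's derivation of (\ref{49}) of course also presupposes enough regularity to integrate by parts, but its convergence mechanism does not hinge on matching two separate energy budgets.) You should either restructure your strong-convergence step along these lines, or strengthen the hypotheses so that $\vu_\infty$ lies in a class where energy equality and weak--strong uniqueness hold.
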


Similar result holds for the linear problem as well as for initial data in $L^p$-spaces (cf. Section \ref{9}).

\section{Preliminaries}
\setcounter{equation}{0}
First, we review some basic notations and functional framework, we will use in the study. Throughout the work, if we do not specify otherwise, $\Omega$ is an open bounded set in $\R^{3}$ with boundary $\Gamma$ of class $\mathcal{C}^{1,1}$ possibly multiply connected. Also if we do not precise otherwise, we will always assume
$$
\alpha \ge 0 \quad \text{ on } \Gamma \quad \text{ and }
\alpha> 0 \quad \text{ on some } \Gamma_0\subset \Gamma \quad \text{ with } |\Gamma_0|>0.
$$ We follow the convention that $C$ is an unspecified positive constant that may vary from expression to expression, even across an inequality (but not across an equality); Also $C$ depends on $\Omega$ and $p$ generally and the dependence on other parameters will be specified in the parenthesis when necessary. 

The vector-valued Laplace operator of a vector field $\bm{v}$ can be equivalently defined by
$$\Delta \bm{v} = 2 \ \div \ \DT\bm{v} - \textbf{grad} \ (\div \ \bm{v}).$$
We will denote by $\pmb{\mathscr{D}}({\Omega})$ the set of smooth functions (infinitely differentiable) with compact support in $\Omega$. Define
$$\pmb{\mathscr{D}}_\sigma({\Omega}) := \left\{ \bm{v}\in \pmb{\mathscr{D}}({\Omega}) : \div \ \bm{v} = 0 \text{ in } \Omega \right\}$$
and
$$
L^p_0(\Omega) := \left\{ v\in \L{p} : \int\displaylimits_{ \Omega  }{v}=0 \right\}.
$$
For $p\in [1,\infty)$, $p'$ denotes the conjugate exponent of $p$ i.e. $\frac{1}{p}+\frac{1}{p'}=1$. We also introduce the following space:
$$\bm{H}^{p}(\div, \Omega) := \left\{ \bm{v}\in \bm{L}^p(\Omega) : \div \ \bm{v}\in L^p(\Omega)\right\} $$
equipped with the norm
$$\|\bm{v}\|_{\bm{H}^{p}(\div, \Omega)} = \|\bm{v}\|_{\bm{L}^p(\Omega)} + \|\div \ \bm{v}\|_{L^p(\Omega)}.$$
It can be shown that $\pmb{\mathscr{D}}({\overline \Omega})$ is dense in $\bm{H}^{p}(\div, \Omega)$ for all $p\in [1,\infty)$. The closure of $\pmb{\mathscr{D}}({\Omega})$ in $\bm{H}^{p}(\div, \Omega)$ is denoted by $\bm{H}^{p}_0(\div, \Omega)$ and it can be characterized by
\begin{equation}
\label{S2E1}
\bm{H}^{p}_0(\div, \Omega) = \left\{\bm{v}\in \bm{H}^{p}(\div, \Omega) : \bm{v}\cdot \bm{n}=0 \ \text{ on } \ \Gamma\right\}.
\end{equation}
For $p\in (1,\infty)$, we denote by $[\bm{H}^{p}_0(\div, \Omega)]^\prime$, the dual space of $\bm{H}^{p}_0(\div, \Omega)$, which can be characterized as (for details, see \cite[Proposition 1.0.4]{nour}):

\begin{proposition}
\label{prop}
	A distribution $\bm{f}$ belongs to $[\bm{H}^{p}_0(\div, \Omega)]^\prime$ iff there exists $\bm{\psi} \in \bm{L}^{p^\prime}(\Omega)$ and $\chi\in L^{p^\prime}(\Omega)$ such that $\bm{f} = \bm{\psi}+ \nabla \chi$. Moreover, we have the estimate :
	$$\|\bm{f}\|_{[\bm{H}^{p}_0(\div, \Omega)]^\prime} = \underset{\bm{f} = \bm{\psi}+ \nabla \chi}{\inf} \ \max\{\|\bm{\psi}\|_{\bm{L}^{p^\prime}(\Omega)}, \|\chi\|_{L^{p^\prime}(\Omega)}\} .$$
\end{proposition}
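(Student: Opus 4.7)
The plan is to prove the ``if'' direction by a direct integration-by-parts estimate and the ``only if'' direction (together with the norm equality) by combining an isometric embedding with the Hahn--Banach theorem.

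Suppose first that $\bm{f}=\bm{\psi}+\nabla\chi$ with $\bm{\psi}\in\vL{p'}$ and $\chi\in\L{p'}$. For any $\bm{v}\in\pmb{\mathscr{D}}(\overline{\Omega})$ with $\bm{v}\cdot\bm{n}=0$ on $\Gamma$, the Stokes boundary term vanishes and $\langle\nabla\chi,\bm{v}\rangle=-\int_{\Omega}\chi\,\div\bm{v}\,\mathrm{d}x$; by density of such fields in $\bm{H}^{p}_{0}(\div,\Omega)$, the identity extends to every $\bm{v}\in\bm{H}^{p}_{0}(\div,\Omega)$, giving
$$|\langle\bm{f},\bm{v}\rangle|\le\|\bm{\psi}\|_{\vL{p'}}\|\bm{v}\|_{\vL{p}}+\|\chi\|_{\L{p'}}\|\div\bm{v}\|_{\L{p}}\le\max\{\|\bm{\psi}\|_{\vL{p'}},\|\chi\|_{\L{p'}}\}\,\|\bm{v}\|_{\bm{H}^{p}(\div,\Omega)}.$$
Hence $\bm{f}\in[\bm{H}^{p}_{0}(\div,\Omega)]'$, and taking the infimum over all admissible decompositions yields one inequality in the norm formula.

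For the converse, I would introduce the linear map $T:\bm{H}^{p}_{0}(\div,\Omega)\to\vL{p}\times\L{p}$ by $T\bm{v}:=(\bm{v},\div\bm{v})$. Endowing $\vL{p}\times\L{p}$ with the sum norm $\|(w_{1},w_{2})\|:=\|w_{1}\|_{\vL{p}}+\|w_{2}\|_{\L{p}}$, the map $T$ is an isometry onto its closed range $R$, and a direct computation shows that the dual of $(\vL{p}\times\L{p},\|\cdot\|)$ is $\vL{p'}\times\L{p'}$ equipped with the $\max$ norm $\max\{\|\bm{\psi}\|_{\vL{p'}},\|\chi\|_{\L{p'}}\}$. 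Given $\bm{f}\in[\bm{H}^{p}_{0}(\div,\Omega)]'$, I define the continuous functional $\Phi$ on $R$ by $\Phi(T\bm{v}):=\langle\bm{f},\bm{v}\rangle$; since $T$ is isometric, $\|\Phi\|_{R^{*}}=\|\bm{f}\|_{[\bm{H}^{p}_{0}(\div,\Omega)]'}$. The Hahn--Banach theorem extends $\Phi$ to all of $\vL{p}\times\L{p}$ without increase of norm, and this extension is represented by some pair $(\bm{\psi},\chi)\in\vL{p'}\times\L{p'}$ with $\max\{\|\bm{\psi}\|_{\vL{p'}},\|\chi\|_{\L{p'}}\}=\|\bm{f}\|_{[\bm{H}^{p}_{0}(\div,\Omega)]'}$ satisfying
$$\langle\bm{f},\bm{v}\rangle=\int_{\Omega}\bm{\psi}\cdot\bm{v}\,\mathrm{d}x+\int_{\Omega}\chi\,\div\bm{v}\,\mathrm{d}x\qquad\forall\,\bm{v}\in\bm{H}^{p}_{0}(\div,\Omega).$$

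Testing this identity against $\bm{v}\in\pmb{\mathscr{D}}(\Omega)$ and integrating by parts yields $\bm{f}=\bm{\psi}-\nabla\chi$ in $\pmb{\mathscr{D}}'(\Omega)$; after relabelling $-\chi\mapsto\chi$ one obtains the announced decomposition, and combining with the first step gives the reverse norm inequality and hence the equality. The main technical point to watch is matching the norm conventions: the sum norm on $\bm{H}^{p}(\div,\Omega)$ forces the product $\vL{p}\times\L{p}$ to carry the sum norm, which in turn makes the dual norm exactly the $\max$ of $\|\bm{\psi}\|_{\vL{p'}}$ and $\|\chi\|_{\L{p'}}$ that appears in the statement; any other product-norm convention would distort this formula. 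No further subtleties arise, since density of $\pmb{\mathscr{D}}(\overline{\Omega})$ in $\bm{H}^{p}(\div,\Omega)$ and the characterization \eqref{S2E1} of $\bm{H}^{p}_{0}(\div,\Omega)$ have already been recorded in the excerpt.
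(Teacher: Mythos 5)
Your proof is correct, and it is essentially the canonical argument for this statement: the paper itself gives no proof, deferring to \cite[Proposition 1.0.4]{nour}, and the proof there is precisely this combination of the isometric embedding $\bm{v}\mapsto(\bm{v},\div\bm{v})$ into $\vL{p}\times\L{p}$ with the sum norm, Hahn--Banach extension, and Riesz representation of the dual product space with the max norm. One small stylistic repair: in the first direction it is cleaner to test only against $\bm{v}\in\pmb{\mathscr{D}}(\Omega)$ (dense in $\bm{H}^p_0(\div,\Omega)$ by definition), since for a general distribution $\bm{f}$ the pairing with fields in $\pmb{\mathscr{D}}(\overline{\Omega})$ is not defined a priori and the boundary term then never appears; the extension to all of $\bm{H}^p_0(\div,\Omega)$ follows from the same density. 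The sign bookkeeping ($\bm{f}=\bm{\psi}-\nabla\chi$, then relabel) and the closedness of the range of $T$ are handled correctly, and the max/sum norm duality is exactly what produces the stated formula.
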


\noindent Next we introduce the spaces:
$$\bm{L}^p_{\sigma,\vt}(\Omega) := \lbrace \bm{v}\in\vL{p} : \div \
\bm{v} = 0 \text{ in } \Omega, \bm{v}\cdot \vn = 0 \text{ on } \Gamma\rbrace $$
equipped with the norm of $\vL{p}$;
$${\bf W} ^{1, p} _{ \sigma , \vt }(\Omega ):=\lbrace \bm{v}\in\vW{1}{p} : \div \ \bm{v} = 0 \text{ in } \Omega, \bm{v}\cdot \vn = 0 \text{ on }\Gamma\rbrace $$
equipped with the norm of $\vW{1}{p}$ and ${\bf H} ^{1} _{ \sigma , \vt  }(\Omega ):= {\bf W} ^{1, 2} _{ \sigma , \vt }(\Omega )$. Also let us define
\begin{equation*}
{\bf E}^p(\Omega ):=\left\{(\vu , \pi )\in {\bf W}^{1, p}(\Omega )\times L^p(\Omega ); -\Delta \vu +\nabla \pi \in \bm{L}^{r(p)}(\Omega) \right\}
\end{equation*}
where
\begin{eqnarray*}
\begin{cases}
r(p)= \max\left\lbrace 1,\frac {3p} {p+3}\right\rbrace \,\,\,&\hbox{if}\,\,\,p\ne \frac{3}{2}\\
r(p)>1\,\,\,&\hbox{if}\,\,\,p= \frac{3}{2} .
\end{cases}
\end{eqnarray*}

Let us now introduce some notations to describe the boundary. Consider any point $P$ on $\Gamma$ and choose an open neighborhood $W$ of $P$ in $\Gamma$, small enough to allow the existence of 2 families of $\mathcal{C}^2$ curves on $W$ with the following properties: a curve of each family passes through every point of $W$ and the unit tangent vectors to these curves form an orthogonal system (which we assume to have the direct orientation) at every point of $W$. The lengths $s_1, s_2$ along each family of curves, respectively, are a possible system of coordinates in $W$. We denote by $\vt_1, \vt_2$ the unit tangent vectors to each family of curves.

With these notations, we have $\bm{v}=\sum_{k=1}^{2}v_k \vt_k + (\bm{v}\cdot \vn) \vn$ where $ \vt_k = (\tau_{k1},\tau_{k2},\tau_{k3})$ and $v_k = \bm{v}\cdot \vt_k$. For simplicity of notations, we will denote,
\begin{equation}
\label{lambda}
\Lambda \bm{v} = \sum_{k=1}^{2} \left( \bm{v}_{\vt}\cdot \frac{\partial \vn}{\partial s_k}\right) \vt_k \ .
\end{equation} 

Here we state a relation between the Navier boundary condition and another type of boundary condition involving $\mathbf{curl}$ (often called as 'Navier type boundary condition') which will be used in later work. For proof, see \cite[Appendix A]{AR}.
\begin{lemma}
For any $\bm{v} \in \vW{2}{p}$, we have the following equalities:
$$2\left[(\DT\bm{v})\vn\right]_{\vt} = \nabla_{\vt}(\bm{v}\cdot \vn) + \left( \frac{\partial \bm{v}}{\partial \vn}\right) _{\vt} - \bm{\Lambda}\bm{v} $$
and
$$\mathbf{curl} \ \bm{v} \times \vn = - \nabla_{\vt} (\bm{v}\cdot \vn)+ \left( \frac{\partial \bm{v}}{\partial \vn}\right) _{\vt} + \bm{\Lambda}\bm{v} .$$
\end{lemma}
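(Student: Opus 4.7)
The plan is to verify both identities by direct componentwise computation in a neighborhood of $\Gamma$, after extending $\vn$ smoothly off the boundary. Starting from the definition of the strain tensor,
\begin{equation*}
2(\DT\bm{v})\vn = (\nabla \bm{v})\vn + (\nabla \bm{v})^T \vn = \frac{\partial \bm{v}}{\partial \vn} + (\nabla \bm{v})^T \vn,
\end{equation*}
since $((\nabla \bm{v})\vn)_i = \sum_j (\partial_j v_i)\, n_j = \partial_\vn v_i$. The key identity is the product rule
\begin{equation*}
\nabla (\bm{v}\cdot\vn) = (\nabla \bm{v})^T \vn + (\nabla \vn)^T \bm{v},
\end{equation*}
which rearranges to $(\nabla\bm{v})^T\vn = \nabla(\bm{v}\cdot\vn) - (\nabla\vn)^T \bm{v}$.

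Next I would exploit two geometric facts about the extended $\vn$: differentiating $|\vn|^2 \equiv 1$ gives $(\nabla\vn)^T \vn = 0$, so $(\nabla\vn)^T \bm{v} = (\nabla\vn)^T \bm{v}_{\vt}$, and moreover the image of $(\nabla\vn)^T$ lies in the tangent plane. Expanding in the orthonormal tangent frame $(\vt_1,\vt_2)$ associated with the coordinates $(s_1,s_2)$ and using $(\nabla\vn)\vt_k = \partial \vn/\partial s_k$, we get
\begin{equation*}
(\nabla\vn)^T \bm{v}_{\vt} = \sum_{k=1}^{2} \bigl\langle \bm{v}_{\vt},\, (\nabla\vn)\vt_k \bigr\rangle \vt_k = \sum_{k=1}^{2} \Bigl( \bm{v}_{\vt}\cdot \frac{\partial \vn}{\partial s_k} \Bigr) \vt_k = \bm{\Lambda}\bm{v}.
\end{equation*}
Projecting $2(\DT\bm{v})\vn$ onto the tangent plane then yields the first identity, since $[\nabla(\bm{v}\cdot\vn)]_{\vt} = \nabla_{\vt}(\bm{v}\cdot\vn)$ and $[(\nabla\vn)^T\bm{v}]_{\vt} = (\nabla\vn)^T\bm{v} = \bm{\Lambda}\bm{v}$ is already tangential.

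For the second identity I would use the elementary vector identity
\begin{equation*}
(\mathbf{curl}\,\bm{v})\times \vn = (\nabla \bm{v})\vn - (\nabla \bm{v})^T \vn = \frac{\partial \bm{v}}{\partial \vn} - (\nabla \bm{v})^T \vn,
\end{equation*}
obtained from the Levi-Civita contraction $\epsilon_{ijk}\epsilon_{jlm} = \delta_{kl}\delta_{im}-\delta_{km}\delta_{il}$. Substituting the expression for $(\nabla\bm{v})^T\vn$ derived above gives
\begin{equation*}
(\mathbf{curl}\,\bm{v})\times\vn = \frac{\partial\bm{v}}{\partial\vn} - \nabla(\bm{v}\cdot\vn) + \bm{\Lambda}\bm{v},
\end{equation*}
and because the left-hand side is automatically tangential, the normal components of $\partial\bm{v}/\partial\vn$ and $\nabla(\bm{v}\cdot\vn)$ must cancel, leaving the stated formula.

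There is no real obstacle here beyond bookkeeping: the only substantive point is the clean identification $(\nabla\vn)^T \bm{v}_{\vt} = \bm{\Lambda}\bm{v}$, which requires the orthonormality of $(\vt_1,\vt_2)$ and the fact that $\partial\vn/\partial s_k$ is tangential. Once that geometric step is in hand, both identities follow by the same short algebraic manipulation applied to $(\nabla\bm{v})^T\vn$, differing only in whether $\partial\bm{v}/\partial\vn$ enters with a $+$ (symmetric part, yielding $\DT\bm{v}$) or with a $-$ (antisymmetric combination, yielding $\mathbf{curl}\,\bm{v}\times\vn$).
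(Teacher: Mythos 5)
Your proof is correct. Note that the paper itself does not prove this lemma; it only cites \cite[Appendix A]{AR}, where the identities are verified by expanding $\bm{v}=\sum_k v_k\vt_k+(\bm{v}\cdot\vn)\vn$ in the local surface coordinates $(s_1,s_2)$ and differentiating term by term. Your argument reaches the same conclusion by a cleaner global route: the single algebraic pivot $(\nabla\bm{v})^T\vn=\nabla(\bm{v}\cdot\vn)-(\nabla\vn)^T\bm{v}$, combined with $2(\DT\bm{v})\vn=\partial\bm{v}/\partial\vn+(\nabla\bm{v})^T\vn$ and $\mathbf{curl}\,\bm{v}\times\vn=\partial\bm{v}/\partial\vn-(\nabla\bm{v})^T\vn$, immediately explains why the two formulas differ only in the signs of $\nabla_{\vt}(\bm{v}\cdot\vn)$ and $\bm{\Lambda}\bm{v}$, and it yields the relation \eqref{19.} as a one-line corollary. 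One small point of care: your claims that $(\nabla\vn)^T\vn=\bm{0}$ and that the image of $(\nabla\vn)^T$ is tangential require the extension of $\vn$ to satisfy $|\vn|\equiv 1$ and $\partial\vn/\partial\vn=\bm{0}$ near $\Gamma$ (e.g.\ $\vn=\nabla d$ with $d$ the signed distance, which is $\HC{1}{1}$ here); alternatively, you can drop these claims entirely, since after the final tangential projection only $\left[(\nabla\vn)^T\bm{v}\right]_{\vt}=\sum_k\bigl(\bm{v}\cdot\partial\vn/\partial s_k\bigr)\vt_k=\bm{\Lambda}\bm{v}$ is used (the normal part of $\partial\vn/\partial s_k$ vanishes because $|\vn|=1$ on $\Gamma$), and this involves only tangential derivatives of $\vn$, hence is extension-independent. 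With that caveat made explicit, the argument is complete.
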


\begin{remark}
\rm{In the particular case $\bm{v}\cdot \vn = 0$ on $\Gamma$, we have the following equalities for all $\bm{v} \in \vW{2}{p}$,
\begin{equation*}
2\left[(\DT\bm{v})\vn\right]_{\vt} = \left( \frac{\partial \bm{v}}{\partial \vn}\right) _{\vt} - \bm{\Lambda}\bm{v} \quad \text{ and } \quad \mathbf{curl} \ \bm{v} \times \vn = \left( \frac{\partial \bm{v}}{\partial \vn}\right) _{\vt} + \bm{\Lambda}\bm{v}
\end{equation*}
which implies that 
\begin{equation}
\label{19.}
2\left[(\DT\bm{v})\vn\right]_{\vt} = \mathbf{curl} \ \bm{v} \times \vn - 2\bm{\Lambda}\bm{v} \ .
\end{equation}
Note that on a flat boundary, $\bm{\Lambda} = \bm{0}$ and $2\left[(\DT\bm{v})\vn\right]_{\vt}$ is actually equal to $\mathbf{curl} \ \bm{v} \times \vn$.
}
\end{remark}

Let us recall the Green's formula that plays an important role in this work, which is proved in \cite[Lemma 3.5]{AG}
\begin{theorem}
\label{Green}
Let $\Omega \subset \R^3$ be a $C^{0, 1}$ bounded domain. Then,

(i) $\pmb{\mathscr{D}}({\overline\Omega})\times \mathscr{D}({\overline{\Omega}})$ is dense in ${\bf E}^{p}(\Omega )$.

(ii)  The linear mapping $\left( \bm{v},\pi\right) \mapsto\left[(\DT\bm{v})\vn\right]_{\vt}$, defined on $\pmb{\mathscr{D}}({\overline\Omega})\times \mathscr{D}({\overline{\Omega}})$ can be extended to a linear, continuous map from $\vE{p}$ to $\vWfracb{-\frac{1}{p}}{p}$. Moreover, we have the following relation: for all $\left( \bm{v},\pi\right) \in\vE{p}$ and $\bm{\varphi}\in {\bf W} _{ \sigma , \vt  }^{1, p'}(\Omega )$,
\begin{equation}
\label{green}
  \int\displaylimits_{\Omega}{\left( -\Delta\bm{v} + \nabla\pi\right) \cdot\bm{\varphi}} =2\int\displaylimits_{\Omega}{\DT\bm{v}:\DT\bm{\varphi}}-2
  \left\langle \left[(\DT\bm{v})\vn\right]_{\vt},{\bm{\varphi}}\right\rangle_{\vWfracb{-\frac{1}{p}}{p}\times \vWfracb{\frac{1}{p}}{p'}} .
  \end{equation}
\end{theorem}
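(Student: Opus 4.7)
The plan is to first derive the Green's identity on the dense smooth subspace provided by part (i) using classical integration by parts, read off the trace continuity from it by a duality estimate, and then extend the definition and the formula to all of $\vE{p}$ using (i).

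For $(\bm{v}, \pi) \in \pmb{\mathscr{D}}(\overline\Omega) \times \mathscr{D}(\overline\Omega)$ and $\bm{\varphi} \in {\bf W}^{1,p'}_{\sigma,\vt}(\Omega)$, I would rewrite $-\Delta\bm{v} = -2\,\div(\DT\bm{v}) + \nabla(\div\bm{v})$ and integrate by parts:
\begin{equation*}
\int_\Omega (-\Delta\bm{v} + \nabla\pi)\cdot\bm{\varphi} = 2\int_\Omega \DT\bm{v}:\nabla\bm{\varphi} - 2\int_\Gamma (\DT\bm{v})\vn\cdot\bm{\varphi} - \int_\Omega (\div\bm{v}+\pi)\,\div\bm{\varphi} + \int_\Gamma (\div\bm{v}+\pi)(\bm{\varphi}\cdot\vn).
\end{equation*}
The last two terms vanish because $\div\bm{\varphi}=0$ in $\Omega$ and $\bm{\varphi}\cdot\vn=0$ on $\Gamma$; the symmetry of $\DT\bm{v}$ converts $\DT\bm{v}:\nabla\bm{\varphi}$ into $\DT\bm{v}:\DT\bm{\varphi}$; and the tangentiality of $\bm{\varphi}|_\Gamma$ lets one replace $(\DT\bm{v})\vn\cdot\bm{\varphi}$ by $[(\DT\bm{v})\vn]_\vt\cdot\bm{\varphi}$. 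This yields (\ref{green}) on the smooth subspace.

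Reading this identity as a duality bound, one gets
\begin{equation*}
|\langle [(\DT\bm{v})\vn]_\vt,\bm{\varphi}\rangle| \le 2\|\DT\bm{v}\|_{\vL{p}}\|\DT\bm{\varphi}\|_{\vL{p'}} + \|-\Delta\bm{v}+\nabla\pi\|_{\vL{r(p)}}\|\bm{\varphi}\|_{\vL{r(p)'}},
\end{equation*}
where the last factor is absorbed by $\|\bm{\varphi}\|_{\vW{1}{p'}}$ via the Sobolev embedding $\vW{1}{p'}\hookrightarrow \vL{r(p)'}$ — the definition of $r(p)$ is tuned precisely so that this embedding holds with the stated sharp exponent. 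The tangential trace from ${\bf W}^{1,p'}_{\sigma,\vt}(\Omega)$ onto $\{\bm{g}\in\vWfracb{\frac{1}{p}}{p'}:\bm{g}\cdot\vn=0\}$ is surjective with a bounded right-inverse on the $C^{1,1}$ domain $\Omega$ (standard, with a Bogovskii-type correction to restore divergence-freeness), so the previous estimate identifies $[(\DT\bm{v})\vn]_\vt \in \vWfracb{-\frac{1}{p}}{p}$ with norm $\le C\|(\bm{v},\pi)\|_{\vE{p}}$. Combined with (i), this produces the continuous extension of the trace map and of the Green's formula to all of $\vE{p}$.

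The main obstacle is the density statement (i): the graph norm of $\vE{p}$ mixes the first-order control $\bm{v}\in\vW{1}{p}$ with the second-order combination $-\Delta\bm{v}+\nabla\pi\in\vL{r(p)}$, so one cannot naively mollify $\bm{v}$ and $\pi$ independently without losing control. I would proceed by a partition-of-unity reduction: interior pieces are handled by ordinary mollification, and for pieces supported near a boundary point $C^{1,1}$ local charts flatten $\Gamma$, after which $(\bm{v},\pi)$ must be extended across the flattened boundary so that $-\Delta\bm{v}+\nabla\pi$ remains in $\vL{r(p)}$. The key trick is that $\bm{F}:=-\Delta\bm{v}+\nabla\pi$ can be extended to a slightly larger set independently (e.g.\ by zero outside $\Omega$), after which one solves an auxiliary local (half-space) Stokes problem driven by the extended $\bm{F}$ to realize this extension as the second-order combination of an extended pair $(\tilde{\bm{v}},\tilde\pi)$ agreeing with $(\bm{v},\pi)$ on $\Omega$; mollification of $(\tilde{\bm{v}},\tilde\pi)$ and restriction to $\Omega$ then deliver smooth approximants converging in the full graph norm of $\vE{p}$. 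This is the route carried out in \cite[Lemma 3.5]{AG}, which we follow.
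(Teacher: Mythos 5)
The paper does not actually prove this statement: it is quoted as a known result and the proof is delegated entirely to \cite[Lemma 3.5]{AG}, so there is no internal argument to compare yours against line by line. That said, your derivation of (ii) from (i) is correct and is the standard one: the identity $-\Delta\bm{v}=-2\,\div(\DT\bm{v})+\nabla(\div\bm{v})$, integration by parts, the vanishing of the $(\div\bm{v}+\pi)$ terms against divergence-free tangential test fields, the symmetry of $\DT\bm{v}$, and the observation that $r(p)'$ is exactly the Sobolev exponent of $W^{1,p'}(\Omega)$ are all right, as is the use of a bounded tangential lifting plus a Bogovskii correction to read the estimate as membership in $\vWfracb{-\frac{1}{p}}{p}$ (the compatibility $\int_\Omega \div\bm{\varphi}=\int_\Gamma\bm{\varphi}\cdot\vn=0$ needed for Bogovskii holds automatically). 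Two small inaccuracies: the theorem is stated for $C^{0,1}$ domains, not $C^{1,1}$, so you should invoke trace/lifting results valid on Lipschitz domains; and since $\vWfracb{-\frac{1}{p}}{p}$ is the dual of the \emph{full} space $\vWfracb{\frac{1}{p}}{p'}$, you should say explicitly that the functional is defined on a general $\bm{g}$ through its tangential part $\bm{g}_{\vt}$.

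The one step I would not accept as written is your mechanism for the density statement (i). Extending $\bm{F}=-\Delta\bm{v}+\nabla\pi$ by zero and then ``solving an auxiliary local Stokes problem driven by the extended $\bm{F}$'' does not by itself produce a pair $(\tilde{\bm{v}},\tilde\pi)$ agreeing with $(\bm{v},\pi)$ on $\Omega$: the difference between your auxiliary solution and the original pair solves a homogeneous Stokes system on the local piece with uncontrolled boundary values, and there is no reason for it to vanish, so the ``extension'' need not extend anything. The standard Lions--Magenes-type argument (which is what one expects behind \cite[Lemma 3.5]{AG}) avoids this entirely: after localization and flattening, one translates the pair $(\bm{v},\pi)$ in the direction transversal to the boundary so that its support moves strictly inside the half-space, and then mollifies; since $-\Delta+\nabla$ commutes with translations and with convolution, the translated-and-mollified pairs converge to $(\bm{v},\pi)$ in the full graph norm of $\vE{p}$. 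With that replacement your proof is complete and matches what the cited lemma provides.
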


Finally, we recall that the infinitesimal generator of an analytic semigroup can be characterized by the following theorem \cite[Theorem 3.2, Chapter I]{Barbu}:

\begin{theorem}
\label{analyticity}
	Let $A$ be a densely defined linear operator in a Banach space $\mathscr{E}$. Then $A$ generates an analytic semigroup on $\mathscr{E}$ if and only if there exists $M>0$ such that
	$$\|\left( \lambda I -A\right) ^{-1}\|_{\mathcal{L}(\mathscr{E})} \leq \frac{M}{|\lambda|}$$
	for all $\lambda \in \mathbb{C}$ with $\Re \lambda >w $.
\end{theorem}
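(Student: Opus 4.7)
The plan is to treat the two implications separately, following the classical Hille--Yosida/Dunford scheme adapted to sectorial operators.

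\emph{Necessity.} Suppose $A$ generates a bounded analytic semigroup $\{T(t)\}_{t\ge 0}$. I would first extend $T$ analytically to some open sector $\Sigma_\theta=\{z\in\mathbb C^*:|\arg z|<\theta\}$, $\theta>0$, with a uniform bound $\|T(z)\|\le M_0$ on $\Sigma_{\theta'}$ for every $\theta'<\theta$. Writing the Laplace transform
$$R(\lambda,A)\,=\,\int_0^\infty e^{-\lambda t}T(t)\,dt,\qquad \Re\lambda>0,$$
I would then rotate the contour onto a ray $\{re^{i\varphi}:r>0\}$ inside $\Sigma_{\theta'}$, obtaining for any $\lambda$ with $\Re\lambda>w$ a representation as an integral along the rotated ray. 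Estimating in absolute value with the bound $\|T(z)\|\le M_0$ gives an inequality of the form $\|R(\lambda,A)\|\le M/|\lambda|$ on $\Re\lambda>w$, which is the desired resolvent estimate.

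\emph{Sufficiency.} This is the delicate direction, because the hypothesis only provides the estimate on the right half-plane, not on a sector. I would first enlarge the resolvent set by the Taylor series
$$R(\lambda,A)\,=\,\sum_{n=0}^{\infty}(\lambda_0-\lambda)^n R(\lambda_0,A)^{n+1},$$
which converges whenever $|\lambda-\lambda_0|<\|R(\lambda_0,A)\|^{-1}$. Choosing $\lambda_0=w+is$ with $|s|$ large and inserting the assumed bound $\|R(\lambda_0,A)\|\le M/|\lambda_0|$, the disk of convergence has radius $\ge |s|/M$, which is a positive fraction of $|\lambda_0|$. Gluing these disks yields the existence of some angle $\delta>0$ such that $\rho(A)$ contains the sector $\Sigma_w=\{\lambda:\lambda\ne w,\ |\arg(\lambda-w)|<\pi/2+\delta\}$, with the improved estimate $\|R(\lambda,A)\|\le M'/|\lambda-w|$ in $\Sigma_w$.

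\emph{Definition of the semigroup and verification.} Having the sectorial estimate, I would define, for $t>0$,
$$T(t)\,:=\,\frac{1}{2\pi i}\int_\Gamma e^{\lambda t}R(\lambda,A)\,d\lambda,$$
where $\Gamma$ is the positively oriented boundary of $\Sigma_w\cap\{|\lambda-w|\ge \varepsilon\}$ for some $\varepsilon>0$, consisting of two rays $w+re^{\pm i(\pi/2+\delta')}$ ($r\ge \varepsilon$) joined by a circular arc. The improved estimate makes the integrand exponentially decaying on the rays (because $\Re(\lambda t)\to -\infty$), so the integral converges absolutely in $\mathcal L(\mathscr E)$. The standard steps to close the argument are: (i)~show $T(t)$ extends analytically to $t\in\Sigma_{\delta'}$ by deforming $\Gamma$; (ii)~use Cauchy's theorem and the resolvent identity to verify $T(t+s)=T(t)T(s)$; (iii)~check strong continuity at $t=0^+$ on $D(A)$ using $R(\lambda,A)x=\lambda^{-1}x+\lambda^{-1}R(\lambda,A)Ax$, then extend by density (this uses that $D(A)$ is dense, which is assumed); (iv)~identify $A$ as the infinitesimal generator by integrating $\lambda\mapsto e^{\lambda t}R(\lambda,A)x$ against a small-radius circle around any point and invoking the inverse Laplace transform, or equivalently by proving $(d/dt)T(t)x=AT(t)x$ for $x\in D(A)$ via differentiation under the integral sign.

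\emph{Main obstacle.} The main difficulty is the passage from a bound on the half-plane to a bound on a strictly larger sector, i.e.\ producing the angle $\delta>0$ from the assumed estimate $\|R(\lambda,A)\|\le M/|\lambda|$. The Taylor series expansion described above is the standard tool, but one must be careful that the radius of convergence at $\lambda_0=w+is$ grows like $|s|$ (not merely like $\Re\lambda_0=w$) so that the enlarged region actually contains a sector opening to the left of the imaginary axis. Once the sectorial bound is available, the rest is routine verification of properties of the Dunford--Riesz integral.
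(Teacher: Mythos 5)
The paper does not prove this statement at all --- it is quoted verbatim from the cited reference \cite[Theorem 3.2, Chapter I]{Barbu} and used as a black box --- so there is no internal proof to compare against; your outline is exactly the standard argument found in that reference (and in Pazy, Ch.~2, Thm.~5.2): Laplace transform plus contour rotation for necessity, and for sufficiency the Neumann-series extension of the resolvent from the half-plane $\Re\lambda>w$ to a sector of angle $>\pi/2$ (using that the radius of convergence at $w+is$ grows like $|s|/M$), followed by the Dunford--Riesz integral and the routine verifications. The sketch is correct and correctly isolates the one non-routine step.
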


\section{Stokes operator on $L^p_{\sigma,\tau}(\Omega)$: Strong solutions}
\label{66}
\setcounter{equation}{0}
It is known that  the closure of $\pmb{\mathscr{D}} _{ \sigma  }({\Omega})$ in $\vL{p}$ is the Banach space $\bm{L}^p_{\sigma,\vt}(\Omega)$ \cite[Theorem 1.4]{Temam}. We introduce now the unbounded operator $(A _{ p, \alpha  }, \mathbf{D}(A _{ p, \alpha  }))$ on $\bm{L}^p_{\sigma,\vt}(\Omega)$ whose definition depends on the regularity of the function $\alpha$.\\

\noindent
1. If $\alpha \in\Lb{t(p)}$ with
\begin{equation}
\label{22}
t(p)= \begin{cases}
\frac{2}{3}p^\prime + \rho & \text{if\quad}1< p<\frac{3}{2}\\
2+\rho & \text{if\quad}\frac{3}{2}\leq p\leq 3, p\neq 2\\
2 & \text{if \quad}p=2\\
\frac{2}{3}p + \rho & \text{if \quad}p>3
\end{cases}
\end{equation}
with $\rho >0$ arbitrarily small, we define the Stokes operator $A_{p,\alpha}$ on $\bm{L}^p_{\sigma,\vt}(\Omega)$ as:

\begin{empheq}[left=\empheqlbrace]{align}
&\mathbf{D}(A _{ p, \alpha  }) = \left\{ \vu\in\bm{W}^{1,p}_{\sigma,\vt}(\Omega):\Delta\vu \in \vL{p}, 2[(\DT\vu)\vn]_{\vt}+\alpha\vu_{\vt} = \bm{0} \text{ on } \Gamma \right\} \label{S3E1}\\
&A _{ p, \alpha  }(\vu) = -P(\Delta \vu) \quad \text{ for } \vu\in\mathbf{D}(A _{ p, \alpha  }) \label{S3E10}
\end{empheq}
where $P :\vL{p} \rightarrow \bm{L}^p_{\sigma,\vt}(\Omega)$ is the orthogonal projection on $\bm{L}^p_{\sigma,\vt}(\Omega)$.  More precisely, for all ${\bm \psi }\in \vL{p}$, $P({\bm \psi })={\bm \psi }-\nabla \pi $ where $\pi\in \W{1}{p}$ is a solution of
\begin{equation}
\label{pression}
\begin{cases}
&\div (\nabla \pi - {\bm \psi } )=0 \,\,\, \hbox{in}\,\,\,\Omega \\
&(\nabla \pi - {\bm \psi })\cdot \vn =0\,\,\,\hbox{on}\,\,\Gamma .
\end{cases}
\end{equation}
Notice that, when $\vu \in \mathbf{D}(A _{ p, \alpha  })$ but $\vu \not \in \vW{2}{p}$, the boundary term $[(\DT\vu)\vn]_{\vt}$ is still well defined as shown in \cite[Lemma 2.4]{AR}.
\\

\noindent
2. If  $\alpha$ is such that
\begin{equation}
\label{alpha}
\alpha \in \begin{cases}
\Wfracb{1-\frac{1}{\frac{3}{2}+\rho }}{\frac{3}{2}+\rho  } & \text{if\quad}1< p\leq\frac{3}{2}\\
\Wfracb{1-\frac{1}{p}}{p} & \text{if\quad}p > \frac{3}{2}
\end{cases}
\end{equation}
with $\rho >0$ arbitrarily small, then we define the Stokes operator $A _{ p, \alpha  }$ on $\bm{L}^p_{\sigma,\vt}(\Omega)$ as
\begin{empheq}[left=\empheqlbrace]{align}
&\mathbf{D}(A _{ p, \alpha  }) = \left\{ \vu\in\vW{2}{p}\cap \bm{L}^p_{\sigma,\vt}(\Omega), 2[(\DT\vu)\vn]_{\vt}+\alpha\vu_{\vt} = \bm{0} \text{ on } \Gamma \right\rbrace \label{S3E11} \\
&A _{ p, \alpha  }(\vu) = -P(\Delta \vu) \quad \text{ for } \vu\in\mathbf{D}(A _{ p, \alpha  }). \label{S3E12}
\end{empheq}

\begin{remark}
	\label{S3Ralpha}
	\rm{If $\alpha$ satisfies (\ref{alpha}), then $\alpha \in L^{t(p)}(\Gamma)$ as well.}
\end{remark}

\begin{remark}
\label{OperatorA}
\rm{When $\alpha $ satisfies  (\ref{alpha}) and $\vu \in {\bf D}( A _{ p, \alpha  })$, $A _{ p, \alpha  }\vu=A \vu$ where $A$ is the following operator defined in \cite[just before Proposition 5.6]{AG}:
\begin{eqnarray*}
&&A\in \mathcal L\left({\bf W}^{1, p} _{ \sigma , \vt }(\Omega ), \left({\bf W}^{1, p'} _{ \sigma , \vt  }(\Omega )\right)' \right)\\
\text{ defined by } &&\langle A \vu, {\bm v} \rangle= a(\vu, {\bm v})\\
\text{where } && a(\vu, {\bm v})=2\int\displaylimits_{\Omega}{\DT\vu:\DT\bar{\bm{v}}} + \int\displaylimits_{\Gamma}{\alpha \vu_{\vt}\cdot \bar{\bm{v}}_{\vt}}.
\end{eqnarray*}
More precisely, if $\vu\in {\bf D}( A _{ p, \alpha  })$, then using Green's formula (\ref{green}) and the relation $2[(\DT\vu)\vn]_{\vt}=-\alpha\vu_{\vt}$ on $\Gamma$, we can show $\langle A \vu, {\bm v} \rangle =\langle A _{ p, \alpha  } \vu, {\bm v} \rangle $ for any ${\bm v}\in {\bf W}^{1, p'} _{ \sigma , \vt}(\Omega )$.}
\end{remark}

\subsection{Analyticity }
\noindent In order to show that $({\bf D}(A_{p,\alpha}), A_{p,\alpha})$ generates an analytic semi group on $\bm{L}^p_{\sigma,\vt}(\Omega)$, $\lambda\in \mathbb{C}$,  some estimate on the resolvent $(\lambda I+A_p)^{-1}$ is needed.  

Suppose that  $\alpha \in L^{t(p)}(\Gamma )$ and ${\bm f} \in \bm{L}^p_{\sigma,\vt}(\Omega)$. Then by (\ref{S3E1})-(\ref{S3E10}), $\vu \in \mathbf{D}(A_{p,\alpha})$ and $(\lambda I+A_{p,\alpha})\vu = \bm{f}$  is equivalent to $\vu \in \vW{1}{p}$ satisfying
\begin{empheq}[left=\empheqlbrace]{align}
&\lambda \vu -\Delta \vu + \nabla \pi = \bm{f}\quad & \text{ in } \Omega  \label{S3.ER1}\\
& \div \ \vu = 0 \quad & \text{ in } \Omega \label{S3.ER2}\\
&\vu\cdot \vn = 0 \quad & \text{ on } \Gamma \label{S3.ER03}\\
&2[(\DT\vu)\vn]_{\vt}+\alpha \vu_{\vt} = \bm{0} \quad & \text{ on } \Gamma \label{S3.ER3}
\end{empheq}
for some $\pi \in L^p(\Omega )$.

If on the other hand, $\alpha $ satisfies (\ref{alpha}), $\vu \in \mathbf{D}(A_{p,\alpha})$ and $(\lambda I+A_{p,\alpha})\vu = \bm{f}$ for ${\bm f}\in \bm{L}^p_{\sigma,\vt}(\Omega)$,  is equivalent to $(\vu, \pi )\in \vW{2}{p} \times \W{1}{p}$ satisfying (\ref{S3.ER1})-(\ref{S3.ER3}).

\begin{proposition}
\label{S3P1}
Suppose that $\alpha \in L^{t(p)}(\Gamma)$,  $ {\bm f}\in [\bm{H}_0^{p'}(\div ,\Omega)]'$ with $p\in(1,\infty)$ and $\lambda\in \mathbb{C}$. Then,
$\vu\in \vW{1}{p}$ solves (\ref{S3.ER1})-(\ref{S3.ER3}) for some $\pi \in L^p(\Omega)$ is equivalent to $\vu \in {\bf W}^{1,p}_{ \sigma, \vt}(\Omega )$ satisfies:
\begin{equation}\label{variational formulation}
\left\{
\begin{aligned}
&a_\lambda  (\vu,\bm{\varphi}) = \langle {\bm{f}, \bar{\bm{\varphi}}} \rangle \quad \forall \ \bm{\varphi}\in {\bf W}^{1,p'}_{ \sigma, \vt}(\Omega )\\
&\hbox{where:}\\
&a_\lambda (\vu,\bm{\varphi}) = \lambda \int\displaylimits_{\Omega}{\vu\cdot \bar{\bm{\varphi}}} + 2\int\displaylimits_{\Omega}{\DT\vu:\DT\bar{\bm{\varphi}}} + \left\langle \alpha \vu_{\vt} , \bar{\bm{\varphi}}_{\vt}\right\rangle _\Gamma
\end{aligned}
\right.
\end{equation}
and $\left\langle , \right\rangle _{\Gamma}$ denotes the duality product between $\vWfracb{-\frac{1}{p}}{p}$ and $\vWfracb{\frac{1}{p}}{p'}$.
\end{proposition}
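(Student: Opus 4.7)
The plan is to handle the two implications of the equivalence. The divergence-free constraint and zero normal trace encoded in $\bm{W}^{1,p}_{\sigma,\vt}(\Omega)$ match (\ref{S3.ER2})--(\ref{S3.ER03}), so the substantive work is transferring the momentum equation (\ref{S3.ER1}) and the Navier condition (\ref{S3.ER3}) into and out of the integral identity (\ref{variational formulation}).

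\emph{Strong $\Rightarrow$ weak.} Given $(\vu,\pi)$ satisfying (\ref{S3.ER1})--(\ref{S3.ER3}), membership $\vu \in \bm{W}^{1,p}_{\sigma,\vt}(\Omega)$ is automatic. I pair (\ref{S3.ER1}) with $\bar{\bm{\varphi}}$ for arbitrary $\bm{\varphi}\in \bm{W}^{1,p'}_{\sigma,\vt}(\Omega)$. The $\nabla\pi$ term vanishes after integration by parts, using $\div\bm{\varphi}=0$ and $\bm{\varphi}\cdot\vn=0$. Green's formula (Theorem \ref{Green}) applied to the pair $(\vu,\pi)$ rewrites the Laplacian contribution as $2\int_\Omega \DT\vu:\DT\bar{\bm{\varphi}}-2\langle[(\DT\vu)\vn]_{\vt},\bar{\bm{\varphi}}_{\vt}\rangle_\Gamma$, and substituting the boundary condition (\ref{S3.ER3}) turns the boundary pairing into $\langle \alpha\vu_{\vt},\bar{\bm{\varphi}}_{\vt}\rangle_\Gamma$, producing (\ref{variational formulation}).

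\emph{Weak $\Rightarrow$ strong.} Conversely, assume $\vu\in \bm{W}^{1,p}_{\sigma,\vt}(\Omega)$ satisfies (\ref{variational formulation}). Testing against $\bm{\varphi}\in\pmb{\mathscr{D}}_\sigma(\Omega)$ kills both boundary terms, and integrating by parts on the strain product yields $\langle \lambda\vu-\Delta\vu-\bm{f},\bar{\bm{\varphi}}\rangle=0$ in $\pmb{\mathscr{D}}'(\Omega)$. Writing $\bm{f}=\bm{\psi}+\nabla\chi$ with $\bm{\psi},\chi\in L^{p}(\Omega)$ via Proposition \ref{prop}, the left-hand side lies in $\bm{W}^{-1,p}(\Omega)$, so de Rham's theorem provides $\pi\in L^p(\Omega)/\R$ with $\lambda\vu-\Delta\vu+\nabla\pi=\bm{f}$, i.e.\ (\ref{S3.ER1}). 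To recover (\ref{S3.ER3}) I pair this equation with general $\bar{\bm{\varphi}}$, $\bm{\varphi}\in \bm{W}^{1,p'}_{\sigma,\vt}(\Omega)$ --- the $\nabla\pi$ term vanishing again --- and subtract from (\ref{variational formulation}); Theorem \ref{Green} collapses the remainder to $\langle 2[(\DT\vu)\vn]_{\vt}+\alpha\vu_{\vt},\bar{\bm{\varphi}}_{\vt}\rangle_\Gamma=0$. Density of tangential traces of $\bm{W}^{1,p'}_{\sigma,\vt}(\Omega)$ in $\{\bm{g}\in\vWfracb{1}{p'}:\bm{g}\cdot\vn=0\}$ then forces (\ref{S3.ER3}) as an identity in $\vWfracb{-\frac{1}{p}}{p}$.

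The delicate point is that Theorem \ref{Green} presupposes $(\vu,\pi)\in\bm{E}^p(\Omega)$, i.e.\ $-\Delta\vu+\nabla\pi\in \bm{L}^{r(p)}(\Omega)$, whereas here the equation only gives $-\Delta\vu+\nabla\pi=\bm{f}-\lambda\vu\in[\bm{H}^{p'}_0(\div,\Omega)]'$. The fix is to re-use Proposition \ref{prop}: with $\bm{f}=\bm{\psi}+\nabla\chi$, $\bm{\psi},\chi\in L^p(\Omega)$, absorb $\nabla\chi$ into the modified pressure $\tilde\pi:=\pi-\chi$, so that $-\Delta\vu+\nabla\tilde\pi=\bm{\psi}-\lambda\vu\in \bm{L}^p(\Omega)\hookrightarrow \bm{L}^{r(p)}(\Omega)$ lies in the admissible class for Theorem \ref{Green}, the identity being then transferred back to $(\vu,\pi)$. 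When $\vu$ lacks $\bm{W}^{2,p}$ regularity (the low-regularity $\alpha$ regime of (\ref{S3E1})), one further needs the extended strain trace $[(\DT\vu)\vn]_{\vt}\in\vWfracb{-\frac{1}{p}}{p}$ from \cite[Lemma 2.4]{AR} to keep the boundary pairings meaningful.
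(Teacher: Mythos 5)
Your argument is correct and follows essentially the same route as the paper: the paper's own proof only verifies continuity of $a_\lambda$ on ${\bf W}^{1,p}_{\sigma,\vt}(\Omega)\times{\bf W}^{1,p'}_{\sigma,\vt}(\Omega)$ (via \cite[Lemma 5.1]{AG}) and then defers the equivalence to the $\lambda=0$ case of \cite[Proposition 5.2]{AG}, which is exactly the Green's-formula / de Rham / trace-density scheme you write out. Your remark that one must pass to the modified pressure $\tilde\pi=\pi-\chi$ so that $-\Delta\vu+\nabla\tilde\pi\in\bm{L}^{r(p)}(\Omega)$ and $(\vu,\tilde\pi)\in{\bf E}^{p}(\Omega)$ before invoking Theorem \ref{Green} correctly handles the one delicate point.
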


\begin{proof}
By \cite[Lemma 5.1]{AG}, for all $\vu\in {\bf W} ^{1,p} _{ \sigma , \vt }\Omega )$ and  $\bm{\varphi}\in {\bf W} ^{1,p'} _{ \sigma , \vt }(\Omega )$ we have  $\alpha \vu_{\vt}\cdot \bar{\bm{\varphi}}_{\vt}\in L^1(\Gamma )$ and
$$
\int\displaylimits_{\Gamma}{\alpha \vu_{\vt}\cdot \bar{\bm{\varphi}}_{\vt}}\le C \|\alpha \| _{L^{t(p)}(\Gamma ) }\|\vu\| _{{\bf W} ^{1,p} _{ \sigma , \vt }(\Omega )}\|{\bm{\varphi}}\| _{ {\bf W} ^{1,p'} _{ \sigma , \vt }(\Omega ) } .
$$
It easily then follows that
$a_\lambda (\cdot, \cdot)$ is a continuous sesqui-linear form on ${\bf W} ^{1,p} _{ \sigma , \vt }(\Omega )\times {\bf W} ^{1,p'} _{ \sigma , \vt }(\Omega )$. When $\lambda =0$, it is the sesqui-linear form $a (\cdot, \cdot)$ introduced in \cite{AG}.  The proof of this proposition then follows exactly the same steps and uses the same arguments as in  \cite[Proposition 5.2]{AG}.
\hfill
\end{proof}

The following theorem gives the existence of a unique solution of the resolvent problem and also the resolvent estimate.

\begin{theorem}
\label{17}
For any $\varepsilon \in (0,\pi)$, let $\lambda \in \Sigma_\varepsilon := \lbrace \lambda\in\mathbb{C} : |\arg \lambda| \leq \pi - \varepsilon\rbrace$, $\bm{f}\in \vL{2}$ and $\alpha\in\Lb{2}$. Then,\\
{\bf 1.} the problem (\ref{S3.ER1})-(\ref{S3.ER3}) has a unique solution $(\vu,\pi)\in \vH{1}\times L^2_0(\Omega)$.\\
{\bf 2.} there exists a constant $C_\varepsilon >0$, independent of $\bm{f}$, $\alpha $ and $\lambda$, such that the solution $\vu$ satisfies the following estimates, for $\lambda \neq 0$:
\begin{equation}
\label{26}
\|\vu\|_{\vL{2}} \leq \frac{1}{C_\varepsilon |\lambda|} \ \|\bm{f}\|_{\vL{2}}
\end{equation}

\begin{equation}
\label{27}
\|\DT\vu\|_{\vL{2}}\leq \frac{1}{C_\varepsilon \sqrt{|\lambda|}} \ \|\bm{f}\|_{\vL{2}}
\end{equation}
and
\begin{equation}
\label{270}
\|\pi\|_{\L{2}} \le C(\Omega) \left[1+\frac{1}{C_\varepsilon} \left(1+\frac{1}{\sqrt{|\lambda|}}\right)\right] \|\bm{f}\|_{\vL{2}} .
\end{equation}
{\bf 3.} moreover, if either (i) $\Omega$ is not axisymmetric or (ii) $\Omega$ is axisymmetric and $\alpha\ge \alpha_*>0$, then
\begin{equation}
\label{28}
\|\vu\|_{\vH{1}} + \|\pi\|_{\L{2}} \le C(\Omega) \left( 1+\frac{1}{C_\varepsilon}\right) \|\bm{f}\|_{\vL{2}} 
\end{equation}
and if $\alpha$ is a constant, then
\begin{equation}
\label{29.}
\|\vu\|_{\vH{2}}+ \|\pi\|_{\H1} \le C(\Omega) \left( 1+\frac{1}{C_\varepsilon}\right) \|\bm{f}\|_{\vL{2}} 
\end{equation}
where $C(\Omega)$ does not depend on $\alpha$.
\end{theorem}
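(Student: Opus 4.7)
The plan is to recast (\ref{S3.ER1})--(\ref{S3.ER3}) into its variational form via Proposition \ref{S3P1} and then exploit the sectorial structure of the sesqui-linear form
$$
a_\lambda(\vu, \bm{\varphi}) \;=\; \lambda \int_\Omega \vu \cdot \bar{\bm{\varphi}} \;+\; 2 \int_\Omega \DT \vu : \DT \bar{\bm{\varphi}} \;+\; \int_\Gamma \alpha \vu_\vt \cdot \bar{\bm{\varphi}}_\vt
$$
on $\mathbf{H}^1_{\sigma, \vt}(\Omega)$. The cornerstone of the whole theorem is the coercivity bound
$$
|a_\lambda(\vu, \vu)| \;\ge\; C_\varepsilon \Bigl( |\lambda|\, \|\vu\|_{\vL{2}}^2 + \|\DT \vu\|_{\vL{2}}^2 + \textstyle\int_\Gamma \alpha |\vu_\vt|^2 \Bigr),\qquad \lambda \in \Sigma_\varepsilon\setminus\{0\},
$$
with $C_\varepsilon>0$ depending only on $\varepsilon$. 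Writing $a_\lambda(\vu,\vu)=\lambda\|\vu\|^2+r$ with $r=2\|\DT \vu\|^2+\int_\Gamma \alpha |\vu_\vt|^2\ge 0$ real, the bound splits by the sign of $\Re\lambda$: if $\Re\lambda\ge 0$ the modulus $|a_\lambda|$ dominates both $r$ (by taking the real part) and $|\lambda|\|\vu\|^2$ (Pythagoras); if $\Re\lambda<0$, the sector condition forces $|\Im\lambda|\ge \sin(\varepsilon)|\lambda|$ which controls $|\lambda|\|\vu\|^2$ through the imaginary part, while a short case split on whether $r\gtrless 2|\Re\lambda|\|\vu\|^2$ controls $r$.

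With coercivity in hand, Korn's inequality upgrades the bound to $|a_\lambda(\vu,\vu)|\ge c(\lambda)\|\vu\|_{\mathbf{H}^1}^2$ for every fixed $\lambda\ne 0$, and Lax-Milgram produces a unique $\vu\in\mathbf{H}^1_{\sigma,\vt}(\Omega)$; De Rham's lemma (with the normalization $\int_\Omega \pi=0$) then recovers a unique $\pi\in L^2_0(\Omega)$. Testing the variational identity against $\bm{\varphi}=\vu$ and applying Cauchy-Schwarz yields $|a_\lambda(\vu,\vu)|\le \|\bm{f}\|_{\vL{2}}\|\vu\|_{\vL{2}}$, and inserting this into the coercivity bound gives (\ref{26}) and (\ref{27}) at once. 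For (\ref{270}), I would pair $\lambda\vu-\Delta\vu+\nabla\pi=\bm{f}$ with arbitrary $\bm{\varphi}\in\mathbf{H}^1_0(\Omega)$ and integrate by parts on $\Delta\vu$ through $\DT$ so the boundary contribution vanishes, bounding $\nabla\pi$ in $\mathbf{H}^{-1}(\Omega)$ by $\|\bm{f}\|+|\lambda|\|\vu\|+\|\DT\vu\|$; Necas' inequality $\|\pi\|_{L^2_0}\le C(\Omega)\|\nabla\pi\|_{\mathbf{H}^{-1}}$, combined with (\ref{26})--(\ref{27}), then produces exactly the factor $1+C_\varepsilon^{-1}(1+|\lambda|^{-1/2})$.

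For the improved bound (\ref{28}), the decisive ingredient is the $\alpha$-independent Korn-type inequality $\|\vu\|_{\vH{1}}^2\le C(\Omega)\bigl(\|\DT\vu\|_{\vL{2}}^2+\int_\Gamma \alpha|\vu_\vt|^2\bigr)$, which holds precisely under either geometric hypothesis (non-axisymmetric $\Omega$, or axisymmetric $\Omega$ with $\alpha\ge\alpha_*>0$) by results of \cite{AG}, since these hypotheses rule out the rigid-motion obstruction to the kernel of $\DT$. Combined with the coercivity estimate tested against $\vu$, this gives $\|\vu\|_{\vH{1}}^2\le C\,C_\varepsilon^{-1}\|\bm{f}\|_{\vL{2}}\|\vu\|_{\vH{1}}$, whence (\ref{28}) after dividing (the pressure piece being a rerun of the $\mathbf{H}^{-1}$ argument above). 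For constant $\alpha$, I would view $(\vu,\pi)$ as the solution of the stationary Stokes problem with datum $\bm{f}-\lambda\vu\in\vL{2}$ and boundary condition (\ref{S3.ER3}), and invoke the $\vH{2}\times\H{1}$ regularity from \cite{AG}, whose constant remains bounded uniformly in constant $\alpha$, to obtain $\|\vu\|_{\vH{2}}+\|\pi\|_{\H{1}}\le C(\|\bm{f}\|_{\vL{2}}+|\lambda|\|\vu\|_{\vL{2}})$; (\ref{29.}) then follows from (\ref{26}). I expect the most delicate step to be the sector coercivity near the negative real axis, together with the careful tracking of the $\alpha$-independence of all constants throughout—especially in the last two parts, where the auxiliary Korn and $\vH{2}$-regularity inputs from \cite{AG} must be verified to come with $\alpha$-uniform constants.
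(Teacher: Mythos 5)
Your proposal is correct and follows essentially the same route as the paper: Proposition \ref{S3P1} to pass to the variational form, the sector inequality $|\lambda a+b|\ge C_\varepsilon(|\lambda|a+b)$ (which the paper simply quotes from \cite{AAE1} rather than reproving by the case split you sketch) to get coercivity and Lax--Milgram, testing against $\vu$ for \eqref{26}--\eqref{27}, the $\bm{H}^{-1}$/Ne\v{c}as bound on $\nabla\pi$ for \eqref{270}, and the stationary $\vH{2}$ regularity of \cite{AG} applied to $\bm{f}-\lambda\vu$ for \eqref{29.}. The only cosmetic deviation is in \eqref{28}, where you extract the $\vH{1}$ bound directly from the energy identity via the $\alpha$-uniform Korn inequality, whereas the paper invokes the stationary Stokes estimate \cite[Proposition 4.3]{AG} with right-hand side $\bm{f}-\lambda\vu$; both rest on the same underlying input and yield the same constant.
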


\begin{remark}
\rm{Note that the estimates (\ref{26}) and (\ref{27}) give better decay for $\lambda$ large and enables us having a good semigroup theory, in general. On the other hand, estimate (\ref{28}) gives uniform bound on the solution, especially when $\lambda$ is small.}
\end{remark}

\begin{proof}
{\bf 1.} In view of Proposition \ref{S3P1}, it is enough to prove the existence and  uniqueness of a solution to (\ref{variational formulation}). Also $\lambda = 0$ case corresponds to the existence result for the stationary Stokes problem \cite[Theorem 4.1]{AG}. So we may consider $\lambda \ne 0$.

By Korn inequality, $\|\vu\|_{\vL{2}} + \|\DT\vu\|_{\vL{2}}$ is an equivalent norm on ${\bf H}^{1}(\Omega )$.
Then using the following inequality (cf. \cite[Lemma 4.1]{AAE1}):
$$\forall \ \lambda \in \Sigma_\varepsilon, \forall \ a,b >0, \qquad |\lambda a + b| \geq C_\varepsilon \left( |\lambda|a + b\right) \ \text{for some constant } C_\varepsilon >0$$
 and $\alpha \ge 0$, we get,
\begin{align*}
|a_\lambda(\vu,\vu)|
&\geq C_\varepsilon \left( |\lambda| \|\vu\|^2_{\vL{2}} + 2\|\DT\vu\|^2_{\vL{2}} + \int\displaylimits_{\Gamma}{\alpha |\vu_{\vt}|^2}\right) \\
&\geq C_\varepsilon \ \min{(|\lambda|, 2)} \left( \|\vu\|^2_{\vL{2}} + \|\DT\vu\|^2_{\vL{2}}\right) \\
& \geq C_\varepsilon \ \min{(|\lambda|, 2)} \|\vu\|^2_{\vH{1}} .
\end{align*}
Hence, for all $\lambda\in\Sigma_\varepsilon$, $a_\lambda$ is coercive on ${\bf H} ^{1} _{ \sigma , \vt }(\Omega )$ and therefore, by Lax-Milgram lemma, we get a unique solution in ${\bf H} ^{1} _{ \sigma , \vt}(\Omega )$ of the problem \eqref{variational formulation} which proves {\bf 1.}\\

{\bf 2.} From the variational formulation, we have, $$a_\lambda(\vu,\vu) = \int\displaylimits_{\Omega}{\bm{f}\cdot \bar{\vu}}$$
which gives
 $$|a_\lambda(\vu,\vu)| \leq \|\bm{f}\|_{\vL{2}} \|\vu\|_{\vL{2}} .$$
But we also have, $$|a_\lambda(\vu,\vu)| \geq C_\varepsilon \left( |\lambda|\|\vu\|^2_{\vL{2}}+2\|\DT\vu\|^2_{\vL{2}}\right) . $$
Thus
 $$\|\vu\|_{\vL{2}} \leq \frac{1}{C_\varepsilon |\lambda|}\|\bm{f}\|_{\vL{2}} $$
and then
$$\|\DT\vu\|^2_{\vL{2}} \ \leq \ \frac{1}{2 \ C_\varepsilon} \|\bm{f}\|_{\vL{2}}\|\vu\|_{\vL{2}} \ \leq \ \frac{1}{2\ |\lambda| \ C^2_\varepsilon} \|\bm{f}\|^2_{\vL{2}}$$
prove the inequalities (\ref{26}) and \eqref{27}.

From the equation (\ref{S3.ER1}), we may write, using (\ref{26}) and (\ref{27}),
\begin{equation*}
\begin{aligned}
\|\pi\|_{\L{2}} \le \|\nabla \pi\|_{\vH{-1}}&\le C(\Omega) \left(\|\bm{f}\|_{\vL{2}} + \|\Delta \vu\|_{\vH{-1}} + |\lambda|\|\vu\|_{\vL{2}} \right)\\
& \le C(\Omega) \left(\|\bm{f}\|_{\vL{2}} + \| \DT\vu\|_{\vL{2}} + |\lambda|\|\vu\|_{\vL{2}} \right)\\
&\le C(\Omega) \left[1+\frac{1}{C_\varepsilon} \left(1+\frac{1}{\sqrt{2|\lambda|}}\right)\right] \|\bm{f}\|_{\vL{2}}
\end{aligned}
\end{equation*}
which gives (\ref{270}).

{\bf 3.} Moreover, writing (\ref{S3.ER1}) as $-\Delta \vu+\nabla \pi ={\bm f}-\lambda \vu$, we deduce from the Stokes estimate \cite[Proposition 4.3]{AG} in the case either (i) $\Omega$ is not axisymmetric or (ii) $\Omega$ is axisymmetric and $\alpha\ge \alpha_*>0$, the existence of a constant $C>0$ which depends only on $\Omega$ such that
$$
\|\vu\| _{ {\bm H}^1(\Omega ) }+\|\pi \| _{ L^2(\Omega ) }\le C(\Omega) \left(\|{\bm f}\| _{ {\bm L}^2(\Omega ) }+|\lambda | \|\vu\| _{ {\bm L}^2(\Omega ) }  \right) \le C(\Omega) \left(1+\frac{1}{C_\varepsilon}\right) \|\bm{f}\|_{\vL{2}}.
$$
This provides the better bound (\ref{28}) on $\vu$ and $\pi$ when $\lambda$ is small.

Similarly, for constant $\alpha$, the $H^2$ estimate of the Stokes problem \cite[Theorem 4.5]{AG} yields
\begin{equation*}
\|\vu\|_{\vH{2}}+\|\pi\|_{\H1}\le C(\Omega) \left(1+\frac{1}{C_\varepsilon}\right) \|\bm{f}\|_{\vL{2}}.
\end{equation*}
\hfill
\end{proof}

In the next theorem we prove the analyticity of the semigroup generated by the Stokes operator with Navier boundary condition on $\bm{L}^2_{\sigma,\vt}(\Omega)$.

\begin{theorem}
For any $\alpha\in\Lb{2}$, the operator $-A_{2,\alpha}$, defined in (\ref{S3E1})-(\ref{S3E10}) with $p=2$,  generates a bounded analytic semigroup on $\bm{L}^2_{\sigma,\vt}(\Omega)$.
\end{theorem}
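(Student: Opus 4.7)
The plan is simply to verify the hypotheses of the sectorial characterisation recalled in Theorem \ref{analyticity}, applied to the operator $-A_{2,\alpha}$ on the Hilbert space $\bm{L}^2_{\sigma,\vt}(\Omega)$. There are only two things to check: density of the domain, and a sectorial resolvent bound.

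First I would check that $A_{2,\alpha}$ is densely defined. This is immediate: any $\bm{\varphi}\in\pmb{\mathscr{D}}_\sigma(\Omega)$ vanishes in a neighbourhood of $\Gamma$ and so trivially lies in $\mathbf{D}(A_{2,\alpha})$ (both boundary conditions (\ref{lens3}) and (\ref{lens4}) are automatic), and by the density result recalled at the beginning of Section \ref{66}, $\pmb{\mathscr{D}}_\sigma(\Omega)$ is dense in $\bm{L}^2_{\sigma,\vt}(\Omega)$.

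Next I would extract the required resolvent bound from Theorem \ref{17}. Fix $\varepsilon\in(0,\pi/2)$, so that the sector $\Sigma_\varepsilon=\{\lambda\in\mathbb{C}:|\arg\lambda|\le\pi-\varepsilon\}$ contains the closed right half-plane minus the origin. Given $\bm{f}\in\bm{L}^2_{\sigma,\vt}(\Omega)$ and $\lambda\in\Sigma_\varepsilon\setminus\{0\}$, Theorem \ref{17} supplies a unique $(\vu,\pi)\in\vH{1}\times L^2_0(\Omega)$ solving (\ref{S3.ER1})-(\ref{S3.ER3}). Since $\Delta\vu=\lambda\vu+\nabla\pi-\bm{f}\in\vL{2}$, and the Navier condition holds, the definition (\ref{S3E1}) yields $\vu\in\mathbf{D}(A_{2,\alpha})$; applying the Helmholtz projector $P$ to (\ref{S3.ER1}) and using (\ref{S3E10}) together with $P\vu=\vu$ and $P(\nabla\pi)=\bm{0}$ gives $(\lambda I+A_{2,\alpha})\vu=\bm{f}$. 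Hence $\lambda I+A_{2,\alpha}:\mathbf{D}(A_{2,\alpha})\to\bm{L}^2_{\sigma,\vt}(\Omega)$ is a bijection and the estimate (\ref{26}) reads
\begin{equation*}
\|(\lambda I+A_{2,\alpha})^{-1}\bm{f}\|_{\vL{2}}\le\frac{1}{C_\varepsilon|\lambda|}\,\|\bm{f}\|_{\vL{2}}\qquad\text{for every }\lambda\in\Sigma_\varepsilon\setminus\{0\}.
\end{equation*}

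In particular, for every $\lambda$ with $\Re\lambda>0$ one has $\|(\lambda I-(-A_{2,\alpha}))^{-1}\|_{\mathcal{L}(\bm{L}^2_{\sigma,\vt}(\Omega))}\le M/|\lambda|$, so Theorem \ref{analyticity} with $\omega=0$ implies that $-A_{2,\alpha}$ generates an analytic semigroup; the fact that the bound extends up to the imaginary axis and throughout the full sector $\Sigma_\varepsilon$ (with a constant independent of $\lambda$) upgrades this to a \emph{bounded} analytic semigroup. There is no serious obstacle here: everything reduces to Theorem \ref{17} together with the equivalence recorded in Proposition \ref{S3P1}. The only point I would double-check is that the variational solution produced by Theorem \ref{17} genuinely belongs to $\mathbf{D}(A_{2,\alpha})$ in the strong operator sense, which is exactly why the definition (\ref{S3E1}) requires only $\vu\in\bm{W}^{1,p}_{\sigma,\vt}(\Omega)$ with $\Delta\vu\in\vL{p}$ (rather than the stronger $\vW{2}{p}$ regularity used in (\ref{S3E11})).
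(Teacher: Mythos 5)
Your proposal is correct and follows essentially the same route as the paper: density of the domain plus the resolvent estimate \eqref{26} from Theorem \ref{17}, fed into the characterisation of Theorem \ref{analyticity}. The only difference is that you spell out the identification of the variational solution of \eqref{S3.ER1}--\eqref{S3.ER3} with an element of $\mathbf{D}(A_{2,\alpha})$ satisfying $(\lambda I+A_{2,\alpha})\vu=\bm{f}$, which the paper leaves implicit in the phrase ``by definition and from the previous theorem.''
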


\begin{proof}
Obviously $\mathbf{D}(A_{2,\alpha})$ is dense in $\bm{L}^2_{\sigma,\vt}(\Omega)$. Therefore, according to Theorem \ref{analyticity}, it is enough to prove the resolvent estimate. Now, by definition and from the previous theorem, we have,
$$\|(\lambda I + A_{2,\alpha})^{-1}\| = \sup_{\underset{\bm{f}\neq \bm{0}}{\bm{f}\in\bm{L}^2_{\sigma,\vt}(\Omega)}}\frac{\|(\lambda I + A_{2,\alpha})^{-1}\bm{f}\|_{\vL{2}}}{\|\bm{f}\|_{\vL{2}}} = \sup_{\bm{f}\in\bm{L}^2_{\sigma,\vt}(\Omega)}\frac{\|\vu\|_{\vL{2}}}{\|\bm{f}\|_{\vL{2}}} \leq \frac{1}{C_\varepsilon |\lambda|} .$$
Hence, the result.
\hfill
\end{proof}

Next we extend the results of Theorem \ref{17} for all $p\in (1,\infty)$.

\begin{theorem}
\label{18}
Suppose that $p\in (1,\infty)$, $\alpha $ satisfies (\ref{alpha}) and $\lambda \in \Sigma_\varepsilon$ where $\Sigma_\varepsilon$ is defined as in Theorem \ref{17}. Then for every  $\bm{f}\in\vL{p}$, the problem (\ref{S3.ER1})-(\ref{S3.ER3}) has a unique solution $(\vu,\pi)\in\vW{2}{p}\times \left( \W{1}{p}\cap L^p_0(\Omega)\right) $. 
\end{theorem}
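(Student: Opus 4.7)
The idea is to regard the resolvent problem \eqref{S3.ER1}--\eqref{S3.ER3} as a compact, lower-order perturbation of the stationary Stokes problem with Navier boundary condition, whose $W^{2,p}$ well-posedness under the regularity hypothesis \eqref{alpha} is \cite[Theorem 4.5]{AG}. The case $\lambda=0$ is exactly that stationary result, so I assume $\lambda\ne 0$. The plan is a bootstrap for $p\ge 2$ starting from Theorem~\ref{17}, followed by an approximation/duality argument for $1<p<2$; uniqueness will reduce to the $p=2$ case via a Sobolev embedding.

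For $p\ge 2$, since $\Omega$ is bounded, $\bm{f}\in\vL{p}\subset\vL{2}$ and Theorem~\ref{17} provides a solution $(\vu,\pi)\in\vH{1}\times L^2_0(\Omega)$. Rewriting the momentum equation as $-\Delta\vu+\nabla\pi=\bm{f}-\lambda\vu$ and using the Sobolev embedding $\vH{1}\hookrightarrow\vL{6}$, the right-hand side lies in $\vL{\min(p,6)}$, so \cite[Theorem 4.5]{AG} upgrades the solution to $\vW{2}{\min(p,6)}\times W^{1,\min(p,6)}(\Omega)$. For $p\le 6$ this is the desired conclusion; for $p>6$, the embedding $\vW{2}{6}\hookrightarrow\vL{\infty}$ puts $\bm{f}-\lambda\vu$ into $\vL{p}$ and a second application of \cite[Theorem 4.5]{AG} yields $(\vu,\pi)\in\vW{2}{p}\times\W{1}{p}$.

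For $1<p<2$, I approximate $\bm{f}$ by a sequence $(\bm{f}_n)\subset\vL{2}$ in $\vL{p}$ (available since $\vL{2}\hookrightarrow\vL{p}$ on the bounded domain $\Omega$). The previous case gives solutions $(\vu_n,\pi_n)\in\vW{2}{2}\times\H{1}\subset\vW{2}{p}\times\W{1}{p}$ with data $\bm{f}_n$. To pass to the limit I establish the uniform bound $\|\vu_n\|_{\vL{p}}\le C_{p,\lambda}\|\bm{f}_n\|_{\vL{p}}$ by duality: for any $\bm{g}\in\bm{L}^{p'}_{\sigma,\vt}(\Omega)$, the bootstrap case at exponent $p'>2$ with spectral parameter $\bar\lambda$ supplies a solution $(\bm{v},q)\in\vW{2}{p'}\times\W{1}{p'}$ of the adjoint problem with data $\bm{g}$. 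Using the variational formulation of Proposition~\ref{S3P1} applied to $\vu_n$ tested against $\bm{v}$, and to $\bm{v}$ tested against $\vu_n$ followed by complex conjugation, the identity $a_\lambda(\vu_n,\bm{v})=\overline{a_{\bar\lambda}(\bm{v},\vu_n)}$ delivers
$$
\int_{\Omega}\vu_n\cdot\bar{\bm{g}}=\int_{\Omega}\bm{f}_n\cdot\bar{\bm{v}},
$$
hence $\left|\int_{\Omega}\vu_n\cdot\bar{\bm{g}}\right|\le C_{p,\lambda}\|\bm{f}_n\|_{\vL{p}}\|\bm{g}\|_{\vL{p'}}$ by the $\vL{p'}$ control on $\bm{v}$ inherited from the bootstrap step, and the Helmholtz duality $[\bm{L}^p_{\sigma,\vt}(\Omega)]'\simeq\bm{L}^{p'}_{\sigma,\vt}(\Omega)$ yields the claim. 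The stationary Stokes estimate $\|\vu_n\|_{\vW{2}{p}}+\|\pi_n\|_{\W{1}{p}}\le C(\|\bm{f}_n\|_{\vL{p}}+|\lambda|\,\|\vu_n\|_{\vL{p}})$ then produces a uniform $\vW{2}{p}\times\W{1}{p}$ bound, and the limit solves the problem. Uniqueness at any $p\in(1,\infty)$ follows because $\vW{2}{p}\hookrightarrow\vL{2}$ on a bounded domain in $\mathbb{R}^3$ for every $p>1$, so any $\vW{2}{p}$ solution of the homogeneous problem is a fortiori an $\vH{1}$ solution and vanishes by Theorem~\ref{17}; the pressure is then zero in $L^p_0(\Omega)$. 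The main anticipated obstacle is the duality step: the $\vL{p'}$ control on $\bm{v}$ has to be extracted a posteriori from the $\vW{2}{p'}$ bound of the bootstrap step (with a constant depending on $\lambda$, which suffices for existence but is not the sharp resolvent estimate pursued in the later sections of the paper), and the sesquilinear forms for $\vu_n$ and $\bm{v}$ must be matched carefully so that their difference reproduces exactly the duality pairing $\int_\Omega\vu_n\cdot\bar{\bm{g}}$.
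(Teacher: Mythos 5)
Your bootstrap for $p\ge 2$ is exactly the paper's argument: start from the $\vH{1}\times L^2_0(\Omega)$ solution of Theorem \ref{17}, rewrite the equation as a stationary Stokes problem with right-hand side $\bm{f}-\lambda\vu\in\vL{\min(p,6)}$, apply the $\vW{2}{p}$ regularity of the stationary Navier-slip problem, and iterate once through $\vW{2}{6}\hookrightarrow\vL{\infty}$ when $p>6$. For $1<p<2$ you take a genuinely different route. The paper works at the level of the weak operator $A$: it shows $\lambda I+A$ is an isomorphism from $\bm{W}^{1,q}_{\sigma,\vt}(\Omega)$ onto $(\bm{W}^{1,q'}_{\sigma,\vt}(\Omega))'$ for $q\ge 2$, passes to the adjoint to produce a $\bm{W}^{1,p}_{\sigma,\vt}(\Omega)$ solution for $p<2$, and only then upgrades to $\vW{2}{p}$ by elliptic regularity; uniqueness comes for free from the isomorphism. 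You instead approximate the data, prove the a priori bound $\|\vu_n\|_{\vL{p}}\le C_{p,\lambda}\|\bm{f}_n\|_{\vL{p}}$ by testing against the adjoint resolvent problem at exponent $p'>2$, and close with the stationary $\vW{2}{p}$ estimate and a weak limit. The two routes are duality arguments of comparable depth; yours never leaves the strong formulation but pays with an approximation step, while the paper's is cleaner on uniqueness. Your identity $a_\lambda(\vu_n,\bm{v})=\overline{a_{\bar\lambda}(\bm{v},\vu_n)}$ and the recovery of $\|\vu_n\|_{\vL{p}}$ from solenoidal test functions $\bm{g}$ alone (via boundedness of the Helmholtz projection) are both sound.

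The one genuine flaw is the uniqueness step. You assert that $\vW{2}{p}\hookrightarrow\vL{2}$ makes a $\vW{2}{p}$ solution of the homogeneous problem ``a fortiori an $\vH{1}$ solution.'' Membership in $\vL{2}$ does not give membership in $\vH{1}$: in three dimensions $\vW{2}{p}\hookrightarrow\vW{1}{p^*}$ with $p^*=\frac{3p}{3-p}$, and $p^*\ge 2$ only when $p\ge \frac{6}{5}$, so for $1<p<\frac{6}{5}$ your homogeneous solution need not lie in $\vH{1}$ and Theorem \ref{17} cannot be invoked directly. The repair is one more pass of the regularity you already use: a homogeneous $\vW{2}{p}$ solution satisfies $-\Delta\vu+\nabla\pi=-\lambda\vu$ with $\lambda\vu\in\vL{q}$, $\frac{1}{q}=\frac{1}{p}-\frac{2}{3}$, hence $q>3>2$, so the stationary regularity places $\vu$ in $\vW{2}{2}\subset\vH{1}$ and the $L^2$ uniqueness of Theorem \ref{17} then applies.
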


\begin{proof}
\textbf{case (i):} $p> 2$. Since from the assumption, $\bm{f}\in {\bm L}^2(\Omega )$ and $\alpha \in L^2(\Gamma )$, there exists a unique solution  $(\vu,\pi)\in \vH{1}\times L^2_0(\Omega)$ of (\ref{S3.ER1})-(\ref{S3.ER3}) by Theorem \ref{17}. Now writing the equation (\ref{S3.ER1}) as
$-\Delta \vu + \nabla \pi= \bm{f} -\lambda \vu$ and since $\vu\in\vH{1}\hookrightarrow \vL{6}$, we have $\bm{f}-\lambda\vu\in \vL{p}$ for all $p\leq 6$. Thus, using the regularity result in \cite[Theorem 5.10]{AG}, we obtain $\vu\in\vW{2}{p}$ for all $p\leq 6$.

Now for $p>6$, we have $\vu\in\vW{2}{6}\hookrightarrow \vL{\infty}$. Hence $\bm{f}-\lambda\vu \in\vL{p}$ and by the same regularity result, we get $\vu\in\vW{2}{p}$ for all $p>6$.\\

\noindent \textbf{case (ii):} $1<p< 2$. We first claim that 
 $(\lambda I + A)$ is an isomorphism from ${\bm W}^{1, q} _{ \sigma , \vt }(\Omega )$ to $({\bm W}^{1, q'} _{ \sigma , \vt }(\Omega ))^\prime$ for all $q\geq 2$. Then the adjoint operator,  $\lambda I +A^*$ is also an isomorphism from ${\bm W}^{1, q'} _{ \sigma , \vt }(\Omega )$ to $({\bm W}^{1, q} _{ \sigma , \vt}(\Omega ))^\prime$ with $q^\prime \leq 2$. Then, 
 for any ${\bm f}\in {\bf L}^p _{ \sigma , \vt}(\Omega )\subset ({\bm W}^{1, p'} _{ \sigma , \vt }(\Omega ))^\prime$, there exists a unique $\vu \in {\bm W}^{1, p} _{ \sigma , \vt}(\Omega )$ such that $(\lambda I+A^*)\vu={\bm f}$. Our second claim is that, since ${\bm f}\in {\bm L}^p _{ \sigma , \vt }(\Omega )$ it follows that $\vu \in {\bf D}(A_{p,\alpha})$ and $A^* \vu=A_{p,\alpha} \vu$. This finally implies that $(\lambda I+A_{p,\alpha})\vu=\bm f$.
 
First claim: For $q>2$ and ${\bm \ell} \in ({\bm W}^{1, q'} _{ \sigma , \vt }(\Omega ))^\prime\subset ({\bm H}^{1} _{ \sigma , \vt }(\Omega ))^\prime$, from Lax Milgam lemma there is a unique
 $\vu \in {\bm H}^{1} _{ \sigma , \vt }(\Omega )$ such that  $a_ \lambda  (\vu, {\bm \varphi} )=\langle {\bm \ell}, {\bm \varphi} \rangle$ for all ${\bm \varphi}\in {\bm H}^{1} _{ \sigma , \vt }(\Omega )$. Then,  $a (\vu, {\bm \varphi} )=\langle {\bm \ell}-\lambda \vu, {\bm \varphi} \rangle$ with 
${\bm \ell}-\lambda \vu\in ({\bm W}^{1, q'} _{ \sigma , \vt }(\Omega ))^\prime$. 
On the other hand, by \cite[Theorem 5.5]{AG} there exists a unique ${\bm w} \in {\bm W}^{1, q} _{ \sigma , \vt }(\Omega )\subset {\bm W}^{1, 2} _{ \sigma , \vt }(\Omega )$ such that
$$
a ({\bm w}, {\bm \varphi} )=\langle {\bm \ell}-\lambda \vu, {\bm \varphi} \rangle\,\,\,\forall {\bm \varphi}\in {\bm W}^{1, q'} _{ \sigma , \vt }(\Omega ).
$$
It then follows that
$$
a ({\bm w}-\vu, {\bm \varphi} )=0\,\,\,\forall {\bm \varphi}\in {\bm H}^{1} _{ \sigma , \vt}(\Omega )
$$
and by the uniqueness result \cite[Theorem 5.5]{AG}, $\vu={\bm w}$ in  ${\bm H}^{1} _{ \sigma , \vt }(\Omega )$ and thus $\vu\in {\bm W}^{1, q} _{ \sigma , \vt }(\Omega )$.
 
Second claim: If ${\bm f}\in {\bm L}^p _{ \sigma , \vt }(\Omega )$ with $1<p<2$ and $(\lambda I+A^*)\vu={\bm f}$ with $\vu \in {\bm W}^{1, p} _{ \sigma , \vt}(\Omega )$,
then $A^* \vu={\bm f}-\lambda \vu=:{\bm g}\in {\bm L}^p _{ \sigma , \vt }(\Omega )$ which means $a(\vu, {\bm \varphi} )=({\bm g}, {\bm \varphi} )$ for all ${\bm \varphi }\in {\bm W}^{1, p'} _{ \sigma , \vt}(\Omega )$. It then follows  by \cite[Theorem 5.10]{AG} that  $(\vu, \pi )\in {\bm W}^{2, p}(\Omega )\times W^{1, p}(\Omega )$ with $\pi $ defined by  (\ref{pression}) for ${\bm \psi }=\Delta \vu$ and $\vu$ satisfies the boundary condition. In particular $\vu \in {\bf D}({ A}_{p,\alpha})$. Then, using  Remark \ref{OperatorA}, $A_p \vu=A^* \vu$.
\hfill
\end{proof}

\begin{remark}
	\rm{
Notice that though the two boundary conditions
\begin{equation}
\label{20.}
\mathbf{curl} \ \vu \times \vn = \bm{0} \ \text{ on }\Gamma
\end{equation}
and
\begin{equation}
\label{21}
\ 2[(\DT\vu)\vn]_{\vt}+\alpha \vu_{\vt} = \bm{0} \ \text{ on }\Gamma
\end{equation}
are very much similar, as described in \eqref{19.}, but in the case of the Stokes problem with Navier type boundary condition \eqref{20.} on $\bm{L}^p_{\sigma,\vt}(\Omega)$ the pressure is constant and hence does not appear in the operator (see \cite[Proposition 3.1]{AAE}). On the contrary, the pressure term does appear in the Stokes operator with Navier boundary condition \eqref{21}.}
\end{remark}

Next we deduce ${\bm L}^p$-resolvent estimate for all $p\in (1,\infty)$.
\begin{theorem}
\label{T1}
Let $\lambda \in \mathbb{C}^*$ with $\Re \lambda \ge 0$ and $p\in (1,\infty)$. Then for $\alpha \in \Lb{t(p)}$ and for any $\bm{f}\in \vL{p}$, the unique solution $\vu\in \vW{1}{p}$ of (\ref{S3.ER1})-(\ref{S3.ER3}) satisfies the estimate:
	\begin{equation*}
	\|\vu\|_{\vL{p}}\le \frac{C}{|\lambda|} \|\bm{f}\|_{\vL{p}}
	\end{equation*}
	where $C$ depends at most on $ p$ and $\Omega$.
\end{theorem}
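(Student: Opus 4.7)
I would prove the estimate by the classical approach of testing equation (\ref{S3.ER1}) against $|\vu|^{p-2}\bar{\vu}$, as indicated in the introduction and carried out e.g.\ in \cite{AAE}. By Theorem \ref{18} applied after a possible regularization of $\alpha$, one may assume $\vu\in\vW{2}{p}$, so that the test function is admissible and all integrations by parts are justified; the general case $\alpha\in\Lb{t(p)}$ is then recovered by approximation together with the uniqueness already established.

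Using $\Delta\vu=2\,\div\DT\vu$ (valid since $\div\vu=0$), the Navier boundary condition (\ref{S3.ER3}), and $\vu\cdot\vn=0$ on $\Gamma$, the multiplication yields
\begin{equation*}
\lambda \int_{\Omega}|\vu|^p \,\mathrm{d}x + 2\int_{\Omega}\DT\vu:\nabla(|\vu|^{p-2}\bar{\vu}) \,\mathrm{d}x + \int_\Gamma \alpha |\vu|^p \,\mathrm{d}s - \int_{\Omega}\pi\,\overline{\div(|\vu|^{p-2}\vu)} \,\mathrm{d}x = \int_{\Omega}\bm{f}\cdot|\vu|^{p-2}\bar{\vu} \,\mathrm{d}x.
\end{equation*}
The boundary integral is non-negative since $\alpha\ge 0$. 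The bulk diffusion term has non-negative real part by a standard pointwise identity: expanding $\nabla(|\vu|^{p-2}\bar{\vu})$ and using the symmetry of $\DT\vu$ produces a sum of $2|\vu|^{p-2}|\DT\vu|^{2}$ and a term of the form $(p-2)|\vu|^{p-4}Q(\vu)$, where $Q(\vu)\ge 0$ for $p\ge 2$, and for $1<p<2$ the same combination is controlled by $|\vu|^{p-2}|\DT\vu|^{2}$ via an elementary estimate.

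Taking real and imaginary parts of the identity, using $\Re\lambda\ge 0$, and applying the sector inequality $|\lambda a+b|\ge C(|\lambda|a+b)$ on $\{\Re\lambda\ge 0\}\setminus\{0\}$ (as in \cite[Lemma 4.1]{AAE1}), I obtain
\begin{equation*}
|\lambda|\int_{\Omega}|\vu|^p\,\mathrm{d}x \le C\left( \|\bm{f}\|_{\vL{p}} + \|\pi\|_{\L{p}}\right)\|\vu\|_{\vL{p}}^{p-1}.
\end{equation*}
The pressure term is the main obstacle: unlike the Navier-type (curl) boundary condition, where the pressure is constant and absent from the estimate, here $\pi$ couples genuinely to the right-hand side. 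Taking the divergence of (\ref{S3.ER1}) gives $\Delta\pi=\div\bm{f}$ in $\Omega$ with Neumann data $\partial_\vn\pi=(\bm{f}+\Delta\vu)\cdot\vn$ on $\Gamma$; standard elliptic $L^p$-theory then yields $\|\pi\|_{\L{p}}\le C(\Omega,p)(\|\bm{f}\|_{\vL{p}}+\|\vu\|_{\vW{1}{p}})$. Combined with the \emph{a priori} $\vW{1}{p}$-regularity of $\vu$ supplied by Theorem \ref{18} (bootstrapped, when $p<2$, from the $L^2$-level bound of Theorem \ref{17}), the pressure contribution is absorbed, and dividing by $\|\vu\|_{\vL{p}}^{p-1}$ delivers the claimed estimate $|\lambda|\|\vu\|_{\vL{p}}\le C\|\bm{f}\|_{\vL{p}}$.
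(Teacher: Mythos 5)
Your proposal follows exactly the route that the paper explicitly discards in its introduction: ``multiplying the equation by $|\vu|^{p-2}\bar{\vu}$ \dots one can easily obtain the above estimate but with the constant depending on $\alpha$. Therefore we need some different approach.'' The obstruction is concrete and sits precisely where you locate it, in the pressure term, but your fix does not close the argument. Because $|\vu|^{p-2}\bar{\vu}$ is not divergence free, $\pi$ survives in the identity, and any bound on $\|\pi\|_{\L{p}}$ obtained from the equation reintroduces the solution itself: from $\nabla\pi=\bm{f}-\lambda\vu+\Delta\vu$ one gets at best $\|\pi\|_{\L{p}}\le C\left(\|\bm{f}\|_{\vL{p}}+|\lambda|\,\|\vu\|_{\vL{p}}+\|\vu\|_{\vW{1}{p}}\right)$, and controlling $\|\vu\|_{\vW{1}{p}}$ by the stationary Stokes regularity of \cite[Theorem 6.11]{AG} again produces $\|\bm{f}-\lambda\vu\|_{\vL{p}}$. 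Substituting back yields $|\lambda|\,\|\vu\|_{\vL{p}}\le C\left(\|\bm{f}\|_{\vL{p}}+|\lambda|\,\|\vu\|_{\vL{p}}\right)$ with a constant $C$ that is not small, so the term you need to absorb cannot be absorbed; the loop is circular. Your alternative of solving $\Delta\pi=\div\bm{f}$ with Neumann data $\partial_{\vn}\pi=(\bm{f}+\Delta\vu)\cdot\vn$ is no better: the normal trace of $\Delta\vu$ is only controlled through $\|\Delta\vu\|_{\vL{p}}$ (via the $\bm{H}^{p}(\div,\Omega)$ trace theory), which is a second-order quantity, not $\|\vu\|_{\vW{1}{p}}$. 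In addition, the constants in the elliptic regularity you invoke depend on $\alpha$ through the boundary condition, while the theorem asserts $C=C(p,\Omega)$ only; and under the stated hypothesis $\alpha\in\Lb{t(p)}$ the solution is merely $\vW{1}{p}$, so the regularization of $\alpha$ and passage to the limit would itself require uniform-in-$\alpha$ bounds, which is what is being proved.

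The paper's actual proof avoids testing against $|\vu|^{p-2}\bar{\vu}$ altogether. It first gets the $\alpha$-independent $L^2$ resolvent bound from the coercive sesquilinear form (Theorem \ref{17}), where divergence-free test functions eliminate the pressure and the boundary term $\int_\Gamma\alpha|\vu_{\vt}|^2\ge 0$ is simply dropped. It then proves a weak reverse H\"{o}lder inequality for local solutions of the homogeneous resolvent problem (Lemma \ref{LRHI}, via a Caccioppoli inequality in which the local pressure is handled by a Bogovskii-type divergence lifting) and feeds this into Shen's real-variable extrapolation lemma (Lemma \ref{L0}) applied to $T_\lambda\bm{f}=|\lambda|\vu$, obtaining the $L^q$ bound for $2<q<p$ with constants depending only on $q$ and $\Omega$; the range $q\in(1,2)$ follows by duality. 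If you want to salvage a multiplication-type argument, you would at minimum have to accept an $\alpha$-dependent constant, which is weaker than the statement you are proving.
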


To prove the above theorem, we need to establish the following weak Reverse H\"{o}lder estimate (as in \cite[Lemma 6.2]{shen}):

\begin{lemma}
	\label{LRHI}
	Let $x_0\in \overline{\Omega}$ and $0< r<c \ \text{diam}(\Omega)$. Let $(\vu,\pi)\in \bm{H}^1(B(x_0, 2r)\cap \Omega)\times L^2(B(x_0, 2r)\cap \Omega)$ satisfies the Stokes system
	\begin{equation}
	\label{S_lambda}
	\begin{cases}
	\lambda \vu-\Delta\vu +\nabla \pi=\bm{0},\quad \div\;\vu=0\ &\text{ in $B(x_0, 2r)\cap \Omega$}\\
	\vu\cdot\vn=0, \quad 2\left[(\DT\vu)\vn \right]_{\vt}+\alpha\vu_{\vt}=\bm{0} \ &\text{ on $B(x_0,2r)\cap \Gamma$}
	\end{cases}
	\end{equation}
where $\lambda\in \mathbb{C}^*$ with $\Re \lambda \ge 0$ and $\alpha \in \Lb{t(p)}$. Then, for any $p\ge 2$,
	\begin{equation}
	\label{RHI}
	\left( \frac{1}{r^3} \int\displaylimits_{\Omega\cap B(x_0, r)}{|\vu|^p}\right) ^{1/p} \le C \left( \frac{1}{r^3} \int\displaylimits_{\Omega\cap B(x_0, 2r)}{|\vu|^2}\right) ^{1/2}
	\end{equation}
	where $C>0$ depends only on $p$ and $\Omega$.
\end{lemma}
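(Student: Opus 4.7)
The plan is to establish \eqref{RHI} by rescaling to a unit ball, deriving a Caccioppoli-type energy inequality that preserves the favourable signs of $\alpha$ and $\Re\lambda$, and then bootstrapping from $L^6$ to arbitrary $p\ge 2$ via local Stokes regularity. First, the substitution $\tilde\vu(y)=\vu(x_0+ry)$, $\tilde\pi(y)=r\,\pi(x_0+ry)$ preserves the form of \eqref{S_lambda} on the unit ball with $\tilde\lambda=r^2\lambda$ and $\tilde\alpha(y)=r\,\alpha(x_0+ry)$; both the sign condition $\Re\tilde\lambda\ge 0$ and the nonnegativity $\tilde\alpha\ge 0$ pass to the rescaled problem. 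When $B(x_0,2r)$ meets $\Gamma$, a local $\mathcal{C}^{1,1}$ straightening chart further reduces the situation to a flat half-ball at scale $1$.

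Next, I would derive the Caccioppoli inequality by testing \eqref{S_lambda} against the divergence-free, tangentially-vanishing vector field $\bm{w}=\bar\vu\,\phi^2-\bm{z}$, where $\phi$ is a smooth cutoff with $\phi\equiv 1$ on $B_s$ and $\mathrm{supp}\,\phi\subset B_t$ (for $1\le s<t\le 2$), and $\bm{z}\in\bm{H}^1_0(B_t\cap\Omega)$ solves $\div\,\bm{z}=\bar\vu\cdot\nabla\phi^2$ (whose right-hand side has mean zero) via a Bogovski\u{\i} construction with $\|\nabla\bm{z}\|_{L^2}\le C(t-s)^{-1}\|\vu\|_{L^2(B_t)}$. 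This choice kills the pressure term in the resulting identity (since $\div\,\bm{w}=0$ and $\bm{w}\cdot\vn=0$ on $\Gamma$), and by Theorem~\ref{Green} together with the Navier condition the boundary contribution collapses to $\int_\Gamma\alpha|\vu_\tau|^2\phi^2\ge 0$ (because $\bm{z}$ vanishes on $B_t\cap\Gamma$). Taking real parts and using $\Re\lambda\ge 0$, $\alpha\ge 0$, Korn's inequality, and Young's inequality to absorb the cross terms, one obtains
\[
\int_{B_s\cap\Omega}|\nabla\vu|^2 \;\le\; \frac{C}{(t-s)^2}\int_{B_t\cap\Omega}|\vu|^2 ,
\]
and Sobolev's embedding $\bm{H}^1\hookrightarrow\vL{6}$ in three dimensions then yields \eqref{RHI} with $p=6$.

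To reach arbitrary $p\ge 2$, I would bootstrap: with $\vu\in L^6_{loc}$ in hand, viewing $-\lambda\vu\in L^6_{loc}$ as the datum in $-\Delta\vu+\nabla\pi=-\lambda\vu$ and applying a localized version of the $W^{2,q}$-regularity of Theorem~\ref{18} on shrinking concentric balls, together with the $\mathcal{C}^{1,1}$ boundary and the sign $\alpha\ge 0$, one successively raises integrability to reach $\vu\in L^\infty_{loc}$; the trivial bound $|B_r|^{-1/p}\|\vu\|_{L^p(B_r)}\le\|\vu\|_{L^\infty(B_r)}$ then gives \eqref{RHI} for every $p\ge 2$.

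The main obstacle is the uniformity of $C$ in $\alpha$ and $\lambda$, which the lemma insists on (only $p$ and $\Omega$ are allowed). The twin sign conditions $\Re\lambda\ge 0$ and $\alpha\ge 0$ must be exploited systematically so that no coercivity constant in either enters any inequality that is later inverted; in particular, the Caccioppoli step must employ a divergence-free, tangentially-vanishing test function, so as to suppress the pressure, which is the natural conduit of $\lambda$-dependence through $\nabla\pi=\Delta\vu-\lambda\vu$. The residual $\lambda$-dependent term arising from pairing $\lambda\vu$ with the Bogovski\u{\i} correction $\bm{z}$ must then be balanced against the nonnegative contribution $\Re\lambda\int|\vu|^2\phi^2$ on the left, an absorption that is most delicate in the regime $\Re\lambda\ll|\lambda|$ and requires a careful choice of the nested scales $s,t$.
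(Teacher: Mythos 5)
Your overall architecture (localize, prove a Caccioppoli inequality, upgrade integrability) matches the paper's, and your divergence-free test function $\bar{\vu}\,\phi^2-\bm{z}$ is a legitimate variant of the paper's device (the paper instead keeps the pressure and bounds $\|\pi-\pi_0\|_{L^2(B_t\cap\Omega)}\le C\bigl(|\lambda|\,\|\vu\|_{L^2}+\|\DT\vu\|_{L^2}\bigr)$ by testing against a Bogovskii field $\bm{\psi}$ with $\div\,\bm{\psi}=\pi-\pi_0$). But there is a genuine gap exactly at the point you yourself flag as ``delicate''. After taking real parts, the term $\Re\bigl(\lambda\int\vu\cdot\bar{\bm z}\bigr)$ is controlled only by $|\lambda|\,\|\vu\|_{L^2(B_t\cap\Omega)}\|\bm z\|_{L^2}\le C|\lambda|(t-s)^{-1}\|\vu\|_{L^2(B_t\cap\Omega)}^2$, while the only $\lambda$-dependent quantity available on the left is $\Re\lambda\int\phi^2|\vu|^2$. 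For $\lambda=i\mu$ with $|\mu|\to\infty$ that left-hand term vanishes and the error grows linearly in $|\mu|$, so no choice of the nested scales $s,t$ can absorb it and your Caccioppoli constant degenerates in $\lambda$ --- which destroys the whole point of the lemma. The paper resolves precisely this by extracting \emph{both} the real and the imaginary parts of the energy identity and adding them, so that $(\Re\lambda+|\Im\lambda|)\int\eta^2|\vu|^2\ge \tfrac{1}{\sqrt 2}\,|\lambda|\int\eta^2|\vu|^2$ sits on the left; only then can the $|\lambda|$-weighted errors be absorbed, via the iteration lemma of \cite[Lemma 0.5]{modica} applied to $f=|\lambda|\,|\vu|^2+|\DT\vu|^2$. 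Your argument never uses the imaginary part, so it does not close.

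The final step also leaks $\lambda$-dependence. Bootstrapping from $L^6$ to $L^\infty$ by treating $-\lambda\vu$ as a datum in the local $W^{2,q}$ theory gives at best $\|\vu\|_{W^{2,q}}\le C(1+|\lambda|)\|\vu\|_{L^q}$, so the constant in \eqref{RHI} produced this way depends on $|\lambda|$; since the lemma is fed into Shen's extrapolation Lemma \ref{L0} precisely to obtain the \emph{uniform} resolvent bound of Theorem \ref{T1}, this is not an admissible loss. The paper stays at the energy level: from the Caccioppoli inequality it invokes the self-improving property of weak reverse H\"older inequalities (\cite[Lemma 6.7]{mitrea}) together with a duality argument to pass from exponent $2$ to arbitrary $p$, never invoking second-order regularity with $\lambda\vu$ on the right-hand side. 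Your reduction to $p=6$ via Korn and Sobolev is fine; it is the passage beyond $p=6$ that must be done without reintroducing the resolvent term.
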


Note that if $B(x_0, 2r)\cap \Gamma = \emptyset$, then we do not consider any boundary condition.

\begin{proof}
	By a geometric consideration, we only need to establish the estimate in two cases : (i) $x_0\in \Omega$ and $B(x_0, 3r)\subset \Omega$; (ii) $x_0 \in \Gamma$.
	
	The first case (i) follows from interior estimate (cf. \cite[estimate 5.22]{shen}).
	The second case (ii) concerns a boundary estimate. Here onwards, we denote the ball $B(x_0,R)$ by $B_R$ and $\dashint_\omega {f}= \frac{1}{|\omega|}\int\displaylimits_{ \omega  } {f}$.\\

1. Let $r\le s < t\le 2r$ and consider a cut-off unction $\eta\in C_c^{\infty}(B_t)$ such that
	\begin{equation}
	\label{eta}
	\eta \equiv 1 \ \text{ on } B_s, \quad 0\le \eta \le 1 \quad \text{ and } \quad |\nabla \eta| \le \frac{C}{t-s}.
	\end{equation}
Multiplying (\ref{S_lambda}) by $\eta ^2 \vu$ and integrating by parts, we get,
	\begin{equation*}
0= \lambda \int\displaylimits_{B_{2r} \cap\Omega}{\eta^2 |\vu|^2}+2\int\displaylimits_{B_{2r} \cap\Omega}{\DT\vu: \DT(\eta^2 \vu)} + \int\displaylimits_{B_{2r} \cap\Gamma}{\alpha \eta^2 |\vu_{\vt}|^2} - \int\displaylimits_{B_{2r} \cap\Omega}{(\pi-\pi_0) \ \div (\eta^2 \vu)}
	\end{equation*}
where $\pi_0 = \dashint\displaylimits_{\Omega}\pi$. This gives, using the fact that $\div \ \vu =0$ in $\Omega$,
\begin{equation*}
\lambda \int\displaylimits_{B_{2r} \cap\Omega}{\eta^2 |\vu|^2}+ 2\int\displaylimits_{B_{2r} \cap\Omega}{\eta^2 |\DT\vu|^2} +\int\displaylimits_{B_{2r} \cap\Gamma}{\alpha \eta^2 |\vu_{\vt}|^2}
= -4\int\displaylimits_{B_{2r} \cap\Omega}{\DT\vu:\eta \nabla \eta \bar{\vu}} + 2\int\displaylimits_{B_{2r} \cap\Omega}{(\pi-\pi_0) \eta \nabla \eta \bar{\vu}}
\end{equation*}
where $	\nabla \eta \bar{\vu}$	is the matrix $\nabla \eta\otimes \bar{\vu}$. Equating real and imaginary parts and using Cauchy's inequality, this implies
\begin{align}
& \quad \Re \lambda \int\displaylimits_{B_{2r} \cap\Omega}{\eta^2 |\vu|^2}+ 2\int\displaylimits_{B_{2r} \cap\Omega}{\eta^2 |\DT\vu|^2} +\int\displaylimits_{B_{2r} \cap\Gamma}{\alpha \eta^2 |\vu_{\vt}|^2}\nonumber\\
& = -4\Re \int\displaylimits_{B_{2r} \cap\Omega}{\DT\vu:\eta \nabla \eta \bar{\vu}} + 2\Re \int\displaylimits_{B_{2r} \cap\Omega}{(\pi-\pi_0) \eta \nabla \eta \bar{\vu}}\nonumber\\
&\le \varepsilon \int\displaylimits_{ \Omega \cap B_{2r}}{\eta^2 |\DT \vu|^2} + C_\varepsilon \int\displaylimits_{ \Omega \cap B_{2r}}{|\nabla \eta|^2 |\vu|^2} + \varepsilon \int\displaylimits_{ \Omega \cap B_{2r}}{\eta^2 |\pi- \pi_0|^2} + C_\varepsilon \int\displaylimits_{ \Omega \cap B_{2r}}{|\nabla \eta |^2 |\vu|^2} \label{0}
\end{align}
and
\begin{align}
& \quad \Im \lambda \int\displaylimits_{B_{2r} \cap\Omega}{\eta^2 |\vu|^2}\nonumber\\
& = -4\Im \int\displaylimits_{B_{2r} \cap\Omega}{\DT\vu:\eta \nabla \eta \bar{\vu}} + 2\Im \int\displaylimits_{B_{2r} \cap\Omega}{(\pi-\pi_0) \eta \nabla \eta \bar{\vu}}\nonumber\\
&\le \varepsilon \int\displaylimits_{ \Omega \cap B_{2r}}{\eta^2 |\DT \vu|^2} + C_\varepsilon \int\displaylimits_{ \Omega \cap B_{2r}}{|\nabla \eta|^2 |\vu|^2} + \varepsilon \int\displaylimits_{ \Omega \cap B_{2r}}{\eta^2 |\pi-\pi_0|^2} + C_\varepsilon \int\displaylimits_{ \Omega \cap B_{2r}}{|\nabla \eta |^2 |\vu|^2} .\label{1}
\end{align}
Now adding (\ref{0}) and (\ref{1}) gives, since $\alpha>0$,
\begin{equation*}
\begin{aligned}
& \quad (\Re \lambda + |\Im \lambda |) \int\displaylimits_{B_{2r} \cap\Omega}{\eta^2 |\vu|^2}+ 2\int\displaylimits_{B_{2r} \cap\Omega}{\eta^2 |\DT\vu|^2}\nonumber\\
& \le \varepsilon \int\displaylimits_{ \Omega \cap B_{2r}}{\eta^2 |\DT \vu|^2} + C_\varepsilon \int\displaylimits_{ \Omega \cap B_{2r}}{|\nabla \eta|^2 |\vu|^2} + \varepsilon \int\displaylimits_{ \Omega \cap B_{2r}}{\eta^2 |\pi- \pi_0|^2}.
\end{aligned}
\end{equation*}
Incorporating the properties of $\eta$, we obtain
\begin{equation}
\label{3}
|\lambda| \int\displaylimits_{B_{s} \cap\Omega}{ |\vu|^2}+ \int\displaylimits_{B_{s} \cap\Omega}{ |\DT\vu|^2} \le \frac{C(\Omega)}{(t-s)^2} \int\displaylimits_{ \Omega \cap B_{t}}{ |\vu|^2} + \varepsilon \int\displaylimits_{ \Omega \cap B_{t}}{ |\pi - \pi_0|^2}.
\end{equation}

2. Next to estimate the pressure term, as $\pi-\pi_0\in L^2(B_t\cap\Omega)$, there exists a unique $\bm{\psi}\in \bm{H}^1(B_t\cap\Omega)$ such that
\begin{equation*}
\begin{cases}
\div \ \bm{\psi}=\pi-\pi_0 &\text{ in } B_t\cap\Omega\\
\bm{\psi} = \bm{0} &\text{ on } \partial(B_t\cap\Omega)
\end{cases}
\end{equation*}
satisfying
\begin{equation}
\label{psi}
\|\bm{\psi}\|_{\bm{H}^1(B_t\cap\Omega)} \le C(\Omega) \ \|\pi-\pi_0\|_{L^2(B_t\cap\Omega)}. 
\end{equation}
Multiplying (\ref{S_lambda}) by $\bm{\psi}$ yields (replacing $\pi$ by $(\pi-\pi_0)$ and extending $\bm{\psi}$ by $0$ outside $ \Omega \cap B_t$),
\begin{equation*}
\lambda \int\displaylimits_{ \Omega \cap B_t}{\vu \cdot \bm{\psi}} + 2 \int\displaylimits_{ \Omega \cap B_t}{\DT \vu : \DT\bm{\psi}}= \int\displaylimits_{ \Omega \cap B_t}{\left( \pi- \pi_0\right) \ \div \ \bm{\psi}} = \int\displaylimits_{ \Omega \cap B_t}{|\pi-\pi_0|^2}
\end{equation*}
which gives, using (\ref{psi}),
\begin{equation*}
\label{2}
\|\pi-\pi_0\|_{L^2(\Omega \cap B_t)} \le C(\Omega) \left( |\lambda| \|\vu\|_{L^2(\Omega \cap B_t)} + \|\DT\vu \|_{L^2(\Omega\cap B_t)}\right) .
\end{equation*}
Plugging this estimate in (\ref{3}), we obtain
\begin{equation*}
| \lambda | \int\displaylimits_{B_s \cap\Omega}{|\vu|^2}+ \int\displaylimits_{B_s \cap\Omega}{|\DT\vu|^2}\nonumber\\
\le \frac{C(\Omega)}{(t-s)^2}\int\displaylimits_{ \Omega \cap B_t}{|\vu|^2} + \varepsilon \int\displaylimits_{ \Omega \cap B_t}{ |\DT \vu|^2} + \varepsilon |\lambda | \int\displaylimits_{ \Omega \cap B_t}{|\vu|^2}.
\end{equation*}
From here, we deduce the following Caccioppoli inequality for Stokes system
\begin{equation}
\label{11}
\int\displaylimits_{B_s\cap\Omega}{|\DT\vu|^2} \le \frac{C(\Omega)}{(t-s)^2}\int\displaylimits_{B_t\cap\Omega }{|\vu|^2}
\end{equation}
which follows from the general result \cite[Lemma 0.5]{modica} setting $f = |\lambda| |\vu|^2 + |\DT\vu |^2, g = |\vu|^2, h= 0$ and $\alpha =2$.

3. Therefore \cite[Lemma 6.7]{mitrea} enables us to have the following reverse H\"{o}lder inequality,
\begin{equation}
\label{10}
\left( \ \ppvint\displaylimits_{ \Omega \cap B_r}{|\vu|^2}\right) ^{1/2} \le C(q) \left( \,\,\,\,\,\,\,\, \ppvint\displaylimits_{ \Omega \cap B_{2r}}{|\vu|^q}\right) ^{1/q} \quad \text{ for any } q>0.
\end{equation}
Finally we claim that the above inequality implies (\ref{RHI}). To prove that, let us define an operator
\begin{equation*}
\begin{aligned}
T : {\bm L}^{p'}(\Omega\cap B_{2r}) &\rightarrow {\bm L}^{2}(\Omega\cap B_r)\\
{\bm v} & \mapsto {\bm v} .
\end{aligned}
\end{equation*}
Now for the adjoint map
$$
T^* : {\bm L}^{2}(\Omega\cap B_r) \rightarrow {\bm L}^{p}(\Omega\cap B_{2r})
$$
by definition, we can write,
\begin{equation*}
\begin{aligned}
\left\langle T^*\vu , \bm{f}\right\rangle_{{\bm L}^{p}(\Omega\cap B_{2r}) \times {\bm L}^{p'}(\Omega\cap B_{2r})}& = \left\langle \vu, T \bm{f} \right\rangle_{{\bm L}^{2}(\Omega\cap B_r) \times {\bm L}^{2}(\Omega\cap B_r)}\\
& \le C(p) r^{3(\frac{1}{2}- \frac{1}{p'})} \|{\bm f}\|_{{\bm L}^{p'}(\Omega\cap B_{2r })} \|\vu\|_{{\bm L}^{2}(\Omega\cap B_r)}
\end{aligned}
\end{equation*}
where the last inequality comes from (\ref{10}). This shows
\begin{equation*}
\|T^* \vu\|_{{\bm L}^{p}(\Omega\cap B_{2r}) } \le C(p) r^{3(\frac{1}{2}- \frac{1}{p'})} \|\vu\|_{{\bm L}^{2}(\Omega\cap B_r)}.
\end{equation*}
Since $T^*\vu = \vu$ on $ {\bm L}^p(\Omega\cap B_r)$, we then obtain
\begin{equation*}
\left( \ \int\displaylimits_{ \Omega \cap B_r}{|\vu|^p}\right) ^{1/p} \le C(p) r^{3(\frac{1}{2}- \frac{1}{p'})} \left( \ \int\displaylimits_{ \Omega \cap B_r}{|\vu|^2}\right) ^{1/2}
\end{equation*}
which concludes (\ref{RHI}) upon using $\|\vu\|_{{\bm L }^2(\Omega\cap B_r)}\le \|\vu\|_{{\bm L }^2(\Omega\cap B_{2r})}$.
\hfill
\end{proof}

The following lemma, due to Shen \cite[Lemma 6.3]{shen}, contains the real variable argument needed to complete the proof of Theorem \ref{T1}.

\begin{lemma}
	\label{L0}
	Let $p>2$ and $\Omega$ be a bounded Lipschitz domain in $\R^{d}$. Suppose that:\\
	(1) $T$ is a bounded sublinear operator in $L^2(\Omega;\mathbb{C}^m)$ and $\|T\|_{L^2\rightarrow L^2} \le C_0$;\\
	(2) There exist constants $0< \beta <1$ and $N>1$ such that for any bounded measurable $f$ with $\text{supp} f \subset \Omega \setminus 3B$,
	\begin{equation}
	\label{extrapolation_condition}
	\left( \ \pvint\displaylimits_{\Omega\cap B}{|Tf|^p}\right) ^{1/p} \le N\left\lbrace \left( \ \ppvint\displaylimits_{\Omega\cap 2B}{|Tf|^2}\right)^{1/2} + \sup_{B'\supset B} \left( \ \dashint\displaylimits_{B'}{|f|^2}\right) ^{1/2} \right\rbrace 
	\end{equation}
	where $B=B(x,r)$ is a ball with $x\in \overline{\Omega}$ and $0<r<\beta \ \text{diam} (\Omega)$. Then $T$ is bounded on $L^q(\Omega;\mathbb{C}^m)$ for any $2< q <p$. Moreover, $\|T\|_{L^q \rightarrow L^q}$ is bounded by a constant depending on at most $d, m, \beta, N, C_0, p, q$ and the Lipschitz character of $\Omega$.
\end{lemma}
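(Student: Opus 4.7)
The plan is to derive a good-$\lambda$ inequality comparing level sets of $|Tf|$ to those of the restricted Hardy--Littlewood maximal function $\mathcal{M}_\Omega(|f|^2)(x):=\sup_{r>0}\dashint\displaylimits_{B(x,r)\cap\Omega}|f|^2$, and then integrate it against $\lambda^{q-1}$ to upgrade hypothesis (1) to an $L^q$-bound for $2<q<p$. Recall that $\mathcal{M}_\Omega$ is bounded on $L^s(\Omega)$ for every $s>1$, and in particular $\|\mathcal{M}_\Omega(|Tf|^2)^{1/2}\|_{L^q(\Omega)}\le C\|Tf\|_{L^q(\Omega)}$ since $q/2>1$. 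By density I may assume $f\in L^\infty(\Omega)$.

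For $\lambda>\lambda_0:=C\|f\|_{L^2(\Omega)}|\Omega|^{-1/2}$ with $C$ sufficiently large, set $E(\lambda):=\{x\in\Omega:\mathcal{M}_\Omega(|Tf|^2)(x)>\lambda^2\}$. A Whitney-type Calder\'on--Zygmund decomposition of $E(\lambda)$ produces essentially disjoint balls $B_j=B(x_j,r_j)$ with $x_j\in\overline{\Omega}$, $r_j<\beta\,\mathrm{diam}(\Omega)$, $\sum_j|B_j|\le C|E(\lambda)|$, and the Whitney control
$$
\left(\,\dashint\displaylimits_{cB_j\cap\Omega}|Tf|^2\right)^{1/2}\le C\lambda
$$
for a fixed dilation $c>2$. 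On each $B_j$ split $f=f_j'+f_j''$ with $f_j':=f\chi_{3B_j\cap\Omega}$ and $f_j'':=f-f_j'$ supported in $\Omega\setminus 3B_j$; by sublinearity $|Tf|\le |Tf_j'|+|Tf_j''|$. Hypothesis (1) yields
$$
\left(\,\dashint\displaylimits_{2B_j\cap\Omega}|Tf_j'|^2\right)^{1/2}\le \frac{C_0\|f\|_{L^2(3B_j\cap\Omega)}}{|B_j|^{1/2}}\le C\bigl(\mathcal{M}_\Omega(|f|^2)(y)\bigr)^{1/2}\qquad(y\in B_j),
$$
and hypothesis (2) applied to $f_j''$, combined with $|Tf_j''|\le |Tf|+|Tf_j'|$ on $2B_j$, the Whitney control, and the previous local estimate, produces
$$
\left(\,\dashint\displaylimits_{B_j\cap\Omega}|Tf|^p\right)^{1/p}\le C\lambda+C\bigl(\mathcal{M}_\Omega(|f|^2)(y)\bigr)^{1/2}\qquad(y\in B_j).
$$

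If $B_j$ meets $\{\mathcal{M}_\Omega(|f|^2)\le \gamma\lambda^2\}$ for some $0<\gamma\le 1$, then Chebyshev gives $|\{x\in B_j:|Tf|(x)>A\lambda\}|\le CA^{-p}|B_j|$ for every $A>1$. Since $\{|Tf|>A\lambda\}\subset E(\lambda)=\bigcup_j B_j$ and every point of $\{|Tf|>A\lambda\}\cap\{\mathcal{M}_\Omega(|f|^2)\le \gamma\lambda^2\}$ lies in such a ``good'' $B_j$, summation produces the good-$\lambda$ inequality
$$
|\{|Tf|>A\lambda\}|\le CA^{-p}|E(\lambda)|+|\{\mathcal{M}_\Omega(|f|^2)>\gamma\lambda^2\}|.
$$
Multiplying by $q\lambda^{q-1}$, integrating over $\lambda\in(\lambda_0,\infty)$, and substituting $\mu=A\lambda$ on the left yields
$$
A^{-q}\|Tf\|_{L^q(\Omega)}^q\le CA^{-p}\|\mathcal{M}_\Omega(|Tf|^2)^{1/2}\|_{L^q(\Omega)}^q+C\gamma^{-q/2}\|f\|_{L^q(\Omega)}^q+C\lambda_0^q|\Omega|.
$$
Using $\|\mathcal{M}_\Omega(|Tf|^2)^{1/2}\|_{L^q}\le C\|Tf\|_{L^q}$, choosing $A$ so large that $CA^{q-p}<1/2$ (possible because $q<p$), absorbing the leading term, and controlling $\lambda_0^q|\Omega|$ by $C\|f\|_{L^q(\Omega)}^q$ via H\"older's inequality, I obtain $\|Tf\|_{L^q(\Omega)}\le C\|f\|_{L^q(\Omega)}$.

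The main technical difficulty lies in the Whitney decomposition of $E(\lambda)$ near the boundary: each ball $B_j$ must simultaneously satisfy $r_j<\beta\,\mathrm{diam}(\Omega)$ so that hypothesis (2) is applicable, and admit the two-sided Whitney control $(\dashint_{cB_j\cap\Omega}|Tf|^2)^{1/2}\le C\lambda$ needed to convert the $L^2$-average of $|Tf|$ on $2B_j$ into the factor $C\lambda$ in the key local estimate. The Lipschitz character of $\Omega$ governs the constants controlling the geometry of $cB_j\cap\Omega$, and the choice of threshold $\lambda_0$ in terms of $C_0\|f\|_{L^2(\Omega)}|\Omega|^{-1/2}$ guarantees that every Whitney ball produced from $E(\lambda)$ with $\lambda>\lambda_0$ is admissible; once the geometry is in place, the good-$\lambda$ integration is standard, and the hypothesis $q<p$ appears only to ensure that $A^{q-p}\to 0$ as $A\to\infty$, i.e., to provide the slack needed for the absorption step.
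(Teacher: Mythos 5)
The paper does not prove this lemma itself --- it is quoted from Shen \cite[Lemma 6.3]{shen} --- so your attempt can only be measured against the standard proof of that result, namely a Calder\'on--Zygmund/good-$\lambda$ argument in the spirit of Caffarelli--Peral, which is indeed the strategy you adopt. There is, however, a genuine gap at the key local estimate. From the splitting $f=f_j'+f_j''$ you correctly obtain an $L^p$-average bound for $Tf_j''$ on $B_j$ (via hypothesis (2)) and an $L^2$-average bound for $Tf_j'$ on $2B_j$ (via hypothesis (1)), but you then assert
\[
\Big(\ \dashint_{B_j\cap\Omega}|Tf|^p\Big)^{1/p}\le C\lambda + C\bigl(\mathcal{M}_\Omega(|f|^2)(y)\bigr)^{1/2}.
\]
This does not follow: $|Tf|\le|Tf_j'|+|Tf_j''|$, and the piece $Tf_j'$ is controlled only in $L^2(B_j)$, never in $L^p(B_j)$ --- hypothesis (2) cannot be applied to $f_j'$ because $f_j'$ is supported \emph{inside} $3B_j$, and hypothesis (1) gives nothing beyond $L^2$. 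Consequently the Chebyshev step $|\{x\in B_j:|Tf|>A\lambda\}|\le CA^{-p}|B_j|$ is unjustified, and the good-$\lambda$ inequality as you state it is not established.

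The repair is standard and preserves your architecture. On a ``good'' ball $B_j$ estimate the two pieces separately: Chebyshev at exponent $p$ for $Tf_j''$ contributes $CA^{-p}|B_j|$, while Chebyshev at exponent $2$ for $Tf_j'$, whose $L^2$-average is at most $C\gamma^{1/2}\lambda$ on a good ball, contributes $CA^{-2}\gamma|B_j|$. Summation then yields
\[
|\{|Tf|>A\lambda\}|\le C\bigl(A^{-p}+A^{-2}\gamma\bigr)|E(\lambda)|+|\{\mathcal{M}_\Omega(|f|^2)>\gamma\lambda^2\}|,
\]
and the absorption must be done in two stages: first choose $A$ large so that $CA^{q-p}<1/4$ (using $q<p$), then choose $\gamma$ small \emph{depending on $A$} so that $CA^{q-2}\gamma<1/4$ (this is where $q>2$ enters and why $\gamma$ cannot be fixed independently of $A$, contrary to what your write-up suggests). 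With this correction --- together with a truncation or a priori finiteness argument to justify absorbing $\|Tf\|_{L^q(\Omega)}^q$, which is not known to be finite in advance for $q>2$ --- your proof goes through and essentially reproduces Shen's argument; the remaining points you flag (admissibility of the Whitney radii via the choice of $\lambda_0$, and the doubling geometry of $cB_j\cap\Omega$ from the Lipschitz character) are handled correctly.
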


\begin{proof}[{ \bf Proof of Theorem \ref{T1}}]
	By rescaling, we may assume that $\text{diam}(\Omega)=1$. Then for $\lambda \in \mathbb{C}^*$ with $\Re \lambda \ge 0$ and $\bm{f} \in \vL{2}$, there exists a unique $(\vu,\pi) \in \vH{1}\times L^2_0(\Omega)$ satisfying (\ref{S3.ER1})-(\ref{S3.ER3}). Also
	\begin{equation*}
	|\lambda| \int\displaylimits_{\Omega}{|\vu|^2} + 2 \int\displaylimits_{\Omega} {|\DT \vu|^2} + \int\displaylimits_{\Gamma}{\alpha |\vu_{\vt}|^2} \le C \int\displaylimits_{\Omega} {|\bm{f}| |\vu|}
	\end{equation*}
	where $C$ depends only on $\Omega$. By H\"{o}lder inequality, this implies
	\begin{equation*}
	|\lambda| \|\vu\|_{\vL{2}}\le C_0 \|\bm{f}\|_{\vL{2}}
	\end{equation*}
	where $C_0$ depends only on $\Omega$. Let us now define the operator $T_\lambda$ by $T_\lambda (\bm{f}) =|\lambda| \vu$. Then $T_\lambda$ is a bounded linear operator on $\vL{2}$ and $\|T_\lambda \|_{\bm{L}^2\rightarrow \bm{L}^2} \le C_0$ which is the assumption (1) in Lemma \ref{L0}. We will show $\|T_\lambda \|_{\bm{L}^q\rightarrow \bm{L}^q} \le C$ for $2<q<p $ using Lemma \ref{L0}.
	
	To verify the assumption (2) in Lemma \ref{L0}, let $B=B(x_0,r)$ where $x_0\in \overline{\Omega}$ and $ 0<r<c$. Let $\bm{f}\in \vL{2}$ with $\text{supp} \bm{f}\subset \Omega \setminus 3B$. Since
	\begin{equation*}
	\begin{cases}
	\lambda \vu - \Delta \vu + \nabla \pi = \bm{0}, \,\,\, \div \ \vu =0 \ & \text{ in } \ \Omega \cap 3B\\
		\vu\cdot\vn=0, \quad 2\left[(\DT\vu)\vn \right]_{\vt}+\alpha\vu_{\vt}=\bm{0} \ &\text{ on $\Gamma\cap 3B$}
	\end{cases}
	\end{equation*}
	we may then apply Lemma \ref{LRHI} to obtain,
	\begin{equation*}
	\left( \frac{1}{r^3}\int\displaylimits_{ \Omega \cap B}{|\vu|^p}\right) ^{1/p} \le C \left( \frac{1}{r^3}\int\displaylimits_{ \Omega \cap 2B}{|\vu|^2}\right) ^{1/2}.
	\end{equation*}
	It follows that
	\begin{equation*}
	\left( \ \pvint\displaylimits_{\Omega \cap B}{|T_\lambda (\bm{f})|^p}\right) ^{1/p} \le C \left( \ \ppvint\displaylimits_{\Omega \cap 2B}{|T_\lambda (\bm{f})|^2}\right) ^{1/2}
	\end{equation*}
	where $C$ depends only on $ p$ and $\Omega$. Therefore by Lemma \ref{L0}, we conclude that the operator $T_\lambda$ is bounded on $\bm{L}^q(\Omega)$ for any $2<q<p$ and that $\|T_\lambda \|_{\bm{L}^q\rightarrow \bm{L}^q}\le C_q$ where $C_q$ depends on at most $q$ and $\Omega$. Thus in view of the definition of $T_\lambda$, we have shown that for any $q>2$,
	\begin{equation*}
	\|\vu\|_{\vL{q}} \le \frac{C_q}{|\lambda|} \|\bm{f}\|_{\vL{q}}.
	\end{equation*}
	By duality, the estimate also holds for $q\in (1,2)$.
	\hfill
\end{proof}

As a conclusion, we have the following theorem:

\begin{theorem}
\label{51}
Let $p\in (1,\infty)$ and  $\alpha$ as in \eqref{alpha}. Then for all $\lambda \in \mathbb{C}^*$ with $\Re \lambda \geq 0$ and $\bm{f}\in\vL{p}$, there exists a constant $C>0$ depending on at most $p$ and $\Omega$ such that the unique solution $(\vu, \pi )\in {\bf D}(A_{p,\alpha})\times \left( W^{1, p}(\Omega )\cap L^p_0(\Omega)\right) $ of \eqref{S3.ER1}-\eqref{S3.ER3} satisfies:
\begin{equation}
\label{13}
\|\vu\|_{\vL{p}} \leq \frac{C}{|\lambda|}\|\bm{f}\|_{\vL{p}}.
\end{equation}
If moreover $\alpha$ is a constant and either (i) $\Omega$ is not axisymmetric or (ii) $\Omega$ is axisymmetric and $\alpha\ge \alpha_*>0$, then
\begin{equation}
\label{14}
\|\DT\vu\|_{\vL{p}} \leq \frac{C}{\sqrt{|\lambda|}}\|\bm{f}\|_{\vL{p}}
\end{equation}
and
\begin{equation}
\label{15}
\|\vu\|_{\vW{2}{p}} \leq C \ \|\bm{f}\|_{\vL{p}}.
\end{equation}
\end{theorem}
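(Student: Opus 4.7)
The plan is to establish the three inequalities in sequence, using \eqref{13} as the foundational bound from which \eqref{15} and then \eqref{14} are derived.

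For \eqref{13}, I first note that by Remark \ref{S3Ralpha}, any $\alpha$ satisfying \eqref{alpha} automatically belongs to $L^{t(p)}(\Gamma)$. Theorem \ref{18} therefore produces a unique solution $(\vu,\pi)\in \mathbf{D}(A_{p,\alpha})\times(\W{1}{p}\cap L^p_0(\Omega))$ of \eqref{S3.ER1}-\eqref{S3.ER3}, and Theorem \ref{T1} applied in this setting delivers the resolvent estimate \eqref{13} with constant depending only on $p$ and $\Omega$. No further work is needed for this step.

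For \eqref{15}, under the additional hypothesis that $\alpha$ is constant and either (i) or (ii) holds, I will rewrite \eqref{S3.ER1} as the stationary Stokes system
$$
-\Delta\vu+\nabla\pi=\bm{f}-\lambda\vu \quad \text{in } \Omega,
$$
still coupled with the divergence-free condition and Navier boundary condition \eqref{S3.ER2}-\eqref{S3.ER3}. Invoking the $L^p$ regularity result for the steady Stokes problem from \cite[Theorem 5.10]{AG} — the $L^p$-analog of \eqref{29.}, whose constant is $\alpha$-independent precisely because uniqueness for the steady problem is secured under hypothesis (i) or (ii) — I will obtain
$$
\|\vu\|_{\vW{2}{p}}+\|\pi\|_{\W{1}{p}}\leq C(\Omega,p)\bigl(\|\bm{f}\|_{\vL{p}}+|\lambda|\,\|\vu\|_{\vL{p}}\bigr).
$$
Inserting \eqref{13} to control $|\lambda|\|\vu\|_{\vL{p}}$ by $C\|\bm{f}\|_{\vL{p}}$ immediately yields \eqref{15}.

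Finally, \eqref{14} will follow by interpolation between \eqref{13} and \eqref{15}. The Gagliardo-Nirenberg inequality on the $\mathcal{C}^{1,1}$ bounded domain $\Omega$ gives
$$
\|\nabla\vu\|_{\vL{p}}\leq C(\Omega,p)\,\|\vu\|_{\vL{p}}^{1/2}\,\|\vu\|_{\vW{2}{p}}^{1/2},
$$
so combining \eqref{13} and \eqref{15} produces $\|\nabla\vu\|_{\vL{p}}\le C|\lambda|^{-1/2}\|\bm{f}\|_{\vL{p}}$ for $|\lambda|\ge 1$; for $|\lambda|\le 1$ the trivial bound $\|\DT\vu\|_{\vL{p}}\le\|\vu\|_{\vW{2}{p}}\le C\|\bm{f}\|_{\vL{p}}\le C|\lambda|^{-1/2}\|\bm{f}\|_{\vL{p}}$ coming from \eqref{15} suffices. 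Since $|\DT\vu|\le|\nabla\vu|$ pointwise, \eqref{14} follows in both regimes. The main obstacle is Step 2: one must invoke the steady $W^{2,p}$-estimate in a form where the constant depends only on $\Omega$ and $p$ and not on the constant $\alpha$, which is exactly why the non-axisymmetry (or strictly positive lower bound on $\alpha$) is required. Once \eqref{15} is in hand, the interpolation in Step 3 is routine.
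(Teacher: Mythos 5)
Your proposal is correct and follows essentially the same route as the paper: estimate \eqref{13} is quoted from Theorem \ref{T1}, estimate \eqref{15} comes from the $\alpha$-independent $W^{2,p}$ a priori bound for the stationary Stokes problem with Navier boundary condition applied to $-\Delta\vu+\nabla\pi=\bm{f}-\lambda\vu$ combined with \eqref{13}, and estimate \eqref{14} is obtained from the Gagliardo--Nirenberg interpolation $\|\vu\|_{\vW{1}{p}}\le C\|\vu\|_{\vW{2}{p}}^{1/2}\|\vu\|_{\vL{p}}^{1/2}$ together with the other two bounds. Your case split on $|\lambda|\gtrless 1$ is unnecessary (the product $\|\vu\|_{\vL{p}}^{1/2}\|\vu\|_{\vW{2}{p}}^{1/2}\le C|\lambda|^{-1/2}\|\bm{f}\|_{\vL{p}}$ holds uniformly) but harmless.
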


\begin{proof}
Estimate \eqref{13} follows from Theorem \ref{T1}.

Let us now prove the estimate (\ref{14}). From Gagliardo-Nirenberg inequality \cite[Chapter IV, Theorem 4.14, Theorem 4.17]{Adams} and regularity estimate for the stationary Stokes problem \cite[Theorem 6.11]{AG}, we have,
\begin{align*}
\|\DT\vu\|_{\vL{p}} \leq C \ \|\vu\|_{\vW{1}{p}} &\leq C \ \|\vu\|^{1/2}_{\vW{2}{p}}\|\vu\|^{1/2}_{\vL{p}}\\
& \leq C \, \|\bm{f}-\lambda \vu\|^{1/2}_{\vL{p}} \|\vu\|^{1/2}_{\vL{p}}\\
& \leq C \left( \|\bm{f}\|_{\vL{p}} + |\lambda| \|\vu\|_{\vL{p}}\right) ^{1/2} \|\vu\|^{1/2}_{\vL{p}}.
\end{align*}
Thus estimate (\ref{14}) follows using \eqref{13}.

To prove (\ref{15}) we again use \cite[Theorem 6.11]{AG} and (\ref{13}) to obtain
$$\|\vu\|_{\vW{2}{p}} \leq C \ \|\bm{f} - \lambda \vu \|_{\vL{p}} \leq C \ \|\bm{f}\|_{\vL{p}} .$$
\hfill
\end{proof}

Finally we obtain our first main result:

\begin{theorem}
\label{24}
Let $\alpha$ be as in \eqref{alpha}. The operator $-A_{p,\alpha}$ generates a bounded analytic semigroup on $\bm{L}^p_{\sigma,\vt}(\Omega)$ for all $1<p<\infty$.
\end{theorem}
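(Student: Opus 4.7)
\textbf{Proof proposal for Theorem \ref{24}.} The strategy is to verify the hypotheses of Theorem \ref{analyticity} for the operator $A := -A_{p,\alpha}$ acting on the Banach space $\mathscr{E} := \bm{L}^p_{\sigma,\vt}(\Omega)$. Essentially all the analytical work has been done in Theorems \ref{18} and \ref{51}; this theorem is the synthesis.

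First I would record that $\mathbf{D}(A_{p,\alpha})$ is densely contained in $\bm{L}^p_{\sigma,\vt}(\Omega)$: indeed $\pmb{\mathscr{D}}_\sigma(\Omega) \subset \mathbf{D}(A_{p,\alpha})$ (these test fields satisfy the boundary condition trivially since their support is compact in $\Omega$), and the closure of $\pmb{\mathscr{D}}_\sigma(\Omega)$ in $\vL{p}$ is $\bm{L}^p_{\sigma,\vt}(\Omega)$ as recalled at the beginning of Section \ref{66}. So $A_{p,\alpha}$ is densely defined.

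Next I would invoke Theorem \ref{18} to see that for any $\lambda \in \Sigma_\varepsilon$ (any $\varepsilon\in(0,\pi)$), and in particular for every $\lambda \in \mathbb{C}^*$ with $\Re \lambda \ge 0$, the operator $\lambda I + A_{p,\alpha} : \mathbf{D}(A_{p,\alpha}) \to \bm{L}^p_{\sigma,\vt}(\Omega)$ is bijective. This identifies the half-plane $\{\Re \lambda > 0\}$ as lying in the resolvent set of $-A_{p,\alpha}$. The key ingredient is then the uniform estimate (\ref{13}) of Theorem \ref{51}, which reads
\begin{equation*}
\|(\lambda I + A_{p,\alpha})^{-1} \bm{f}\|_{\vL{p}} \;\le\; \frac{C}{|\lambda|}\,\|\bm{f}\|_{\vL{p}},
\qquad \Re \lambda \ge 0,\ \lambda \ne 0,
\end{equation*}
with a constant $C = C(\Omega,p)$ independent of $\lambda$. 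Rewriting this as $\|(\lambda I - A)^{-1}\|_{\mathcal{L}(\mathscr{E})} \le M/|\lambda|$ with $A = -A_{p,\alpha}$ and $M = C$, the hypothesis of Theorem \ref{analyticity} is satisfied for every $w \ge 0$ (in fact for $w = 0$). That theorem then delivers that $-A_{p,\alpha}$ generates an analytic semigroup on $\bm{L}^p_{\sigma,\vt}(\Omega)$.

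Finally, because the resolvent bound $\|(\lambda I + A_{p,\alpha})^{-1}\| \le M/|\lambda|$ holds on the whole open right half-plane $\{\Re\lambda>0\}$ with a constant independent of $\lambda$, the generated semigroup is uniformly bounded on $(0,\infty)$, i.e. it is a \emph{bounded} analytic semigroup. There is no conceptual obstacle at this last step; the work has been concentrated in Theorem \ref{T1} (the $L^p$-extrapolation argument using Shen's lemma) which supplied the $\alpha$-independent right-half-plane resolvent estimate for general $p\in(1,\infty)$. Density of the domain together with Theorem \ref{analyticity} then package the conclusion.
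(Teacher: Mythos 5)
Your proposal is correct and follows essentially the same route as the paper: density of $\mathbf{D}(A_{p,\alpha})$ via $\pmb{\mathscr{D}}_\sigma(\Omega)\hookrightarrow \mathbf{D}(A_{p,\alpha})\hookrightarrow \bm{L}^p_{\sigma,\vt}(\Omega)$, combined with the solvability from Theorem \ref{18} and the resolvent estimate \eqref{13} of Theorem \ref{51}, fed into the characterization of Theorem \ref{analyticity}. The paper's own proof is just a terser version of exactly this synthesis.
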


\begin{proof}
In view of Theorem \ref{51}, to apply Theorem \ref{analyticity} it remains to check that ${\bf D}(A_{p,\alpha})$ is dense in $\bm{L}^p_{\sigma,\vt}(\Omega)$. But this is immediate since
$\pmb{\mathscr{D}}_\sigma({\Omega})\hookrightarrow {\bf D}(A_{p,\alpha}) \hookrightarrow \bm{L}^p_{\sigma,\vt}(\Omega)$ and by definition $\pmb{\mathscr{D}}_\sigma({\Omega})$ is dense in 
$\bm{L}^p_{\sigma,\vt}(\Omega)$.
\hfill
\end{proof}

\section{Stokes operator on $[H_0^{p^\prime}(\div ,\Omega)]^\prime_{\sigma, \tau}$}
\label{67}
\setcounter{equation}{0}

We first recall that if $\bm{f}\in [\bm{H}_0^{p^\prime}(\div ,\Omega)]^\prime$ (defined in (\ref{S2E1})) and is such that $\div \bm{f}\in \vL{p}$ for some $p\in (1, \infty)$, then its normal trace 
$(\bm{f}\cdot \vn) _{ |\Gamma  }$ is well defined and belongs to  $\Wfracb{-1-\frac{1}{p}}{p}$ \cite[Corollary 3.7]{AAE1}.

 Let $\mathscr{B}$ be the closure of $\pmb{\mathscr{D}} _{ \sigma  }({\Omega})$ in $[\bm{H}_0^{p^\prime}(\div ,\Omega)]^\prime$. Then, it can be shown that
$$\mathscr{B} = [\bm{H}_0^{p^\prime}(\div ,\Omega)]^\prime _{\sigma, \vt} := \left\lbrace \bm{f}\in [\bm{H}_0^{p^\prime}(\div ,\Omega)]^\prime : \div \ \bm{f}=0 \text{ in } \Omega, \bm{f}\cdot \vn = 0 \text{ on } \Gamma\right\rbrace $$
which is a Banach space with the norm  of $[\bm{H}_0^{p^\prime}(\div ,\Omega)]^\prime$ \cite[Proposition 3.9]{AAE1}. Let $Q : [\bm{H}_0^{p^\prime}(\div ,\Omega)]^\prime\rightarrow \mathscr{B}$ be the orthogonal projection on $\mathscr{B}$. We define the Stokes operator $B_{p,\alpha}$ on $\mathscr{B}$, as
\begin{equation*}
\begin{cases}
&\mathbf{D}(B_{p,\alpha}) = \left\lbrace \vu\in \vW{1}{p}\cap \mathscr{B} : \Delta\vu \in [\bm{H}_0^{p^\prime}(\div ,\Omega)]^\prime, 2[(\DT\vu)\vn]_{\vt}+\alpha\vu_{\vt} = \bm{0} \text{ on } \Gamma \right\} ;\\
&B_{p,\alpha}(\vu) = -Q(\Delta \vu) \quad \text{ for } \vu\in\mathbf{D}(B_{p,\alpha}) 
\end{cases}
\end{equation*}
with $\alpha \in\Lb{t(p)}$ where $t(p)$ is defined in \eqref{22}.

\subsection{Analyticity}

As in the previous section, we will now discuss the analyticity of the semigroup generated by the Stokes operator $B_{p,\alpha}$ on $[\bm{H}_0^{p^\prime}(\div ,\Omega)]^\prime _{\sigma, \vt}$.

\begin{theorem}
\label{23}
Let $p\in (1,\infty)$ and  $\alpha \in \Lb{t(p)}$. Then, for all  $\lambda \in \mathbb{C}^*$ such that $\Re \lambda \geq 0$, and all 
$\bm{f}\in [\bm{H}_0^{p^\prime}(\div ,\Omega)]^\prime$, the problem (\ref{S3.ER1})-(\ref{S3.ER3}) has a unique solution $(\vu, \pi) \in \vW{1}{p} \times L^{p}_0(\Omega)$ satisfying
\begin{equation}\label{56}
\|\vu\|_{[\bm{H}_0^{p^\prime}(\div ,\Omega)]^\prime}  \leq \frac{C}{|\lambda|} \ \|\bm{f}\|_{[\bm{H}_0^{p^\prime}(\div ,\Omega)]^\prime}
\end{equation}
for some constant $C$ independent of $\lambda$ and $\alpha$.
\end{theorem}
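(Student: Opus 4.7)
The plan is to build a solution of \eqref{S3.ER1}--\eqref{S3.ER3} by decomposing the data $\bm{f}$ until we land in the regime covered by the $L^p$-resolvent theory of Section \ref{66}, and then to obtain the dual estimate \eqref{56} by duality against the adjoint resolvent problem with exponent $p'$ and spectral parameter $\bar{\lambda}$.

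\textbf{Existence and uniqueness.} By Proposition \ref{prop}, write $\bm{f} = \bm{\psi} + \nabla \chi$ with $\bm{\psi} \in \bm{L}^p(\Omega)$ and $\chi \in L^p(\Omega)$. Apply the $L^p$-Helmholtz decomposition (i.e. the projector $P$ from \eqref{pression} in the space $\vL{p}$) to split $\bm{\psi} = P\bm{\psi} + \nabla q$ with $P\bm{\psi} \in \bm{L}^p_{\sigma,\vt}(\Omega)$ and $q \in W^{1,p}(\Omega)$. Theorem \ref{T1} combined with the variational existence coming from Proposition \ref{S3P1} then produces a unique $\vu \in \bm{W}^{1,p}_{\sigma,\vt}(\Omega)$ and $\tilde{\pi} \in L^p(\Omega)$ satisfying $\lambda \vu - \Delta \vu + \nabla \tilde{\pi} = P\bm{\psi}$ together with the boundary conditions \eqref{S3.ER03}--\eqref{S3.ER3}. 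Setting $\pi := \tilde{\pi} + q + \chi$, normalized to zero mean, gives a solution of \eqref{S3.ER1}--\eqref{S3.ER3}. Uniqueness reduces, through Proposition \ref{S3P1}, to checking that the homogeneous variational problem admits only $\vu \equiv 0$: testing $a_\lambda(\vu,\vu) = 0$ and using $\Re \lambda \geq 0$, $\alpha \geq 0$ together with Korn's inequality handles $\Re \lambda > 0$; for $\Re \lambda = 0$ we use either the non-axisymmetry of $\Omega$ or $\alpha > 0$ on $\Gamma_0$ to exclude rigid rotations.

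\textbf{The dual estimate.} For \eqref{56}, pick an arbitrary test field $\bm{g} \in \bm{H}_0^{p'}(\div,\Omega)$. Since $\bm{g}\cdot\vn = 0$ on $\Gamma$, the $L^{p'}$-Helmholtz decomposition writes $\bm{g} = \bm{h} + \nabla \rho$ with $\bm{h} \in \bm{L}^{p'}_{\sigma,\vt}(\Omega)$ and $\rho \in W^{1,p'}(\Omega)$, both controlled by $\|\bm{g}\|_{\bm{H}_0^{p'}(\div,\Omega)}$. Applying Theorem \ref{T1} with exponent $p'$ and parameter $\bar{\lambda}$ (which again satisfies $\Re \bar{\lambda} \geq 0$) yields $\bm{v} \in \bm{W}^{1,p'}_{\sigma,\vt}(\Omega)$ and $\sigma \in L^{p'}(\Omega)$ solving $\bar{\lambda}\bm{v} - \Delta\bm{v} + \nabla\sigma = \bm{h}$ together with the Navier conditions on $\Gamma$, and
\[
\|\bm{v}\|_{\bm{L}^{p'}(\Omega)} \leq \frac{C}{|\lambda|}\,\|\bm{h}\|_{\bm{L}^{p'}(\Omega)} \leq \frac{C}{|\lambda|}\,\|\bm{g}\|_{\bm{H}_0^{p'}(\div,\Omega)}.
\]
Since $\vu \in \bm{L}^p_{\sigma,\vt}(\Omega)$, the pairing with $\nabla \bar{\rho}$ vanishes, so $\langle \vu, \bar{\bm{g}}\rangle = \int_\Omega \vu\cdot\bar{\bm{h}}$. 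Substituting $\bar{\bm{h}} = \lambda\bar{\bm{v}} - \Delta\bar{\bm{v}} + \nabla\bar{\sigma}$, using that $\vu$ annihilates gradients, and invoking Green's formula \eqref{green} together with the boundary identities $2[(\DT\bm{v})\vn]_{\vt} = -\alpha\bm{v}_{\vt}$ and $2[(\DT\vu)\vn]_{\vt} = -\alpha\vu_{\vt}$, reshuffles the Laplacian and boundary contributions exactly into the sesquilinear form $a_\lambda(\vu,\bm{v})$, which by Proposition \ref{S3P1} equals $\langle \bm{f}, \bar{\bm{v}}\rangle_{[\bm{H}_0^{p'}(\div,\Omega)]'\times \bm{H}_0^{p'}(\div,\Omega)}$. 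Since $\div\,\bm{v} = 0$ and $\bm{v}\cdot\vn = 0$, one has $\|\bm{v}\|_{\bm{H}_0^{p'}(\div,\Omega)} = \|\bm{v}\|_{\bm{L}^{p'}(\Omega)}$. Taking the supremum over $\bm{g}$ and using the duality definition of the $[\bm{H}_0^{p'}(\div,\Omega)]'$-norm gives \eqref{56}.

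\textbf{Main obstacle.} The most delicate step is justifying the integration-by-parts identity bridging $\int_\Omega \vu\cdot\bar{\bm{h}}$ and $a_\lambda(\vu,\bm{v})$ under the minimal regularity $\alpha \in L^{t(p)}(\Gamma)$, where neither $\vu$ nor $\bm{v}$ is in general $\bm{W}^{2,\cdot}$. This forces us to rely on the extended Green's formula of Theorem \ref{Green} and to interpret $[(\DT\bm{v})\vn]_{\vt}$ and $\vu_{\vt}$ as mutually dual elements of negative/positive-order trace spaces; the boundary term $\int_\Gamma \alpha\,\vu_{\vt}\cdot\bar{\bm{v}}_{\vt}$ must then be controlled via the trace bound \cite[Lemma 5.1]{AG} exploiting precisely the exponent $t(p)$ appearing in \eqref{22}.
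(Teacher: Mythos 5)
Your proof is correct and follows essentially the same route as the paper: existence is reduced to the stationary $W^{1,p}$ theory of \cite{AG}, and the bound \eqref{56} is obtained by duality against the resolvent problem with exponent $p'$, invoking the $L^{p'}$ estimate of Theorem \ref{T1} together with the identity $\|\bm{v}\|_{\bm{H}_0^{p'}(\div,\Omega)}=\|\bm{v}\|_{\bm{L}^{p'}(\Omega)}$ for divergence-free tangential fields. The only substantive difference is that the paper solves the dual problem with a more regular coefficient $\tilde\alpha$ satisfying \eqref{alpha} (so the dual solution is $W^{2,p'}$ and the integration by parts is classical), whereas you keep the same $\alpha\in L^{t(p)}(\Gamma)$ and justify the pairing through the generalized Green formula of Theorem \ref{Green} — which is in fact the more careful treatment, since the duality computation needs the boundary term $\int_{\Gamma}\alpha\,\vu_{\vt}\cdot\bar{\bm{v}}_{\vt}$ to be the same on both sides.
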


\begin{proof}
{\bf 1.	Existence:} The proof of the existence and uniqueness of the solution follows similar arguments as in the proof of Theorem \ref{17} and Theorem \ref{18}.
For $p=2$ the existence and uniqueness of solution comes from Lax-Milgram lemma and De Rham theorem for the pressure.  

When $p>2$, since $\bm{f}\in [\bm{H}_0^{p^\prime}(\div ,\Omega)]^\prime \subset [\bm{H}_0^{2}(\div ,\Omega)]^\prime$ and $\alpha\in \Lb{t(p)} \subset \Lb{2}$, we have the existence of the unique solution $(\vu, \pi) \in \vH{1} \times L^{2}_0(\Omega)$. We can now apply \cite[Corollary 5.8]{AG}, since $\bm{f}-\lambda \bm{u} \in [\bm{H}_0^{p^\prime}(\div ,\Omega)]^\prime$ to obtain $\vu\in\vW{1}{p}$. 

For $p<2$, the proof follows in the same way as in \cite[Corollary 5.8]{AG}. \\

{\bf 2. Estimate:} Now to prove the estimate \eqref{56}, consider the problem,
\begin{equation*}
\left\{
\begin{aligned}
\lambda \bm{v} -\Delta \bm{v} + \nabla \theta = \bm{F} , \quad \div \ \bm{v} = 0 \quad & \text{ in } \Omega \\
\bm{v}\cdot \vn = 0, \quad 2[(\DT\bm{v})\vn]_{\vt}+\tilde{\alpha} \bm{v}_{\vt} = \bm{0} \quad & \text{ on } \Gamma
\end{aligned}
\right.
\end{equation*}
where $\bm{F}\in \bm{H}_0^{p^\prime}(\div ,\Omega)$ and $\tilde{\alpha}$ as in \eqref{alpha}. Thanks to Theorem \ref{18} and the estimate \eqref{13}, there exists unique $(\bm{v}, \theta)\in \vW{2}{p^\prime}\times (\W{1}{p^\prime}\cap L^{p'}_0(\Omega))$ with the estimate
$$\|\bm{v}\|_{\vL{p^\prime}} \leq \frac{C}{|\lambda|} \ \|\bm{F}\|_{\vL{p^\prime}} \ .$$
As a result, we get,
$$\|\bm{v}\|_{\bm{H}_0^{p^\prime}(\div ,\Omega)} \leq \frac{C}{|\lambda|} \ \|\bm{F}\|_{\bm{H}_0^{p^\prime}(\div ,\Omega)} .$$
Now, for the solution $(\vu,\pi)\in \vW{1}{p}\times \L{p}$ of the problem (\ref{S3.ER1})-(\ref{S3.ER3}), we get,
\begin{align*}
\|\vu\|_{[\bm{H}_0^{p^\prime}(\div ,\Omega)]^\prime} &= \underset{\underset{\bm{F}\neq \bm{0}}{\bm{F}\in \bm{H}_0^{p^\prime}(\div ,\Omega)} }{\sup}\frac{|\left\langle \vu,\bm{F}\right\rangle| }{\|\bm{F}\|_{\bm{H}_0^{p^\prime}(\div ,\Omega)}}
= \underset{\underset{\bm{F}\neq \bm{0}}{\bm{F}\in \bm{H}_0^{p^\prime}(\div ,\Omega)} }{\sup}\frac{|\left\langle \vu,\lambda \bm{v} -\Delta \bm{v} + \nabla \theta \right\rangle| }{\|\bm{F}\|_{\bm{H}_0^{p^\prime}(\div ,\Omega)}}\\
&= \underset{\underset{\bm{F}\neq \bm{0}}{\bm{F}\in \bm{H}_0^{p^\prime}(\div ,\Omega)} }{\sup}\frac{|\left\langle \lambda \vu - \Delta \vu + \nabla \pi,\bm{v} \right\rangle| }{\|\bm{F}\|_{\bm{H}_0^{p^\prime}(\div ,\Omega)}}= \underset{\underset{\bm{F}\neq \bm{0}}{\bm{F}\in \bm{H}_0^{p^\prime}(\div ,\Omega)} }{\sup}\frac{|\left\langle \bm{f},\bm{v} \right\rangle| }{\|\bm{F}\|_{\bm{H}_0^{p^\prime}(\div ,\Omega)}}\\
& \leq \ \frac{C}{|\lambda|} \ \|\bm{f}\|_{[\bm{H}_0^{p^\prime}(\div ,\Omega)]^\prime} 
\end{align*}
which is the required estimate.
\hfill
\end{proof}

\begin{theorem}
\label{S455}
Let $ p\in (1,\infty)$ and  $\alpha \in \Lb{t(p)}$ with $t(p)$ defined in \eqref{22}. Then, for $\bm{f}\in [\bm{H}_0^{p^\prime}(\div ,\Omega)]^\prime$, the  unique solution $(\vu, \pi )$ of (\ref{S3.ER1})-(\ref{S3.ER3}) in ${\bf D}(B_{p,\alpha})\times L^p_0(\Omega )$ is such that the pressure $\pi$ satisfies the following estimate in the case when (i) either $\Omega$ is not axisymmetric or (ii) $\Omega$ is axisymmetric and $\alpha\ge \alpha _*>0$:
\begin{equation*}
\|\pi\|_{\L{p}} \leq C \ \|\bm{f}\|_{[\bm{H}_0^{p^\prime}(\div ,\Omega)]^\prime} 
\end{equation*}
for some constant $C$ independent of $\lambda$ and $\alpha$.
\end{theorem}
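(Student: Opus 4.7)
The plan is to reduce the pressure estimate to the stationary $L^p$ Stokes estimate with Navier boundary condition (already available in \cite{AG}), using the resolvent bound \eqref{56} established in Theorem \ref{23} to control the zeroth-order term.

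First, I would rewrite equation \eqref{S3.ER1} in the form
$$
-\Delta \vu + \nabla \pi = \bm{f} - \lambda \vu \quad \text{in } \Omega,
$$
keeping the boundary conditions \eqref{S3.ER03}-\eqref{S3.ER3} and $\div \vu = 0$. Since $\vu \in \mathbf{D}(B_{p,\alpha}) \subset \bm{W}^{1,p}(\Omega) \cap [\bm{H}_0^{p'}(\div,\Omega)]'_{\sigma,\vt}$, the datum $\bm{g} := \bm{f} - \lambda\vu$ lies in $[\bm{H}_0^{p'}(\div,\Omega)]'$. Hence $(\vu,\pi)$ solves the \emph{stationary} Stokes problem with Navier boundary condition and right-hand side $\bm{g}$.

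Next, under either hypothesis (i) or (ii), I would invoke the stationary $L^p$ estimate for this problem (the same result from \cite{AG}, e.g.\ \cite[Corollary 5.8]{AG}, that is already used in step~1 of the proof of Theorem \ref{23}), which yields
$$
\|\pi\|_{L^p(\Omega)} \le C(\Omega)\,\|\bm{g}\|_{[\bm{H}_0^{p'}(\div,\Omega)]'} \le C(\Omega)\Bigl(\|\bm{f}\|_{[\bm{H}_0^{p'}(\div,\Omega)]'} + |\lambda|\,\|\vu\|_{[\bm{H}_0^{p'}(\div,\Omega)]'}\Bigr),
$$
with $C(\Omega)$ independent of $\alpha$ and $\lambda$. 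The $\alpha$-independence is exactly the analogue, in the $L^p$ framework, of the uniform Korn-type bound \eqref{28} already exploited in Theorem \ref{17}: it is precisely the role of the hypotheses (i)/(ii) to rule out the axisymmetric obstruction to a uniform stationary estimate.

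Finally, applying the resolvent estimate \eqref{56} from Theorem \ref{23},
$$
|\lambda|\,\|\vu\|_{[\bm{H}_0^{p'}(\div,\Omega)]'} \le C\,\|\bm{f}\|_{[\bm{H}_0^{p'}(\div,\Omega)]'},
$$
and substituting into the previous line gives the announced bound on $\|\pi\|_{L^p(\Omega)}$. The only delicate point is to justify that the stationary $L^p$ pressure estimate really is $\alpha$-independent under (i)/(ii); this follows from the $L^2$ case recalled in the proof of Theorem \ref{17} (via \cite[Proposition 4.3]{AG}) together with the $L^p$ extension in \cite[Corollary 5.8]{AG}. Everything else is a direct linear combination.
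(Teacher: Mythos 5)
Your argument is correct and is essentially the paper's own proof: rewrite \eqref{S3.ER1} as a stationary Stokes problem with datum $\bm{f}-\lambda\vu$, apply the $\alpha$-uniform stationary $L^p$ estimate from \cite{AG} under hypothesis (i) or (ii), and absorb the term $|\lambda|\,\|\vu\|_{[\bm{H}_0^{p'}(\div,\Omega)]'}$ via the resolvent bound \eqref{56}. The only cosmetic difference is the precise reference within \cite{AG} (the paper invokes its Theorem 6.7 rather than Corollary 5.8).
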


\begin{proof}
By the regularity result of the stationary Stokes problem \cite[Theorem 6.7]{AG}, we can write
\begin{eqnarray*}
\|\vu\| _{ {\bf W}^{1, p}(\Omega ) }+\|\pi \| _{ L^p(\Omega ) }\le C\|{\bm f}-\lambda \vu\|_{[\bm{H}_0^{p^\prime}(\div ,\Omega)]^\prime} 
\end{eqnarray*}
and the result follows using estimate (\ref{56}).
\hfill
\end{proof}

The analyticity of the semigroup generated by the Stokes operator $B_{p,\alpha}$ on $[\bm{H}_0^{p^\prime}(\div ,\Omega)]^\prime _{\sigma, \vt}$ is now easily deduced from Theorem \ref{23}.
\begin{theorem}
	\label{S4Thm1}
Let $\alpha \in \Lb{t(p)}$ with $t(p)$ defined in \eqref{22}. The operator $-B_{p,\alpha}$ generates a bounded analytic semigroup on $[\bm{H}_0^{p^\prime}(\div ,\Omega)]^\prime _{\sigma, \vt}$ for all $1<p<\infty$.
\end{theorem}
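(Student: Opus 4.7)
The plan is to reduce Theorem \ref{S4Thm1} to the characterization of generators of analytic semigroups given in Theorem \ref{analyticity}. This requires two ingredients: (a) density of $\mathbf{D}(B_{p,\alpha})$ in $[\bm{H}_0^{p'}(\div,\Omega)]'_{\sigma,\vt}$, and (b) a resolvent estimate of the form $\|(\lambda I + B_{p,\alpha})^{-1}\|_{\mathcal{L}(\mathscr{B})} \le M/|\lambda|$ for $\Re \lambda \ge 0$, $\lambda \ne 0$. Both will be inherited almost for free from results already established earlier in the paper, so the work is organizational rather than technical.

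For density, I would simply observe that $\pmb{\mathscr{D}}_\sigma(\Omega) \subset \mathbf{D}(B_{p,\alpha})$: any smooth, compactly supported, divergence-free field trivially lies in $\vW{1}{p} \cap \mathscr{B}$, has $\Delta \vu \in \pmb{\mathscr{D}}(\Omega) \subset [\bm{H}_0^{p'}(\div,\Omega)]'$, and vanishes in a neighborhood of $\Gamma$, so all three boundary constraints are satisfied. Since by definition $\mathscr{B} = [\bm{H}_0^{p'}(\div,\Omega)]'_{\sigma,\vt}$ is the closure of $\pmb{\mathscr{D}}_\sigma(\Omega)$ in $[\bm{H}_0^{p'}(\div,\Omega)]'$, density of $\mathbf{D}(B_{p,\alpha})$ in $\mathscr{B}$ follows immediately.

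For the resolvent part, I would fix $\lambda \in \mathbb{C}^*$ with $\Re \lambda \ge 0$ and $\bm{f} \in \mathscr{B} \subset [\bm{H}_0^{p'}(\div,\Omega)]'$. Theorem \ref{23} furnishes a unique pair $(\vu, \pi) \in \vW{1}{p} \times L^p_0(\Omega)$ solving (\ref{S3.ER1})--(\ref{S3.ER3}) together with the estimate (\ref{56}). I then need to check that $\vu \in \mathbf{D}(B_{p,\alpha})$ and that $(\lambda I + B_{p,\alpha})\vu = \bm{f}$. The first point follows by rewriting the equation as $\Delta \vu = \lambda \vu + \nabla \pi - \bm{f}$: each term on the right lies in $[\bm{H}_0^{p'}(\div,\Omega)]'$ (since $\lambda \vu \in \vL{p}$ and $\pi \in L^p(\Omega)$ using Proposition \ref{prop}), while $\vu \in \vW{1}{p} \cap \mathscr{B}$ and satisfies the boundary conditions (\ref{S3.ER03})--(\ref{S3.ER3}) by construction. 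For the second point, applying $Q$ to $-\Delta \vu = \bm{f} - \lambda \vu - \nabla \pi$ and using $Q(\nabla \pi) = 0$ (gradients being orthogonal to the $\sigma,\vt$-component) together with $Q(\vu) = \vu$ and $Q(\bm{f}) = \bm{f}$ yields $B_{p,\alpha}\vu = \bm{f} - \lambda \vu$, as needed. Combined with the estimate (\ref{56}), which is exactly $\|\vu\|_{\mathscr{B}} \le (C/|\lambda|)\|\bm{f}\|_{\mathscr{B}}$, this gives the resolvent bound on $\mathscr{B}$.

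The main (and essentially only) subtle point is the identification $Q(\nabla \pi) = 0$ in the dual space setting, which encodes the Helmholtz-type decomposition underlying the definition of $Q$; I would cite \cite[Proposition 3.9]{AAE1} (already used to describe $\mathscr{B}$) to justify it rigorously. Once (a) and (b) are in place, Theorem \ref{analyticity} applied in the Banach space $\mathscr{B}$ concludes that $-B_{p,\alpha}$ generates a bounded analytic semigroup, which is the claim of Theorem \ref{S4Thm1}.
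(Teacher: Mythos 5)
Your proposal is correct and follows essentially the same route as the paper: the paper's own proof likewise combines the resolvent estimate of Theorem \ref{23} with the density chain $\pmb{\mathscr{D}}_\sigma(\Omega)\hookrightarrow \mathbf{D}(B_{p,\alpha})\hookrightarrow [\bm{H}_0^{p'}(\div,\Omega)]'_{\sigma,\vt}$ and then invokes Theorem \ref{analyticity}. The only difference is that you spell out the identification of the PDE solution with $(\lambda I+B_{p,\alpha})^{-1}\bm{f}$ (via $Q(\nabla\pi)=0$), which the paper leaves implicit in its appeal to Theorem \ref{23}.
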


\begin{proof}
In view of Theorem \ref{23}, to apply Theorem \ref{analyticity} it remains to check that ${\bf D}(B_{p,\alpha})$ is dense in $[\bm{H}_0^{p^\prime}(\div ,\Omega)]^\prime _{\sigma, \vt}$. But this is immediate since
$\pmb{\mathscr{D}}_\sigma({\Omega})\hookrightarrow {\bf D}(B_{p,\alpha}) \hookrightarrow [\bm{H}_0^{p^\prime}(\div ,\Omega)]^\prime _{\sigma, \vt}$ and by definition $\pmb{\mathscr{D}}_\sigma({\Omega})$ is dense in 
$[\bm{H}_0^{p^\prime}(\div ,\Omega)]^\prime _{\sigma, \vt}$.
\hfill
\end{proof}

\section{Imaginary and Fractional powers}
\label{frac power}
\setcounter{equation}{0}

\subsection{Imaginary powers}
Our main purpose in this section is to prove local bounds on pure imaginary powers $A_{p,\alpha}^{is}$ and $B_{p,\alpha}^{is}$ of the Stokes operators defined in Section \ref{66} and Section \ref{67} respectively. A complete theory of fractional powers of an operator (bounded or unbounded) can be found in Komatsu \cite{Komatsu}. 

Since these operators are non-negative operators, it then follows from the results in \cite{Komatsu} and in \cite{Triebel} that their powers are well, densely defined and closed linear operators on $\boldsymbol{L}^{p}_{\sigma,\vt}(\Omega)$ and $[\bm{H}_0^{p^\prime}(\div ,\Omega)]^\prime$ with domain $\mathbf{D}(A^{is}_{p,\alpha})$ and $\mathbf{D}(B^{is}_{p,\alpha})$ respectively.

Notice that in \cite{AAE}, it was comparatively straight forward to obtain the bounds on pure imaginary powers, since with Navier type boundary condition, the Stokes operator actually reduces to Laplace operator and thus they could borrow the well-established theory for elliptic operators, which is not our case. Therefore we use the theory of interpolation-extrapolation to make use of the established theory for similar operators and implement a perturbation argument.

\begin{theorem}
\label{62}
Let $\alpha$ be as in (\ref{alpha}) and if $p\in (1, 3]$ suppose also that  $\alpha\in\Lb{\infty}$. Then there exists an angle $0 <\theta < \pi/2$ and a constant $C>0$ such that for any $s\in \mathbb{R}$,
\begin{equation}
\label{53}
\|A_{p,\alpha}^{is}\| \leq C \ e^{|s| \theta} .
\end{equation}
Similarly, for $\alpha \in \Lb{\infty}$, there exists an angle $0 <\theta' < \pi/2$ and a constant $C'>0$, such that for any $s\in \mathbb{R}$,
\begin{equation}
\label{53.}
\|B_{p,\alpha}^{is}\| \leq C' \ e^{|s| \theta'} .
\end{equation}
\end{theorem}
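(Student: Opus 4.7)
The strategy, as announced in the Introduction, is to express $A_{p,\alpha}$ as a lower-order perturbation of the Stokes operator $\widetilde A_p$ associated with the Navier-type boundary condition
\[
\vu\cdot\vn = 0, \qquad \mathbf{curl}\;\vu\times\vn = \bm{0} \quad \text{on } \Gamma,
\]
for which the BIP property (with angle $<\pi/2$) is established in \cite[Theorem 6.1]{AAE1}. The identity \eqref{19.}, valid whenever $\vu\cdot\vn = 0$ on $\Gamma$, rewrites the Navier boundary condition as
\[
\mathbf{curl}\;\vu\times\vn + (\alpha - 2\bm{\Lambda})\vu_{\vt} = \bm{0},
\]
so that, at the level of the sesquilinear form $a(\cdot,\cdot)$ of Remark \ref{OperatorA}, the Navier and Navier-type settings differ only by the boundary form $(\vu,\bm v)\mapsto \int_{\Gamma}(\alpha - 2\bm{\Lambda})\vu_{\vt}\cdot\bar{\bm v}_{\vt}$, which is of strictly lower order with respect to the principal interior part $2\int_{\Omega}\DT\vu:\DT\bar{\bm v}$.

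I would carry out the argument in three steps. First, set up the interpolation-extrapolation scale $(E_\theta)_{\theta\in\mathbb R}$ generated on $\bm{L}^p_{\sigma,\vt}(\Omega)$ by $-\widetilde A_p$ in the sense of Amann \cite{amann}; since $-\widetilde A_p$ generates a bounded analytic semigroup and is BIP with angle $<\pi/2$, the extrapolated realizations inherit these properties on each $E_\theta$. Second, realize $A_{p,\alpha}$ as the additive perturbation of $\widetilde A_p$ obtained from the operator $\mathcal R\vu := (\alpha - 2\bm{\Lambda})\vu_{\vt}$ composed with the tangential trace. The hypothesis (\ref{alpha}) (together with the $L^\infty$ upgrade for $p\leq 3$) is calibrated so that $\alpha\vu_{\vt}$ lives in a fractional boundary space fitting one level of the scale, whence $\mathcal R$, viewed in the scale, maps $E_1\to E_\beta$ for some $\beta < 1$; by the reiteration and embedding properties of the scale this yields an $E_0$-additive perturbation $\mathcal B$ that is $\widetilde A_p$-bounded with relative bound zero. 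Third, invoke the standard perturbation theorem for bounded imaginary powers (see e.g. \cite[Ch.\ V]{amann} or \cite{pruss}): if $-\widetilde A_p$ is BIP with angle $\theta_0<\pi/2$ and $\mathcal B$ is $\widetilde A_p$-bounded with relative bound zero, then $-\widetilde A_p - \mathcal B = -A_{p,\alpha}$ is again BIP with some angle $\theta<\pi/2$, which is exactly (\ref{53}).

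The main obstacle is the second step: one must check that multiplication by $\alpha$ preserves the fractional Sobolev space on $\Gamma$ in which the tangential trace of $\vu\in D(\widetilde A_p^{1/2})$ lives, and identify the exact shift $\beta$ in the scale. This is where the sharp regularity (\ref{alpha}) on $\alpha$ is used, and where the technical threshold $p = 3$ appears: below it, the trace space is too weak for pointwise multiplication and one must add the $L^\infty$ hypothesis on $\alpha$. Once these multiplier estimates are in place, the analogous bound \eqref{53.} for the weak Stokes operator $B_{p,\alpha}$ is obtained by repeating the argument in the extrapolation scale based on $[\mathbf{H}_0^{p'}(\div,\Omega)]'_{\sigma,\vt}$, starting from the analyticity of $-B_{p,\alpha}$ furnished by Theorem \ref{S4Thm1} and transferring the BIP property of the Navier-type Stokes operator to this weaker space by duality.
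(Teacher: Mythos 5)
Your overall strategy --- perturb the Navier-type Stokes operator by the boundary term coming from the identity \eqref{19.}, use Amann's interpolation-extrapolation scales, and invoke a BIP perturbation theorem --- is exactly the route the paper takes, and your identification of the relevant ingredients (\cite[Theorem 6.1]{AAE1} for the unperturbed operator, \cite{amann}, the perturbation result in \cite{pruss}) is correct. However, your second step contains a genuine gap. You claim that the boundary operator $\mathcal R\vu=(\alpha-2\bm{\Lambda})\vu_{\vt}$ yields an ``$E_0$-additive perturbation $\mathcal B$ that is $\widetilde A_p$-bounded with relative bound zero,'' so that $A_{p,\alpha}=\widetilde A_p+\mathcal B$ on $E_0=\bm{L}^p_{\sigma,\vt}(\Omega)$. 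This cannot work: the two strong operators act identically (both are $-P\Delta$) and differ only through their domains, which encode different boundary conditions, so no additive perturbation with $\mathbf{D}(\mathcal B)\supseteq \mathbf{D}(\widetilde A_p)$ can relate them on $E_0$ (such a perturbation would leave the domain unchanged). Equivalently, the difference of the two sesquilinear forms is a distribution supported on $\Gamma$, which lands only in extrapolation spaces of \emph{negative} order, not in some $E_\beta$ with $0\le\beta<1$. The paper therefore applies the perturbation argument one half-step \emph{down} the scale, on $X_{-1/2}=\bigl[\bm{W}^{1,p'}_{\sigma,\vt}(\Omega)\bigr]'$, where both weak operators share the common domain $X_{1/2}=\bm{W}^{1,p}_{\sigma,\vt}(\Omega)$ and the boundary form is a genuine lower-order perturbation $\Lambda_\alpha$ of the extrapolated operator $A_{-1/2}$.

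Once BIP is obtained for the perturbed weak operator $A_{N,w}$ on $X_{-1/2}$, there remains a second nontrivial step that is entirely missing from your proposal: transferring the property back up to the strong operator on $\bm{L}^p_{\sigma,\vt}(\Omega)$. The paper does this by generating a \emph{new} interpolation-extrapolation scale $(X^w_a,A^w_a)$ from the pair $\bigl(X_{-1/2},A_{N,w}\bigr)$, observing that $X^w_{1/2}=[X_{-1/2},X_{1/2}]_{1/2}=\bm{L}^p_{\sigma,\vt}(\Omega)$, and then proving --- via the Green-type identities \eqref{SAE5}--\eqref{SAE6}, surjectivity, and injectivity of $A_{N,w}$ --- that the half-step realization $A^w_{1/2}$ coincides with $A_{p,\alpha}$, domain included. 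This identification is the bulk of the paper's proof and is what actually converts the weak-space BIP estimate into \eqref{53}; without it (or some substitute), your argument stops at the weak operator and never reaches $A_{p,\alpha}$.
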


\begin{proof}
Since the proof of (\ref{53.}) is exactly similar to that of (\ref{53}), we only show (\ref{53}). The proof of (\ref{53}) is based on the theory of interpolation-extrapolation scales from \cite{amann}. A similar approach has been followed in \cite{pruss}, considering the perturbation of a different operator than ours and for $\alpha$ constant.

{\bf 1.} Let us define $X_0 := \bm{L}^p_{\sigma,\vt}(\Omega)$ and $A_0 := \lambda I +A_{NT}$, for all  $\lambda >0$ and where $A_{NT}$ is the Stokes operator with Navier-type boundary condition
$$\vu\cdot \vn = 0, \quad \mathbf{curl} \ \vu \times \vn =\bm{0} \quad \text{ on } \Gamma .$$
i.e.
\begin{equation*}
\begin{cases}
&\mathbf{D}(A_{NT}) = \left\{ \vu\in\vW{2}{p}\cap \bm{L}^p_{\sigma,\vt}(\Omega),\mathbf{curl} \ \vu \times \vn = \bm{0} \text{ on } \Gamma \right\rbrace \\
&A_{NT}(\vu) = -P(\Delta \vu) \quad \text{ for } \vu\in\mathbf{D}(A_{NT}).
\end{cases}
\end{equation*}
Note that from \cite[Theorem 6.4]{Komatsu}, it follows that for $\lambda\ge 0$, the domain $\mathbf{D}(\lambda I +A_p)$ does not depend on $\lambda$ and
$$
\mathbf{D}(\lambda I +A_p) = \mathbf{D}(A_p) \quad \text{ for } \lambda>0 .
$$
As indicated in the Introduction to this Section, the powers $A_0^a$ of the operator $A_0$ are  well, densely defined and  closed linear operators on $\boldsymbol{L}^{p}_{\sigma,\vt}(\Omega)$ with domain $\mathbf{D}(A^{a}_{0})$.  

Now by \cite[Theorems V.1.5.1 and V.1.5.4]{amann}, $(X_0, A_0)$ generates an interpolation-extrapolation scale $(X_a,A_a), a\in \R$ with respect to the complex interpolation functor since $A_0$ is a closed operator on $X_0$ with bounded inverse (cf. \cite[Theorem 4.8]{AAE}). More precisely, for every $a\in \R$, $X_a$ is a Banach space,  
$X_a\hookrightarrow X _{ a-1 }$ and $A_a$ is an unbounded linear operator on $X_a$ with domain $X _{ a+1 }$ and for $a>0$:
\begin{eqnarray*}
	&&(i)\,\,\,X_a = \left( \mathbf{D}(A_0^a), \|A_0^a \cdot \|\right)\\
	&&(ii)\,\,\,A_a\,\,\,\hbox{is the restriction of}\,\,A_0\,\,\hbox{on}\,\,\,X_a.
\end{eqnarray*}
Moreover, for any $b\in (a, a+1)$,
$$
X_b=[X_a, X _{ a+1 }] _{ \theta }\,\,\,\hbox{ where }\,\,\frac {1} {b}=\frac {1-\theta} {a}+\frac {\theta} {a+1}.
$$

Similarly, let $X_0^{\sharp} := (X_0)' = \bm{L}^{p'}_{\sigma,\vt}(\Omega)$, $A_0^{\sharp} := (A_0)'$. Then $(X_0^{\sharp}, A_0^{\sharp})$ generates another interpolation-extrapolation scale $(X_a^{\sharp}, A_a^{\sharp})$, the dual scale by \cite[Theorem V.1.5.12]{amann} and
$$
(X_a)' = X_{-a}^{\sharp} \quad \text{ and } \quad (A_a)' = A_{-a}^{\sharp} \quad \text{ for } a\in \R
$$
where $A'$ denotes the dual of $A$. 
In the particular case $a=-1/2$, we obtain by definition, an operator $A_{-1/2}: X_{-1/2} \rightarrow X_{-1/2}$ with 
\begin{equation}
\label{SAE2}
\mathbf{D}(A_{-1/2}) = X_{1/2} = [X_0, X_1]_{1/2} .
\end{equation}
We now claim that
\begin{equation}
\label{SAE1}
[X_0, X_1]_{1/2}= \bm{W}^{1,p}_{\sigma,\vt}(\Omega)
\end{equation}
and then,
\begin{equation}
\label{SAE3}
X_{-1/2} = \left[ \bm{W}^{1,p'}_{\sigma,\vt}(\Omega)\right]'
\end{equation}
will follow.

To prove (\ref{SAE1}), 
one  inclusion is obvious. Indeed,
$$
[X_0, X_1]_{1/2} \subset [{L}^{p}_{\sigma,\vt}(\Omega),\bm{W}^{2,p}_{\sigma,\vt}(\Omega)]_{1/2} = \bm{W}^{1,p}_{\sigma,\vt}(\Omega) .
$$
And for the other inclusion, by (\ref{SAE2}) it is enough to prove that  $\bm{W}^{1,p}_{\sigma,\vt}(\Omega)\subset \mathbf{D}(A_0^{1/2})$. To this end,  first consider the operator $A_0^{1/2}$ on ${\bm L}^{p'} _{ \sigma , \vt  }(\Omega )$. Since $A_0$ has a bounded inverse, $A_0^{1/2}$ is an isomorphism from $\mathbf{D}(A_0^{1/2})$ to $\bm{L}^{p'}_{\sigma,\vt}(\Omega)$ \cite[Theorem 1.15.2, part(e)]{Triebel} and thus, for any $\mathbf{F}\in \bm{L}^{p^\prime}_{\sigma,\vt}(\Omega)$, there exists a unique $\bm{v}\in \mathbf{D}(A_0^{1/2})$ such that $A_0^{1/2} \bm{v} = \mathbf{F}$. So, for all $\vu \in \mathbf{D}(A_0)$,
\begin{align}
\label{SA3}
\|A_0^{1/2}\vu\|_{\bm{L}^p_{\sigma,\vt}(\Omega)} = \underset{\underset{ \mathbf{F}\neq \mathbf{0}}{\mathbf{F}\in \bm{L}^{p^\prime}_{\sigma,\vt}(\Omega)}}{\sup}\frac{\left| \left\langle A_0^{1/2}\vu, \mathbf{F}\right\rangle \right| }{\|\mathbf{F}\|_{\bm{L}^{p^\prime}_{\sigma,\vt}(\Omega)}}
& = \underset{\underset{\bm{v}\neq \mathbf{0}}{\bm{v}\in \mathbf{D}(A_{0}^{1/2})}}{\sup}\frac{\left| \left\langle A_0^{1/2}\vu, A_0^{\frac{1}{2}}\bm{v}\right\rangle \right| }{\|A_0^{\frac{1}{2}}\bm{v}\|_{\bm{L}^{p^\prime}_{\sigma,\vt}(\Omega)}}\notag \\
& = \underset{\underset{\bm{v}\neq \mathbf{0}}{\bm{v}\in \mathbf{D}(A_0^{1/2})}}{\sup}\frac{\left| \left\langle A_0\vu, \bm{v}\right\rangle \right| }{\|A_0^{1/2}\bm{v}\|_{\bm{L}^{p^\prime}_{\sigma,\vt}(\Omega)}}\notag \\
& = \underset{\underset{\bm{v}\neq \mathbf{0}}{\bm{v}\in \mathbf{D}(A_0^{1/2})}}{\sup}\frac{\left|\int\displaylimits_{\Omega}{\lambda \vu\cdot \bm{v}+ \mathbf{curl} \ \vu\cdot \mathbf{curl} \ \bm{v}} \right| }{\|A_0^{1/2}\bm{v}\|_{\bm{L}^{p^\prime}_{\sigma,\vt}(\Omega)}}\notag \\
& \leq C \|\vu\|_{\vW{1}{p}} .
\end{align}
Now as $\mathbf{D}(A_0)$ is dense in $\bm{W}^{1,p}_{\sigma,\vt}(\Omega)$, we get the inequality \eqref{SA3} for all $\vu\in\bm{W}^{1,p}_{\sigma,\vt}(\Omega)$ which gives the required embedding.

Now from \cite[Theorem 6.1]{AAE1}, we know that there exist constants $M>0$ and $\theta\in (0,\frac{\pi}{2})$ such that
$$\forall s\in \R,\,\,\, \| A_{0}^{is}\|_{\mathcal{L}(X_0)} \le M e^{|s|\theta }.$$
It then follows from \cite[Theorem V.1.5.5 (ii)]{amann} that
$$\forall s\in \R,\,\,\, \|\left( A_{-1/2}\right)  ^{is}\|_{\mathcal{L}(X_{-1/2})} \le M e^{|s|\theta } .$$
We call the operator $A_{-1/2}$ the weak Stokes operator subject to Navier-type boundary condition. Since $A_{-1/2}$ is the closure of $A_0$ in $X_{-1/2}$ and $X_1 \hookrightarrow X _{ 1/2 }$, it follows that $A_{-1/2}\vu = A_0 \vu$ for $\vu\in X_1$ and thus, for all $\bm{v}\in \bm{W}^{1,p'}_{\sigma,\vt}(\Omega)$,
$$
\left\langle \bm{v}, A_{-1/2}\vu\right\rangle _{ (X_{-1/2})'\times X_{-1/2}} = \left \langle \bm{v}, A_0 \vu\right\rangle = \lambda \int\displaylimits_{\Omega}{\vu\cdot \bm{v}} + \int\displaylimits_{\Omega}{\mathbf{curl} \ \vu \cdot \mathbf{curl} \ \bm{v}}
$$
where we only used integration by parts. Now using the density of $X_1$ in $X_{1/2}$, we obtain the relation, for all $\left( \vu,\bm{v}\right) \in \bm{W}^{1,p}_{\sigma,\vt}(\Omega) \times \bm{W}^{1,p'}_{\sigma,\vt}(\Omega)$,
\begin{eqnarray} 
\left\langle A_{-1/2}\vu,\bm{v}\right\rangle = \lambda \int\displaylimits_{\Omega}{\vu\cdot \bm{v}} +\int\displaylimits_{\Omega}{\mathbf{curl} \ \vu \cdot \mathbf{curl} \ \bm{v}} .\label{SA1}
\end{eqnarray}

{\bf 2.} Next let us define an unbounded operator $A_{N,w}$ on $X_{-1/2}$, with domain $X_{1/2}$, as, for all $\left( \vu,\bm{v}\right) \in \bm{W}^{1,p}_{\sigma,\vt}(\Omega) \times \bm{W}^{1,p'}_{\sigma,\vt}(\Omega)$,
\begin{eqnarray}
\left\langle A_{N,w}\vu,\bm{v}\right\rangle = \int\displaylimits_{\Omega}{\mathbf{curl} \ \vu \cdot \mathbf{curl} \ \bm{v}} + \left\langle \Lambda \vu,\bm{v}\right\rangle_{\Gamma} + \int\displaylimits_{\Gamma}{\alpha \vu\cdot \bm{v}}\label{SA2}
\end{eqnarray}
where $\Lambda$ is defined in (\ref{lambda}). We call the operator $A_{N,w}$ the weak Stokes operator subject to Navier boundary conditions. Comparing (\ref{SA2}) with (\ref{SA1}) implies 
$$
\left\langle \left( \lambda I + A_{N,w}\right) \vu,\bm{v}\right\rangle = \left\langle A_{-1/2} \vu,\bm{v}\right\rangle + \left\langle \Lambda_\alpha \vu,\bm{v}\right\rangle_{\Gamma}
$$
where the linear operator $ \Lambda_\alpha : X_{-1/2}\rightarrow X_{-1/2}$, given by,
$$
\left\langle \Lambda_\alpha \vu,\bm{v}\right\rangle_\Gamma = \left\langle \Lambda \vu,\bm{v}\right\rangle_{\Gamma} + \int\displaylimits_{\Gamma}{\alpha \vu\cdot \bm{v}}
$$
is a lower order perturbation of $A_{-1/2}$. Therefore, as $\alpha\in \Lb{\infty}$, it follows from \cite[Proposition 3.3.9]{pruss},
$$\forall s\in \R:\,\,\,\, \ \|\left[ (\lambda I + A_{N,w})\right] ^{is}\|_{\mathcal{L}(X_{-1/2})} \le M e^{|s|\theta_A } $$
for some constant  $\theta_A \in (0,\pi/2)$. Since, from \cite[Theorem 5.8]{AG}, $A_{N,w}$ has a bounded inverse it follows from \cite[Proposition 3.3.9]{pruss} again, that
$$ \| A_{N,w}^{is}\|_{\mathcal{L}(X_{-1/2})} \le M e^{|s|\theta_A } .$$

{\bf 3.} Now we want to transfer this 'bounded imaginary power' property to the strong Stokes operator $A_p$ with Navier boundary condition, defined in (\ref{S3E11})-(\ref{S3E12}) on $\bm{L}^p_{\sigma,\vt}(\Omega)$. For that we will apply again Amann's theory of interpolation-extrapolation scales. Let $X_0^w := \left[ \bm{W}^{1,p'}_{\sigma,\vt}(\Omega)\right] '$, $A^w_0 :=A_{N,w}$ and $X_1^w := \bm{W}^{1,p}_{\sigma,\vt}(\Omega)$. By \cite[Theorems V.1.5.1 and V.1.5.4]{amann}, the pair $(X_0^w, A^w_0)$ generates an interpolation-extrapolation scale $(X_a^w,A_a^w), a\in \R$ with respect to the complex interpolation functor and by \cite[Theorem V.1.5.5 (ii)]{amann}, for any $a\in \R$,

$$\forall s\in \R,\,\,\, \ \|( A^w_a) ^{is}\|_{\mathcal{L}(X_{a}^w)} \le M e^{|s|\theta_A } .$$

We will show in the remaining part  of this proof that the operator $A_{1/2}^w: X_{3/2}^w\subset X_{1/2}^w\rightarrow X_{1/2}^w$ coincides with $ A_p$ where the strong Stokes operator $A_p: \mathbf{D}(A_p)\subset \bm{L}^p_{\sigma,\vt}(\Omega) \rightarrow \bm{L}^p_{\sigma,\vt}(\Omega)$ is defined in (\ref{S3E11})-(\ref{S3E12}).
Observe that, by (\ref{SAE2}), (\ref{SAE3}),
$$
X_0^w = X_{-1/2} \quad \text{ and } \quad X_1^w = X_{1/2} .
$$
Therefore,
$$
X_{1/2}^w = \left[ X_0^w, X_1^w\right] _{1/2} = \left[ X_{-1/2}, X_{1/2}\right] _{1/2} = X_0 = \bm{L}^p_{\sigma,\vt}(\Omega) 
$$
and the operator $A_{1/2}^w$ is the restriction of $A_0^w$ on $X_{1/2}^w$. Hence, $A_{1/2}^w \vu = A_0^w \vu = A_{N,w} \vu$ for any $\vu \in \mathbf{D}(A_{1/2}^w) = X_{3/2}^w$ and then, for any $\bm{\varphi}\in \bm{W}^{1,p'}_{\sigma,\vt}(\Omega)$,
\begin{eqnarray}
&&\hskip -2cm \left\langle \bm{\varphi}, A_{1/2}^w \vu\right\rangle_{  \left( X_{1/2}^w\right) '\times X_{1/2}^w}  = 
\left\langle \bm{\varphi}, A_{N,w} \vu \right\rangle_{  \left( X_{1/2}^w\right) '\times X_{1/2}^w} 
\label{SAE4}\\
&& = \int\displaylimits_{\Omega}{\mathbf{curl} \ \vu \cdot \mathbf{curl} \ \bm{\varphi}} + \left\langle \Lambda \vu,\bm{\varphi}\right\rangle_{\Gamma} + \int\displaylimits_{\Gamma}{\alpha \vu\cdot \bm{\varphi}} .\label{SAE5}
\end{eqnarray}
On the other hand, for any $(\bm{v},\bm{\varphi})\in \mathbf{D}(A_p)\times \bm{W}^{1,p'}_{\sigma,\vt}(\Omega)$,  it follows from integration by parts that
\begin{eqnarray}
\label{SAE6}
\left\langle \bm{\varphi} , A_{p} \bm{v}\right\rangle_{ \left( X_{1/2}^w\right) '\times  X_{1/2}^w}   = \int\displaylimits_{\Omega}{\mathbf{curl} \ \bm{v} \cdot \mathbf{curl} \ \bm{\varphi}}+ \left\langle \Lambda \bm{v},\bm{\varphi}\right\rangle_{\Gamma} + \int\displaylimits_{\Gamma}{\alpha \bm{v} \cdot \bm{\varphi}}.
\end{eqnarray}
Now for any given $\vu \in \mathbf{D}(A_{1/2}^w)$, $A_{1/2}^w\vu \in \bm{L}^p_{\sigma, \vt}(\Omega)$ and then there exists a unique $\bm{v}\in \mathbf{D}(A_p)$ such that
$$A_p \bm{v} = A_{1/2}^w \vu $$
since $A_p$ is onto. Thus it follows from (\ref{SAE4}) that for any $\bm{\varphi }\in \bm{W}^{1,p'}_{\sigma,\vt}(\Omega)$,
$$
\left\langle \bm{\varphi },  A_p \bm{v} \right\rangle_{\left( X_{1/2}^w\right) '\times X_{1/2}^w}  =\left\langle \bm{\varphi }, A_{N,w}\vu \right\rangle_{ \left( X_{1/2}^w\right) '\times X_{1/2}^w}
$$
This in turn implies by (\ref{SAE5}) and (\ref{SAE6}) that
$$\left\langle \bm{\varphi }, A_{N,w}\vu \right\rangle_{ \left( X_{1/2}^w\right) '\times X_{1/2}^w} =\left\langle \bm{\varphi }, A_{N,w} \bm{v} \right\rangle_{ \left( X_{1/2}^w\right) '\times X_{1/2}^w}. $$
Hence, $\bm{v} = \vu$ by injectivity of $A_{N,w}$. Similarly, if $\bm{v}\in \mathbf{D}(A_p)$ is given, then there exists a unique $\vu \in \mathbf{D}(A_{1/2}^w)$ such that $ A_{1/2}^w\vu = A_p \bm{v}$ since $A_p \bm{v} \in \bm{L}^p_{\sigma, \vt}(\Omega)$ and $A_{1/2}^w$ is onto. By the same argument as above, we obtain $\vu = \bm{v}$ showing that $\mathbf{D}(A_p) = \mathbf{D}(A_{1/2}^w)$ and $A_p = A_{1/2}^w$.
Thus finally we get that,
\begin{equation*}
\forall s\in \R,\,\,\,\,\| A_p^{is}\|_{\mathcal{L}(\bm{L}^p_{\sigma,\vt}(\Omega))} \le M e^{|s|\theta_A} .
\end{equation*}
\hfill
\end{proof}

\subsection{Fractional  powers}
The above result allows us to study the domains of $A^\beta_p$, $\beta\in \R$. It can be shown that $\bm{D}(A^\beta_{p,\alpha})$ is a Banach space with the graph norm which is equivalent to the norm $\|A^\beta_{p,\alpha} \cdot\|_{\vL{p}}$, since $A_{p,\alpha}$ has bounded inverse. Note that for any $\beta \in\R$, the map $\vu \rightarrow \|A^\beta_{p,\alpha}\vu\|_{\vL{p}}$ defines a norm on $\bm{D}(A^\beta_{p,\alpha})$ due to the injectivity of $A^\beta_{p,\alpha}$.

\begin{theorem}
\label{45}
	For all $p\in (1,\infty)$, $\mathbf{D}(A_{p,\alpha}^{1/2}) = \bm{W}^{1,p}_{\sigma,\vt}(\Omega)$ with equivalent norms.
\end{theorem}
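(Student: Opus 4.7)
The plan is to combine two ingredients already established in the paper: the bounded imaginary powers of $A_{p,\alpha}$ (Theorem \ref{62}) and the interpolation-extrapolation scale constructed in its proof. The main technical identification that makes everything fit is the fact, proved en route to Theorem \ref{62}, that the weak Stokes operator $A_{N,w}$ with Navier boundary condition on $X_{-1/2}=[\bm{W}^{1,p'}_{\sigma,\vt}(\Omega)]'$ fits into an interpolation-extrapolation scale $(X_a^w, A_a^w)$ whose half-level $X_{1/2}^w$ coincides with $\bm{L}^p_{\sigma,\vt}(\Omega)$ and whose level $3/2$ coincides with $\mathbf{D}(A_{p,\alpha})$ (since $A_{1/2}^w = A_{p,\alpha}$).

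First I would invoke the classical theorem of Triebel (cf.\ \cite[Theorem 1.15.3]{Triebel}): when a densely defined closed operator $A$ on a Banach space $X$ admits a bounded inverse and has bounded imaginary powers, then for $\theta \in (0,1)$
\begin{equation*}
\mathbf{D}(A^{\theta}) = [X, \mathbf{D}(A)]_{\theta}
\end{equation*}
with equivalent norms. Since $A_{p,\alpha}$ has bounded inverse by Theorem \ref{51} (the estimate \eqref{13} at $\lambda=0$ follows from the stationary Stokes theory) and since \eqref{53} in Theorem \ref{62} provides BIP with angle strictly less than $\pi/2$, applying this characterization at $\theta=1/2$ gives
\begin{equation*}
\mathbf{D}(A_{p,\alpha}^{1/2}) = [\bm{L}^p_{\sigma,\vt}(\Omega), \mathbf{D}(A_{p,\alpha})]_{1/2}
\end{equation*}
with equivalent norms.

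Next I would identify this complex interpolation space as $\bm{W}^{1,p}_{\sigma,\vt}(\Omega)$ using the scale $(X_a^w, A_a^w)$ built in step 3 of the proof of Theorem \ref{62}. There we have $X_0^w = [\bm{W}^{1,p'}_{\sigma,\vt}(\Omega)]'$, $X_1^w = \bm{W}^{1,p}_{\sigma,\vt}(\Omega)$, $X_{1/2}^w = \bm{L}^p_{\sigma,\vt}(\Omega)$, and $X_{3/2}^w = \mathbf{D}(A_{1/2}^w) = \mathbf{D}(A_{p,\alpha})$. By the very construction of interpolation-extrapolation scales \cite[Theorem V.1.5.4]{amann}, intermediate levels are recovered as complex interpolation spaces of adjacent ones; in particular
\begin{equation*}
X_1^w = [X_{1/2}^w, X_{3/2}^w]_{1/2} = [\bm{L}^p_{\sigma,\vt}(\Omega), \mathbf{D}(A_{p,\alpha})]_{1/2},
\end{equation*}
so this interpolation space is exactly $\bm{W}^{1,p}_{\sigma,\vt}(\Omega)$ with equivalent norms. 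Chaining the two identifications yields $\mathbf{D}(A_{p,\alpha}^{1/2}) = \bm{W}^{1,p}_{\sigma,\vt}(\Omega)$.

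The only subtle point is the strict application of the Triebel characterization, which requires a genuinely positive (invertible) operator; if one prefers to argue with $\lambda I + A_{p,\alpha}$ for some $\lambda>0$ to secure invertibility and the sectoriality bound, the equality $\mathbf{D}((\lambda I + A_{p,\alpha})^{1/2}) = \mathbf{D}(A_{p,\alpha}^{1/2})$ from \cite[Theorem 6.4]{Komatsu} shows that this makes no difference. Everything else — the construction of the scale, the identification of $X_{1/2}^w$, and the density of smooth solenoidal fields — is already in place from Sections \ref{66} and \ref{frac power}, so no further estimate is required beyond assembling these pieces.
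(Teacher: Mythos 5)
Your proposal is correct and follows essentially the same strategy as the paper: first use \cite[Theorem 1.15.3]{Triebel} together with the BIP estimate \eqref{53} and the invertibility of $A_{p,\alpha}$ to get $\mathbf{D}(A_{p,\alpha}^{1/2})=[\bm{L}^p_{\sigma,\vt}(\Omega),\mathbf{D}(A_{p,\alpha})]_{1/2}$, and then identify that interpolation space with $\bm{W}^{1,p}_{\sigma,\vt}(\Omega)$ using the machinery already set up in the proof of Theorem \ref{62}. The one place where you genuinely diverge is the identification step, and your version is actually the more precise one: the paper simply says the identity ``is already proved in \eqref{SAE1}'', but \eqref{SAE1} literally concerns $[X_0,X_1]_{1/2}$ with $X_1=\mathbf{D}(A_{NT})$, the domain of the Navier-\emph{type} operator, not $\mathbf{D}(A_{p,\alpha})$; these are different subspaces of $\vW{2}{p}$ cut out by different boundary conditions. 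By instead invoking the scale $(X_a^w,A_a^w)$ built on the weak Navier-slip operator and reading off $X_1^w=[X_{1/2}^w,X_{3/2}^w]_{1/2}$ from \cite[Theorem V.1.5.4]{amann}, together with the identifications $X_{1/2}^w=\bm{L}^p_{\sigma,\vt}(\Omega)$ and $X_{3/2}^w=\mathbf{D}(A_{1/2}^w)=\mathbf{D}(A_{p,\alpha})$ established in step 3 of the proof of Theorem \ref{62}, you obtain exactly the interpolation identity that is needed, with no appeal to the Navier-type domain. Your closing remark on replacing $A_{p,\alpha}$ by $\lambda I+A_{p,\alpha}$ to secure positivity, with domains of the square roots unchanged by \cite[Theorem 6.4]{Komatsu}, is a legitimate way to tidy up the hypothesis of Triebel's theorem and matches how the paper handles the analogous point elsewhere.
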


\begin{proof}
Since the pure imaginary power of $A_{p,\alpha}$ is bounded and satisfies estimate \eqref{53}, using the result \cite[Theorem 1.15.3]{Triebel}, we get that
	\begin{equation}
	\label{41}
\mathbf{D}(A_{p,\alpha}^{1/2})= [\bm{L}^p_{\sigma,\vt}(\Omega), \mathbf{D}(A_{p,\alpha})]_{\frac{1}{2}} .
	\end{equation}
Then it is enough to show that
$$
[\bm{L}^p_{\sigma,\vt}(\Omega), \mathbf{D}(A_{p,\alpha})]_{\frac{1}{2}} = \mathbf{W}^{1,p}_{\sigma,\vt}(\Omega)
$$
with equivalent norms, which is already proved in (\ref{SAE1}). 
\hfill 
\end{proof}

\begin{remark}
\rm{	If $\Omega$ is not obtained by rotation around an axis i.e. if $\Omega$ is not axisymmetric, the norms $\|\vu\|_{\vW{1}{p}}$ and $\|\DT\vu\|_{\vL{p}}$ are equivalent for $\vu\in\vW{1}{p}$ with $\vu\cdot \vn = 0$ on $\Gamma$, as shown in \cite[Proposition 3.7]{AG}. As a result we have the following equivalence for all $\vu\in \mathbf{D}(A_{p,\alpha}^{1/2})$:
	$$\|\DT\vu\|_{\vL{p}} \simeq \|A_{p,\alpha}^{1/2}\vu\|_{\vL{p}} .$$}
\end{remark}

Our next result is an embedding theorem of Sobolev type for domains of fractional powers which will be applied to deduce the so-called $L^p-L^q$ estimates for the solution of the evolutionary Stokes equation.
\begin{theorem}\label{44}
	For all $1<p<\infty$ and for all $\beta \in \mathbb{R}$ such that $0<\beta <\frac{3}{2p}$, the following embedding holds :
	\begin{equation*}
	\mathbf{D}(A_{p,\alpha}^\beta)\hookrightarrow \vL{q} \quad \text{ where } \ \frac{1}{q} = \frac{1}{p}-\frac{2\beta}{3} .
	\end{equation*}
\end{theorem}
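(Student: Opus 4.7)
The strategy is to identify $\mathbf{D}(A_{p,\alpha}^\beta)$ as a complex interpolation space, embed it into a fractional Sobolev space, and then apply the classical Sobolev embedding.

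By Theorem \ref{62}, the operator $A_{p,\alpha}$ has bounded imaginary powers, and by Theorem \ref{51} it admits a bounded inverse on $\bm{L}^p_{\sigma,\vt}(\Omega)$. Consequently, \cite[Theorem 1.15.3]{Triebel} yields, for every $\beta \in (0,1)$,
\begin{equation*}
\mathbf{D}(A_{p,\alpha}^\beta) \;=\; \bigl[\bm{L}^p_{\sigma,\vt}(\Omega),\,\mathbf{D}(A_{p,\alpha})\bigr]_\beta
\end{equation*}
with equivalent norms. Since $\mathbf{D}(A_{p,\alpha}) \hookrightarrow \vW{2}{p}$ by definition, the monotonicity of the complex interpolation functor gives the continuous embedding
\begin{equation*}
\mathbf{D}(A_{p,\alpha}^\beta) \;\hookrightarrow\; \bigl[\vL{p},\,\vW{2}{p}\bigr]_\beta .
\end{equation*}

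Next, one identifies this interpolation space with a fractional Sobolev (Bessel potential) space on $\Omega$. Since $\Gamma$ is $\mathcal{C}^{1,1}$, there exists a total extension operator from $\Omega$ to $\R^3$ (Stein-type), so that the Lions--Magenes/Triebel results \cite[Theorem 4.3.1]{Triebel} carry over from the whole space case to give
\begin{equation*}
\bigl[\vL{p},\,\vW{2}{p}\bigr]_\beta \;=\; \vW{2\beta}{p} \qquad \text{for } 2\beta \notin \mathbb{Z},
\end{equation*}
where $\vW{2\beta}{p}$ is the fractional Sobolev space on $\Omega$. In the borderline case $\beta = \tfrac{1}{2}$ the identification $\mathbf{D}(A_{p,\alpha}^{1/2}) = \bm{W}^{1,p}_{\sigma,\vt}(\Omega)$ is already provided by Theorem \ref{45}.

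Finally, apply the classical Sobolev embedding $\vW{s}{p} \hookrightarrow \vL{q}$ for $0 < s < 3/p$ with $\tfrac{1}{q} = \tfrac{1}{p} - \tfrac{s}{3}$, specialised to $s = 2\beta$. The condition $s < 3/p$ is precisely $\beta < \tfrac{3}{2p}$, which is the hypothesis of the theorem, and the embedding produces the claimed exponent $q$. The only subtlety in the plan is the identification of the interpolation space with $\vW{2\beta}{p}(\Omega)$ on the bounded $\mathcal{C}^{1,1}$ domain; this is handled by reducing to the whole space via extension, and is the main technical step that must be carefully justified.
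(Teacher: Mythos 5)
Your argument follows the paper's route exactly on the range $\beta\in(0,1)$: bounded imaginary powers plus \cite[Theorem 1.15.3]{Triebel} give $\mathbf{D}(A_{p,\alpha}^\beta)=[\bm{L}^p_{\sigma,\vt}(\Omega),\mathbf{D}(A_{p,\alpha})]_\beta$, this embeds into $[\vL{p},\vW{2}{p}]_\beta\hookrightarrow\vW{2\beta}{p}$, and the fractional Sobolev embedding finishes. Your extra care about identifying the interpolation space on the bounded $\mathcal{C}^{1,1}$ domain via extension is fine (the paper simply asserts this chain of embeddings).

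However, there is a genuine gap: you only treat $\beta\in(0,1)$, while the theorem is claimed for all $0<\beta<\frac{3}{2p}$. When $1<p<\frac{3}{2}$ one has $\frac{3}{2p}>1$, so the hypothesis admits exponents $\beta\ge 1$, and for those the identity $\mathbf{D}(A_{p,\alpha}^\beta)=[\bm{L}^p_{\sigma,\vt}(\Omega),\mathbf{D}(A_{p,\alpha})]_\beta$ is no longer available (complex interpolation between $X$ and $\mathbf{D}(A)$ only reaches powers in $[0,1]$). The paper closes this case by a bootstrapping argument: write $\beta=\theta+k$ with $0\le\theta<1$ and $k\in\mathbb{N}$, set $\frac{1}{q_0}=\frac{1}{p}-\frac{2\theta}{3}$ and $\frac{1}{q_j}=\frac{1}{q_{j-1}}-\frac{2}{3}$ so that $q_k=q$, and chain the one-step embeddings $\mathbf{D}(A_{p,\alpha}^\theta)\hookrightarrow\vL{q_0}$ and $\mathbf{D}(A_{q_{j-1},\alpha})\hookrightarrow\vL{q_j}$ (each justified by the $(0,1)$-case you proved, applied on successive $L^{q_j}$ scales) to get $\|\vu\|_{\vL{q}}\le C\|A_{p,\alpha}^\beta\vu\|_{\vL{p}}$ for $\vu$ in a dense subset $\mathbf{D}(A_{p,\alpha}^m)$, and then concludes by density. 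The hypothesis $\beta<\frac{3}{2p}$ is exactly what guarantees each intermediate exponent $q_{j-1}$ satisfies the smallness condition needed for the next step. You should add this iteration (or at least the single extra step needed since $\beta<\frac{3}{2}<2$ forces $k\le 1$) to cover the full stated range.
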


\begin{proof}
	First observe that for $0\leq \theta\leq 1$, by the result \cite[Theorem 1.15.3]{Triebel} and the estimate \eqref{53}, we can write
	\begin{equation}\label{43}
	\mathbf{D}(A_{p,\alpha}^{\theta}) = [\bm{L}^p_{\sigma,\vt}(\Omega), \mathbf{D}(\lambda I+A_{p,\alpha})]_{\theta}
	\hookrightarrow [\vL{p}, \vW{2}{p}]_\theta \hookrightarrow \vW{2\theta}{p} \hookrightarrow \vL{q}
	\end{equation}
	where
	$$\frac{1}{q} = \frac{1}{p}-\frac{2\theta}{3} \quad \text{ when } \ p < \frac{3}{2\theta} .$$
	Now let $\beta = \theta +k$ where $0\leq \theta <1$ and $k\in \mathbb{N}\cup \{0\}$. Consider $m$ large so that $\mathbf{D}(A^m_{p,\alpha})\subset \mathbf{D}(A^\beta_{q,\alpha})$ where $\frac{1}{q} = \frac{1}{p}-\frac{2\beta}{3}$. Also, by the definition of $q$, it is obvious that $\mathbf{D}(A^\beta_{q,\alpha}) \subset \mathbf{D}(A^\beta_{p,\alpha})$. 
	If we set
	$$\frac{1}{q_0} = \frac{1}{p}- \frac{2\theta}{3} \quad \text{ and } \quad \frac{1}{q_j} = \frac{1}{q_0}-\frac{2j}{\theta} \quad \text{ for } \ 0\leq j\leq k ,$$
	then we have $\frac{1}{q_j} = \frac{1}{q_{j-1}} - \frac{2}{3}$ for $1\leq j\leq k$ and $q_k = q$. Moreover, $q_{j-1}<\frac{3}{2}$ for $1\leq j\leq k$ by assumptions on $p$ and $\beta$. Hence as the consequence of the embedding \eqref{43}, we get that
	$$
	\mathbf{D}(A_{p,\alpha}^\theta) \hookrightarrow \vL{q_0}
	$$
	and
	$$\mathbf{D}(A_{q_{j-1},\alpha}) \hookrightarrow \vL{q_j} \ \text{ for } \ 1\leq j\leq k .
	$$
	Thus it follows that for all $\vu\in \mathbf{D}(A^m_p)$,
	$$
	\|\vu\|_{\vL{q}} \leq C \|A_{q_{k-1},\alpha}\vu\|_{\vL{q_{k-1}}} \leq ... \leq C \|A^k_{q_0,\alpha}\vu\|_{\vL{q_0}} \leq C \|A^\beta _{p,\alpha} \vu\|_{\vL{p}} .
	$$
	By density of $\mathbf{D}(A^m_{p,\alpha})$ in $\mathbf{D}(A^\beta_{p,\alpha})$, we get the final result.
	\hfill
\end{proof}

\section{The homogeneous Stokes problem}
\setcounter{equation}{0}

In this section, with the help of the semigroup theory, we solve the homogeneous time dependent Stokes problem:
\begin{equation}
\label{evolutionary Stokes}
\left\{
\begin{aligned}
\frac{\partial \vu}{\partial t} -\Delta \vu + \nabla \pi = \bm{0} , \quad \div \ \vu = 0 \quad & \text{ in } \Omega\times (0,T) ,\\
\vu\cdot \vn = 0, \quad 2[(\DT\vu)\vn]_{\vt}+\alpha \vu_{\vt} = \bm{0} \quad & \text{ on } \Gamma\times (0,T) ,\\
\vu(0) = \vu_0 \quad & \text{ in } \Omega
\end{aligned}
\right.
\end{equation}
for which the analyticity of the semigroups, considered before give a unique solution satisfying the usual regularity.

\subsection{Strong solution}
We start with the strong solution of the problem \eqref{evolutionary Stokes}.
\begin{theorem}
\label{35}
Let $ p\in (1,\infty)$ and $\alpha$ be as in \eqref{alpha}. Then for $\vu_0\in \bm{L}^p_{\sigma,\vt}(\Omega)$, the problem \eqref{evolutionary Stokes} has a unique solution $\vu(t)$ satisfying
\begin{equation}
\label{32}
\vu\in C([0,\infty),\bm{L}^p_{\sigma,\vt}(\Omega) )\cap C((0,\infty),\mathbf{D}(A_{p,\alpha}))\cap C^1((0,\infty),\bm{L}^p_{\sigma,\vt}(\Omega)) 
\end{equation}
and
\begin{equation}
\label{33}
\vu\in C^k((0,\infty),\mathbf{D}(A_{p,\alpha}^l)) \quad \forall \ k\in\mathbb{N}, \ \forall \ l\in\mathbb{N}\backslash\{0\} .
\end{equation}
Also we have the estimates, for some constant $C>0$ independent of $\alpha$,
\begin{equation}
\label{30}
\|\vu(t)\|_{\vL{p}} \leq C \|\vu_0\|_{\vL{p}}
\end{equation}
and
\begin{equation}
\label{31}
\left\| \frac{\partial \vu(t)}{\partial t}\right\| _{\vL{p}} \leq \frac{C}{t} \|\vu_0\|_{\vL{p}}.
\end{equation}
Moreover, if $\alpha$ is a constant and either (i) $\Omega$ is not axisymmetric or (ii) $\Omega$ is axisymmetric and $\alpha\ge \alpha_*>0$, then
\begin{equation}
\label{34}
\left\| \DT \vu(t)\right\| _{\vL{p}} \leq \frac{C}{\sqrt{t}} \ \|\vu_0\|_{\vL{p}}
\end{equation}

\begin{equation}
\label{46}
\|\vu(t)\|_{\vW{2}{p}} \leq \frac{C}{t} \|\vu_0\|_{\vL{p}}
\end{equation}
and
\begin{equation}
\label{12}
 \|\nabla \pi\|_{\L{p}} \leq \frac{C}{t} \|\vu_0\|_{\vL{p}} .
\end{equation}
\end{theorem}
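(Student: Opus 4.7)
Define $\vu(t):=e^{-tA_{p,\alpha}}\vu_0$, where $\{e^{-tA_{p,\alpha}}\}_{t\ge 0}$ is the bounded analytic semigroup on $\bm{L}^p_{\sigma,\vt}(\Omega)$ furnished by Theorem~\ref{24}. From the abstract theory of analytic semigroups (see e.g.\ \cite[Chapter I]{Barbu}), the regularity assertions \eqref{32}--\eqref{33}, the uniform bound \eqref{30} (the semigroup is bounded), and the estimate \eqref{31} (from $\partial_t\vu=-A_{p,\alpha}\vu$ combined with the classical smoothing bound $\|A_{p,\alpha}e^{-tA_{p,\alpha}}\|\le C/t$) are all immediate. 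Because these classical bounds rest only on the resolvent estimate \eqref{13}, whose constant is $\alpha$-free by Theorem~\ref{51}, the constants above do not depend on $\alpha$.

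To produce the pressure, fix $t>0$ so that $\vu(t)\in \mathbf{D}(A_{p,\alpha})$ and hence $\Delta\vu(t)\in\bm{L}^p(\Omega)$. The identity $\partial_t\vu+A_{p,\alpha}\vu=0$ in $\bm{L}^p_{\sigma,\vt}(\Omega)$ then rewrites, using $A_{p,\alpha}\vu=-P(\Delta\vu)$, as
\[
\partial_t\vu(t)-\Delta\vu(t)=\Delta\vu(t)-P(\Delta\vu(t))=:-\nabla\pi(t),
\]
with $\pi(t)\in W^{1,p}(\Omega)/\R$ by the defining property of $P$ recorded in \eqref{pression}. The boundary conditions \eqref{lens3}--\eqref{lens4} are automatic since $\vu(t)\in \mathbf{D}(A_{p,\alpha})$. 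Uniqueness within the class \eqref{32} follows from the injectivity of $A_{p,\alpha}$: any solution with vanishing initial datum must equal $e^{-tA_{p,\alpha}}\bm 0=\bm 0$.

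For \eqref{34}, I would invoke Theorem~\ref{45} to identify $\mathbf{D}(A_{p,\alpha}^{1/2})$ with $\bm{W}^{1,p}_{\sigma,\vt}(\Omega)$ with equivalent norms, and use the remark following Theorem~\ref{45} that asserts the equivalence $\|\DT\bm v\|_{\vL p}\simeq \|A_{p,\alpha}^{1/2}\bm v\|_{\vL p}$ on that space whenever $\Omega$ is not axisymmetric (or axisymmetric with $\alpha\ge\alpha_*>0$). Combined with the standard fractional smoothing estimate $\|A_{p,\alpha}^{1/2}e^{-tA_{p,\alpha}}\|\le C/\sqrt t$---a direct consequence of the Dunford integral representation together with the $\alpha$-free resolvent bound \eqref{13}---this gives \eqref{34}.

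Finally, for \eqref{46}--\eqref{12}, the key observation is that paragraph two exhibits $(\vu(t),\pi(t))$ as a solution of the \emph{stationary} Stokes problem with Navier boundary condition and source $-\partial_t\vu(t)\in\bm{L}^p_{\sigma,\vt}(\Omega)$. When $\alpha$ is constant and (i) or (ii) holds, the stationary $L^p$-Stokes regularity \cite[Theorem 6.11]{AG} (the same tool behind \eqref{15} in Theorem~\ref{51}) yields
\[
\|\vu(t)\|_{\vW 2p}+\|\nabla\pi(t)\|_{\L p}\le C\,\|\partial_t\vu(t)\|_{\vL p},
\]
and combining with \eqref{31} immediately produces \eqref{46} and \eqref{12}. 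The only delicate point throughout is to ensure that no constant secretly depends on $\alpha$; this is precisely what the $\alpha$-uniform resolvent estimate of Theorem~\ref{51} and the perturbation-based imaginary-power bounds of Theorem~\ref{62} were designed to guarantee.
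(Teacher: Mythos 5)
Your proposal is correct and follows essentially the same route as the paper: both treat \eqref{evolutionary Stokes} as the abstract Cauchy problem for the bounded analytic semigroup $e^{-tA_{p,\alpha}}$, read off \eqref{32}--\eqref{33} and \eqref{30}--\eqref{31} from standard semigroup theory with $\alpha$-free constants inherited from the resolvent bound of Theorem \ref{51}, and obtain \eqref{46} and \eqref{12} from the stationary $L^p$-regularity (equivalently, $\|\bm{v}\|_{\vW{2}{p}}\simeq\|A_{p,\alpha}\bm{v}\|_{\vL{p}}$ on $\mathbf{D}(A_{p,\alpha})$) applied with source $-\partial_t\vu(t)$. The only divergences are minor: the paper derives \eqref{34} by Gagliardo--Nirenberg interpolation between \eqref{46} and \eqref{30}, exactly as in the proof of \eqref{14}, rather than via $\|A_{p,\alpha}^{1/2}e^{-tA_{p,\alpha}}\|\le C/\sqrt{t}$ together with Theorem \ref{45}; and your displayed pressure identity has a harmless sign slip (since $\partial_t\vu=P(\Delta\vu)$, it should read $\partial_t\vu-\Delta\vu=P(\Delta\vu)-\Delta\vu=-\nabla\pi$ with $\nabla\pi=\Delta\vu-P(\Delta\vu)$).
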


\begin{proof}
Since $-A_{p,\alpha}$ generates an analytic semigroup for every $\vu_0\in\bm{L}^p_{\sigma,\vt}(\Omega)$, the initial value problem \eqref{evolutionary Stokes} has a unique solution $\vu(t) = T(t)\vu_0$, by \cite[Corollary 1.5, Chapter 4, page 104]{Pazy}. Also, from \cite[Theorem 7.7, Chapter 1, Page 30]{Pazy}, we get that
$$\|T(t)\| \leq C \quad \text{ for some constant } C>0, \text{ independent of } \alpha .$$ 
As a result, we obtain the estimate \eqref{30}. Also, with the help of \cite[point (d), Theorem 5.2, Chapter 2]{Pazy}, we get the estimate \eqref{31}. To prove the estimate \eqref{34}, we need to proceed as in the proof of \eqref{14}, hence we skip it. The estimate \eqref{46} follows from \eqref{31} and using the fact that $\|\bm{v}\|_{\vW{2}{p}} \simeq \|A_{p,\alpha} \bm{v}\|_{\vL{p}}$ for $\bm{v}\in\mathbf{D}(A_{p,\alpha})$. 

Further, using the usual regularity properties of semi group and by  \cite[Lemma 4.2, chapter 2]{Pazy}, we can deduce the regularity \eqref{32} and \eqref{33}. The estimate on the pressure term (\ref{12}) can be deduced from the equation using (\ref{31}) and (\ref{46}).
\hfill
\end{proof}

The estimates \eqref{30}, \eqref{31} and \eqref{34} allow us to deduce the following regularity result.

\begin{corollary}
\label{36}
Let $p\in ( 1,\infty)$ and $\alpha$ be as in \eqref{alpha}. Moreover, $\vu_0 \in \bm{L}^p_{\sigma,\vt}(\Omega), 0<T<\infty$ and $(\vu,\pi)$ be the unique solution of problem \eqref{evolutionary Stokes} given by theorem \ref{35}. Then, for all $1\leq q<2$, we have,
$$\vu\in L^q(0,T;\vW{1}{p}), \pi\in L^q(0,T;L^p_0(\Omega)) \text{ and } \frac{\partial \vu}{\partial t} \in L^q(0,T;[\bm{H}_0^{p^\prime}(\div ,\Omega)]^\prime) .$$
\end{corollary}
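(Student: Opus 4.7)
Plan. By Theorem \ref{35} the unique solution lies in $C((0,\infty);\mathbf{D}(A_{p,\alpha}))\cap C^{1}((0,\infty);\bm{L}^{p}_{\sigma,\vt}(\Omega))$, so the task is to convert the pointwise-in-$t$ estimates into $L^{q}$ integrability on $(0,T)$; the bound $q<2$ is precisely the range in which $t\mapsto t^{-1/2}$ lies in $L^{q}(0,T)$.

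For the velocity, the analytic-semigroup smoothing $\|A_{p,\alpha}^{1/2}e^{-tA_{p,\alpha}}\|_{\mathcal{L}(\vL{p})}\le Ct^{-1/2}$ (Theorem \ref{24}) combined with the identification $\mathbf{D}(A_{p,\alpha}^{1/2})=\bm{W}^{1,p}_{\sigma,\vt}(\Omega)$ of Theorem \ref{45} yields $\|\vu(t)\|_{\vW{1}{p}}\le Ct^{-1/2}\|\vu_0\|_{\vL{p}}$, which integrates in $L^{q}(0,T)$ exactly for $q<2$. For $\partial_{t}\vu$, the resolvents of the strong operator $A_{p,\alpha}$ on $\bm{L}^{p}_{\sigma,\vt}(\Omega)$ and of the weak operator $B_{p,\alpha}$ on $X_{0}:=[\bm{H}^{p'}_0(\div,\Omega)]'_{\sigma,\vt}$ (Section \ref{67}) solve the same boundary value problem, so the two analytic semigroups agree on $\bm{L}^{p}_{\sigma,\vt}(\Omega)\subset X_{0}$ and $\partial_{t}\vu=-B_{p,\alpha}\vu$ in $X_{0}$. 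The bounded imaginary powers (\ref{53.}) and Triebel's theorem give $\mathbf{D}(B_{p,\alpha}^{1/2})=[X_{0},\mathbf{D}(B_{p,\alpha})]_{1/2}$, and since $\mathbf{D}(B_{p,\alpha})\hookrightarrow \bm{W}^{1,p}_{\sigma,\vt}(\Omega)$ by the weak Stokes regularity, this midpoint interpolant contains $\bm{L}^{p}_{\sigma,\vt}(\Omega)$ (the $L^p$-analogue of the classical $[\bm{H}^{-1},\bm{H}^{1}]_{1/2}=\bm{L}^{2}$). Hence $\|B_{p,\alpha}^{1/2}\vu_{0}\|_{X_{0}}\le C\|\vu_{0}\|_{\vL{p}}$ and the analytic-semigroup bound on $X_{0}$ gives
\begin{equation*}
\left\|\frac{\partial \vu}{\partial t}(t)\right\|_{[\bm{H}^{p'}_0(\div,\Omega)]'}=\|B_{p,\alpha}e^{-tB_{p,\alpha}}\vu_{0}\|_{X_{0}}\le Ct^{-1/2}\|\vu_{0}\|_{\vL{p}}.
\end{equation*}
Finally, for the pressure, $(\vu(t),\pi(t))$ solves at each $t>0$ the stationary Stokes system (\ref{S3.ER1})-(\ref{S3.ER3}) with $\lambda=0$ and right-hand side $-\partial_{t}\vu(t)\in X_{0}$, so the weak stationary Stokes regularity underlying Theorem \ref{S455} (cf.\ \cite[Theorem 6.7]{AG}) yields $\|\pi(t)\|_{L^{p}_{0}(\Omega)}\le C\|\partial_{t}\vu(t)\|_{[\bm{H}^{p'}_0(\div,\Omega)]'}\le Ct^{-1/2}\|\vu_{0}\|_{\vL{p}}$, again $L^{q}(0,T)$-integrable for $q<2$.

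The only non-routine ingredient is the embedding $\bm{L}^{p}_{\sigma,\vt}(\Omega)\hookrightarrow\mathbf{D}(B_{p,\alpha}^{1/2})$; it is the weak-scale counterpart of Theorem \ref{45} and is obtained by computing $[X_{0},\mathbf{D}(B_{p,\alpha})]_{1/2}$ via the Amann interpolation-extrapolation scale, exactly as in the proof of Theorem \ref{62}. Once this is in place, every other step is either the analytic-semigroup smoothing bound or the stationary Stokes regularity, both already available in the paper.
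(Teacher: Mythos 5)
Your treatment of the velocity is correct and in substance close to the paper's, though you reach the key bound by a slightly different route: the paper obtains $\|\vu(t)\|_{\vW{1}{p}}\le C(1+t^{-1/2})\|\vu_0\|_{\vL{p}}$ from Korn's inequality together with the estimates (\ref{30}) and (\ref{34}) of Theorem \ref{35} (note that (\ref{34}) carries the extra hypotheses ``$\alpha$ constant, $\Omega$ not axisymmetric or $\alpha\ge\alpha_*>0$''), whereas your route through $\|A_{p,\alpha}^{1/2}e^{-tA_{p,\alpha}}\|\le Ct^{-1/2}$ and $\mathbf{D}(A_{p,\alpha}^{1/2})=\bm{W}^{1,p}_{\sigma,\vt}(\Omega)$ (Theorem \ref{45}) gives the same bound without those hypotheses, at the price of inheriting from Theorem \ref{62} the assumption $\alpha\in\Lb{\infty}$ when $p\le 3$. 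The pressure step is fine once the time-derivative bound is in hand, and matches what the paper does in spirit.

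The genuine gap is in your argument for $\partial_t\vu$. You reduce it to the embedding $\bm{L}^p_{\sigma,\vt}(\Omega)\hookrightarrow\mathbf{D}(B_{p,\alpha}^{1/2})$, which is nowhere established in the paper, and the justification you sketch does not hold up: (i) from $\mathbf{D}(B_{p,\alpha})\hookrightarrow\bm{W}^{1,p}_{\sigma,\vt}(\Omega)$ one can only conclude that $[X_0,\mathbf{D}(B_{p,\alpha})]_{1/2}$ is \emph{contained in} $[X_0,\bm{W}^{1,p}_{\sigma,\vt}(\Omega)]_{1/2}$ — an upper bound on the domain cannot show that the interpolation space \emph{contains} $\bm{L}^p_{\sigma,\vt}(\Omega)$; (ii) since $[\bm{H}_0^{p^\prime}(\div,\Omega)]^\prime$ sits strictly between $\vL{p}$ and $\vW{-1}{p}$, the heuristic $[\bm{H}^{-1},\bm{H}^1]_{1/2}=\bm{L}^2$ does not obviously reach down to $\bm{L}^p$ here; and (iii) the appeal to the proof of Theorem \ref{62} does not transfer, because the Amann scale constructed there is anchored at $[\bm{W}^{1,p'}_{\sigma,\vt}(\Omega)]'$ (with $\bm{W}^{1,p}_{\sigma,\vt}(\Omega)$ as the upper endpoint), not at $[\bm{H}_0^{p^\prime}(\div,\Omega)]^\prime_{\sigma,\vt}$ with $\mathbf{D}(B_{p,\alpha})$, which is a strict subspace of $\bm{W}^{1,p}_{\sigma,\vt}(\Omega)$. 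The paper avoids fractional powers of $B_{p,\alpha}$ altogether: since $\vu(t)\in\mathbf{D}(B_{p,\alpha})$ for $t>0$ and $\partial_t\vu=-B_{p,\alpha}\vu$, the bounded invertibility of $B_{p,\alpha}$ gives $\|\partial_t\vu(t)\|_{[\bm{H}_0^{p^\prime}(\div,\Omega)]^\prime}=\|B_{p,\alpha}\vu(t)\|\simeq\|\vu(t)\|_{\mathbf{D}(B_{p,\alpha})}$, which is controlled through the $\vW{1}{p}$-bound already obtained, yielding the same $O(t^{-1/2})$ decay. Either adopt that argument, or supply an actual proof of $\bm{L}^p_{\sigma,\vt}(\Omega)\hookrightarrow\mathbf{D}(B_{p,\alpha}^{1/2})$ by rerunning the interpolation--extrapolation construction with base space $[\bm{H}_0^{p^\prime}(\div,\Omega)]^\prime_{\sigma,\vt}$; as written, that step is asserted rather than proved.
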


\begin{proof}
Since we have the Korn inequality
$$\|\vu(t)\|_{\vW{1}{p}} \leq \|\vu(t)\|_{\vL{p}} + \|\DT\vu(t)\|_{\vL{p}} $$
and $\vu(t)$ satisfies the estimates \eqref{30} and \eqref{34}, we get,
$$\|\vu(t)\|^q_{\vW{1}{p}} \leq  C(1+t^{-q/2}) \|\vu_0\|_{\vL{p}}$$
which implies $\vu\in L^q(0,T;\vW{1}{p})$ only for $1\leq q <2$ and for all $0<T<\infty$.

Moreover, as the operator $B_{p,\alpha} : \bm{D}(B_{p,\alpha})\rightarrow [\bm{H}_0^{p^\prime}(\div ,\Omega)]^\prime$ is an isomorphism, we have the equivalence of norm, for any $\bm{v}\in \bm{D}(B_{p,\alpha})$, $\|B_{p,\alpha}\bm{v} \|_{[\bm{H}_0^{p^\prime}(\div ,\Omega)]^\prime}\simeq \|\bm{v}\|_{\bm{D}(B_{p,\alpha})}$ and here $B_{p,\alpha} \bm{\vu} = \frac{\partial \vu}{\partial t}$. Thus $\frac{\partial \vu}{\partial t} \in L^q(0,T;[\bm{H}_0^{p^\prime}(\div ,\Omega)]^\prime)$.

Finally from the equation $\nabla \pi = \Delta \vu - \frac{\partial \vu}{\partial t}$, the regularity of $\pi$ follows.
\hfill
\end{proof}

\begin{theorem}
\label{S6Thm2}	
 Let $\alpha $ satisfy (\ref{alpha}). Then for all $p\leq q <\infty$ and $\vu_0\in \bm{L}^p_{\sigma,\vt}(\Omega)$, there exists $\delta>0$ such that the unique solution $\vu(t)$ of the problem (\ref{evolutionary Stokes}) belongs to $\vL{q}$ and satisfies, for all $t>0$ :
\begin{equation}
\label{38}
\|\vu(t)\|_{\vL{q}} \leq C(\Omega, p) \ e^{-\delta t}t^{-3/2 (1/p-1/q)} \|\vu_0\|_{\vL{p}} .
\end{equation}
Moreover, the following estimates also hold
\begin{equation}
\label{39}
\|\DT{\vu(t)}\|_{\vL{q}} \leq C(\Omega,p) \ e^{-\delta t}t^{-3/2(1/p-1/q)-1/2} \|\vu_0\|_{\vL{p}} ,
\end{equation}
\begin{equation}
\label{40}
\forall \ m,n\in\mathbb{N}, \quad \|\frac{\partial ^m}{\partial t^m}A_{p,\alpha}^n \vu(t)\|_{\vL{q}} \leq C(\Omega, p) \ e^{-\delta t}t^{-(m+n)-3/2(1/p-1/q)} \|\vu_0\|_{\vL{p}} .
\end{equation}
Note that all the above constants $C(\Omega,p)$ are independent of $\alpha$.
\end{theorem}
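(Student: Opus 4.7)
\textbf{Proof plan for Theorem \ref{S6Thm2}.} The argument rests on three ingredients: (i) the Sobolev embedding of fractional power domains in Theorem~\ref{44}, namely $\mathbf{D}(A_{p,\alpha}^\beta) \hookrightarrow \vL{q}$ when $\frac{1}{q}=\frac{1}{p}-\frac{2\beta}{3}$ and $0<\beta<\frac{3}{2p}$; (ii) the identification $\mathbf{D}(A_{p,\alpha}^{1/2}) = \bm{W}^{1,p}_{\sigma,\vt}(\Omega)$ from Theorem~\ref{45}; and (iii) the exponential decay of the analytic semigroup $T(t) = e^{-tA_{p,\alpha}}$, that is, $\|T(t)\|_{\mathcal{L}(\bm{L}^p_{\sigma,\vt}(\Omega))} \le M\,e^{-\delta t}$ for some $\delta>0$.

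To establish (iii), I would begin with $p=2$, where the coercivity of the sesquilinear form $a(\vu,\vu) = 2\|\DT\vu\|^2_{\vL{2}} + \int_\Gamma \alpha|\vu_{\vt}|^2$ on $\bm{H}^1_{\sigma,\vt}(\Omega)$ (via Korn's inequality together with the standing hypothesis that $\alpha\ge 0$ with $\alpha>0$ on $\Gamma_0\subset\Gamma$, $|\Gamma_0|>0$) yields $A_{2,\alpha}\geq \delta_0 I$ in the form sense, and hence $\|T(t)\|_{\mathcal{L}(\bm{L}^2_{\sigma,\vt}(\Omega))}\le e^{-\delta_0 t}$. For general $p\in(1,\infty)$ the resolvent of $A_{p,\alpha}$ is compact (since $\mathbf{D}(A_{p,\alpha})\hookrightarrow\bm{W}^{2,p}(\Omega)\hookrightarrow\vL{p}$ compactly), so its spectrum is discrete and, by elliptic regularity and the consistency of $A_{p,\alpha}$ with $A_{2,\alpha}$ on $\mathbf{D}(A_{p,\alpha})\cap \mathbf{D}(A_{2,\alpha})$, independent of $p$; the Gearhart--Prüss principle for analytic semigroups then promotes the spectral bound to a growth bound and yields $\|T(t)\|_{\mathcal{L}(\bm{L}^p_{\sigma,\vt}(\Omega))}\le M\,e^{-\delta t}$ for some $\delta\in(0,\delta_0)$.

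Given (iii), estimate (\ref{38}) is a one-liner: for $p<q<\infty$ set $\beta=\frac{3}{2}(\frac{1}{p}-\frac{1}{q})\in(0,\frac{3}{2p})$ and combine Theorem~\ref{44} with the standard analytic-semigroup bound $\|A_{p,\alpha}^\beta T(t)\|_{\mathcal{L}(\vL{p})}\le C\,t^{-\beta} e^{-\delta t}$ to get
\begin{equation*}
\|\vu(t)\|_{\vL{q}} \le C \|A_{p,\alpha}^\beta T(t) \vu_0\|_{\vL{p}} \le C' \, t^{-\beta} e^{-\delta t} \|\vu_0\|_{\vL{p}};
\end{equation*}
the case $q=p$ is just (iii). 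For (\ref{39}), Theorem~\ref{45} gives $\|\DT\vu(t)\|_{\vL{q}} \le C \|A_{q,\alpha}^{1/2}\vu(t)\|_{\vL{q}}$, and I split $T(t)=T(t/2)\circ T(t/2)$: the inner copy transports $\vu_0\in\vL{p}$ into $\vL{q}$ via (\ref{38}) with factor $(t/2)^{-3/2(1/p-1/q)}e^{-\delta t/2}$, while $A_{q,\alpha}^{1/2}T(t/2)$ acts on $\vL{q}$ with bound $(t/2)^{-1/2}e^{-\delta t/2}$; the product yields (\ref{39}).

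Estimate (\ref{40}) proceeds identically. Since $\frac{\partial^m}{\partial t^m}A_{p,\alpha}^n \vu(t)=(-1)^m A_{p,\alpha}^{m+n} T(t)\vu_0$, the same splitting $T(t)=T(t/2)\circ T(t/2)$ combines (\ref{38}) applied to the inner factor with the analytic-semigroup estimate $\|A_{q,\alpha}^{m+n}T(t/2)\|_{\mathcal{L}(\vL{q})} \le C\,t^{-(m+n)} e^{-\delta t/2}$ applied to the outer factor; here I use that the fractional powers of $A_{p,\alpha}$ and $A_{q,\alpha}$ agree on the intersection of their domains (both being restrictions of $-P\Delta$ with the same boundary condition), so one may legitimately pass from the $p$-realization to the $q$-realization after the first half-step. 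The main obstacle is step (iii), i.e.\ establishing the spectral gap for $p\ne 2$ and extracting a prefactor independent of $\alpha$; everything else is routine semigroup calculus built on the embeddings of Section~\ref{frac power}.
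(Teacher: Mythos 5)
Your proposal is correct and its overall architecture coincides with the paper's: exponential decay of the semigroup, the fractional-power embeddings of Theorem~\ref{44}, and analytic-semigroup calculus. The differences are worth noting. First, what you flag as the ``main obstacle'' --- the uniform bound $\|T(t)\|\le M e^{-\delta t}$ on $\bm{L}^p_{\sigma,\vt}(\Omega)$ --- does not require your Gearhart--Pr\"uss/spectral-independence detour: the paper has already shown that $A_{p,\alpha}$ has a bounded inverse with resolvent estimate uniform in $\alpha$ (Theorem~\ref{51}), and for a bounded analytic semigroup whose generator has $0$ in its resolvent set, the decay $\|A_{p,\alpha}^{\beta}T(t)\|\le M_\beta t^{-\beta}e^{-\delta t}$ is exactly \cite[Chapter 2, Theorem 6.13]{Pazy}; this is the only input the paper uses, and it gives the $\alpha$-independence of the prefactor for free. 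Second, for \eqref{38} you apply Theorem~\ref{44} directly at the exponent $\beta=\tfrac{3}{2}(\tfrac1p-\tfrac1q)$, whereas the paper takes $s$ slightly larger, embeds $\mathbf{D}(A_{p,\alpha}^{s})\hookrightarrow \bm{L}^{p_0}(\Omega)$ and then interpolates $\|\vu(t)\|_{\vL{q}}\le C\|\vu(t)\|^{\theta}_{\vL{p_0}}\|\vu(t)\|^{1-\theta}_{\vL{p}}$; both give the same exponent, and your version is marginally more direct (the constraint $\beta<\tfrac{3}{2p}$ is automatically satisfied). Third, for \eqref{39} you invoke $\mathbf{D}(A_{q,\alpha}^{1/2})=\bm{W}^{1,q}_{\sigma,\vt}(\Omega)$ and split $T(t)=T(t/2)T(t/2)$, while the paper instead uses the Gagliardo--Nirenberg inequality $\|\vu\|_{\vW{1}{q}}\le C\|\vu\|^{1/2}_{\vW{2}{q}}\|\vu\|^{1/2}_{\vL{q}}$ together with \eqref{38} and the case $m=1,n=0$ of \eqref{40}; these are interchangeable, and your splitting argument is the one implicitly needed for \eqref{40} in any case. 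The consistency of the $L^p$- and $L^q$-realizations of the semigroup across the half-step, which you correctly identify as a point to justify, is used tacitly in the paper as well.
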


\begin{proof}
First observe that in the case of $p=q$, the estimates \eqref{38}, \eqref{39} and \eqref{40} follow from the classical semi group theory and the result that $\|T(t)\| \leq M e^{-\delta t}$ (\cite[Theorem 6.13, Chapter 2]{Pazy}).
	
Suppose that $p\neq q$. Let $s\in\mathbb{R}$ such that $\frac{3}{2}(\frac{1}{p}-\frac{1}{q})<s<\frac{3}{2p}$ and set $\frac{1}{p_{0}}=\frac{1}{p}-\frac{2s}{3}$. It is clear that $p<q<p_{0}$. Since for all $t>0$ and for all $l\in \R^+$, $\boldsymbol{u}(t)\in\mathbf{D}(A_{p,\alpha}^{l})$, thanks to Theorem \ref{44}, $\boldsymbol{u}(t)\in\mathbf{D}(A_{p,\alpha}^{s})\hookrightarrow\boldsymbol{L}^{p_{0}}(\Omega)$. Now $\frac{1}{q}=\frac{\alpha}{p_{0}}+\frac{1-\alpha}{p}$ for $\alpha=\frac{1/p-1/q}{1/p-1/p_{0}}\in( 0,1) $. Thus $\boldsymbol{u}(t)\in\boldsymbol{L}^{q}(\Omega)$ and
 \begin{eqnarray*}
 \Vert\boldsymbol{u}(t)\Vert_{\boldsymbol{L}^{q}(\Omega)} \leq C \Vert\boldsymbol{u}(t)\Vert^{\alpha}_{\boldsymbol{L}^{p_{0}}(\Omega)}
 \Vert\boldsymbol{u}(t)\Vert^{1-\alpha}_{\boldsymbol{L}^{p}(\Omega)}
 &\leq &C \Vert A_{p,\alpha}^{s}T(t)\boldsymbol{u}_{0}\Vert^{\alpha}_{\boldsymbol{L}^{p}(\Omega)} \Vert T(t)\boldsymbol{u}_{0}\Vert^{1-\alpha}_{\boldsymbol{L}^{p}(\Omega)}\nonumber\\
 &\leq& C\,e^{-\delta t}t^{-\alpha s} \Vert\boldsymbol{u}_{0}\Vert_{\boldsymbol{L}^{p}(\Omega)}
\end{eqnarray*}
where the last estimate follows from \cite[Chapter 2, Theorem 6.13]{Pazy}.

In order to prove (\ref{40}) we first obtain from \eqref{32}: $\frac{\partial ^m}{\partial t^m}A_{p,\alpha}^n \vu(t) \in \vL{q}$ for any $m,n \in \mathbb{N}$ and then
\begin{align*}
\|\frac{\partial ^m}{\partial t^m}A_{p,\alpha}^n \vu(t)\|_{\vL{q}} = \|A_{p,\alpha}^{(m+n)}T(t)\vu_0\|_{\vL{q}} \leq C e^{-\delta t}t^{-(m+n)-3/2(1/p-1/q)} \|\vu_0\|_{\vL{p}} .
\end{align*}

To prove estimate (\ref{39}), we first deduce from  (\ref{40}) for $m=1$ and $n=0$:
\begin{equation}
\label{S5E1}
\|A_{p,\alpha}\vu(t)\|_{\vL{q}} \le Ce^{-\delta t}t^{-1-\frac {3}{2}\left(\frac {1} {p}-\frac {1} {q} \right)}.
\end{equation}
Then, from Gagliardo Nirenberg's inequality, we obtain
\begin{align*}
\|\DT\vu(t)\|_{\vL{q}} &\leq C \ \|\vu(t)\|_{\vW{1}{q}} \leq C \ \|\vu(t)\|^{1/2}_{\vW{2}{q}}\|\vu(t)\|^{1/2}_{\vL{q}}\\
&\leq C \ \|\vu(t)\|^{1/2}_{\mathbf{D}(A_{q,\alpha})   }\|\vu(t)\|^{1/2}_{\vL{q}}\leq C \ \| A_{q,\alpha}\vu(t)\|^{1/2}_{\vL{q}} \|\vu(t)\|^{1/2}_{\vL{q}}.
\end{align*}
Thus (\ref{39}) follows from (\ref{38}) and (\ref{S5E1}).
\hfill
\end{proof}

\begin{proof}[\bf Proof of Theorem \ref{homstrong}]
This essentially follows from Theorem \ref{35} and Theorem \ref{S6Thm2}. 
\hfill	
\end{proof}

\subsection{Weak solution}

The following result says that if the initial data is in $[\bm{H}_0^{p^\prime}(\div ,\Omega)]^\prime$, we have the weak solution for the homogeneous problem \eqref{evolutionary Stokes}. Here, as in Theorem \ref{35}, we use the analyticity of the semi group generated by the operator $B_{p,\alpha}$ and the fact that $\|T(t)\| \leq M e^{-\delta t}$. 

\begin{theorem}\label{37}
Let $ 1<p<\infty$ and $\alpha \in\Lb{t(p)}$ where $t(p)$ defined in \eqref{22}. Then, for all $\vu_0\in [\bm{H}_0^{p^\prime}(\div ,\Omega)]^\prime$, the problem \eqref{evolutionary Stokes} has a unique solution $\vu(t)$ with the regularity
\begin{equation*}
\vu\in C([0,\infty),[\bm{H}_0^{p^\prime}(\div ,\Omega)]^\prime)\cap C((0,\infty),\mathbf{D}(B_{p,\alpha}))\cap C^1((0,\infty),[\bm{H}_0^{p^\prime}(\div ,\Omega)]^\prime) 
\end{equation*}
and
\begin{equation*}
\vu\in C^k((0,\infty),\mathbf{D}(B_{p,\alpha}^l)) \quad \forall \ k\in\mathbb{N}, \ \forall \ l\in\mathbb{N}\backslash\{0\} .
\end{equation*}
Also there exists constants $C>0$, independent of $\alpha$ and $\delta >0$ such that for all $t>0$,
\begin{equation*}
\|\vu(t)\|_{[\bm{H}_0^{p^\prime}(\div ,\Omega)]^\prime} \leq C e^{-\delta t} \|\vu_0\|_{[\bm{H}_0^{p^\prime}(\div ,\Omega)]^\prime}
\end{equation*}
and
\begin{equation*}
\left\| \frac{\partial \vu(t)}{\partial t}\right\| _{[\bm{H}_0^{p^\prime}(\div ,\Omega)]^\prime} \leq C \ \frac{e^{-\delta t}}{t} \|\vu_0\|_{[\bm{H}_0^{p^\prime}(\div ,\Omega)]^\prime}.
\end{equation*}
Moreover, if (i) either $\Omega$ is not axisymmetric or (ii) $\Omega$ is axisymmetric and $\alpha\ge \alpha_*>0$, then
\begin{equation*}
\label{47}
\|\vu\|_{\vW{1}{p}} \leq C \ \frac{e^{-\delta t}}{t} \|\vu_0\|_{[\bm{H}_0^{p^\prime}(\div ,\Omega)]^\prime}
\end{equation*}
and 
\begin{equation*}
\|\nabla \pi\|_{[\bm{H}_0^{p^\prime}(\div ,\Omega)]^\prime} \leq C \ \frac{e^{-\delta t}}{t} \|\vu_0\|_{[\bm{H}_0^{p^\prime}(\div ,\Omega)]^\prime} .
\end{equation*}
\end{theorem}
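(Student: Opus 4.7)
\textbf{Proof proposal for Theorem \ref{37}.}
My plan is to mirror the argument used for the strong solution in Theorem \ref{35}, but now working in the larger space $[\bm{H}_0^{p'}(\div,\Omega)]'_{\sigma,\vt}$ and with the operator $B_{p,\alpha}$ in place of $A_{p,\alpha}$. Theorem \ref{S4Thm1} tells us that $-B_{p,\alpha}$ generates a bounded analytic semigroup $\{T(t)\}_{t\ge 0}$ on $[\bm{H}_0^{p'}(\div,\Omega)]'_{\sigma,\vt}$. Setting $\vu(t):=T(t)\vu_0$, the standard Pazy theory (\cite[Corollary 1.5, Chapter 4]{Pazy}) immediately yields existence and uniqueness of a solution to \eqref{evolutionary Stokes} together with the regularity
\[
\vu\in C([0,\infty);[\bm{H}_0^{p'}(\div,\Omega)]')\cap C((0,\infty);\mathbf{D}(B_{p,\alpha}))\cap C^1((0,\infty);[\bm{H}_0^{p'}(\div,\Omega)]'),
\]
and, iterating the analytic regularisation, $\vu\in C^k((0,\infty);\mathbf{D}(B_{p,\alpha}^l))$ for all $k\in\mathbb{N}$, $l\in\mathbb{N}\setminus\{0\}$ (as in \cite[Lemma 4.2, Chapter 2]{Pazy}).

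The exponential decay $\|T(t)\|\le Ce^{-\delta t}$ follows from the fact that $B_{p,\alpha}$ has a bounded inverse on $[\bm{H}_0^{p'}(\div,\Omega)]'_{\sigma,\vt}$, which combined with the resolvent estimate \eqref{56} of Theorem \ref{23} places $0$ in the resolvent set and a sector of the left half-plane in the resolvent set with uniformly bounded resolvent; a classical argument (cf.\ \cite[Theorem 6.13, Chapter 2]{Pazy}) then produces the exponential decay of the semigroup. This yields at once
\[
\|\vu(t)\|_{[\bm{H}_0^{p'}(\div,\Omega)]'}\le Ce^{-\delta t}\|\vu_0\|_{[\bm{H}_0^{p'}(\div,\Omega)]'}.
\]
Since $\tfrac{\partial\vu}{\partial t}=-B_{p,\alpha}T(t)\vu_0$, the analyticity estimate $\|B_{p,\alpha}T(t)\|\le C t^{-1}e^{-\delta t}$ (point (d) of \cite[Theorem 5.2, Chapter 2]{Pazy}) gives the bound on $\partial\vu/\partial t$. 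Crucially, because the resolvent estimate \eqref{56} carries a constant independent of $\alpha$, so do all the constants in these semigroup estimates.

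For the $\vW{1}{p}$ bound on $\vu$ in the non-axisymmetric case (or in the axisymmetric case with $\alpha\ge\alpha_*>0$), I would apply the stationary regularity estimate from \cite[Theorem 6.7]{AG} to the equation
\[
-\Delta\vu(t)+\nabla\pi(t)=-\tfrac{\partial\vu}{\partial t}(t)\quad\text{in }\Omega
\]
with boundary conditions \eqref{lens3}-\eqref{lens4}, obtaining
\[
\|\vu(t)\|_{\vW{1}{p}}+\|\pi(t)\|_{L^p(\Omega)}\le C\bigl\|\tfrac{\partial\vu}{\partial t}(t)\bigr\|_{[\bm{H}_0^{p'}(\div,\Omega)]'},
\]
which together with the previous step yields the claimed estimate. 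Finally, the pressure gradient estimate follows by writing $\nabla\pi=\Delta\vu-\tfrac{\partial\vu}{\partial t}$ and using $\|\Delta\vu\|_{[\bm{H}_0^{p'}(\div,\Omega)]'}\le C\|\vu\|_{\vW{1}{p}}$ combined with the already established bounds.

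The main obstacle, and the only non-routine step, is ensuring that all the constants in the resolvent and semigroup estimates are truly independent of $\alpha$. This is precisely why the $\alpha$-uniform resolvent bound \eqref{56} proved in Theorem \ref{23} is essential: without that uniformity, the standard Pazy machinery would only produce $\alpha$-dependent constants, defeating the purpose of the theorem. Everything else is a faithful translation of the strong-solution proof of Theorem \ref{35} into the dual functional setting.
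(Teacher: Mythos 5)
Your proposal follows exactly the route the paper takes: the paper gives no separate proof for this theorem but states that it is obtained "as in Theorem \ref{35}", using the analyticity of the semigroup generated by $B_{p,\alpha}$ (Theorem \ref{S4Thm1}), the decay $\|T(t)\|\le Me^{-\delta t}$, the $\alpha$-uniform resolvent bound \eqref{56}, and the stationary regularity result of \cite[Theorem 6.7]{AG} for the $\vW{1}{p}$ and pressure estimates (as in Theorem \ref{S455}). Your write-up is a faithful and correctly detailed version of that same argument, including the key observation that the $\alpha$-independence of the constants is inherited from the uniform resolvent estimate.
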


In the same way as we deduced in Corollary \ref{36}, we can have the following regularity result from Theorem \ref{37}.
\begin{corollary}
Let $ p\in (1,\infty)$ and $\alpha \in \Lb{t(p)}$ where $t(p)$ defined in \eqref{22}. Moreover, suppose $\vu_0 \in [\bm{H}_0^{p^\prime}(\div ,\Omega)]^\prime, 0<T<\infty$ and $(\vu,\pi)$ be the unique solution of problem \eqref{evolutionary Stokes} given by theorem \ref{37}. Then, for all $1\leq q<2$, we have
$$\vu\in L^q(0,T;\vL{p}) .$$
\end{corollary}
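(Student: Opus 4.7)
The strategy is parallel to Corollary \ref{36}: show that $\|\vu(t)\|_{\vL{p}}$ has a $t^{-1/2}$ singularity at $t=0$, so it is $L^q$-integrable in time precisely when $q<2$. The two ingredients are (a) the semigroup bounds from Theorem \ref{37}, namely
$$\|\vu(t)\|_{[\bm{H}_0^{p^\prime}(\div ,\Omega)]^\prime} \le C\, e^{-\delta t}\, \|\vu_0\|_{[\bm{H}_0^{p^\prime}(\div ,\Omega)]^\prime}
\quad\text{and}\quad
\|\vu(t)\|_{\vW{1}{p}} \le C\, \frac{e^{-\delta t}}{t}\, \|\vu_0\|_{[\bm{H}_0^{p^\prime}(\div ,\Omega)]^\prime},$$
and (b) an interpolation inequality that places $\vL{p}$ halfway between these two spaces.

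For ingredient (b), the natural inequality to establish is
$$\|\bm{w}\|_{\vL{p}} \le C\, \|\bm{w}\|^{1/2}_{[\bm{H}_0^{p^\prime}(\div ,\Omega)]^\prime}\, \|\bm{w}\|^{1/2}_{\vW{1}{p}},$$
which can be derived from the complex interpolation identity $[\vW{-1}{p},\vW{1}{p}]_{1/2}=\vL{p}$ together with the continuous embedding $[\bm{H}_0^{p^\prime}(\div ,\Omega)]^\prime \hookrightarrow \vW{-1}{p}$. An equivalent route, more in the spirit of the paper, is to exploit the bounded imaginary powers of $B_{p,\alpha}$ (Theorem \ref{62}) and identify $\mathbf{D}(B_{p,\alpha}^{1/2})$ with $\vL{p}_{\sigma,\vt}$, following the pattern of Theorem \ref{45}. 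Either way, inserting the two semigroup bounds yields
$$\|\vu(t)\|_{\vL{p}} \le C\, e^{-\delta t}\, t^{-1/2}\, \|\vu_0\|_{[\bm{H}_0^{p^\prime}(\div ,\Omega)]^\prime}.$$

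Raising to the $q$-th power and integrating in time,
$$\int_0^T \|\vu(t)\|^q_{\vL{p}}\, dt \le C\, \|\vu_0\|^q_{[\bm{H}_0^{p^\prime}(\div ,\Omega)]^\prime} \int_0^T e^{-\delta q t}\, t^{-q/2}\, dt,$$
which is finite exactly when $q/2<1$, i.e.\ for $1\le q<2$, giving the claim. The main technical step is expected to be the interpolation identification in the solenoidal setting: identifying the complex interpolation space between $[\bm{H}_0^{p^\prime}(\div ,\Omega)]^\prime_{\sigma,\vt}$ and $\mathbf{D}(B_{p,\alpha})$ (with the graph norm) as $\vL{p}_{\sigma,\vt}$ requires some care, since it involves function spaces carrying the Navier boundary condition and one typically needs either the bounded imaginary powers of $B_{p,\alpha}$ or the favourable geometric hypothesis on $\Omega$ or $\alpha$ used to obtain the $\vW{1}{p}$ bound. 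The remainder of the argument is a direct transcription of the reasoning of Corollary \ref{36}, with the one-derivative gap now spanning the dual-space side rather than the $\vL{p}$ side.
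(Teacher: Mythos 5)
Your proposal follows essentially the same route as the paper: the paper's proof uses precisely the interpolation inequality $\|\vu(t)\|_{\vL{p}} \leq \|\vu(t)\|_{\vW{1}{p}}^{1/2}\,\|\vu(t)\|_{\vW{-1}{p}}^{1/2}$ together with the embedding $[\bm{H}_0^{p^\prime}(\div ,\Omega)]^\prime \hookrightarrow \vW{-1}{p}$ and the two decay estimates of Theorem \ref{37}, yielding the $t^{-1/2}$ singularity and hence integrability for $q<2$. Your remark that the $\vW{1}{p}$ bound rests on the geometric hypothesis on $\Omega$ (or on $\alpha$) is a fair observation, but it applies equally to the paper's own argument.
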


\begin{proof}
We know the interpolation inequality
$$\|\vu(t)\|_{\vL{p}} \leq \|\vu(t)\|_{\vW{1}{p}}^{1/2} \|\vu(t)\|_{\vW{-1}{p}}^{1/2} .$$
Now using the estimates in Theorem \ref{47} and the fact that $[\bm{H}_0^{p^\prime}(\div ,\Omega)]^\prime \hookrightarrow \vW{-1}{p}$ in the above inequality, we get the result.
\hfill
\end{proof}

\section{The non-homogeneous Stokes problem}
\setcounter{equation}{0}

Here we discuss the non-homogeneous Stokes problem:
\begin{equation}
\label{58}
\left\{
\begin{aligned}
\frac{\partial \vu}{\partial t} -\Delta \vu + \nabla \pi = \bm{f} , \quad \div \ \vu = 0 \quad & \text{ in } \Omega\times (0,T) ,\\
\vu\cdot \vn = 0, \quad 2[(\DT\vu)\vn]_{\vt}+\alpha \vu_{\vt} = \bm{0} \quad & \text{ on } \Gamma\times (0,T) ,\\
\vu(0) = \vu_0 \quad & \text{ in } \Omega .
\end{aligned}
\right.
\end{equation}
It is known that if $-\mathcal{A}$ generates a bounded analytic semigroup on a Banach space $X$, then we can construct a strong solution of
\begin{equation}
\label{57}
u^\prime + \mathcal{A}u = f \ \text{ for a.e. } t\in (0,T), \quad u(0)=a
\end{equation}
if $f$ is H\"{o}lder continuous in time with values in $X$. But the analyticity of $e^{-t\mathcal{A}}$ is not sufficient to deduce the existence of solutions of \eqref{57} for general $f\in L^p(0,T;X)$ unless $X$ is a Hilbert space. Therefore, we use the result on abstract Cauchy problem by Giga and Sohr \cite[Theorem 2.3]{Giga91} which used the notion of $\zeta$-convexity. For completeness, we recall the definition of $\zeta$-convexity (see \cite{Bourgain}, also refer to \cite{rubio}):

A Banach space $X$ is said to be $\zeta$-convex if there exists a symmetric biconvex function $\zeta$ on $X\times X$ such that $\zeta(0,0)>0$ and
$$\zeta(x,y) \leq \|x+y\| \quad \text{ if } \quad \|x\| \leq 1\leq \|y\| .$$
The concept of $\zeta$-convexity is stronger than that of reflexivity. For application purpose, it is important to recall \cite{Giga91} that $X$ is $\zeta$-convex iff for some $1<s<\infty$, the truncated Hilbert transform
$$(H_\varepsilon f)(t) = \frac{1}{\pi}\int_{|\tau| >\varepsilon}^{} \frac{f(t-\tau)}{\tau} d\tau , \quad f\in L^s(\mathbb{R},X)$$
converges as $\varepsilon \rightarrow 0$ for almost all $t\in\mathbb{R}$ and there is a constant $C=C(s,x)$ independent of $f$ such that
$$\|Hf\|_{L^s(\mathbb{R},X)} \leq C \|f\|_{L^s(\mathbb{R},X)} $$
where $(Hf)(t) = \lim\limits_{\varepsilon \rightarrow 0} (H_\varepsilon f)(t) $.

The result in \cite[Theorem 2.3]{Giga91} is useful in two senses : (i) it can be used even when $\mathcal{A}$ does not have a bounded inverse (though in our case, both $A_p$ and $B_p$ have bounded inverse) and (ii) the constant in the estimate is independent of time $T$, hence gives global in time results.

Here we introduce the notation for the space, for any $1<p,q<\infty$,
$$D^{\frac{1}{q},p}_{\mathcal{A}} = \left\lbrace v \in X : \|v\|_{D^{\frac{1}{q},p}_{\mathcal{A}} } = \|v\|_X +\left(   \int_{0}^{\infty}\|t^{1-\frac{1}{q}}\mathcal{A}e^{-t\mathcal{A}}v\|^p_X \frac{dt}{t} \right)^{1/p} < \infty\right\rbrace $$
which actually agrees with the real interpolation space $\left( D(\mathcal{A}), X \right) _{1-1/q,p}$ when $e^{-t\mathcal{A}}$ is an analytic semigroup. First we deduce the strong solution of the Stokes system \eqref{58} and obtain $L^p-L^q$ estimates.

\begin{proof}[\bf Proof of Theorem \ref{nonhom}]
Since $\bm{L}^p_{\sigma, \vt}(\Omega)$ is $\zeta$-convex \cite[page 81]{Giga91} and $A_{p,\alpha}$ satisfies the estimate \eqref{53}, all the assumptions of \cite[Theorem 2.3]{Giga91} are fulfilled with $\mathcal{A}= A_{p,\alpha}$ and $X= \bm{L}^p_{\sigma,\vt}(\Omega)$. As a result, the regularity of $\vu$ and $ \frac{\partial \vu}{\partial t}$ follow. The regularity of $\pi$ comes from the fact that 
\begin{equation}
\label{65}
\nabla \pi = \bm{f} - \frac{\partial \vu}{\partial t} + \Delta \vu .
\end{equation}
Also we get the estimate
$$\int_{0}^{T}\left\| \frac{\partial \vu}{\partial t}\right\| ^q_{\vL{p}} dt + \int_{0}^{T}\|A_{p,\alpha} \vu\|^q_{\vL{p}} dt \leq C \left( \int_{0}^{T} \|\bm{f}(t)\|^q_{\vL{p}} dt + \|\vu_0\|^q_{D^{1-\frac{1}{q},q}_{A_{p,\alpha}}} \right) $$
which yields \eqref{59} using the fact that $A_{p,\alpha} \vu = -\Delta \vu + \nabla \pi$.
\hfill
\end{proof}

In the same way, using $\mathcal{A} = B_{p,\alpha}$ and $X = [\bm{H}_0^{p^\prime}(\div ,\Omega)]^\prime _{\sigma, \vt} $ in \cite[Theorem 2.3]{Giga91}, since $[\bm{H}_0^{p^\prime}(\div ,\Omega)]^\prime _{\sigma, \vt}$ is $\zeta$-convex \cite[Proposition 2.16]{AAE1} and $B_{p,\alpha}$ satisfies the estimate on the pure imaginary power (\ref{53.}), we get the weak solution of the problem \eqref{58} with corresponding estimates as follows:

\begin{theorem}
\label{S7Thm1}
	Let $0< T \leq \infty, 1< p,q <\infty$ and $\alpha\in\Lb{t(p)}$ where $t(p)$ be defined in \eqref{22}. Then for every $\bm{f}\in L^q(0,T;[\bm{H}_0^{p^\prime}(\div ,\Omega)]^\prime _{\sigma, \vt})$ and $\vu_0 \in D^{1-\frac{1}{q},q}_{B_{p,\alpha}}$ there exists a unique solution $(\vu,\pi)$ of \eqref{58} satisfying the properties :
	\begin{equation*}
	\begin{aligned}
	\vu \in L^q(0,T_0;\vW{1}{p}) \ \text{ for all } \ T_0\leq T \ \text{ if } \ T<\infty \ \text{ and } \ T_0 < \infty \ \text{ if } \ T= \infty ,
	\end{aligned}
	\end{equation*}
	
	\begin{equation*}
	\begin{aligned}
	\pi\in L^q(0,T; L^p_0(\Omega)) , \quad \frac{\partial \vu}{\partial t} \in L^q(0,T, [\bm{H}_0^{p^\prime}(\div ,\Omega)]^\prime _{\sigma, \vt}) ,
	\end{aligned}
	\end{equation*}
	
	\begin{equation*}
	\begin{aligned}
	&\int_{0}^{T}\left\| \frac{\partial \vu}{\partial t}\right\| ^q_{[\bm{H}_0^{p^\prime}(\div ,\Omega)]^\prime} dt + \int_{0}^{T}\| \vu\|^q_{\vW{1}{p}} dt + \int_{0}^{T} \|\pi\|^q_{\L{p}/\R} dt\\
	& \qquad \leq C \left( \int_{0}^{T} \|\bm{f}\|^q_{[\bm{H}_0^{p^\prime}(\div ,\Omega)]^\prime} dt + \|\vu_0\|^q_{D^{1-\frac{1}{q},q}_{B_{p,\alpha}}}\right)  .
	\end{aligned}
	\end{equation*}
\end{theorem}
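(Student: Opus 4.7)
The plan is to apply the abstract maximal regularity result of Giga and Sohr \cite[Theorem 2.3]{Giga91} with the choice $\mathcal{A}=B_{p,\alpha}$ on the Banach space $X=[\bm{H}_0^{p^\prime}(\div ,\Omega)]^\prime _{\sigma, \vt}$, exactly as was done for Theorem \ref{nonhom} with $(A_{p,\alpha}, \bm{L}^p_{\sigma,\vt}(\Omega))$. We need to check three ingredients: $(i)$ the space $X$ is $\zeta$-convex, which is supplied by \cite[Proposition 2.16]{AAE1}; $(ii)$ $-B_{p,\alpha}$ generates a bounded analytic semigroup on $X$ with $0\in \rho(B_{p,\alpha})$, which is Theorem \ref{S4Thm1} together with the resolvent estimate of Theorem \ref{23}; $(iii)$ the pure imaginary powers of $B_{p,\alpha}$ are uniformly bounded on compact intervals with exponential type strictly less than $\pi/2$, which is exactly estimate \eqref{53.} of Theorem \ref{62}. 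Once these are verified, \cite[Theorem 2.3]{Giga91} produces, for each $\bm{f}\in L^q(0,T;X)$ and $\vu_0\in D^{1-\frac{1}{q},q}_{B_{p,\alpha}}$, a unique $\vu$ with
\begin{equation*}
\frac{\partial \vu}{\partial t}, \; B_{p,\alpha}\vu \in L^q(0,T;X)
\end{equation*}
together with the a priori bound
\begin{equation*}
\int_{0}^{T}\!\Bigl\|\frac{\partial \vu}{\partial t}\Bigr\|^q_{X}\,dt + \int_{0}^{T}\!\|B_{p,\alpha}\vu\|^q_{X}\,dt \le C \Bigl(\int_{0}^{T} \|\bm{f}\|^q_{X}\,dt + \|\vu_0\|^q_{D^{1-\frac{1}{q},q}_{B_{p,\alpha}}}\Bigr),
\end{equation*}
with a constant $C$ independent of $T$.

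The next step is to upgrade the regularity in space. For a.e.\ $t\in(0,T)$ we have $\vu(t)\in \mathbf{D}(B_{p,\alpha})$, so by the definition of the operator $B_{p,\alpha}$, the pair $(\vu(t),\pi(t))$ solves the stationary Stokes system
$$
-\Delta \vu(t) + \nabla \pi(t) = \bm{f}(t) - \tfrac{\partial \vu}{\partial t}(t) \in X, \qquad \div\,\vu(t)=0,
$$
with $\vu(t)\cdot \vn=0$ and $2[(\DT\vu(t))\vn]_{\vt}+\alpha\vu(t)_{\vt}=\bm 0$ on $\Gamma$. The $W^{1,p}$--$L^p$ regularity result for the stationary Stokes problem from \cite[Theorem 6.7]{AG} (already used in the proof of Theorem \ref{S455}) then yields
\begin{equation*}
\|\vu(t)\|_{\vW{1}{p}} + \|\pi(t)\|_{L^p_0(\Omega)} \le C\,\Bigl\|\bm{f}(t)-\tfrac{\partial \vu}{\partial t}(t)\Bigr\|_{X}.
\end{equation*}
Raising to the $q$-th power, integrating in time, and combining with the previous bound gives the claimed estimate. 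The regularity $\pi \in L^q(0,T;L^p_0(\Omega))$ and $\vu\in L^q(0,T_0;\vW{1}{p})$ (for $T_0<\infty$) follows, and the pressure itself is recovered pointwise in time via the equation, exactly as in \eqref{65}.

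The most delicate point is that the constant in the a priori bound must remain independent of $T$ (so that the $T=\infty$ case is covered); this is built into the Giga--Sohr theorem provided $0\in\rho(\mathcal{A})$, which is why having a bounded inverse for $B_{p,\alpha}$ on the whole scale is essential, and this was established through Theorem \ref{23}. Uniqueness follows from the uniqueness part of \cite[Theorem 2.3]{Giga91} applied to the homogeneous problem, so no separate argument is required.
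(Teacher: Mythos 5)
Your proposal is correct and follows essentially the same route as the paper: the authors also obtain this theorem by applying \cite[Theorem 2.3]{Giga91} with $\mathcal{A}=B_{p,\alpha}$ and $X=[\bm{H}_0^{p^\prime}(\div ,\Omega)]^\prime _{\sigma, \vt}$, citing the $\zeta$-convexity of $X$ from \cite[Proposition 2.16]{AAE1} and the imaginary-power bound \eqref{53.}, and then recovering the spatial regularity of $(\vu,\pi)$ from the stationary Stokes estimates as you do. Your write-up is in fact more explicit than the paper's (which simply says the argument is ``in the same way'' as Theorem \ref{nonhom}), and the elliptic upgrade via \cite[Theorem 6.7]{AG} matches what the paper uses in Theorem \ref{S455}.
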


\section{Nonlinear problem}
\setcounter{equation}{0}

In this section, we consider the initial value problem for the Navier-Stokes system with Navier boundary condition :
\begin{equation}
\label{NS}
\left\{
\begin{aligned}
\frac{\partial \vu}{\partial t} -\Delta \vu + (\vu\cdot \nabla) \vu + \nabla \pi = \bm{0} , \quad \div \ \vu = 0 \quad & \text{ in } \Omega\times (0,T) ,\\
\vu\cdot \vn = 0, \quad 2[(\DT\vu)\vn]_{\vt}+\alpha \vu_{\vt} = \bm{0} \quad & \text{ on } \Gamma\times (0,T) ,\\
\vu(0) = \vu_0 \quad & \text{ in } \Omega .
\end{aligned}
\right.
\end{equation}
The semi group theory formulated in Section \ref{66}, \ref{67} and \ref{frac power} for the Stokes operator provides us the necessary properties with which we can obtain some existence, uniqueness and regularity result for the non-linear problem as well. Here we want to  employ the results of \cite{Giga86} for the abstract semi linear parabolic equation of the form
\begin{equation}
\label{S8E1}
u_t + \mathcal{A}u = F u, \quad u(0) = a
\end{equation}
where $F u$ represents the nonlinear part and $\mathcal{A}$ is an elliptic operator. This abstract theory gives the existence of a local solution $u(t)$ for certain class of $F u$. The solution can be extended globally also, provided norm of the initial data is sufficiently small. Moreover, this solution belongs to $L^q(0,T; L^p)$ with suitably chosen $p,q$. Since, $u\in L^q(0,T; L^p)$ is equivalent of saying $\| u(t)\|_{L^p(\Omega)} \in L^q(0,T)$, this gives the asymptotic behaviour of $\|u(t)\|_{L^p(\Omega)}$ as $t\rightarrow 0$ and $t\rightarrow \infty$.

To apply \cite[Theorem 1 and Theorem 2]{Giga86}, we need to verify the hypothesis therein, which we state below for convenience:

For a closed subspace $E^p$ of $\L{p}$, let $P: \L{p}\rightarrow E^p$ be a continuous projection for $p\in (1,\infty)$ such that the restriction of $P$ on $C_c(\Omega)$, the space of continuous functions with compact support, is independent of $p$ and $C_c(\Omega) \cap E^p$ be dense in $E^p$. Let $e^{-t\mathcal{A}}$ be a strongly continuous operator on $E^p$ for all $p\in (1, \infty)$. Also, there exists constants $n,m\ge 1$ such that for a fixed $T\in (0,\infty)$, the estimate
\leqnomode
\begin{equation}
\tag{\bf A}
\|e^{-t\mathcal{A}}f\|_{\L{p}} \le M \|f\|_{\L{s}}/t^\sigma ,\quad f\in E^s,\quad t\in(0,T)
\end{equation}
holds with $\sigma = (\frac{1}{s} - \frac{1}{p})\frac{n}{m}$ for $p\ge s>1$ and constant $M$ depending only on $p,s,T$.

Moreover, let $Fu$ be written as
$$Fu = LGu$$
where $L$ is a closed, linear operator, densely defined from $\L{p}$ to $E^q$ for some $q>1$ such that for some $\gamma, 0\le \gamma<m$, the estimate
\begin{equation}
\tag{\bf N1}
\|e^{-t\mathcal{A}}Lf\|_{\L{p}} \le N_1 \|f\|_{\L{p}}/t^{\gamma/m}, \quad f\in E^p, \quad t\in(0,T)
\end{equation}
holds with $N_1$ depending only on $ T$ and $p$, for all $p\in (1,\infty)$ and $G$ is a nonlinear mapping from $E^p$ to $\L{h}$ such that for some $\beta>0$, the estimate
\begin{equation}
\tag{\bf N2}
\|Gv-Gw\|_{\L{h}}\le N_2 \|v-w\|_{\L{p}} \left(  \|v\|^\beta_{\L{p}}+\|w\|^\beta_{\L{p}}\right), \quad G(0)=0 
\end{equation}
holds with $1\le h=p/(1+\beta)$ and $N_2$ depending only on $p$, for all $p\in (1,\infty)$.

With these assumptions, the next Theorem follows directly from \cite[Theorem 1 and Theorem 2]{Giga86}.

\begin{theorem}
\label{S8T1}
For $\vu_0 \in \bm{L}^r_{\sigma,\vt}(\Omega)$ and $\alpha\in \Wfracb{1-\frac{1}{r}}{r}, r\geq 3$, $\alpha \ge 0$, there exists $T_0 > 0$ and a unique solution $\vu(t)$ of \eqref{NS} on $[0,T_0)$ such that
\begin{equation}
\label{S8E0}
\vu \in C([0,T_0);\bm{L}^r_{\sigma,\vt}(\Omega)) \cap L^q(0,T_0; \bm{L}^p_{\sigma,\vt}(\Omega))
\end{equation}
\begin{equation*}
t^{1/q}\vu \in C([0,T_0);\bm{L}^p_{\sigma,\vt}(\Omega)) \quad \text{ and } \quad t^{1/q}\|\vu\|_{\vL{p}} \rightarrow 0 \text{ as } t\rightarrow 0
\end{equation*}
with $\frac{2}{q}= \frac{3}{r}-\frac{3}{p}, p,q >r$. Moreover, there exists a constant $\varepsilon >0$ such that if $\|\vu_0\|_{\vL{r}} < \varepsilon$, then $T_0$ can be taken as infinity for $r=3$.

Let $(0,T_\star)$ be the maximal interval such that $\vu$ solves \eqref{NS} in $C((0,T_\star);\bm{L}^r_{\sigma,\vt}(\Omega)), r>3$. Then
\begin{equation*}
\|u(t)\|_{\vL{r}} \geq C (T_\star -t)^{(3-r)/2r}
\end{equation*}
where $C$ is independent of $T_\star$ and $t$.
\end{theorem}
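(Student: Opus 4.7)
The plan is to verify the three hypotheses (A), (N1), (N2) recalled just before the theorem statement with $E^p=\bm{L}^p_{\sigma,\vt}(\Omega)$, $\mathcal{A}=A_{p,\alpha}$, $n=3$, $m=2$, and then invoke \cite[Theorems 1 and 2]{Giga86}. The nonlinear term is written in divergence form
$$F\vu=-P\bigl((\vu\cdot\nabla)\vu\bigr)=-P\,\div(\vu\otimes\vu)=L\,G\vu,$$
with $G\vu=\vu\otimes\vu$ and $L=-P\circ\div$; this decomposition is admissible because $\div\vu=0$.

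Hypothesis (A) is a direct consequence of the $L^s$--$L^p$ smoothing bound \eqref{38} of Theorem \ref{S6Thm2}, since $\frac{3}{2}\bigl(\frac{1}{s}-\frac{1}{p}\bigr)=\bigl(\frac{1}{s}-\frac{1}{p}\bigr)(n/m)$. Hypothesis (N2) follows from H\"older's inequality with $h=p/2$:
$$\|\vu\otimes\vu-\bm v\otimes\bm v\|_{\vL{p/2}}\le \|\vu-\bm v\|_{\vL{p}}\bigl(\|\vu\|_{\vL{p}}+\|\bm v\|_{\vL{p}}\bigr),$$
so that $\beta=1$ and $G(\bm 0)=\bm 0$.

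The core of the proof is the verification of (N1) with $\gamma=1$, i.e.\ the estimate
$$\|e^{-tA_{p,\alpha}}P\,\div\bm F\|_{\vL{p}}\le C\,t^{-1/2}\|\bm F\|_{\vL{p}}.$$
The plan is to proceed by duality: for $\bm\varphi\in\bm L^{p'}_{\sigma,\vt}(\Omega)$,
$$\langle e^{-tA_{p,\alpha}}P\,\div\bm F,\bm\varphi\rangle=\langle\div\bm F,e^{-tA_{p',\alpha}}\bm\varphi\rangle=-\langle\bm F,\nabla e^{-tA_{p',\alpha}}\bm\varphi\rangle,$$
the boundary term produced by the second equality vanishing because $(e^{-tA_{p',\alpha}}\bm\varphi)\cdot\vn=0$ on $\Gamma$. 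One then concludes by combining Theorem \ref{45} (which gives $\|\nabla\bm w\|_{\vL{p'}}\simeq \|A_{p',\alpha}^{1/2}\bm w\|_{\vL{p'}}$) with the standard moment inequality $\|A_{p',\alpha}^{1/2}e^{-tA_{p',\alpha}}\|_{\mathcal L(\bm L^{p'}_{\sigma,\vt}(\Omega))}\le C\,t^{-1/2}$, which is a consequence of the bounded analyticity of the semigroup given by Theorem \ref{24}.

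With (A), (N1), (N2) in place, \cite[Theorem 1]{Giga86} delivers the local existence on some $[0,T_0)$, uniqueness, and the regularity \eqref{S8E0} together with $t^{1/q}\|\vu(t)\|_{\vL{p}}\to 0$ as $t\to 0$; \cite[Theorem 2]{Giga86} provides the global existence for $r=3$ under the smallness condition $\|\vu_0\|_{\vL{3}}<\varepsilon$. The blow-up lower bound when $r>3$ is a standard consequence of the scaling-critical lifespan produced by the Giga--Fujita contraction argument, namely $T_0\gtrsim \|\vu_0\|_{\vL{r}}^{-2r/(r-3)}$: restarting the evolution at time $s<T_\star$ with datum $\vu(s)$ must extend the solution up to $T_\star$, so $s+c\|\vu(s)\|_{\vL{r}}^{-2r/(r-3)}\le T_\star$, which rearranges to the claimed bound $\|\vu(s)\|_{\vL{r}}\ge C(T_\star-s)^{(3-r)/2r}$. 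The main obstacle is (N1): one has to justify the integration-by-parts identity rigorously when $\bm F$ is merely in $\vL{p}$, so that $\div\bm F$ lives only distributionally, and to confirm that the boundary contribution really vanishes thanks to the Navier boundary condition satisfied by $e^{-tA_{p',\alpha}}\bm\varphi$; once this is settled, the identification $\mathbf D(A^{1/2}_{p',\alpha})=\bm W^{1,p'}_{\sigma,\vt}(\Omega)$ together with the analyticity of the semigroup closes the argument.
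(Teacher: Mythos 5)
Your proposal follows the same overall strategy as the paper: verify hypotheses ({\bf A}), ({\bf N1}), ({\bf N2}) for $\mathcal{A}=A_{p,\alpha}$ with $m=2$, $n=3$ and the divergence-form factorization $F\vu=LG\vu$, $G\vu=\vu\otimes\vu$, $L=-P\,\div$, then invoke \cite[Theorems 1 and 2]{Giga86}; the treatment of ({\bf A}) and ({\bf N2}) and the blow-up rate via the lifespan bound $T_0\gtrsim\|\vu_0\|_{\vL{r}}^{-2r/(r-3)}$ all match. The one genuine divergence is ({\bf N1}): the paper writes $e^{-tA_{p,\alpha}}L\bm f=A_{p,\alpha}^{1/2}e^{-tA_{p,\alpha}}\,A_{p,\alpha}^{-1/2}L\bm f$ and cites \cite[Lemma 2.1]{giga-miya} for the boundedness of $A_{p,\alpha}^{-1/2}L$ on $\vL{p}$, whereas you prove the same fact by duality using the paper's own identification $\mathbf{D}(A_{p',\alpha}^{1/2})=\bm{W}^{1,p'}_{\sigma,\vt}(\Omega)$ (Theorem \ref{45}); these are two sides of the same coin, and your route has the advantage of being self-contained. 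One correction, though: in your duality computation the boundary term is $\int_\Gamma(\bm F\vn)\cdot\bm w$ with $\bm w=e^{-tA_{p',\alpha}}\bm\varphi$, and this does \emph{not} vanish merely because $\bm w\cdot\vn=0$ on $\Gamma$ — that condition kills only the normal component of $\bm F\vn$. The correct justifications are either (i) that for the actual nonlinearity $\bm F=\vu\otimes\vu$ with $\vu\cdot\vn=0$ one has $(\vu\otimes\vu)\vn=(\vu\cdot\vn)\vu=\bm 0$ identically on $\Gamma$, so the boundary term disappears for the class of $\bm F$ to which $L$ is actually applied, or (ii) that $L\bm F$ for general $\bm F\in\vL{p}$ should be \emph{defined} through the duality formula $\langle L\bm F,\bm w\rangle:=\langle\bm F,\nabla\bm w\rangle$ on $\bm W^{1,p'}_{\sigma,\vt}(\Omega)$ (equivalently, $A_{p,\alpha}^{-1/2}L$ is the bounded extension by density), which is exactly how the cited lemma of Giga--Miyakawa proceeds. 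With that repair the argument is complete and equivalent to the paper's.
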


\begin{proof}
As our Stokes operator has all the same properties and estimates satisfied by the Stokes operator with Dirichlet boundary condition, we are exactly in the same set up as in \cite{Giga86} and hence the proof goes similar to that. However, we briefly review it for completeness.

Let $E^p$ be $\bm{L}^p_{\sigma,\vt}(\Omega)$ and $P: \vL{p}\rightarrow \bm{L}^p_{\sigma, \vt}(\Omega)$ be the Helmholtz projection, defined in (\ref{pression}). It is trivial to see that $P$ is independent of $p\in (1,\infty)$ on $C_c(\Omega)$ and $C_c(\Omega)\cap \bm{L}^p_{\sigma,\vt}(\Omega)$ is dense in $\bm{L}^p_{\sigma,\vt}(\Omega)$. The Stokes operator $A_{p,\alpha}$ on $\bm{L}^p_{\sigma,\vt}(\Omega)$ is defined in (\ref{S3E11})-(\ref{S3E12}) with dense domain and $-A_{p,\alpha}$ generates bounded analytic semigroup on $\bm{L}^p_{\sigma,\vt}(\Omega)$ for all $p\in (1,\infty)$ also (cf. Theorem \ref{24}). Applying $P$ on both sides of the Navier-Stokes system (\ref{NS}) gives
	$$
	\vu_t + A_{p,\alpha} \vu = -P(\vu\cdot \nabla )\vu, \qquad \vu(0) = \vu_0
	$$
which is obviously in the form (\ref{S8E1}) with $F u =-P(\vu\cdot \nabla )\vu $.
	
We now need to verify the assumptions ({\bf A}), ({\bf N1}) and ({\bf N2}). Since $-A_{p,\alpha}$ generates a bounded analytic semigroup with bounded inverse, we have (cf. \cite[Chapter 2, Theorem 6.13]{Pazy})
$$
\forall \bm{f}\in \bm{L}^s_{\sigma,\vt}(\Omega), \qquad \|A_{s,\alpha}^\sigma e^{-tA_{s,\alpha}}\bm{f}\|_{\vL{s}}\le M \|\bm{f}\|_{\vL{s}} /t^\sigma .
$$
As $\bm{D}(A_{s,\alpha}^\sigma)$ is continuously embedded in $\bm{W}^{2\sigma,s}(\Omega)$, this together with the Sobolev embedding theorem yields ({\bf A}) with $m=2, n=3$.

Next we want to write the nonlinear term $Fu $. Since $\div \ \vu = 0$, we have $(\vu\cdot \nabla )\vu_i = \sum_{j=1}^3 \nabla_j (u_j u_i)$. If we define $g: \R^3 \rightarrow \R^{9}$ by
$$
(g(x))_{ij} = -x_i x_j
$$
and $G:\bm{L}^p_{\sigma,\vt}(\Omega) \rightarrow (\L{p})^{9}$ by
$$
G \vu(x) = g(\vu(x))
$$
and $L : (\L{p})^9 \rightarrow \bm{L}^q_{\sigma,\vt}(\Omega)$ by
$$
L g_{ij} = \sum_{j=1}^3 P \nabla_j g_{ij}
$$
which is a linear operator, it implies $F u = L G \vu$. Also it is easy to see from H\"{o}lder inequality that
$$
|g(y) - g(z)| \le N_2 |y-z| (|y| + |z|), \quad g(0)=0
$$
which gives in turn ({\bf N2}) with $\beta =1$.

Finally
$$
\|e^{-tA_{p,\alpha}}L\bm{f}\|_{\vL{p}} = \|A_{p,\alpha}^{1/2} e^{-tA_p}A_{p,\alpha}^{-1/2}L\bm{f}\|_{\vL{p}} \le \frac{C}{t^{1/2}} \|A_{p,\alpha}^{-1/2}L \bm{f}\|_{\vL{p}}
$$
and since $A_{p,\alpha}^{-1/2}L$ is bounded in $\vL{p}$ (cf. \cite[Lemma 2.1]{giga-miya}), the assumption ({\bf N1}) is verified for $\gamma =1$. This completes the proof.
\hfill
\end{proof}

Next we show that the solution of (\ref{NS}) given in the above Theorem in the integral form is actually regular enough and satisfies (\ref{NS}).
\begin{theorem}
\label{S8T2}
Let $\vu_0\in\bm{L}^r_{\sigma,\vt}(\Omega), r\geq 3$ and $\vu(t)$ be the unique solution of (\ref{NS}) given by Theorem \ref{S8T1}. Then
\begin{equation*}
\vu\in C((0,T_*], {\bf D}(A_{p,\alpha})) \cap C^1((0,T_*];\bm{L}^r_{\sigma,\vt}(\Omega) )
\end{equation*}
\end{theorem}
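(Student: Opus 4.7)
The plan is to pass from the mild/integral formula for $\vu$ provided by Theorem \ref{S8T1} to a genuinely strong solution by invoking the analyticity of $e^{-tA_{r,\alpha}}$ together with the classical regularity theory for abstract parabolic Cauchy problems driven by an analytic semigroup. Write
$$
\vu(t)=e^{-tA_{r,\alpha}}\vu_0-\int_0^t e^{-(t-s)A_{r,\alpha}}\,F(s)\,ds,\qquad F(s):=P\bigl[(\vu\cdot\nabla)\vu\bigr](s).
$$
The linear part $t\mapsto e^{-tA_{r,\alpha}}\vu_0$ already satisfies the claimed regularity by Theorem \ref{homstrong}, so the entire work is to show that the Duhamel convolution
$$
w(t):=\int_0^t e^{-(t-s)A_{r,\alpha}}F(s)\,ds
$$
belongs to $C((0,T_\star);\mathbf D(A_{r,\alpha}))\cap C^1((0,T_\star);\bm L^r_{\sigma,\vt}(\Omega))$.

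The key step is to establish that the forcing $F$ is locally H\"older continuous on $(0,T_\star)$ with values in $\bm L^r_{\sigma,\vt}(\Omega)$. From Theorem \ref{S8T1} we know that $\vu\in C((0,T_\star);\bm L^r_{\sigma,\vt})$ and that $t^{1/q}\vu\in C([0,T_\star);\bm L^p_{\sigma,\vt})$ for exponents $p,q>r$ with $2/q=3/r-3/p$. Using the smoothing estimate \eqref{40} applied to $\vu(t)=e^{-(t-t_0)A_{r,\alpha}}\vu(t_0)$ on any compact subinterval $[t_0,T_1]\subset(0,T_\star)$, together with the identification $\mathbf D(A_{p,\alpha}^{1/2})=\bm W^{1,p}_{\sigma,\vt}$ of Theorem \ref{45} and the embedding Theorem \ref{44}, I would bootstrap to deduce that $\vu$ and $\nabla\vu$ are locally H\"older continuous on $(0,T_\star)$ with values in suitable $\bm L^s$ spaces chosen so that H\"older's inequality gives
$$
\bigl\|(\vu(t)\cdot\nabla)\vu(t)-(\vu(\tau)\cdot\nabla)\vu(\tau)\bigr\|_{\bm L^r(\Omega)}\le C|t-\tau|^{\theta}
$$
for some $\theta\in(0,1)$ on compact subintervals of $(0,T_\star)$. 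The continuity of $P$ on $\vL{r}$ then transfers this H\"older regularity to $F$.

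Once H\"older continuity of $F$ with values in $\bm L^r_{\sigma,\vt}$ is secured, the classical regularity theorem for analytic semigroups (e.g. \cite[Chapter 4, Theorem 3.5]{Pazy}) applied on each $[t_0,T_1]\subset(0,T_\star)$ yields $w\in C((t_0,T_1);\mathbf D(A_{r,\alpha}))\cap C^1((t_0,T_1);\bm L^r_{\sigma,\vt})$ together with the expected identity $w'(t)+A_{r,\alpha}w(t)=F(t)$. Since $t_0>0$ and $T_1<T_\star$ are arbitrary, combining this with the linear part finishes the proof and in passing shows that $\vu$ solves \eqref{NS} in the strong sense on $(0,T_\star)$.

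The main obstacle is the second step: producing the temporal H\"older regularity of $(\vu\cdot\nabla)\vu$ in $\bm L^r_{\sigma,\vt}$. The natural estimate provided by Theorem \ref{S8T1} gives integrability in time rather than pointwise H\"older control, so one must iterate the smoothing estimates \eqref{38}--\eqref{40} once (or possibly several times), using the semigroup representation $\vu(t)=e^{-(t-t_0)A_{r,\alpha}}\vu(t_0)-\int_{t_0}^t e^{-(t-s)A_{r,\alpha}}F(s)\,ds$ restarted at an interior time $t_0>0$, to promote mere continuity into H\"older continuity with values in a space strong enough to accommodate the quadratic nonlinearity. All the ingredients, and in particular the $\alpha$-independent estimates of Theorem \ref{S6Thm2}, are already available from the previous sections, so no genuinely new analytic input is required beyond careful exponent bookkeeping.
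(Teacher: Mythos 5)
Your strategy coincides with the paper's: the paper proves Theorem \ref{S8T2} by citing \cite[Theorem 2.5]{giga-miya} and \cite[Lemma 2.14]{kato}, and those results are established by exactly the scheme you describe — restart the Duhamel formula at an interior time $t_0>0$, bootstrap the smoothing estimates to get local H\"older continuity in time of $F=P[(\vu\cdot\nabla)\vu]$ with values in $\bm{L}^r_{\sigma,\vt}(\Omega)$, and then invoke the classical regularity theorem for $u'+\mathcal{A}u=F$ with $-\mathcal{A}$ generating an analytic semigroup and H\"older forcing. So there is no divergence of method; your proposal unpacks the content of the paper's citation.

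The caveat is that the decisive step is only announced, not executed. The sentence ``I would bootstrap to deduce that $\vu$ and $\nabla\vu$ are locally H\"older continuous\ldots'' is precisely the content of \cite[Lemmas 2.12--2.14]{kato} and the proof of \cite[Theorem 2.5]{giga-miya}, and it is more than exponent bookkeeping: one must first show $\vu(t)\in\mathbf{D}(A_{r,\alpha}^{\gamma})$ with locally uniform bounds for a suitable $\gamma>\tfrac12$ (so that, via Theorem \ref{45} and Theorem \ref{44}, $(\vu\cdot\nabla)\vu(t)\in\bm{L}^r(\Omega)$), and then derive H\"older continuity in the $\mathbf{D}(A_{r,\alpha}^{\delta})$ norms from $e^{-tA}-e^{-\tau A}=-\int_{\tau}^{t}Ae^{-\sigma A}\,d\sigma$, estimating the restarted Duhamel convolution in the same norms at every stage of the iteration. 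Related to this, the estimate \eqref{40} cannot be applied to ``$\vu(t)=e^{-(t-t_0)A_{r,\alpha}}\vu(t_0)$'' for the nonlinear problem, since that identity is false; the restarted representation carries the extra term $\int_{t_0}^{t}e^{-(t-s)A_{r,\alpha}}F(s)\,ds$, which you do write down afterwards but which must be carried through the bootstrap. As a proof sketch the proposal is sound and matches the paper; as a proof it stops exactly where the real work begins, which the paper itself discharges to the cited references.
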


\begin{proof}
As in the previous Theorem, the proof follows exactly the same way as in the case of the Dirichlet boundary condition in \cite{giga-miya}. Since the Stokes operator with Navier boundary condition, defined in (\ref{S3E11})-(\ref{S3E12}), have all the same properties as for the Stokes operator with Dirichlet boundary condition, \cite[Theorem 2.5]{giga-miya} gives (with $f=0$) that $\vu\in C((0,T_*], {\bf D}(A_{p,\alpha}))$. And $\vu\in C^1((0,T_*];\bm{L}^r_{\sigma,\vt}(\Omega) )$ follows from \cite[Lemma 2.14]{kato} (with $f=0$).
\hfill
\end{proof}

Next we show that regular solutions satisfy energy inequality provided the initial condition is in $ \bm{L}^2_{\sigma,\vt}(\Omega)$.

\begin{proposition}
\label{energy_est_NS}
Let $\vu$ be the regular solution of (\ref{NS}) on $ (0,T_0) (T_0<\infty)$ satisfying (\ref{S8E0}). Suppose $\vu_0\in \bm{L}^2_{\sigma,\vt}(\Omega)$. Then
\begin{equation*}
\vu\in L^\infty(0,T_0; \bm{L}^2_{\sigma,\vt}(\Omega) ) \cap L^2(0,T_0;\vH{1})
\end{equation*}
and satisfies the energy equality
\begin{equation*}
	\frac{1}{2}\int\displaylimits_{ \Omega  }{|\vu(t)|^2} + 2 \int\displaylimits_0^t \int\displaylimits_{ \Omega  }{|\DT \vu|^2} + \int\displaylimits_0^t \int\displaylimits_{ \Gamma  }{\alpha |\vu_{\vt}|^2}= \frac{1}{2}\int\displaylimits_{ \Omega  }{| \vu_0|^2}.
\end{equation*}
\end{proposition}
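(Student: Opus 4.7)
The plan is to derive the energy equality from the standard energy method: take the scalar product of the momentum equation with $\vu(t)$ itself, integrate by parts, and use the boundary condition together with the incompressibility. The regularity of the solution provided by Theorem \ref{S8T1} and Theorem \ref{S8T2} will make every manipulation rigorous for $t>0$, and a limit $t\to 0^+$ will recover the correct initial datum.

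More concretely, I would first fix $0<\varepsilon<t<T_0$ and test the first equation of (\ref{NS}) against $\vu(s)$ for each $s\in[\varepsilon,t]$. Since $\vu\in C((0,T_0];\mathbf{D}(A_{r,\alpha}))\cap C^1((0,T_0];\bm{L}^r_{\sigma,\vt}(\Omega))$ with $r\ge 3$ (so that the pair $(\vu,\pi)\in \vE{2}$ after composing with the Sobolev embedding from $\vW{2}{r}$), the pairing $\int_\Omega \vu_t\cdot \vu$ equals $\frac{1}{2}\frac{d}{ds}\|\vu(s)\|_{\vL{2}}^2$. For the viscous and pressure terms I would invoke the Green formula (\ref{green}) with $\bm{\varphi}=\vu(s)\in {\bf W}^{1,2}_{\sigma,\vt}(\Omega)$, obtaining
\begin{equation*}
\int_\Omega (-\Delta\vu+\nabla\pi)\cdot \vu = 2\int_\Omega |\DT\vu|^2 - 2\bigl\langle [(\DT\vu)\vn]_{\vt}, \vu_{\vt}\bigr\rangle_{\Gamma},
\end{equation*}
and then replace the boundary term using the Navier condition $2[(\DT\vu)\vn]_{\vt}=-\alpha\vu_{\vt}$, producing $+\int_\Gamma \alpha |\vu_{\vt}|^2$. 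The convective term vanishes by the usual computation $\int_\Omega (\vu\cdot\nabla)\vu\cdot\vu=\tfrac{1}{2}\int_\Omega \vu\cdot\nabla|\vu|^2$, combined with $\div\vu=0$ in $\Omega$ and $\vu\cdot\vn=0$ on $\Gamma$ after one integration by parts. This yields the pointwise identity
\begin{equation*}
\frac{1}{2}\frac{d}{ds}\int_\Omega |\vu(s)|^2 + 2\int_\Omega |\DT\vu(s)|^2 + \int_\Gamma \alpha|\vu_{\vt}(s)|^2 = 0,\qquad s\in(0,T_0).
\end{equation*}

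Integrating from $\varepsilon$ to $t$ is immediate, and the key point is to let $\varepsilon\to 0^+$. Since $r\ge 3$ and $\Omega$ is bounded we have $\bm{L}^r_{\sigma,\vt}(\Omega)\hookrightarrow \bm{L}^2_{\sigma,\vt}(\Omega)$, so $\vu\in C([0,T_0);\bm{L}^2_{\sigma,\vt}(\Omega))$ and $\|\vu(\varepsilon)\|_{\vL{2}}\to \|\vu_0\|_{\vL{2}}$, while the $\DT\vu$ and boundary integrals are monotone in $\varepsilon$ by positivity. The energy equality of the statement follows.

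Once the equality is in hand, the $L^\infty(0,T_0;\bm{L}^2_{\sigma,\vt}(\Omega))$ bound is immediate, and $\DT\vu\in L^2(0,T_0;\vL{2})$; combining with $\vu\cdot\vn=0$ and the Korn-type inequality on $\bm{H}^1_{\sigma,\vt}(\Omega)$ recalled in the paper (which controls $\|\vu\|_{\vH{1}}$ by $\|\vu\|_{\vL{2}}+\|\DT\vu\|_{\vL{2}}$) then gives $\vu\in L^2(0,T_0;\vH{1})$. The only real technical point is to make sure that at fixed $s>0$ all the integration by parts are justified with the regularity available; this is why I would work on $[\varepsilon,t]$ first and then pass to the limit, the continuity $\vu\in C([0,T_0);\bm{L}^2_{\sigma,\vt}(\Omega))$ being the decisive ingredient to recover $\|\vu_0\|_{\vL{2}}^2$ on the right-hand side.
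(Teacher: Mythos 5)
Your argument is correct and is essentially the proof the paper intends: the paper simply refers to \cite[Proposition 1, Section 5]{Giga86}, which is exactly this energy method — test with $\vu$, use the Green formula \eqref{green} and the Navier condition to produce the $\DT\vu$ and $\alpha|\vu_{\vt}|^2$ terms, kill the convective term via $\div\,\vu=0$ and $\vu\cdot\vn=0$, integrate on $[\varepsilon,t]$ and let $\varepsilon\to 0^+$ using $\vu\in C([0,T_0);\bm{L}^r_{\sigma,\vt}(\Omega))\hookrightarrow C([0,T_0);\bm{L}^2_{\sigma,\vt}(\Omega))$. Your handling of the limit $\varepsilon\to0$ by monotonicity and the final appeal to the Korn inequality for the $L^2(0,T_0;\vH{1})$ bound are both sound.
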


\begin{proof}
The proof follows the same reasoning as in \cite[Proposition 1, Section 5]{Giga86}.
\hfill
\end{proof}

\section{The limit as $\alpha \to \infty$}
\label{9}
\setcounter{equation}{0}

\noindent
Let us denote now  $u _{ \alpha  }$ the solutions  of the unsteady Stokes or Navier-Stokes equation with NBC for a given slip coefficient $\alpha\ge 0 $ and a fixed initial data $u_0$.  A very formal argument suggests that when $\alpha \to \infty$, we may expect that $u _{ \alpha  }\to u _{ \infty }$ in some sense, where $u _{ \infty }$ is the solution of the same equation, with the same initial data, but with Dirichlet boundary condition. The existence of solutions $u_\infty$ for such a problem, for a suitable set of initial data has been proved  for example in \cite[Theorem 1.1, Chapter III]{Temam} for the Stokes equation and \cite[Theorem 3.1, Chapter III]{Temam} for the Navier Stokes equation.

This question has already been  considered in \cite{kelliher} for $\Omega $ a two dimensional domain and $\frac{1}{\alpha} \in L^\infty (\Gamma )$. The author proves in Theorem 9.2 that when $\|\frac{1}{\alpha}\| _{ \Lb{\infty}}\rightarrow 0$ and $\vu_0\in\vH{3}\cap \bm{H}^1_{0}(\Omega)\cap \bm{L}^2_{\sigma,\vt}(\Omega)$, the solution of problem (\ref{lens0})-(\ref{lens5}) converges to the solution of the Navier-Stokes problem with Dirichlet boundary condition in $L^\infty (0,T; L^2(\Omega ))\cap L^2(0,T; \dot{H}^1(\Omega ))\cap L^2(0, T; L^2(\Gamma ))$ (\cite[Theorem 9.2]{kelliher}).  

Our results on this problem  are based on the uniform estimates of  the solutions with NBC with respect to the parameter $\alpha$ proved in the previous Sections. Then, we may only consider the case where the function $\alpha$ is a non negative constant. But, on the other hand, our convergence result in the Hilbert case, with the same rate of convergence as in  \cite{kelliher} only needs the initial data to satisfy $\vu_0\in \bm{L}^2_{\sigma,\vt}(\Omega)$ and we also obtain convergence results for the non Hilbert cases.

In the first two results of this Section, we prove that when $\alpha $ is a constant and the initial data is such that $\vu_0\in\mathbf{D}(A_{p, \alpha })$ for all $\alpha $ sufficiently large, the solutions of the Stokes equation  with Navier boundary conditions (\ref{evolutionary Stokes}) converge in the energy space to the solutions of the Stokes equation with Dirichlet boundary condition obtained in \cite{giga83}. Moreover, we also obtain estimates on the rates of convergence.

\begin{theorem}
\label{S9T0}
Let $\vu_0 \in \bm{L}^2_{\sigma,\vt}(\Omega)$, $\alpha$ be a  constant and $T_\alpha(t):\bm{L}^p_{\sigma,\vt}(\Omega) \rightarrow \bm{L}^p_{\sigma,\vt}(\Omega) $ the semigroup generated by the Stokes operator $A_{p, \alpha}$, defined in (\ref{S3E1})-(\ref{S3E10}). Then for any $T<\infty$,
\begin{equation}
\label{54}
T_\alpha (t)\vu_0 \rightarrow T_\infty (t)\vu_0 \quad \text{ in } \quad L^2(0,T;\vH{1}) \quad \text{ as } \quad \alpha\to \infty
\end{equation}
where $ T_\infty (t)$ is the semigroup generated by the Stokes operator with Dirichlet boundary condition \cite{giga83}. Also we have
\begin{equation}
\label{50}
\int\displaylimits_{0}^{T}\int\displaylimits_{ \Gamma  }{ |T_\alpha (t)\vu_0 -T_\infty (t)\vu_0 |^2}\le \frac{C}{\alpha}.
\end{equation}

Moreover, if $\vu_0 \in \bm{H}^1_0(\Omega)$ with $\div \ \vu_0=0$ in $\Omega$, we further obtain
\begin{equation}
\label{52}
\int\displaylimits_{0}^{T}\int\displaylimits_{\Omega}{|\DT(T_\alpha (t)\vu_0 -T_\infty (t)\vu_0)|^2} + \int\displaylimits_{0}^{T}\int\displaylimits_{ \Gamma  }{ |T_\alpha (t)\vu_0 -T_\infty (t)\vu_0 |^2}\le \frac{C}{\alpha}.
\end{equation}
\end{theorem}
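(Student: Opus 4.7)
The plan is to work in the Hilbert setting $p=2$ and exploit the variational structure via a single $\alpha$-uniform energy identity, then use weak compactness to identify the limit and upgrade to strong convergence. By the analyticity of $-A_{2,\alpha}$ (Theorem \ref{24}), for every $t>0$ we have $\vu_\alpha(t):=T_\alpha(t)\vu_0\in\mathbf{D}(A_{2,\alpha})$, so testing \eqref{evolutionary Stokes} against $\bar{\vu}_\alpha$ and invoking Green's formula \eqref{green} together with the boundary identity $2[(\DT\vu_\alpha)\vn]_\vt=-\alpha(\vu_\alpha)_\vt$ yields, for every $T>0$,
\begin{equation*}
\tfrac{1}{2}\|\vu_\alpha(T)\|_{\vL{2}}^{2}+2\int_{0}^{T}\|\DT\vu_\alpha\|_{\vL{2}}^{2}\,dt+\alpha\int_{0}^{T}\!\!\int_{\Gamma}|(\vu_\alpha)_\vt|^{2}\,ds\,dt=\tfrac{1}{2}\|\vu_0\|_{\vL{2}}^{2}.
\end{equation*}
Since $\vu_\infty=\bm{0}$ on $\Gamma$, the third term delivers \eqref{50} at once, and Korn's inequality on $\{\vu\in\vH{1}:\vu\cdot\vn=0\}$ converts the bound on $\DT\vu_\alpha$ into the $\alpha$-uniform bound $\|\vu_\alpha\|_{L^\infty(0,T;\vL{2})\cap L^2(0,T;\vH{1})}\leq C\|\vu_0\|_{\vL{2}}$.

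Combining this with a bound on $\partial_t\vu_\alpha$ in $L^2(0,T;[\bm{H}^{2}_0(\div,\Omega)]')$ inherited from Section \ref{67}, the Aubin--Lions lemma provides a (sub)sequence $\vu_\alpha$ converging strongly in $L^2(0,T;\vL{2})$, weakly in $L^2(0,T;\vH{1})$ and weakly-$*$ in $L^\infty(0,T;\vL{2})$ to some limit $\vu_*$. The boundary bound $\alpha\int_0^T\!\!\int_\Gamma|(\vu_\alpha)_\vt|^2\leq\tfrac12\|\vu_0\|^2$ forces $(\vu_*)_\vt=\bm{0}$ on $\Gamma$, and combined with $\vu_*\cdot\vn=0$ this gives $\vu_*=\bm{0}$ on $\Gamma$. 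Picking test functions in $\bm{H}^1_{0,\sigma}(\Omega)\subset\bm{H}^1_{\sigma,\vt}(\Omega)$ kills the boundary term in the weak formulation \eqref{variational formulation}, and passing to the limit identifies $\vu_*$ as the unique Dirichlet--Stokes solution starting from $\vu_0$; so $\vu_*=\vu_\infty$ and the convergence holds along the entire family $\alpha\to\infty$.

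To upgrade to strong convergence in $L^2(0,T;\vH{1})$, I compare the energy identity of $\vu_\alpha$ with the analogous one for $\vu_\infty$. Using weak lower semicontinuity $\liminf_\alpha\|\vu_\alpha(T)\|^2\geq\|\vu_\infty(T)\|^2$,
\begin{equation*}
\limsup_\alpha\!\left(2\!\int_0^T\!\|\DT\vu_\alpha\|_{\vL{2}}^2\,dt+2\alpha\!\int_0^T\!\!\int_\Gamma|(\vu_\alpha)_\vt|^2\right)\leq\|\vu_0\|_{\vL{2}}^2-\|\vu_\infty(T)\|_{\vL{2}}^2=2\!\int_0^T\!\|\DT\vu_\infty\|_{\vL{2}}^2\,dt,
\end{equation*}
while weak lower semicontinuity supplies the reverse $\liminf$ inequality. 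Hence $\|\DT\vu_\alpha\|_{L^2(0,T;\vL{2})}\to\|\DT\vu_\infty\|_{L^2(0,T;\vL{2})}$, which upgrades the weak convergence of $\DT\vu_\alpha$ to strong, and combined with the strong $L^2(\vL{2})$ convergence of $\vu_\alpha$ and Korn's inequality this produces \eqref{54}.

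Finally, for \eqref{52} under the stronger assumption $\vu_0\in\bm{H}^1_0(\Omega)$ with $\div\,\vu_0=0$, set $\vw:=\vu_\alpha-\vu_\infty$: one has $\vw(0)=\bm{0}$, $\vw\cdot\vn=0$, $\vw_\vt=(\vu_\alpha)_\vt$ and on $\Gamma$ the identity $[(\DT\vw)\vn]_\vt=-\tfrac{\alpha}{2}(\vu_\alpha)_\vt-[(\DT\vu_\infty)\vn]_\vt$. Testing $\partial_t\vw-\Delta\vw+\nabla q=\bm{0}$ against $\vw$ and applying Young's inequality to the mixed boundary term gives
\begin{equation*}
\tfrac{d}{dt}\|\vw\|_{\vL{2}}^2+4\|\DT\vw\|_{\vL{2}}^2+\alpha\!\int_\Gamma\!|(\vu_\alpha)_\vt|^2\leq\tfrac{4}{\alpha}\!\int_\Gamma\!|[(\DT\vu_\infty)\vn]_\vt|^2.
\end{equation*}
Dirichlet--Stokes maximal $L^2$-regularity with $\vu_0\in\bm{H}^1_0(\Omega)\cap\bm{L}^2_{\sigma,\vt}(\Omega)$ gives $\vu_\infty\in L^2(0,T;\vH{2})$ with norm controlled by $\|\vu_0\|_{\vH{1}}$, and the trace theorem yields $\int_0^T\|[(\DT\vu_\infty)\vn]_\vt\|_{\vLb{2}}^2\,dt\leq C\|\vu_0\|_{\vH{1}}^2$, so integration in time delivers \eqref{52}. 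The main obstacle is precisely this last step: for merely $\vL{2}$ data the trace of $\DT\vu_\infty$ fails to be square-integrable in time, and that is exactly why the explicit rate \eqref{52} cannot be obtained by the direct energy method without the extra $\bm{H}^1_0$-regularity triggering maximal $L^2$-regularity of the Dirichlet--Stokes problem.
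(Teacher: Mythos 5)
Your argument is correct and shares the paper's overall skeleton (energy equality, $\alpha$-uniform bounds, Aubin--Lions, identification of the Dirichlet limit through test functions in $\bm{H}^1_{0,\sigma}(\Omega)$), but the decisive step --- upgrading the weak convergence of $\DT\vu_\alpha$ to strong convergence in $L^2(0,T;\vL{2})$ --- is done by a genuinely different device. The paper writes the equation satisfied by the difference $\bm{v}_\alpha=\vu_\alpha-\vu_\infty$, whose Navier boundary datum is $-2[(\DT\vu_\infty)\vn]_{\vt}$, and reads both \eqref{50} and \eqref{54} off the energy identity \eqref{55} for $\bm{v}_\alpha$, pairing $\bm{v}_{\alpha\vt}\rightharpoonup\bm{0}$ in $L^2(0,T;\vHfracb{1}{2})$ against $[(\DT\vu_\infty)\vn]_{\vt}\in L^2(0,T;\vHfracbd{1}{2})$. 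You instead obtain \eqref{50} directly from the a priori energy equality of $\vu_\alpha$ (using $\vu_\infty|_\Gamma=\bm{0}$ and $|\vu_\alpha|=|\vu_{\alpha\vt}|$ on $\Gamma$) and get \eqref{54} by the classical ``weak convergence plus convergence of norms'' argument, comparing the $\limsup$ of the energy equality with the Dirichlet energy equality via weak lower semicontinuity of $\|\vu_\alpha(T)\|_{\vL{2}}^2$. This buys you something real: you never need to give $[(\DT\vu_\infty)\vn]_{\vt}$ a meaning in $L^2(0,T;\vHfracbd{1}{2})$ for data that are merely in $\bm{L}^2_{\sigma,\vt}(\Omega)$, a point the paper passes over quickly; the price is that you must justify $\vu_\alpha(T)\rightharpoonup\vu_\infty(T)$ in $\vL{2}$ at the fixed endpoint, which weak-$*$ convergence in $L^\infty(0,T;\vL{2})$ alone does not give --- you need the equicontinuity in $\vH{-1}$ coming from the bound on $\partial_t\vu_\alpha$, i.e.\ convergence in $C([0,T];\vH{-1})$. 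You should also record the identification $\vu_*(0)=\vu_0$, which the paper does by integrating by parts in time against test functions vanishing at $t=T$ and which your sketch leaves implicit. For \eqref{52} the two proofs coincide, except that your explicit Young absorption of the boundary term is the cleaner route to the stated rate: the Cauchy--Schwarz bound as written in the paper, combined with \eqref{50}, only yields $C/\sqrt{\alpha}$ for the $\DT$ term unless one absorbs as you do.
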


\begin{proof}
Let us denote $\vu_\alpha := T_\alpha (t)\vu_0$. Then $ \vu_{\alpha}$ is the solution of the Stokes problem with Navier boundary condition (\ref{evolutionary Stokes}), given by Theorem \ref{35}
where $ \pi_\alpha$ is the associated pressure.	So $\vu_\alpha$ satisfies the following energy equality
	\begin{equation*}
	\frac{1}{2}\int\displaylimits_{\Omega}{|\vu_\alpha(T)|^2} + 2 \int\displaylimits_{0}^{T} \int\displaylimits_{\Omega}{|\DT\vu_\alpha|^2} + \alpha \int\displaylimits_{0}^{T}\int\displaylimits_{\Gamma} {|\vu_{\alpha\vt}|^2} = \frac{1}{2}\int\displaylimits_{\Omega}{|\vu_0|^2}
	\end{equation*}
	which shows that as $\alpha \rightarrow \infty$,
	\begin{equation*}
	\DT\vu_\alpha \text{ is bounded in } L^2(0,T;\vL{2})
	\end{equation*}
	and
	\begin{equation*}
	\vu_{\alpha\vt} \text{ is bounded in } L^2(0,T;\vLb{2}).
	\end{equation*}
	Therefore, $\vu_\alpha$ is bounded in $ L^2(0,T;\vH{1})$. Hence $ \pi_\alpha$ is also bounded in $ L^2(0,T;\vL{2})$ since $ \|B_2\bm{v}\|_{[\bm{H}_0^{2}(\div ,\Omega)]^\prime}\simeq \|\bm{v}\|_{\vH{1}}$ for all $\bm{v}\in \mathbf{D}(B_2)$. This deduces that $ \frac{\partial \vu_\alpha}{\partial t}$ is as well bounded in $ L^2(0,T;\vH{-1})$. So there exists $ (\vu_\infty, \pi_\infty)\in L^2(0,T;\vH{1})\times L^2(0,T;\L{2})$ with $\frac{\partial \vu_{\infty}}{\partial t}\in L^2(0,T;\bm{H}^{-1}(\Omega))$ such that up to a subsequence,
	\begin{equation*}
	(\vu_\alpha, \pi_\alpha) \rightharpoonup (\vu_\infty, \pi_\infty) \text{ weakly in } L^2(0,T;\vH{1})\times L^2(0,T;\L{2}) \quad \text{ as } \alpha \rightarrow \infty
	\end{equation*}
	and
	\begin{equation*}
	\frac{\partial \vu_\alpha}{\partial t}\rightharpoonup \frac{\partial \vu_{\infty}}{\partial t} \text{ weakly in } L^2(0,T;\bm{H}^{-1}(\Omega)). 
	\end{equation*}
	Also, by Aubin-Lions Lemma, we have
	\begin{equation*}
	\vu_\alpha\to \vu_\infty \quad \text{ in } \quad L^2(0,T;\vL{6-\varepsilon}) \text{ for any } \varepsilon >0.
	\end{equation*}
	
Next we claim that $(\vu_\infty, \pi_\infty)$ satisfies the following Dirichlet problem
\begin{equation}
\label{19}
\left\{
\begin{aligned}
&\frac{\partial \vu_\infty}{\partial t} -\Delta \vu_\infty + \nabla \pi_\infty = \bm{0} , \quad \div \ \vu_\infty = 0 \quad && \text{ in } \Omega\times (0,T) \\
&\vu_\infty = \bm{0} \quad && \text{ on } \Gamma\times (0,T) \\
&\vu_\infty(0) = \vu_0 \quad && \text{ in } \Omega. 
\end{aligned}
\right.
\end{equation}
Indeed, for any $\bm{v}\in C^1([0,T];\bm{H}^1_{0,\sigma}(\Omega))$ where we denote $\bm{H}^1_{0,\sigma}(\Omega) = \bm{H}^1_0(\Omega)\cap \bm{L}^2_{\sigma,\vt}(\Omega)$, the weak formulation satisfied by $ (\vu_\alpha, \pi_\alpha)$ is,
\begin{equation}
\label{4.}
\int\displaylimits_0^T\left\langle \frac{\partial \vu_{\alpha}}{\partial t}, \bm{v}\right\rangle _{\bm{H}^{-1}(\Omega)\times \bm{H}^1_0(\Omega)} + 2\int\displaylimits_0^T\int\displaylimits_{ \Omega} {\DT\vu_{\alpha}:\DT\bm{v}}=0.
\end{equation}
Then passing limit as $\alpha \to \infty$, we obtain
\begin{equation}
\label{5.}
\int\displaylimits_0^T\left\langle \frac{\partial \vu_{\infty}}{\partial t}, \bm{v}\right\rangle _{\bm{H}^{-1}(\Omega)\times \bm{H}^1_0(\Omega)} + 2\int\displaylimits_0^T\int\displaylimits_{ \Omega} {\DT\vu_{\infty}:\DT\bm{v}} = 0.
\end{equation} 
Hence,
\begin{equation*}
\left\langle \frac{\partial \vu_{\alpha}}{\partial t}, \bm{v}\right\rangle _{\bm{H}^{-1}(\Omega)\times \bm{H}^1_0(\Omega)} + 2\int\displaylimits_{ \Omega} {\DT\vu_{\infty}:\DT\bm{v}}= 0
\end{equation*}
for any $\bm{v}\in\bm{H}^1_{0,\sigma}(\Omega)$ and a.e. $0\le t\le T$. Also we have, $\vu_{\infty}\in C([0,T];\vL{2})$.

In order to show that $\vu_\infty(0) =\vu_0$, we can write from (\ref{4.}), for any $\bm{v}\in C^1([0,T];\bm{H}^1_{0,\sigma}(\Omega))$ with $\bm{v}(T)=0$,
\begin{equation*}
-\int\displaylimits_0^T\int\displaylimits_{ \Omega  }{\vu_{\alpha}\cdot\frac{\partial \bm{v}}{\partial t}} + 2\int\displaylimits_0^T\int\displaylimits_{ \Omega} {\DT\vu_{\alpha}:\DT\bm{v}} = \int\displaylimits_{ \Omega}{\vu_0\cdot \bm{v}(0)}
\end{equation*}
and similarly from (\ref{5.}),
\begin{equation*}
-\int\displaylimits_0^T\int\displaylimits_{ \Omega  }{\vu_{\infty}\cdot\frac{\partial \bm{v}}{\partial t}} + 2\int\displaylimits_0^T\int\displaylimits_{ \Omega} {\DT\vu_{\infty}:\DT\bm{v}} = \int\displaylimits_{ \Omega}{\vu_\infty(0)\cdot \bm{v}(0)}.
\end{equation*}
As $\bm{v}(0)$ is arbitrary, we thus conclude that $\vu_\infty(0) =\vu_0$.

{\bf ii)} It remains to prove the strong convergence of $(\vu_{\alpha},\pi_\alpha)$ to $(\vu_{\infty},\pi_\infty)$. Note that $(\bm{v}_\alpha,p_\alpha ) := (\vu_{\alpha} - \vu_{\infty},\pi_\alpha - \pi_\infty )$ satisfies the following problem
\begin{equation}
\label{20}
\left\{
\begin{aligned}
&\frac{\partial \bm{v}_\alpha}{\partial t} -\Delta \bm{v}_\alpha + \nabla p_\alpha = \bm{0} , \quad \div \ \bm{v}_\alpha = 0 \quad && \text{ in } \Omega\times (0,T) \\
&\bm{v}_\alpha\cdot \vn = 0, \quad 2[(\DT\bm{v}_{\alpha})\vn]_{\vt}+ \alpha\bm{v}_{\alpha\vt}= -2 [(\DT\vu_{\infty})\vn]_{\vt}
\quad && \text{ on } \Gamma\times (0,T) \\
&\bm{v}_\alpha(0) = \bm{0} \quad && \text{ in } \Omega .
\end{aligned}
\right.
\end{equation}
Multiplying the above system by $\bm{v}_\alpha$, we obtain the following energy estimate
\begin{equation}
\label{55}
\frac{1}{2}\int\displaylimits_{ \Omega  }{|\bm{v}_\alpha(t)|^2} + 2\int\displaylimits_{0}^{T}\int\displaylimits_{ \Omega  }{|\DT \bm{v}_\alpha|^2} + \int\displaylimits_{0}^{T}\int\displaylimits_{ \Gamma  }{\alpha |\bm{v}_{\alpha\vt}|^2} = 2 \int\displaylimits_{0}^{T}\left\langle [(\DT\vu_{\infty})\vn]_{\vt}, \bm{v}_{\alpha\vt}\right\rangle _{\Gamma}
\end{equation}
which shows that $ \bm{v}_\alpha \to \bm{0}$ in $L^2(0,T;\vLb{2})$ as $\alpha \to \infty$ and thus (\ref{50}) follows. Also since $\bm{v}_{\alpha \vt} \rightharpoonup \bm{0} $ weakly in $ L^2(0,T;\vHfracb{1}{2})$ and $ [(\DT\vu_{\infty})\vn]_{\vt}\in L^2(0,T;\vHfracbd{1}{2})$, the right hand side in the above relation goes to $0$ as $\alpha\rightarrow \infty$. So $\DT\bm{v}_\alpha\to 0$ in $L^2(0,T;\vL{2})$. This proves (\ref{54}).

{\bf iii)} If we assume $\vu_0\in \bm{H}^1_0(\Omega)\cap \bm{L}^2_{\sigma,\vt}(\Omega)$, then $\vu_\infty\in L^2(0,T;\vH{2})$ (cf. \cite[Proposition 1.2, Chapter III]{Temam}) and thus the energy equality (\ref{55}) can be estimated as
\begin{equation*}
\int\displaylimits_{0}^{T}\int\displaylimits_{ \Omega  }{|\DT \bm{v}_\alpha|^2} \le \int\displaylimits_{0}^{T}\int\displaylimits_{\Gamma}{[(\DT\vu_{\infty})\vn]_{\vt}\cdot \bm{v}_{\alpha\vt}} \le  \|\vu_{\infty}\|_{L^2(0,T;\vH{2})} \|\bm{v}_\alpha\|_{L^2(0,T;\vLb{2})}. 
\end{equation*}
This along with the estimate (\ref{50}) gives (\ref{52}).
\hfill
\end{proof}

In order to prove the convergence result for any $p\in (1,\infty)$, we need to use some compactness argument. But due to unavailability of the energy estimate for general $p\ne 2$, some more regularity of the solution of (\ref{evolutionary Stokes}), hence more regular initial data is required.

\begin{theorem}
\label{S9T1}
Let $ p\in (1,\infty)$, $T_\alpha(t), T_\infty (t) $ be defined as in Theorem \ref{S9T0} and $\vu_0\in \bm{W}^{2,p}_0(\Omega)\cap \bm{L}^p_{\sigma,\vt}(\Omega)$. Then for any $T<\infty$, as $\alpha \rightarrow \infty$,
\begin{equation}
\label{16.}
T_\alpha (t)\vu_0 \rightarrow T_\infty (t)\vu_0 \quad \text{ in } \quad C([0,T];\mathbf{D}(A^{1/2}_{p, \alpha}))
\end{equation}
and
\begin{equation}
\label{16}
T_\alpha (t)\vu_0 \rightarrow T_\infty (t)\vu_0 \quad \text{ in } \quad C^k(0,T;\mathbf{D}(A^l_{p, \alpha}))\,\,\,\ \forall k\in \mathbb{N}, \,\,\,\forall l\in \mathbb{N}\setminus \{0\}.
\end{equation}
Also if we assume $\vu_0\in \mathbf{D}(A^2_{p, \alpha })$, then the following convergence rate can be obtained, for any $1<q<\infty$, 
\begin{equation*}
\|\frac{\partial }{\partial t}\left( T_\alpha (t)\vu_0 - T_\infty (t)\vu_0\right) \|_{L^q(0,T;\vL{p})}+\|T_\alpha (t)\vu_0 - T_\infty (t)\vu_0\|_{L^q(0,T;\vW{2}{p})} \le \frac{C}{\alpha} .
\end{equation*}
\end{theorem}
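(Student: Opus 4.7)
The plan is to combine the $\alpha$-independent regularity theory of Sections~3--7 with Theorem~\ref{S9T0}, and to treat the rate estimate by a boundary lifting plus the uniform maximal regularity of Theorem~\ref{nonhom}. Throughout, write $\vu_\alpha=T_\alpha(t)\vu_0$ and $\vu_\infty=T_\infty(t)\vu_0$.

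For the qualitative convergences (\ref{16.}) and (\ref{16}), I would proceed as follows. By Sobolev embedding $\bm{W}^{2,p}_0(\Omega)\hookrightarrow\bm{H}^1_0(\Omega)\cap\bm{L}^2_{\sigma,\vt}(\Omega)$ on the bounded three--dimensional domain, Theorem~\ref{S9T0} immediately gives $\vu_\alpha\to\vu_\infty$ in $L^2(0,T;\vH{1})$. Next, Theorem~\ref{homstrong} together with the $\alpha$-independent estimates (\ref{13})--(\ref{15}) provides uniform bounds for $\{\vu_\alpha\}$ in $C([0,T];\bm{W}^{1,p}_{\sigma,\vt}(\Omega))$ and, by the analytic smoothing of the semigroup, in $C^k([\delta,T];\mathbf{D}(A_{p,\alpha}^l))$ for every $k,l\in\mathbb{N}$ and every $\delta>0$, together with uniform bounds on $\partial_t\vu_\alpha$. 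Weak-$\ast$ compactness extracts a convergent subsequence, Aubin--Lions upgrades it to strong convergence in the topologies of (\ref{16.}) and (\ref{16}), and the $L^2$ convergence forces the limit to coincide with $\vu_\infty$. The identification $\mathbf{D}(A_{p,\alpha}^{1/2})=\bm{W}^{1,p}_{\sigma,\vt}(\Omega)$ from Theorem~\ref{45} (with $\alpha$-independent equivalence of norms on this fixed set) turns the first statement into (\ref{16.}), and iterating on the equations satisfied by $\partial_t^k A_{p,\alpha}^l\vu_\alpha$ yields (\ref{16}).

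For the rate, under $\vu_0\in\mathbf{D}(A_{p,\alpha}^2)$ the difference $\bm{v}_\alpha:=\vu_\alpha-\vu_\infty$ satisfies a Stokes system with $\bm{v}_\alpha(0)=\bm{0}$ and the inhomogeneous boundary condition $2[(\DT\bm{v}_\alpha)\vn]_\vt+\alpha\bm{v}_{\alpha,\vt}=-2[(\DT\vu_\infty)\vn]_\vt$. I lift this datum by letting $\bm{w}_\alpha(t)$ solve, for each $t$, the stationary Stokes problem with NBC and right-hand side $\bm{0}$ but boundary datum $\bm{g}(t):=-2[(\DT\vu_\infty(t))\vn]_\vt$. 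The rescaled field $\bm{\tilde w}_\alpha=\alpha\bm{w}_\alpha$ satisfies the Robin-type condition $\bm{\tilde w}_{\alpha,\vt}+\frac{2}{\alpha}[(\DT\bm{\tilde w}_\alpha)\vn]_\vt=\bm{g}$, which is a regular perturbation of the Dirichlet problem and should admit the $\alpha$-uniform estimate $\|\bm{\tilde w}_\alpha(t)\|_{\vW{2}{p}}\le C\|\bm{g}(t)\|_{\vWfracb{1-1/p}{p}}$ for $\alpha\ge \alpha_0$, giving $\|\bm{w}_\alpha(t)\|_{\vW{2}{p}}=O(1/\alpha)$. The corrected field $\bm{V}_\alpha:=\bm{v}_\alpha-\bm{w}_\alpha$ then solves a Stokes system with \emph{homogeneous} NBC, initial datum $-\bm{w}_\alpha(0)$, and forcing $-\partial_t\bm{w}_\alpha$; the $\alpha$-uniform maximal $L^q$-regularity of Theorem~\ref{nonhom} produces the desired $L^q(0,T;\vW{2}{p})\cap W^{1,q}(0,T;\bm{L}^p_{\sigma,\vt}(\Omega))$-estimate of size $1/\alpha$, the time regularity of $\bm{g}$ being inherited from $\vu_\infty\in W^{1,q}(0,T;\vW{2}{p})$ which follows from $\vu_0\in\mathbf{D}(A_{p,\alpha}^2)$ through the Dirichlet Stokes semigroup.

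The principal obstacle is the $\alpha$-uniform $\vW{2}{p}$-estimate for the stationary lifting, since \cite{AG} provides NBC Stokes estimates whose constants a priori depend on $\alpha$. For $p=2$ an energy argument treating $\frac{2}{\alpha}[(\DT\cdot)\vn]_\vt$ as a vanishing perturbation of the Dirichlet trace suffices; for general $p$ one must revisit the proof of Theorem~\ref{T1}, carrying the boundary term through the weak reverse Hölder inequality and applying Shen's $L^p$-extrapolation Lemma~\ref{L0} to obtain constants independent of $\alpha$. A secondary technical point is ensuring that $\bm{w}_\alpha(0)$ belongs to the interpolation space $E_q$ of Theorem~\ref{nonhom} with $\|\bm{w}_\alpha(0)\|_{E_q}=O(1/\alpha)$, which is precisely where the hypothesis $\vu_0\in\mathbf{D}(A_{p,\alpha}^2)$ (rather than merely $\mathbf{D}(A_{p,\alpha})$) is needed, as it transfers the required compatibility to $\vu_\infty(0)=\vu_0$ and its boundary trace.
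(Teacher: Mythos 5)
Your treatment of the qualitative convergences \eqref{16.} and \eqref{16} is essentially sound and close to the paper's: both rest on the observation that $\vu_0\in\bm{W}^{2,p}_0(\Omega)\cap\bm{L}^p_{\sigma,\vt}(\Omega)=\bigcap_{\alpha\ge 1}\mathbf{D}(A_{p,\alpha})$ (so that $\|A_{p,\alpha}\vu_0\|_{\vL{p}}$ is bounded independently of $\alpha$), uniform bounds in $C([0,T];\vW{2}{p})\cap C^1([0,T];\vL{p})$, and Aubin--Lions compactness. The paper identifies the limit by rewriting the Navier condition as $\vu_{\alpha\vt}=-\frac{2}{\alpha}[(\DT\vu_\alpha)\vn]_{\vt}$ and passing to the limit directly, whereas you identify it through the $L^2$ result of Theorem \ref{S9T0}; that works, but note that your claimed embedding $\bm{W}^{2,p}_0(\Omega)\hookrightarrow\bm{H}^1_0(\Omega)$ fails for $1<p<6/5$ in three dimensions --- only $\bm{W}^{2,p}(\Omega)\hookrightarrow\bm{L}^2(\Omega)$ is needed (and true), since Theorem \ref{S9T0} requires merely $\vu_0\in\bm{L}^2_{\sigma,\vt}(\Omega)$.

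The genuine gap is in the rate estimate. You lift the boundary datum $-2[(\DT\vu_\infty)\vn]_{\vt}$ through a stationary Stokes problem \emph{with Navier boundary condition}, and your argument then hinges on an $\alpha$-uniform $\vW{2}{p}$ estimate for the inhomogeneous-boundary NBC problem (equivalently, for the singularly perturbed Robin problem satisfied by $\alpha\bm{w}_\alpha$). You correctly flag this as ``the principal obstacle,'' but you do not close it: no such uniform estimate is proved in the paper, the $W^{2,p}$ estimates of \cite{AG} do not come with $\alpha$-independent constants for inhomogeneous boundary data, and Shen's extrapolation (Lemma \ref{L0}) as used in Theorem \ref{T1} only yields the $\vL{p}$ resolvent bound for the \emph{homogeneous} boundary condition. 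The paper's proof shows that this difficulty can be bypassed entirely: the difference $\bm{v}_\alpha=\vu_\alpha-\vu_\infty$ is written with the \emph{Dirichlet-type} boundary condition $\bm{v}_{\alpha\vt}=-\frac{2}{\alpha}[(\DT\vu_\alpha)\vn]_{\vt}$, whose datum is explicitly $O(1/\alpha)$ in $C^1([0,T];\vWfracb{2-\frac{1}{p}}{p})$ because $\vu_0\in\mathbf{D}(A^2_{p,\alpha})$ gives $\vu_\alpha\in C^1([0,T];\vW{2}{p})$ uniformly. The lift $\bm{w}_\alpha$ is then taken as the solution of the stationary \emph{Dirichlet} Stokes problem (Cattabriga), whose constant has nothing to do with $\alpha$, and the corrected field $V_\alpha=\bm{v}_\alpha-\bm{w}_\alpha$ solves a Stokes system with \emph{homogeneous Dirichlet} condition, to which the Giga--Sohr maximal regularity for the Dirichlet Stokes operator applies. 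Your invocation of Theorem \ref{nonhom} (maximal regularity for the NBC operator) for the corrected field would again raise the question of $\alpha$-uniformity of the data terms; switching the whole rate argument to the Dirichlet framework, as the paper does, removes the obstacle you identified rather than requiring you to overcome it.
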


\begin{proof}
	First we claim the following set theoretic equality
	\begin{equation*}
	\bm{W}^{2,p}_0(\Omega)\cap \bm{L}^p_{\sigma,\vt}(\Omega) = \underset{\alpha\ge 1}{\cap} \mathbf{D}(A_{p, \alpha }).
	\end{equation*}
	Indeed, it is obvious to see that
	$$
	\underset{\alpha\ge 1}{\cap} \mathbf{D}(A_{p, \alpha }) = \{\vu \in \bm{W}^{2,p}(\Omega)\cap \bm{W}^{1,p}_0(\Omega)\cap \bm{L}^p_{\sigma,\vt}(\Omega): [(\DT\vu)\vn]_{\vt} = \bm{0} \text{ on } \Gamma\}.
	$$
	Let us denote the set on the right hand side of the above relation by $E$. It is then enough to show $E\subseteq \bm{W}^{2,p}_0(\Omega)\cap \bm{L}^p_{\sigma,\vt}(\Omega)$. To simplify, assuming $\Omega = \R^3_+$, we get that if $\vu = \bm{0}$ on $\Gamma$, then $[(\DT\vu)\vn]_{\vt} = \bm{0}$ iff $(\frac{\partial \vu_1}{\partial x_3}, \frac{\partial \vu_2}{\partial x_3}, 0) = \bm{0}$. Taking into account the fact that $\div \ \vu =0$ in $\Omega$, this implies $\frac{\partial \vu}{\partial \vn} =\bm{0}$ on $\Gamma$. Hence the claim.
	
{\bf i)} Since $\vu_0\in \mathbf{D}(A_{p, \alpha })$ for every $\alpha $ large enough,
$$
\vu_{\alpha}\in C([0,T];\bm{W}^{2,p}_{\sigma,\vt}(\Omega))\cap C^1([0,T];\bm{L}^p_{\sigma,\vt}(\Omega)) \text{ is bounded as } \alpha\rightarrow \infty.
$$
So by Aubin-Lions compactness result, there exists $\vu_\infty\in C[0,T;\bm{W}^{1,p}_{\sigma,\vt}(\Omega))$ such that up to a subsequence,
$$
\vu_{\alpha} \rightarrow \vu_{\infty}\text{ in } C([0,T];\bm{W}^{1,p}_{\sigma,\vt}(\Omega)).
$$
Also by De Rham's theorem, there exists $\pi_\infty \in C([0,T];L^{p}(\Omega))$ such that  $\pi_\alpha \rightarrow \pi_\infty$ in $C([0,T];L^{p}(\Omega))$. In fact, as $ \vu_{\alpha}\in C^k(0,T;\mathbf{D}(A^l_p))$ is bounded uniformly for any $k\in \mathbb{N}$ and $l\in \mathbb{N}\backslash\{0\}$, we get that $\vu_\infty \in C^k(0,T;\mathbf{D}(A^l_p))$. We now claim that $(\vu_\infty,\pi_\infty)$ satisfies the Stokes equation with Dirichlet boundary condition. Indeed if we write the system (\ref{evolutionary Stokes}) as
\begin{equation}
\label{modified_S}
\left\{
\begin{aligned}
\frac{\partial \vu_\alpha}{\partial t} -\Delta \vu_\alpha + \nabla \pi_\alpha = \bm{0} , \quad \div \ \vu_\alpha = 0 \quad & \text{ in } \Omega\times (0,T) \\
\vu_\alpha\cdot \vn = 0, \quad  \vu_{\alpha\vt} = -\frac{2}{\alpha}[(\DT\vu_\alpha)\vn]_{\vt} \quad & \text{ on } \Gamma\times (0,T) \\
\vu_\alpha(0) = \vu_0 \quad & \text{ in } \Omega
\end{aligned}
\right.
\end{equation}
passing the limit $\alpha\rightarrow \infty$, we obtain that $\vu_{\infty}$ satisfies the system (\ref{19})
which is given by $T_\infty(t)\vu_0$, with associated pressure $\pi_\infty$. Note that we use $\vu_{\alpha}$ is continuous up to $t=0$ to show that $\vu_\infty$ satisfies the initial data. Thus, by the hypothesis on $\vu_0$, the convergence result (\ref{16.}) and (\ref{16}) follow.

{\bf ii)} Next to deduce the rate of convergence, consider the difference between the systems (\ref{modified_S}) and (\ref{19}). The system satisfies by $(\bm{v}_\alpha,p_\alpha):= (\vu_{\alpha} -\vu_\infty, \pi_\alpha - \pi_\infty )$ is
\begin{equation}
\label{6}
\left\{
\begin{aligned}
\frac{\partial \bm{v}_\alpha}{\partial t} -\Delta \bm{v}_\alpha + \nabla p_\alpha = \bm{0} , \quad \div \ \bm{v}_\alpha = 0 \quad & \text{ in } \Omega\times (0,T) \\
\bm{v}_\alpha\cdot \vn = 0, \quad  \bm{v}_{\alpha\vt} = -\frac{2}{\alpha}[(\DT\vu_\alpha)\vn]_{\vt} \quad & \text{ on } \Gamma\times (0,T) \\
\vu_\alpha(0) = \bm{0}\quad & \text{ in } \Omega.
\end{aligned}
\right.
\end{equation}
We then reduce the system (\ref{6}) to a problem with homogeneous boundary data to apply the known estimates for Stokes problem for example, \cite[Theorem 2.8]{Giga91}. For any $\alpha>0$ and $t\ge 0$, let $(\bm{w}_\alpha(t),z_\alpha(t))$ satisfies the system
\begin{equation*}
\begin{aligned}
-\Delta \bm{w}_\alpha(t) + \nabla z_\alpha (t)=\bm{0}, \quad \div \ \bm{w}_\alpha(t) = 0, \quad
\bm{w}_\alpha(t)\arrowvert _\Gamma = \vu_{\alpha}\arrowvert _\Gamma(t).
\end{aligned}
\end{equation*}
Since we have assumed $\vu_0\in \mathbf{D}(A^2_{p, \alpha })$, we get the improved regularity \cite[Theorem 4.4.7, Chapter 4]{kesavan} $\vu_{\alpha}\in C^1([0,\infty);\bm{W}^{2,p}_{\sigma,\vt}(\Omega))$ 
which gives $\frac{\partial \vu_{\alpha}}{\partial t}\arrowvert _\Gamma(t) \in \vW{2-\frac{1}{p}}{p} $ for all $t\ge 0$ . Thus the regularity result for Stationary Stokes system with Dirichlet boundary condition \cite{cattabriga} yields that $\frac{\partial }{\partial t}\bm{w}_\alpha(t)\in \vW{2}{p}$ for all $t\ge 0$. We also obtain the estimate
\begin{equation}
\label{48}
\|\bm{w}_\alpha\|_{C^1([0,T];\bm{W}^{2,p}(\Omega))} \le C \|\vu_{\alpha}\|_{C^1([0,T];\vWfracb{2-\frac{1}{p}}{p})}\le \frac{C}{\alpha} \| [(\DT\vu_{\alpha})\vn]_{\vt}\|_{C^1([0,T];\vWfracb{2-\frac{1}{p}}{p})}.
\end{equation}
Note that the above continuity constant $C$ is independent of $\alpha$.
Then the substitution $ V_\alpha := \bm{v}_\alpha - \bm{w}_\alpha$ reduces the system (\ref{6}) to,
\begin{equation}
\label{42}
\left\{
\begin{aligned}
&\frac{\partial V_\alpha}{\partial t} -\Delta V_\alpha + \nabla p_\alpha =-\frac{\partial \bm{w}_\alpha}{\partial t}, \quad \div \ V_\alpha = 0 \quad && \text{ in } \Omega\times (0,T) \\
&V_\alpha = \bm{0} \quad && \text{ on } \Gamma\times (0,T) \\
&V_\alpha(0) = -\bm{w}_\alpha(0) \quad && \text{ in } \Omega .
\end{aligned}
\right.
\end{equation}
Hence the maximal regularity result \cite[Theorem 2.8]{Giga91} leads us to,
\begin{equation*}
V_\alpha \in L^q(0,T; \bm{W}^{2,p}_0(\Omega) )\,\,\, \text{ with } \,\,\, \frac{\partial V_\alpha}{\partial t} \in L^q(0,T;\bm{L}^p_\sigma (\Omega)) \,\,\,\,\text{ for any } 1<q<\infty
\end{equation*}
and the estimate
\begin{equation*}
\begin{aligned}
\int\displaylimits_{0}^{T}\left\| \frac{\partial V_\alpha}{\partial t} \right\| ^q_{\bm{L}^p(\Omega)} + \int\displaylimits_{0}^{T}\|-\Delta V_\alpha + \nabla p_\alpha\|^q_{\bm{L}^p (\Omega)}\le C \left( \int\displaylimits_{0}^{T}\|\frac{\partial \bm{w}_\alpha}{\partial t}\|^q_{\bm{L}^p_\sigma (\Omega)} + \|\bm{w}_\alpha(0)\|_{\vW{2}{p}}\right) 
\end{aligned}
\end{equation*}
with $C = C(\Omega,p,q)>0$ independent of $\alpha$.
Therefore, together with (\ref{48}), we obtain
\begin{equation*}
\begin{aligned}
\int\displaylimits_{0}^{T}\left\| \frac{\partial \bm{v}_\alpha}{\partial t} \right\| ^q_{\bm{L}^p (\Omega)} + \int\displaylimits_{0}^{T}\|-\Delta \bm{v}_\alpha + \nabla p_\alpha\|^q_{\bm{L}^p(\Omega)} \le \frac{C}{\alpha} 
\end{aligned}
\end{equation*}
since $\| [(\DT\vu_{\alpha})\vn]_{\vt}\|_{C^1([0,T];\vWfracb{2-\frac{1}{p}}{p})}$ is bounded for all $\alpha$ large. This concludes the proof.
\hfill
\end{proof}

We prove now our two results on the convergence as $\alpha \to \infty$ of the solutions to the Navier Stokes equation and begin with the case where $\vu_0$ belongs to an $L^2$-type space.

\begin{proof}[\bf Proof of Theorem \ref{S9T2}]
We proceed as in the linear case.
	
{\bf i)} From Proposition \ref{energy_est_NS} we can see that as $\alpha \rightarrow \infty$,
\begin{equation*}
\DT\vu_\alpha \text{ is bounded in } L^2(0,T;\vL{2})
\end{equation*}
and
\begin{equation*}
\vu_{\alpha\vt} \text{ is bounded in } L^2(0,T;\vLb{2}).
\end{equation*}
Therefore, $\vu_\alpha$ is bounded in $ L^2(0,T;\vH{1})$. 
And since $ \|B_{2,\alpha}{\bm v}\|_{[{\bm H}_0^2(\div,\Omega)]'} \simeq \|\bm{v}\|_{\vH{1}}$ for any $ \bm{v}\in {\mathbf{D}(B_{2,\alpha})}$, $ \pi_\alpha$ is as well bounded in $ L^2(0,T;\vL{2})$. This implies $ \frac{\partial \vu_\alpha}{\partial t}$ is also bounded in $ L^2(0,T;\vH{-1})$. Hence, there exists $ (\vu_\infty, \pi_\infty)\in L^2(0,T;\vH{1})\times L^2(0,T;\L{2})$ with $\frac{\partial \vu_{\infty}}{\partial t}\in L^2(0,T;\bm{H}^{-1}(\Omega))$ such that up to a subsequence,
\begin{equation*}
(\vu_\alpha, \pi_\alpha) \rightharpoonup (\vu_\infty, \pi_\infty) \quad \text{ weakly in } \quad L^2(0,T;\vH{1})\times L^2(0,T;\L{2}) \quad \text{ as } \alpha \rightarrow \infty
\end{equation*}
and
\begin{equation*}
\frac{\partial \vu_\alpha}{\partial t}\rightharpoonup \frac{\partial \vu_{\infty}}{\partial t}  \quad\text{ weakly in } \quad L^2(0,T;\bm{H}^{-1}(\Omega)).
\end{equation*}
Also, by Aubin-Lions Lemma,
\begin{equation*}
\vu_\alpha \to \vu_\infty \quad \text{ in } \quad L^2(0,T;\vL{6-\varepsilon}) \quad \text{ for any } \quad \varepsilon >0.
\end{equation*}
	
Next we show that $(\vu_\infty, \pi_\infty) $ satisfies the Dirichlet problem (\ref{NSD}). Indeed, for any $\bm{v}\in C^1([0,T];\bm{H}^1_{0,\sigma}(\Omega))$, the weak formulation of the problem (\ref{NS}) is,
\begin{equation}
\label{4}
	\int\displaylimits_0^T\left\langle \frac{\partial \vu_{\alpha}}{\partial t}, \bm{v}\right\rangle _{\vH{-1}\times \bm{H}^1_0(\Omega)} + 2\int\displaylimits_0^T\int\displaylimits_{ \Omega} {\DT\vu_{\alpha}:\DT\bm{v}}+\int\displaylimits_0^T\int\displaylimits_{ \Omega}{(\vu_{\alpha}\cdot \nabla)\vu_\alpha\cdot \bm{v}}=0.
\end{equation}
Then passing limit as $\alpha \to \infty$, we obtain
\begin{equation}
\label{5}
\int\displaylimits_0^T\left\langle \frac{\partial \vu_{\infty}}{\partial t}, \bm{v}\right\rangle _{\vH{-1}\times \bm{H}^1_0(\Omega)} + 2\int\displaylimits_0^T\int\displaylimits_{ \Omega} {\DT\vu_{\infty}:\DT\bm{v}} +\int\displaylimits_0^T\int\displaylimits_{ \Omega}{(\vu_{\infty}\cdot \nabla)\vu_\infty\cdot \bm{v}}= 0.
\end{equation}
To pass to the limit in the non-linear term, we used the standard relation
$$
\int\displaylimits_{ \Omega}{(\vu_{\alpha}\cdot \nabla)\vu_\alpha\cdot \bm{v}} = -\int\displaylimits_{ \Omega}{(\vu_{\alpha}\cdot \nabla)\bm{v}\cdot \vu_\alpha}.
$$ 
Also we have, $\vu_{\infty}\in C([0,T];\vL{2})$.

In order to prove $\vu_\infty(0) =\vu_0$, we write from (\ref{4}), for any $\bm{v}\in C^1([0,T];\bm{H}^1_{0,\sigma}(\Omega))$ with $\bm{v}(T)=0$,
\begin{equation*}
-\int\displaylimits_0^T\int\displaylimits_{ \Omega  }{\vu_{\alpha}\cdot\frac{\partial \bm{v}}{\partial t}} + 2\int\displaylimits_0^T\int\displaylimits_{ \Omega} {\DT\vu_{\alpha}:\DT\bm{v}}+\int\displaylimits_0^T\int\displaylimits_{ \Omega}{(\vu_{\alpha}\cdot \nabla)\vu_\alpha\cdot \bm{v}} = \int\displaylimits_{ \Omega}{\vu_0\cdot \bm{v}(0)}
\end{equation*}
and similarly from (\ref{5}),
\begin{equation*}
-\int\displaylimits_0^T\int\displaylimits_{ \Omega  }{\vu_{\infty}\cdot\frac{\partial \bm{v}}{\partial t}} + 2\int\displaylimits_0^T\int\displaylimits_{ \Omega} {\DT\vu_{\infty}:\DT\bm{v}} +\int\displaylimits_0^T\int\displaylimits_{ \Omega}{(\vu_{\infty}\cdot \nabla)\vu_\infty\cdot \bm{v}}= \int\displaylimits_{ \Omega}{\vu_\infty(0)\cdot \bm{v}(0)}.
\end{equation*}
As $\bm{v}(0)$ is arbitrary, we thus conclude $\vu_\infty(0) =\vu_0$.

{\bf ii)} Finally to show the strong convergence of $(\vu_\alpha, \pi_\alpha) $ to $ (\vu_\infty, \pi_\infty)$, setting $\bm{v}_\alpha = \vu_{\alpha} -\vu_\infty$ and $ p_\alpha = \pi_\alpha - \pi_\infty$, it solve the following problem
\begin{equation*}
\left\{
\begin{aligned}
&\frac{\partial \bm{v}_\alpha}{\partial t} -\Delta \bm{v}_\alpha + (\vu_{\alpha}\cdot \nabla) \vu_{\alpha} - (\vu_{\infty}\cdot \nabla )\vu_{\infty} +\nabla p_\alpha = \bm{0} , \quad \div \ \bm{v}_\alpha = 0 \quad && \text{ in } \Omega\times (0,T) \\
&\bm{v}_\alpha\cdot \vn = 0, \quad 2[(\DT\bm{v}_\alpha)\vn]_{\vt}+\alpha \bm{v}_{\alpha \vt} = -2 [(\DT\vu_{\infty})\vn]_{\vt} \quad && \text{ on } \Gamma\times (0,T) \\
&\bm{v}_\alpha(0) = 0 \quad && \text{ in } \Omega .
\end{aligned}
\right.
\end{equation*}
Multiplying the above system by $ \bm{v}_\alpha$ and integrating by parts over $\Omega \times (0,T)$, we deduce
\begin{equation}
\label{49}
\begin{aligned}
& \quad \frac{1}{2}\|\bm{v}_\alpha(T)\|_{\vL{2}}^2 + 2 \int\displaylimits_0^T \|\DT \bm{v}_\alpha\|^2_{\mathbb{L}^2(\Omega)} + \alpha \int\displaylimits_0^T {\|\bm{v}_{\alpha\vt}\|^2_{\vLb{2}} }\\
& = 
-2\int\displaylimits_{0}^{T}{\left\langle [(\DT \bm{v}_\infty)\vn]_{\vt},\bm{v}_{\alpha\vt}\right\rangle_\Gamma } - \int\displaylimits_{0}^{T}{\left\langle (\vu_{\alpha}\cdot \nabla) \vu_{\alpha} - (\vu_{\infty}\cdot \nabla )\vu_{\infty},\bm{v}_\alpha \right\rangle_\Omega }.
\end{aligned}
\end{equation}
This shows that $\bm{v}_\alpha \to \bm{0}$ in $L^2(0,T;\vLb{2})$ proving (\ref{60}). As $\bm{v}_\alpha \to \bm{0}$ in $L^2(0,T;\vL{4})$,
\begin{equation*}
\begin{aligned}
\int\displaylimits_{0}^{T}{\left\langle (\vu_{\alpha}\cdot \nabla) \vu_{\alpha} - (\vu_{\infty}\cdot \nabla )\vu_{\infty},\bm{v}_\alpha \right\rangle_\Omega } = - \int\displaylimits_{0}^{T}\int\displaylimits_{ \Omega  }{(\bm{v}_\alpha\cdot}\nabla) \vu_{\infty}\cdot \bm{v}_\alpha \le \|\bm{v}_\alpha\|_{\vL{4}}^2 \|\nabla \vu_{\infty}\|_{\vL{2}} \rightarrow 0.
\end{aligned}
\end{equation*}
Also since $ \bm{v}_\alpha\rightharpoonup \bm{0}$ weakly in $ L^2(0,T;\vHfracb{1}{2})$ and $[(\DT\vu_\infty)\vn]_{\vt}\in L^2(0,T;\vHfracbd{1}{2})$, it implies,
\begin{equation*}
\left\langle 2[(\DT \vu_\infty) \vn]_{\vt}, \bm{v}_\alpha \right\rangle _{\vHfracbd{1}{2}\times \vHfracb{1}{2}}\rightarrow 0.
\end{equation*} 
Therefore, $ \DT\bm{v}_\alpha \rightarrow \bm{0}$ in $L^2(0,T;\vL{2})$. The strong convergence for the pressure term  follows from the equation.

{\bf iii)} Now to obtain (\ref{61}), we estimate suitably the right hand side of (\ref{49}). Because $\vu_0\in \bm{H}^1_{0,\sigma}(\Omega)$, we have $\vu_\infty\in L^2(0,T;\vH{2})$ and thus
\begin{equation*}
\int\displaylimits_{0}^{T}{\left\langle [(\DT \bm{v}_\infty)\vn]_{\vt},\bm{v}_{\alpha\vt}\right\rangle_\Gamma } \le C \|\vu_{\infty}\|_{L^2(0,T;\vH{2})}\|\bm{u}_\alpha\|_{L^2(0,T;\vLb{2})}.
\end{equation*}
Similarly $\Gamma$ is $\HC{2}{1}$ and $\vu_0\in \vH{2}\cap \bm{H}^1_{0,\sigma}(\Omega)$ implies the regularity $ \vu_{\infty}\in L^2(0,T;\vH{3})$ by the same argument as \cite[Theorem 3.10, Chapter III]{Temam}. Hence $\nabla \vu_{\infty}\in L^\infty(0,T;\vH{2}) \subset L^\infty(0,T;\vL{\infty})$ and thus
\begin{equation*}
\left| \int\displaylimits_{0}^{T}\int\displaylimits_{ \Omega  }{(\bm{v}_\alpha\cdot}\nabla) \vu_{\infty}\cdot \bm{v}_\alpha\right| \le C \|\bm{v}_\alpha\|^2_{L^2(0,T;\vL{2})}\|\nabla \vu_{\infty}\|_{L^\infty(0,T;\vL{\infty})}.
\end{equation*}
Therefore, combining these estimates, we get
\begin{equation*}
\frac{1}{2}\|\bm{v}_\alpha(T)\|_{\vL{2}}^2 + 2 \int\displaylimits_0^T \|\DT \bm{v}_\alpha\|^2_{\mathbb{L}^2(\Omega)} \le C \|\bm{u}_\alpha\|_{L^2(0,T;\vLb{2})} + C \int\displaylimits_0^T{\|\bm{v}_\alpha}\|^2_{\vL{2}}
\end{equation*}
which yields by applying Gronwall's lemma,
\begin{equation*}
\|\bm{v}_\alpha(T)\|_{\vL{2}}^2 \le C \|\bm{u}_\alpha\|_{L^2(0,T;\vLb{2})}e^{Ct} \le \frac{C}{\alpha}.
\end{equation*}
The convergence in $L^\infty(0,T;\vL{2})$ and thus also in $L^2(0,T;\vH{1})$ follow immediately.
\hfill
\end{proof}

When the initial data belongs to an $L^r$-type space with $r\ge 3$, we have the following:

\begin{theorem}
\label{S9T3}
Let $ (\vu_\alpha, \pi_\alpha)$ be the solution of the problem (\ref{NS}) with $\vu_0\in \bm{W}^{2,r}_0(\Omega)\cap \bm{L}^r_{\sigma,\vt}(\Omega)$ where $r\ge 3$. Suppose also that 
$(\vu_\infty, \pi_\infty)\in C([0,T];\vW{2}{r})\cap C^1([0,T];\vL{r})\times C([0,T];\vW{1}{r}) $  is the solution of  (\ref{NSD}) with the same initial data $\vu_0$ whose existence and uniqueness has been proved in \cite[Theorem 4]{Giga86}. 
Then as $\alpha \rightarrow \infty$, 
\begin{equation}
\label{21.}	
(\vu_\alpha, \pi_\alpha) \rightarrow (\vu_\infty, \pi_\infty) \quad \text{ in } \quad C([0,T);\vW{1}{s}) \times C([0,T);\L{s})
\end{equation}
where $s\in [1, \infty) \text{ if } r=3 \text{ and } s= \infty \text{ if } r>3$.
	
Moreover, if $\vu_0\in \mathbf{D}(A^2_{p, \alpha })$ for $\alpha $ sufficiently large, then we obtain the following rate of convergence, for any $m\in (1,\infty)$,
\begin{equation}
\label{25}
\|\frac{\partial}{\partial t}(\vu_\alpha- \vu_\infty)\|_{L^m(0,T;\vL{s})} + \| \vu_\alpha- \vu_\infty\|_{L^m(0,T;\vW{2}{s})} \le \frac{C}{\alpha}.
\end{equation}
\end{theorem}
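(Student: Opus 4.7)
The plan is to adapt the two-step strategy of Theorem \ref{S9T1} to the nonlinear problem, treating $(\vu\cdot\nabla)\vu$ as a perturbation that is controlled by the uniform-in-$\alpha$ bounds obtained in the previous sections.

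First, for the qualitative convergence (\ref{21.}), I would exploit the set-theoretic identity $\bm{W}^{2,r}_0(\Omega)\cap \bm{L}^r_{\sigma,\vt}(\Omega)=\bigcap_{\alpha\ge 1}\mathbf{D}(A_{r,\alpha})$ already established at the beginning of the proof of Theorem \ref{S9T1}, so that $\vu_0 \in \mathbf{D}(A_{r,\alpha})$ for every sufficiently large $\alpha$. Combining the local existence result from Theorem \ref{S8T1}, the regularity improvement from Theorem \ref{S8T2}, and the $\alpha$-independent analytic semigroup bounds of Theorem \ref{51} and Theorem \ref{62}, one obtains a family $\{\vu_\alpha\}$ bounded in $C([0,T];\bm{W}^{2,r}_{\sigma,\vt}(\Omega))\cap C^1([0,T];\bm{L}^r_{\sigma,\vt}(\Omega))$ uniformly in $\alpha\ge\alpha_0$, on any $[0,T]$ strictly contained in the common maximal interval of existence. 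Aubin--Lions then yields a subsequence converging in $C([0,T];\bm{W}^{1,r}_{\sigma,\vt}(\Omega))$, and by De~Rham the associated pressures converge in $C([0,T];L^r(\Omega))$. Rewriting the Navier boundary condition as $\vu_{\alpha\vt}=-\tfrac{2}{\alpha}[(\DT\vu_\alpha)\vn]_\vt$, where $[(\DT\vu_\alpha)\vn]_\vt$ is uniformly bounded in $C([0,T];\bm{W}^{1-1/r,r}(\Gamma))$, forces $\vu_\infty=\bm 0$ on $\Gamma$. The strong convergence in $\bm{W}^{1,r}$ allows one to pass to the limit in $(\vu_\alpha\cdot\nabla)\vu_\alpha$, identifying $(\vu_\infty,\pi_\infty)$ with the unique Dirichlet solution of (\ref{NSD}); uniqueness promotes subsequential to full convergence. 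The upgrade from $\bm{W}^{1,r}$ to $\bm{W}^{1,s}$ (with $s\in[1,\infty)$ if $r=3$ and $s=\infty$ if $r>3$) is the Sobolev embedding $\bm{W}^{2,r}\hookrightarrow \bm{W}^{1,s}$ applied to the uniformly bounded family.

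Second, for the rate (\ref{25}) I would follow step (ii) of the proof of Theorem \ref{S9T1}. The difference $(\bm v_\alpha,p_\alpha):=(\vu_\alpha-\vu_\infty,\pi_\alpha-\pi_\infty)$ solves
\begin{equation*}
\frac{\partial \bm v_\alpha}{\partial t}-\Delta\bm v_\alpha+\nabla p_\alpha=-\bigl[(\vu_\alpha\cdot\nabla)\vu_\alpha-(\vu_\infty\cdot\nabla)\vu_\infty\bigr],\quad \div\,\bm v_\alpha=0,
\end{equation*}
subject to $\bm v_\alpha\cdot\vn=0$, $\bm v_{\alpha\vt}=-\tfrac{2}{\alpha}[(\DT\vu_\alpha)\vn]_\vt$ on $\Gamma$, and $\bm v_\alpha(0)=\bm 0$. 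Solving the instantaneous Dirichlet lift $-\Delta\bm w_\alpha(t)+\nabla z_\alpha(t)=\bm 0$, $\div\,\bm w_\alpha(t)=0$, $\bm w_\alpha(t)|_\Gamma=\vu_\alpha(t)|_\Gamma$, using the extra regularity $\vu_0\in\mathbf{D}(A^2_{r,\alpha})$ (which propagates to $\vu_\alpha\in C^1([0,T];\bm{W}^{2,r}_{\sigma,\vt}(\Omega))$), gives $\|\bm w_\alpha\|_{C^1([0,T];\bm W^{2,s}(\Omega))}\le C/\alpha$ exactly as in (\ref{48}). Setting $V_\alpha:=\bm v_\alpha-\bm w_\alpha$ reduces the problem to a Stokes--Dirichlet system with homogeneous boundary data and right-hand side $-\partial_t\bm w_\alpha-[(\vu_\alpha\cdot\nabla)\vu_\alpha-(\vu_\infty\cdot\nabla)\vu_\infty]$, to which the maximal $L^m$-regularity result \cite[Theorem 2.8]{Giga91} can be applied.

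The main obstacle will be handling the nonlinear term so that the $1/\alpha$ scaling survives. Splitting
\begin{equation*}
(\vu_\alpha\cdot\nabla)\vu_\alpha-(\vu_\infty\cdot\nabla)\vu_\infty=(\bm v_\alpha\cdot\nabla)\vu_\alpha+(\vu_\infty\cdot\nabla)\bm v_\alpha,
\end{equation*}
each factor is controlled in $\vL{s}$ by $C\bigl(\|\vu_\alpha\|_{\vW{1}{s}}+\|\vu_\infty\|_{\vW{1}{s}}\bigr)\|\bm v_\alpha\|_{\vW{1}{s}}$, where the first factor is uniformly bounded thanks to the $\bm{W}^{2,r}\hookrightarrow \bm{W}^{1,s}$ estimates established in the first step. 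Inserting this into the maximal regularity inequality, absorbing the resulting lower-order term into the left-hand side via interpolation $\|\bm v_\alpha\|_{\vW{1}{s}}\lesssim \|\bm v_\alpha\|_{\vW{2}{s}}^{1/2}\|\bm v_\alpha\|_{\vL{s}}^{1/2}$, and closing by a Gronwall argument in $L^m(0,T;\bm{W}^{2,s}(\Omega))$ will yield (\ref{25}).
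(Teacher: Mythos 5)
Your proposal follows essentially the same route as the paper: the same set-theoretic identity $\bm{W}^{2,r}_0(\Omega)\cap \bm{L}^r_{\sigma,\vt}(\Omega)=\bigcap_{\alpha\ge 1}\mathbf{D}(A_{r,\alpha})$, uniform bounds in $C([0,T];\vW{2}{r})\cap C^1([0,T];\vL{r})$, Aubin--Lions and De~Rham for the qualitative limit, the rewritten boundary condition $\vu_{\alpha\vt}=-\tfrac{2}{\alpha}[(\DT\vu_\alpha)\vn]_{\vt}$ to identify the Dirichlet limit, and for the rate the same boundary lift plus maximal regularity from \cite[Theorem 2.8]{Giga91} applied to the difference system. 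The one place you go beyond the paper is the last step: the paper bounds the right-hand side of the maximal regularity inequality by $\int_0^T\|(\vu_\infty\cdot\nabla)\vu_\infty-(\vu_\alpha\cdot\nabla)\vu_\alpha\|^m_{\vL{s}}$ plus an $O(1/\alpha)$ boundary contribution and then only remarks that both terms tend to zero, which as written yields convergence but not the stated rate $C/\alpha$; your splitting of the nonlinear difference, interpolation $\|\bm v_\alpha\|_{\vW{1}{s}}\lesssim \|\bm v_\alpha\|_{\vW{2}{s}}^{1/2}\|\bm v_\alpha\|_{\vL{s}}^{1/2}$, absorption, and Gronwall closure is exactly what is needed to actually extract \eqref{25}, and is a genuine (if local) improvement on the written argument.
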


\begin{proof}
{\bf i)} As explained in the beginning of the proof of Theorem \ref{S9T1}, $ \vu_0\in\mathbf{D}(A_{r, \alpha })$ for all $\alpha $ large enough and hence
$$
\vu_{\alpha} \;\;\; \text{ is bounded in }\;\;\; C([0,T];\vW{2}{r})\cap C^1([0,T];\vL{r}).
$$
Thus by compactness, there exists $\vu_\infty\in C([0,T];\vW{1}{s})$ with $s$ as defined in the theorem such that, up to a subsequence,
$$
\vu_{\alpha}\rightarrow \vu_\infty \text{ in } C([0,T];\vW{1}{s}).
$$
This implies by De Rham theorem, $\pi_\alpha \rightarrow \pi_\infty$ in $C([0,T];L^{s}(\Omega))$. Now to show that the limit $(\vu_\infty,\pi_\infty)$ actually satisfies the Navier-Stokes problem with homogeneous Dirichlet boundary data, we write the system (\ref{NS}) in the following form
	\begin{equation}
	\label{modifiedNS}
	\left\{
	\begin{aligned}
	\frac{\partial \vu_\alpha}{\partial t} -\Delta \vu_\alpha + (\vu_\alpha\cdot \nabla) \vu_\alpha + \nabla \pi_\alpha = \bm{0} , \quad \div \ \vu_\alpha = 0 \quad & \text{ in } \Omega\times (0,T) \\
	\vu_\alpha\cdot \vn = 0, \quad  \vu_{\alpha\vt} = -\frac{2}{\alpha}[(\DT\vu_\alpha)\vn]_{\vt} \quad & \text{ on } \Gamma\times (0,T) \\
	\vu_\alpha(0) = \vu_0 \quad & \text{ in } \Omega.
	\end{aligned}
	\right.
	\end{equation}
Since $(\vu_{\alpha}\cdot \nabla) \vu_{\alpha} - (\vu_{\infty}\cdot \nabla )\vu_{\infty} = (\vu_{\alpha}-\vu_{\infty})\cdot \nabla \vu_{\alpha} + \vu_\infty\cdot \nabla (\vu_\alpha - \vu_\infty)$, the regularity implies
\begin{equation*}
\begin{aligned}
&\quad \ \|(\vu_{\alpha}\cdot \nabla) \vu_{\alpha} - (\vu_{\infty}\cdot \nabla )\vu_{\infty}\|_{\vL{s}}\\
&\le \|\vu_{\alpha}-\vu_{\infty}\|_{\vL{\infty}}\|\nabla \vu_{\alpha}\|_{\vL{s}} + \|\vu_{\infty}\|_{\vL{\infty}}\|\nabla (\vu_{\alpha}-\vu_{\infty})\|_{\vL{s}}\\
& \le \|\vu_{\alpha}-\vu_{\infty}\|_{\vW{1}{s}} \left( \|\vu_{\alpha}\|_{\vW{1}{s}} + \|\vu_\infty\|_{\vW{1}{s}}\right) 
\end{aligned}
\end{equation*}
which shows that $(\vu_{\alpha}\cdot \nabla) \vu_{\alpha} \rightarrow (\vu_{\infty}\cdot \nabla )\vu_{\infty}$ in $ C([0,T;\vL{s})$ as $\alpha \rightarrow \infty$. Hence, passing limit in the other terms of the above system yields that indeed $\vu_\infty$ is a solution of the problem (\ref{NSD}). Note  that we use the continuity of $\vu_{\alpha}$ up to $t=0$ to obtain $\vu_\infty$ satisfies the initial data.

Next to deduce the rate of convergence, taking difference between the two systems (\ref{modifiedNS}) and (\ref{NSD}) and denoting by $\bm{v}_\alpha = \vu_{\alpha} -\vu_\infty$ and $ p_\alpha = \pi_\alpha - \pi_\infty$, we obtain
\begin{equation*}
\left\{
\begin{aligned}
&\frac{\partial \bm{v}_\alpha}{\partial t} -\Delta \bm{v}_\alpha  +\nabla p_\alpha = (\vu_{\infty}\cdot \nabla )\vu_{\infty} - (\vu_{\alpha}\cdot \nabla) \vu_{\alpha} , \quad \div \ \bm{v}_\alpha = 0 \quad && \text{ in } \Omega\times (0,T) \\
&\bm{v}_\alpha\cdot \vn = 0, \quad \bm{v}_{\alpha \vt} = -\frac{2}{\alpha} [(\DT\vu_{\alpha})\vn]_{\vt} \quad && \text{ on } \Gamma\times (0,T) \\
&\bm{v}_\alpha(0) = 0 \quad && \text{ in } \Omega .
\end{aligned}
\right.
\end{equation*}
But notice that $(\vu_{\infty}\cdot \nabla )\vu_{\infty} - (\vu_{\alpha}\cdot \nabla) \vu_{\alpha}\rightarrow \bm{0}$ in $ L^m(0,T;\vL{s})$ for any $m\in [1,\infty)$ and $ [(\DT \vu_\alpha )\vn ]_{\vt}$ is bounded in $L^m(0,T;\vWfracb{1-\frac{1}{s}}{s})$. Therefore as we have done for the linear problem, using the lift operator and then the maximal regularity for the Stokes system with non-homogeneous initial data \cite[Theorem 2.8]{Giga91} leads us to the following
\begin{equation*}
\begin{aligned}
& \quad \int\displaylimits_{0}^{T}\left\| \frac{\partial \bm{v}_\alpha}{\partial t} \right\| ^m_{\bm{L}^s (\Omega)} + \int\displaylimits_{0}^{T}\|-\Delta \bm{v}_\alpha + \nabla p_\alpha\|^m_{\bm{L}^s(\Omega)}\\
& \le C\left( \int\displaylimits_0^T \|(\vu_{\infty}\cdot \nabla )\vu_{\infty} - (\vu_{\alpha}\cdot \nabla) \vu_{\alpha}\|^m_{\vL{s}} +\frac{1}{\alpha}\| [(\DT\vu_{\alpha})\vn]_{\vt}\|^m_{\vWfracb{2-\frac{1}{s}}{s}}\right).
\end{aligned}
\end{equation*}
This finally shows that both the terms in the right hand side go to $0$ as $\alpha \rightarrow \infty$. Hence the result. 
\hfill
\end{proof}

\bibliographystyle{plain}
\bibliography{Master_bibfile}

\end{document}